\documentclass[a4paper]{article}
\usepackage{fullpage}
\usepackage[utf8]{inputenc}
\usepackage[T1]{fontenc}
\usepackage[english]{babel}
\usepackage{graphicx}
\usepackage{amsmath,amsfonts,amssymb,amsthm}
\usepackage{dsfont}
\usepackage{bm}
\usepackage{natbib}
\usepackage{cmbright}
\usepackage[table]{xcolor}
\usepackage{mdframed}
\usepackage{booktabs}
\usepackage{hyperref}
\newcommand{\verifrepo}{\href{https://github.com/ydecastro/TENSOR-KSS}{\texttt{github.com/ydecastro/TENSOR-KSS}}}
\definecolor{burgundy}{rgb}{0.5, 0.0, 0.13}
\definecolor{camel}{rgb}{0.76, 0.6, 0.42}
\definecolor{chamoisee}{rgb}{0.63, 0.47, 0.35}
\definecolor{grey1}{RGB}{128,128,128}
\hypersetup{colorlinks=true,linkcolor=burgundy,citecolor=chamoisee,urlcolor=burgundy,hypertexnames=false}
\numberwithin{equation}{section}

\newtheorem{theorem}{Theorem}
\definecolor{mastertheoremfill}{rgb}{0.985,0.97,0.93}
\newmdenv[
    backgroundcolor=mastertheoremfill,
    linecolor=burgundy,
    linewidth=0.8pt,
    topline=true,bottomline=true,rightline=true,leftline=true,
    innertopmargin=6pt,innerbottommargin=6pt,
    innerleftmargin=8pt,innerrightmargin=8pt,
    skipabove=8pt,skipbelow=6pt
]{mastertheorembox}
\newtheorem{proposition}{Proposition}
\newtheorem{corollary}[proposition]{Corollary}
\newtheorem{remark}{Remark}

\newtheorem{lemma}{Lemma}

\newcommand{\ba}{\boldsymbol{a}}

\newcommand{\R}{\mathds{R}}
\newcommand{\E}{\mathds{E}}
\renewcommand{\P}{\mathds{P}}
\newcommand{\meanTensor}{\bm{\sigma}^\star}
\newcommand{\putativemeanTensor}{\bm{\sigma}}

\newcommand{\unnormalizedTensor}{\bm{\tau}}
\newcommand{\Tensors}{\mathcal{T}(k,d)}
\newcommand{\sphereTensors}{\mathds S(k,d)}
\newcommand{\tensors}{\mathcal{T}}
\newcommand{\obs}{{\bm Y}}
\newcommand{\noise}{{\bm W}}

\newcommand{\var}{{\rm var}}

\newcommand{\1}{\mathds{1}}

\title{Non-asymptotic Tail Bounds for the Kostlan--Shub--Smale Field: Tensor PCA and Spherical $k$-Spin Complexity}

\usepackage{authblk}
\author[1]{Jean-Marc Aza\"{i}s}
\author[2]{Federico Dalmao}
\author[3,4]{Yohann De Castro}
\affil[1]{{\small Institut de Math\'ematiques de Toulouse\\ Universit\'e  de Toulouse, France.}}
\affil[2]{{\small DMEL, CENUR Litoral Norte\\  Universidad de la Rep\'ublica, Salto, Uruguay.}}
\affil[3]{{\small Institut Camille Jordan, CNRS UMR 5208\\  \'Ecole Centrale Lyon, France.}}
\affil[4]{{\small Institut Universitaire de France~(IUF)}}
\date{Preprint as of~\today}

\begin{document}
\maketitle
\begin{abstract}
This paper builds a hierarchy of explicit, non-asymptotic tail bounds for the supremum of the Kostlan--Shub--Smale (KSS) random field on the sphere, and applies it to two problems: Spiked Tensor PCA and the landscape of the spherical $k$-spin model. For Tensor PCA, we study the non-asymptotic statistical limits of estimating a rank-$R$ symmetric signal tensor of order~$k\ge 3$ and dimension~$d\ge 3$ from a single Gaussian observation at signal-to-noise ratio~$\lambda$, through the \emph{profile maximum likelihood estimator}, the MLE restricted to normalized rank-$R$ tensors of coherence at least~$\kappa$. Our analysis uses a single reduction: a deterministic geometric inequality (the Tube Method) and a rank-reduction step bound the estimation error by the supremum of the canonical KSS field, which the Kac--Rice formula turns into a Gaussian integral against the expected absolute characteristic polynomial of a shifted Gaussian Orthogonal Ensemble, controlled in turn by the four explicit tail bounds of our hierarchy (three from a Mehta--Fyodorov representation, one from a Ben Arous--Dembo--Guionnet large deviation). The same reduction yields two results, each with explicit constants. For estimation, a finite-$(k,d)$ error bound recovers the asymptotically optimal rate~$\sqrt{d\log k}$ of Perry, Wein and Bandeira, with explicit dependence on the rank~$R$ and the coherence~$\kappa$. For the landscape, a two-sided non-asymptotic bracketing of the annealed complexity of the spherical $k$-spin Hamiltonian recovers the Auffinger--Ben Arous--\v{C}ern\'y complexity function in the high-dimensional limit.
\end{abstract}

\section{Introduction}
\label{sec:introduction}

\subsection{Tensor regression model and the profile MLE}
\label{sub:model}

Tensor regression generalizes linear regression to the setting where the response is a symmetric tensor $\obs$ of order $k\ge 2$ and dimension $d\ge 2$. The model reads
\begin{equation}
    \label{eq:generic_tensor_regression}
    \obs \;=\; \lambda \meanTensor + \noise\,,
\end{equation}
\begin{subequations}
where $\meanTensor$ is the signal tensor (with $\|\meanTensor\|_F=1$), $\lambda>0$ is the signal-to-noise ratio, and $\noise$ is a standard Gaussian symmetric tensor with distribution $\P_0$ whose density is proportional to $\exp(-\|\noise\|_F^2/2)$. Throughout, $\|\cdot\|_F$ denotes the Frobenius norm and $\langle\cdot,\cdot\rangle_\tensors$ the associated inner product on the space $\Tensors$ of symmetric tensors. We denote by $\mathfrak{S}_R$ the set of \emph{normalized} symmetric tensors of rank at most~$R$. Any $\putativemeanTensor\in\mathfrak{S}_R$ admits a rank-$R$ decomposition
\begin{equation}
    \label{eq:mean_tensor_rank_r}
    \putativemeanTensor \;=\; \sum_{j=1}^R a_j\,t_j^{\otimes k}\,,\qquad a_j\neq 0\,,\quad \|\putativemeanTensor\|_F^2=1\,,
\end{equation}
where $t_j^{\otimes k}$ denotes the $k$-fold outer product of $t_j\in\mathbb{S}^{d-1}$ with itself, and the vectors $t_j$ are pairwise distinct. 

\pagebreak[3]
\medskip

\noindent
The \emph{coherence} of $\putativemeanTensor$ is defined by
\begin{equation}
    \label{def:coherence}
    \kappa^2(\putativemeanTensor) \;:=\; \max\Big\{\lambda_{\min}(\bm{G})\,\Big|\,G_{ij}=\langle t_i,t_j\rangle^k\ \text{and \eqref{eq:mean_tensor_rank_r} holds}\Big\}\,,
\end{equation}
where the maximum is taken over all rank-$R$ decompositions of $\putativemeanTensor$. The coherence satisfies $\kappa(\putativemeanTensor)\in(0,1]$, with $\kappa=1$ when the components are pairwise orthogonal. Small values of~$\kappa$ correspond to near-collinear components, which are statistically harder to disentangle. 

\paragraph{Profile Maximum Likelihood Estimator}
The \emph{profile Maximum Likelihood Estimator} (profile MLE) maximizes the inner product with the observation $\obs$ over the slice of $\mathfrak{S}_R$ on which the coherence is at least~$\kappa$:
\end{subequations}
\begin{equation}
    \label{eq:MLE_definition}
    \hat{\lambda} \;=\; \langle \obs, \hat{\putativemeanTensor} \rangle_\tensors\,,
    \qquad
    \hat{\putativemeanTensor} \;\in\; \operatorname*{arg\,max}
    \Big\{ \langle \obs, \putativemeanTensor \rangle_\tensors\,\Big|\,\putativemeanTensor\in\mathfrak{S}_R,\,\kappa(\putativemeanTensor)\ge\kappa\Big\}\,,
\end{equation}
and $\hat{\putativemeanTensor}$ is well defined: the feasible set
\[
\mathcal{C}_{R,\kappa}:=\Big\{\putativemeanTensor\in\mathfrak{S}_R\,\Big|\,\kappa(\putativemeanTensor)\ge\kappa\Big\}
\]
is \emph{compact}: bounded in the unit sphere of $\Tensors$, and closed because the coherence floor $\kappa(\putativemeanTensor)\ge\kappa>0$ bounds the coefficients ($\|\ba\|_2\le 1/\kappa$, Lemma~\ref{lem:reduce_rank_one}) while the components range over the compact sphere $\mathbb S^{d-1}$, ruling out the border-rank degenerations through which $\{\mathrm{rank}\le R\}$ fails to be closed for $k\ge 3$. The continuous Gaussian functional $\putativemeanTensor\mapsto\langle\obs,\putativemeanTensor\rangle_\tensors$ therefore attains an almost surely unique maximum on $\mathcal C_{R,\kappa}$ \citep[Theorem~3]{lifshits1983absolute,tsirel1976density}; concentrating out $\lambda$ in the Gaussian log-likelihood of~\eqref{eq:generic_tensor_regression} leaves exactly~\eqref{eq:MLE_definition}, whence the name \emph{profile MLE}.

\subsection{Geometric reduction and the Kac--Rice integral}
\label{sub:reduction}

A deterministic geometric inequality (Lemma~\ref{lem:geometric_bound}), referred to as the \emph{Tube Method}, controls the estimation error by the supremum of the empirical noise process on the feasible manifold:
\[
    \|\hat{\putativemeanTensor}-\meanTensor\|_F^2 \;\le\; \frac{4\,\Gamma_{R,\kappa}}{\lambda}\,,
    \qquad
    \Gamma_{R,\kappa}:=\sup\Big\{|\langle\noise,\putativemeanTensor\rangle_\tensors|\,\Big|\,\putativemeanTensor\in\mathcal{C}_{R,\kappa}
    \Big\}\,,
\]
\begin{subequations}
with $\Gamma_{R,\kappa}$ referred to as the \emph{noise level} and, by continuity of $\putativemeanTensor\mapsto\langle\noise,\putativemeanTensor\rangle_\tensors$ and compactness of $\mathcal{C}_{R,\kappa}$, this supremum is attained. The rank-reduction inequality (see Lemma~\ref{lem:reduce_rank_one}) then bounds
\begin{equation}\label{eq:gamma_11}
\Gamma_{R,\kappa}\le\frac{\sqrt R}\kappa\,\Gamma_{1,1}\,,
\qquad
\Gamma_{1,1} \;=\; \sup_{\bm \theta\in\mathbb S^{d-1}}\Big\{|X(\bm \theta)|\Big\}\,,
\qquad X(\bm \theta):=\langle\noise,\bm \theta^{\otimes k}\rangle_\tensors\,,
\end{equation}
where the covariance $\E[X(\bm \theta)X(\bm v)]=\langle\bm \theta,\bm v\rangle^k$ identifies $X$ as the canonical \emph{Kostlan--Shub--Smale} (KSS) random field on the sphere. This is the field named in the title: every bound below concerns the upper tail of its supremum, for which we write
\begin{equation}\label{eq:def_widetilde_Gamma}
    \widetilde\Gamma_{1,1}(u)\;:=\;\P\Big\{\sup_{\bm\theta\in\mathbb S^{d-1}}X(\bm\theta)>u\Big\}\,.
\end{equation}
The two-sided supremum $\Gamma_{1,1}$ of~\eqref{eq:gamma_11} follows from the symmetry $X\stackrel d=-X$ of the centred field:
\begin{equation}\label{eq:two_sided_intro}
    \P\{\Gamma_{1,1}>u\}\;\le\;2\,\widetilde\Gamma_{1,1}(u)\,;
\end{equation}
the factor $2$ is the only cost of passing to the two-sided sup, and appears in the failure probability of Theorem~\ref{thm:main}.

\paragraph{Kac--Rice integral.}
Throughout, the \emph{Gaussian Orthogonal Ensemble} of size~$n$, denoted $\mathrm{GOE}(n)$, is the law of a real symmetric $n\times n$ random matrix $G=(G_{ij})$ whose entries are jointly Gaussian, centred, independent up to symmetry, with the Mehta normalization $\var(G_{ii})=1$ and $\var(G_{ij})=1/2$ for $i\neq j$; equivalently, the density on the space of real symmetric matrices is proportional to $\exp(-\mathrm{tr}(G^2)/2)$ and the law is invariant under conjugation by any orthogonal matrix~\citep{mehta}. The Kac--Rice formula \citep{azais2009level}, combined with the conditional GOE law of the Riemannian Hessian \citep[Lemma~3.2(b)]{auffinger2013}, reduces $\widetilde\Gamma_{1,1}(u)$ to a Gaussian integral against the expected absolute characteristic polynomial of a shifted GOE matrix. Defining
\begin{equation}\label{eq:KR_combined_intro}
    \delta_0(u)\;:=\;C_{k,d}\int_u^\infty \E\big[|\det(G_{d-1}-\rho x\,I_{d-1})|\big]\,\varphi(x)\,\mathrm{d}x\,,
\end{equation}
the Kac--Rice formula gives
\begin{equation}\label{eq:KR_main_bound_intro}
    \widetilde\Gamma_{1,1}(u)\;\le\;\delta_0(u)\,,
\end{equation}
with constants
\begin{align}
\label{eq:rho_ckd_intro}
    \rho:=\sqrt{\frac k{2(k-1)}}\,,\qquad C_{k,d}:=\frac{2\sqrt{\pi}}{\Gamma(d/2)}\,(k-1)^{\frac{d-1}2}\,,\qquad G_{d-1}\sim\mathrm{GOE}(d-1)
\end{align}
and $\varphi$ the standard normal density.
It is proved in \cite[Theorem~8.12]{azais2009level} that the bound~\eqref{eq:KR_main_bound_intro} is super-exponentially precise as $u\to\infty$, so that $\delta_0$ is asymptotically the right scale to describe~$\widetilde\Gamma_{1,1}$.

Section~\ref{sec:tail_bounds} makes $\delta_0$ explicit: on $[u_{\mathrm{IMF}},\infty)$ with $u_{\mathrm{IMF}}:=\sqrt{2d-1}/\rho$ (Szeg\H{o}'s bound on the largest root of $H_{d-1}$, Lemma~\ref{lem:Idc_positivity}), it admits the exact closed form $\delta_{\mathrm{exact}}$ via the Mehta--Fyodorov representation (Theorem~\ref{thm:imf_tail_exact}), with $\delta_0=\delta_{\mathrm{exact}}$ pointwise there; the relaxations $\delta_{\mathrm{IMF}},\delta_{\mathrm{SMF}},\delta_{\mathrm{SM}}$ are explicit upper bounds on appropriate sub-ranges.
\end{subequations}

\paragraph{Asymptotic baseline as $u\to\infty$, with $d$ fixed.}
\label{sub:baseline}
Replacing the expected determinant in~\eqref{eq:KR_combined_intro} by its leading monomial $(\rho x)^{d-1}$ gives, by Lemma~\ref{lem:equiv_tail}, the asymptotic equivalent
\begin{equation}\label{eq:asymptotic_baseline_approx}
    \delta_{\mathrm{bl}}(u)\;:=\;\frac{\sqrt{2}}{\Gamma(d/2)}\!\left(\frac{k}{2}\right)^{\frac{d-1}2}u^{d-2}\,e^{-u^2/2}\,,
    \qquad
    \delta_0(u)\;\sim\;\delta_{\mathrm{bl}}(u)\quad\text{as $u\to\infty$ with $d$ fixed}\,.
\end{equation}
This is the optimal tail rate against which the non-asymptotic bounds are measured; $\delta_{\mathrm{bl}}$ is not itself an upper bound on $\delta_0$, only the asymptotic equivalent.

\subsection{Hierarchy of four explicit tail bounds}
\label{sub:three_bounds}

We develop a hierarchy of four explicit non-asymptotic upper bounds on $\delta_0(u)$ and, by~\eqref{eq:KR_combined_intro}, on the excursion probability $\P\big\{\sup_{\bm \theta\in\mathbb S^{d-1}}X(\bm \theta)>u\big\}$, each valid above an explicit threshold:
\begin{equation*}
    u_{\mathrm{IMF}} := \frac{\sqrt{2d-1}}{\rho}\,,\qquad
    u_{\mathrm{SMF}} := 2\sqrt{d}\,,\qquad
    u_{\mathrm{SM}}  := \frac{32\sqrt{d-1}}{\rho}\,.
\end{equation*}
For $k,d\ge 3$ one has $u_{\mathrm{IMF}}\le u_{\mathrm{SMF}}\le u_{\mathrm{SM}}\le 32\sqrt{2d}$. Table~\ref{tab:tail_bounds} collects the four bounds with their validity ranges and closed forms, each derived in Section~\ref{sec:tail_bounds}: $\delta_{\mathrm{exact}}$ is the exact closed-form evaluation of $\delta_0$ (a finite Hermite-recurrence sum, Theorem~\ref{thm:imf_tail_exact}); $\delta_{\mathrm{IMF}}$ discards the negative term $-2\,\mathcal{I}_d^c(\rho x)\,H_{d-1}(\rho x)$ of the Mehta expansion (asymptotically sharp, Theorem~\ref{thm:imf_tail}); $\delta_{\mathrm{SMF}}$ further collapses the Hermite sums to their Szeg\H{o} monomial envelope, giving an inversion-friendly closed form (Theorem~\ref{thm:smf_tail}); and $\delta_{\mathrm{SM}}$ is an independent Ben Arous--Dembo--Guionnet/layer-cake bound not using the Mehta--Fyodorov algebra (Theorem~\ref{thm:sm_tail}). The first three are strictly nested, $\delta_{\mathrm{exact}}\le\delta_{\mathrm{IMF}}\le\delta_{\mathrm{SMF}}$ on $[u_{\mathrm{IMF}},\infty)$ (Theorems~\ref{thm:imf_tail_exact} and~\ref{thm:uniform_domination}), and dominate $\delta_{\mathrm{SM}}$ on $[u_{\mathrm{SM}},\infty)$ (Remark~\ref{rem:sm_domination}).
All four decay at the leading rate $u^{d-2}e^{-u^2/2}$ as $u\to\infty$ ($d$ fixed); their $d\to\infty$ prefactors are compared below.

\begin{table}[ht]
\centering
\renewcommand{\arraystretch}{1.5}
\resizebox{\textwidth}{!}{
\begin{tabular}{lll}
\toprule
\textbf{Method} & \textbf{Validity range} & \textbf{Closed-form expression} \\
\midrule
Baseline (not a bound) & $u\to\infty$, $d$ fixed & $\delta_{\mathrm{bl}}(u)\;=\;\dfrac{\sqrt{2}}{\Gamma(d/2)}\!\left(\dfrac{k}{2}\right)^{\frac{d-1}2}u^{d-2}e^{-u^2/2}$ \\
$\delta_{\mathrm{exact}}$ (canonical) & $u\in\R$ (exact) & $\delta_{\mathrm{exact}}(u)\;=\;2(k-1)^{\frac{d-1}2}\bigl[D_1(u)+D_2(u)+D_3(u)+D_4(u)\bigr]$, finite Hermite sum (Thm.~\ref{thm:imf_tail_exact}) \\
$\delta_{\mathrm{IMF}}$ & $u\ge u_{\mathrm{IMF}}$ & $\delta_{\mathrm{IMF}}(u)\;=\;2(k-1)^{\frac{d-1}2}\,\Big(\alpha_d\,\Phi_d(\rho,u)\,e^{-u^2/2}+\Psi_d(\rho,u)\,e^{-(1+\rho^2)u^2/2}\Big)$ \\
$\delta_{\mathrm{SMF}}$ & $u\ge u_{\mathrm{SMF}}$ & $\delta_{\mathrm{SMF}}(u)\;=\;4\,\alpha_d(2k)^{\frac{d-1}2}u^{d-2}e^{-u^2/2}+2^d\beta_d(k-1)^{\frac{d-1}2}u^{2d-3}e^{-3u^2/4}$ \\
$\delta_{\mathrm{SM}}$ & $u\ge u_{\mathrm{SM}}$ & $\delta_{\mathrm{SM}}(u)\;=\;2\,\dfrac{\sqrt{2}}{\Gamma(d/2)}\!\left(\dfrac{k}{2}\right)^{\frac{d-1}2}u^{d-2}\,e^{-u^2/2}\,\bigl(1+\eta_d(\rho,u)\bigr)$ \\
\bottomrule
\end{tabular}
}
\vspace{0.5em}
\caption{The four explicit non-asymptotic tail bounds on $\delta_0(u)$, with the asymptotic baseline $\delta_{\mathrm{bl}}$ (first row), which is the equivalent $\delta_0\sim\delta_{\mathrm{bl}}$ as $u\to\infty$ ($d$ fixed), not an upper bound. The exact bound $\delta_{\mathrm{exact}}$ (components $D_1,\dots,D_4$, Theorem~\ref{thm:imf_tail_exact}) coincides with $\delta_0$ for every $u\in\R$; the threshold $u_{\mathrm{IMF}}$ marks only the nesting range $\delta_{\mathrm{exact}}\le\delta_{\mathrm{IMF}}\le\delta_{\mathrm{SMF}}$. The constants $\alpha_d,\beta_d$ and functions $\Phi_d,\Psi_d,\eta_d$ are given in~\eqref{eq:expression_constants} and Section~\ref{sec:tail_bounds}.}
\label{tab:tail_bounds}
\end{table}

\paragraph{Explicit form of the constants of Table~\ref{tab:tail_bounds}.}
The constants originate in the Mehta--Fyodorov representation~\eqref{eq:fmehta}: $\alpha_d,\beta_d$ are the dominant Hermite coefficient and remainder envelope in the splitting $Q_d=\alpha_d H_{d-1}+\mathcal{R}_d$ (Lemma~\ref{lem:dominant_term}); $\Phi_d,\Psi_d$ arise in the IMF decomposition (Proposition~\ref{prop:IMF_decomp}); and $\eta_d$ is the layer-cake correction (Proposition~\ref{prop:det_bound}). Set
\[
    c_j\;:=\;(2^j j!\sqrt\pi)^{-1/2}\,,\qquad
    \Lambda\;:=\;2\rho^2-1\;=\;\tfrac1{k-1}\,,\qquad
    \mu_m\;:=\;\int_\R H_m(y)\,e^{-y^2/2}\,\mathrm{d}y\,;
\]
by parity, $\mu_{2p+1}=0$, and direct evaluation yields the closed form $\mu_{2p}=\sqrt{2\pi}\,(2p)!/p!$. With these conventions:
\begin{subequations}
    \label{eq:expression_constants}
\begin{itemize}
    \item The dominant Hermite coefficient $\alpha_d$ (i.e.\ the coefficient of $H_{d-1}(\nu)$ in the Mehta expansion of~$Q_d(\nu)$) is
    \begin{equation*}
    \alpha_d \;=\;
    \begin{cases}
        \tfrac12\sqrt{d/2}\,c_{d-1}\,c_d\,\mu_d & \text{if $d$ is even},\\[2pt]
        1/\mu_{d-1} & \text{if $d$ is odd}.
    \end{cases}
    \end{equation*}
    \item The remainder envelope constant $\beta_d$ (from the bound $|\mathcal{R}_d(\nu)|\le\beta_d(1+\nu^2)^{d-1}e^{-\nu^2/2}$ of~\eqref{eq:remainder_bound}) has the explicit form
    \begin{equation*}
        \beta_d \;=\; \underbrace{\sum_{j=0}^{d-1}c_j^2\,2^j\!\left(\frac{(2j)!}{j!}\right)^{\!2}}_{S_d\,:\ \text{squared-Hermite contribution}}
        \;+\;\sqrt{d/2}\,c_{d-1}\,c_d\,\frac{(2d-2)!}{(d-1)!}\,2^{d-1}\,\widetilde B_d\,,
    \end{equation*}
    where $\widetilde B_d$ is the explicit constant
    \begin{equation*}
        \widetilde B_d \;=\; \max\!\left(\frac{(2d)!\,2^{d+1}}{d!}\,,\; \frac{(2d)!}{d!}\!\int_0^\infty\!(1+y)^d\,e^{-y^2/2}\,\mathrm{d}y\right)\,,
    \end{equation*}
    coming from the proof of Lemma~\ref{lem:dominant_term}: it dominates the uniform Hermite-tail envelope constant $E_d$ of~\eqref{eq:Idc_uniform_env}, for which one has $E_d\le\frac{(2d)!\,2^{d+1}}{d!}\le\widetilde B_d$, so that $|\mathcal{I}_d^c(\nu)|\le\widetilde B_d\,(1+|\nu|)^{d-1}e^{-\nu^2/2}$ holds for every $\nu\ge 0$ (see proof of Lemma~\ref{lem:dominant_term}).
    \item The polynomial--exponential functions $\Phi_d(\rho,u)$, $\Psi_d(\rho,u)$ entering the IMF decomposition~\eqref{eq:IMF_decomp} and the SM correction $\eta_d(\rho,u)$ of Proposition~\ref{prop:det_bound} read
\begin{align}
    \Phi_d(\rho,u) &\;=\;\sum_{j=0}^{\lfloor (d-2)/2\rfloor}\!2\rho\,(2\Lambda)^j\,\frac{(d-2)!!}{(d-2j-2)!!}\,(2\rho u)^{d-2j-2}
    \;+\;\frac{\1_{\{d\text{ odd}\}}}{u}\,(2\Lambda)^{\frac{d-1}2}\,(d-2)!!\,,\nonumber\\
    \Psi_d(\rho,u) &\;=\;\frac{c_0^2}{(1+\rho^2)\,u}\;+\;\sum_{j=1}^{d-1}c_j^2\,\frac{(2\rho)^{2j}\,u^{2j-1}}{1+\rho^2-(2j-1)/u^2}\,,\nonumber\\
    \eta_d(\rho,u)&\;=\;\Bigl(1+\bigl(\tfrac{\sqrt{d-1}}{\rho u}\bigr)^{1/2}\Bigr)^{d-1}\!\!-1+(d+1)\,2^{d-1}\,e^{-2\rho u\sqrt{d-1}/9}\,.\label{eq:eta_def_intro}
\end{align}
\end{itemize}
\end{subequations}

\begin{figure}[!t]
\centering
\includegraphics[width=0.99\linewidth]{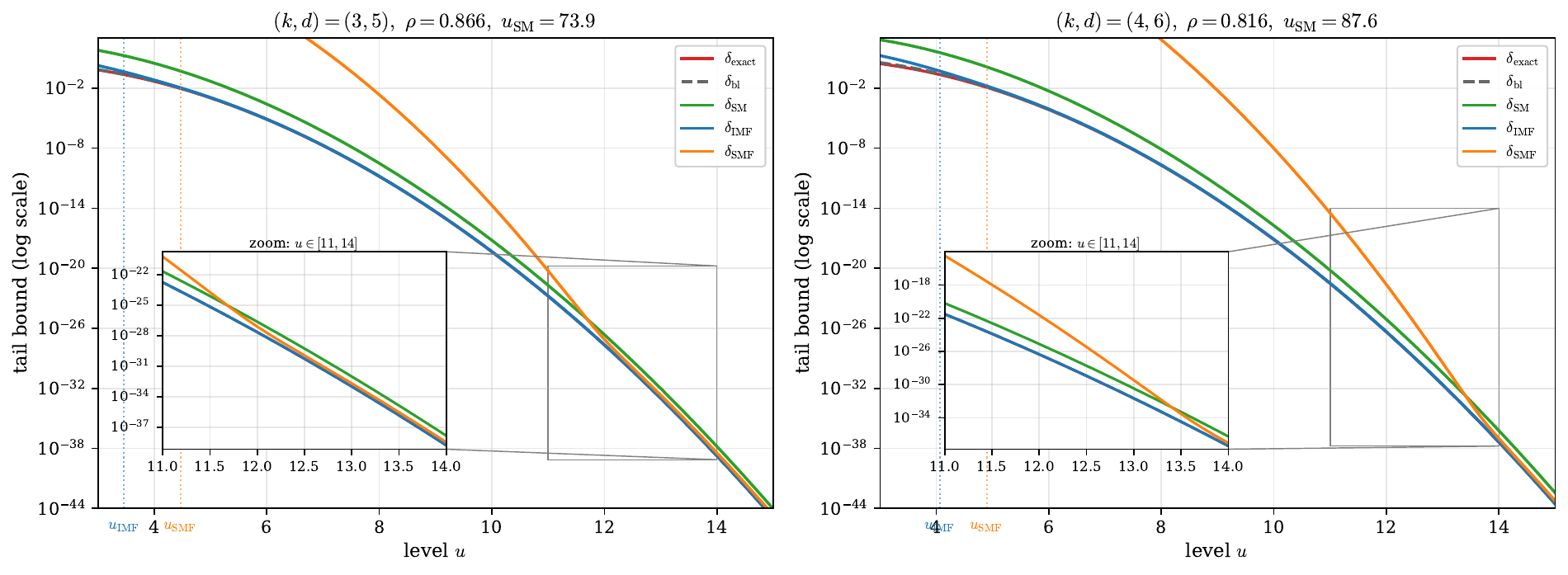}
\caption{Comparison of the tail bounds $\delta_{\mathrm{exact}}$, $\delta_{\mathrm{IMF}}$, $\delta_{\mathrm{SMF}}$, $\delta_{\mathrm{SM}}$ against the asymptotic baseline $\delta_{\mathrm{bl}}$, on logarithmic scale, plotted against the level~$u$ for $(k,d)=(3,5)$ (left, $u_{\mathrm{SM}}=73.90$) and $(k,d)=(4,6)$ (right, $u_{\mathrm{SM}}=87.64$), both well outside the displayed range. The five curves are: (i)~the strictly exact closed-form bound $\delta_{\mathrm{exact}}(u)$ of Theorem~\ref{thm:imf_tail_exact} (rigorous, sharpest closed form, and the reference for $\delta_0$ since it is the closed-form evaluation of the Kac--Rice integral~\eqref{eq:KR_combined_intro} on $[u_{\mathrm{IMF}},\infty)$); (ii)~the asymptotically sharp $\delta_{\mathrm{IMF}}(u)$; (iii)~the inversion-friendly $\delta_{\mathrm{SMF}}(u)$ (factor $2$ above baseline); (iv)~the independent spectral-radius bound $\delta_{\mathrm{SM}}(u)$; and (v)~the asymptotic baseline $\delta_{\mathrm{bl}}(u)$ of~\eqref{eq:asymptotic_baseline_approx} (not a bound). All curves start from $u=u_{\mathrm{IMF}}$; the $\delta_{\mathrm{SM}}$ expression is evaluated below its own validity threshold $u_{\mathrm{SM}}$ to make the comparison with $\delta_{\mathrm{SMF}}$ and $\delta_{\mathrm{IMF}}$ visible (it ceases to be a bound below $u_{\mathrm{SM}}$). An inset (top right) zooms onto the moderate-$u$ region where the gap between $\delta_{\mathrm{IMF}}$, $\delta_{\mathrm{exact}}$ and $\delta_{\mathrm{bl}}$ is largest. $\delta_{\mathrm{IMF}}$ is visually indistinguishable from $\delta_{\mathrm{exact}}$, consistent with the asymptotic sharpness of the Hermite-tail analysis; $\delta_{\mathrm{SM}}$ (resp.\ $\delta_{\mathrm{SMF}}$) is looser through the explicit correction factor $1+\eta_d(\rho,u)$ (resp.\ the Szeg\H{o} envelope coarsening).}
\label{fig:delta_min}
\end{figure}

\paragraph{Asymptotic optimality of the prefactors.}
As $d\to\infty$ ($k$ fixed), a Stirling approximation shows all three relaxations capture the exponential scale $(ek/d)^{d/2}$ of the Kac--Rice baseline; the IMF bound recovers the exact leading constant, while SM (the absolute-value split at the edge) and SMF (the Szeg\H{o} envelope) each incur a uniform factor $2$. The derivation is given in Appendix~\ref{app:prefactor_asymptotics}.

\paragraph{Master tail bound and main theorem.}
By direct application of Theorem~\ref{thm:imf_tail} (the IMF tail bound, Section~\ref{sub:IMF}) and the two-sided symmetry $\P\{\Gamma_{1,1}>u\}\le 2\,\P\{\sup_{\bm\theta}X(\bm\theta)>u\}$, the master tail bound on $\Gamma_{1,1}=\sup_{\bm \theta\in\mathbb S^{d-1}}|X(\bm \theta)|$ used in Theorem~\ref{thm:main} is
\begin{subequations}
\begin{equation}\label{eq:delta_min}
    \forall u\ge u_{\mathrm{IMF}}\,,\qquad
    \delta_{\min}(u)\;:=\;2\,\delta_{\mathrm{IMF}}(u)\,,
    \qquad
    \P\big\{\Gamma_{1,1}>u\big\}\;\le\;\delta_{\min}(u)\,.
\end{equation}
The choice~\eqref{eq:delta_min} is unconditionally smaller than the SMF branch $2\delta_{\mathrm{SMF}}$ on $[u_{\mathrm{SMF}},\infty)$ (Theorem~\ref{thm:uniform_domination}, Section~\ref{sub:uniform_domination}), and smaller than the SM branch $2\delta_{\mathrm{SM}}$ on $[u_{\mathrm{SM}},\infty)$ for every $(k,d)$ of practical interest (Remark~\ref{rem:sm_domination}). The SM and SMF bounds are retained throughout the paper as pedagogical and practical alternatives: $\delta_{\mathrm{SMF}}$ enables the closed-form inversion of Remark~\ref{rem:choice_u}, $\delta_{\mathrm{SM}}$ is independent of the Mehta--Fyodorov algebra and serves as a consistency check.

\begin{theorem}[Non-asymptotic estimation error bound]
\label{thm:main}
Let $k\ge 3$, $d\ge 3$, $R\ge 1$, let $\meanTensor\in\mathfrak{S}_R$ be a signal tensor with $\kappa(\meanTensor)\ge\kappa$, and let $\hat{\putativemeanTensor}$ be the corresponding profile MLE defined in~\eqref{eq:MLE_definition}. For every $u\ge u_{\mathrm{IMF}}=\sqrt{2d-1}/\rho$, with probability at least $1-\delta_{\min}(u)$,
\begin{equation}\label{eq:main_bound}
    \|\hat{\putativemeanTensor} - \meanTensor\|_F^2
    \;\le\;
    \frac{4\sqrt{R}\,u}{\kappa\,\lambda}\,.
\end{equation}
\end{theorem}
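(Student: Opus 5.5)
The proof chains three results stated earlier in the paper: the Tube Method (Lemma~\ref{lem:geometric_bound}), the rank-reduction inequality (Lemma~\ref{lem:reduce_rank_one}), and the master tail bound~\eqref{eq:delta_min}.

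\emph{Step 1 (deterministic inequality).} Since $\meanTensor\in\mathfrak{S}_R$ with $\kappa(\meanTensor)\ge\kappa$, the truth is feasible: $\meanTensor\in\mathcal C_{R,\kappa}$. By optimality of $\hat{\putativemeanTensor}$,
\[
\lambda\langle\meanTensor,\hat{\putativemeanTensor}\rangle_\tensors+\langle\noise,\hat{\putativemeanTensor}\rangle_\tensors\ \ge\ \lambda+\langle\noise,\meanTensor\rangle_\tensors .
\]
Both tensors have unit Frobenius norm, so $1-\langle\meanTensor,\hat{\putativemeanTensor}\rangle_\tensors=\tfrac12\|\hat{\putativemeanTensor}-\meanTensor\|_F^2$. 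Hence
\[
\tfrac{\lambda}{2}\|\hat{\putativemeanTensor}-\meanTensor\|_F^2\le\langle\noise,\hat{\putativemeanTensor}-\meanTensor\rangle_\tensors\le 2\Gamma_{R,\kappa}.
\]
This is exactly the bound $\|\hat{\putativemeanTensor}-\meanTensor\|_F^2\le 4\Gamma_{R,\kappa}/\lambda$ of Lemma~\ref{lem:geometric_bound}, which I would invoke directly. The inequality holds surely, with no probabilistic input.

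\emph{Step 2 (rank reduction).} Next I apply~\eqref{eq:gamma_11}: $\Gamma_{R,\kappa}\le\frac{\sqrt R}{\kappa}\Gamma_{1,1}$. The underlying argument writes $\putativemeanTensor=\sum_j a_j t_j^{\otimes k}$ using a decomposition that attains the coherence. Then $\|\ba\|_2^2\lambda_{\min}(\bm G)\le\ba^\top\bm G\ba=1$, which gives $\|\ba\|_2\le1/\kappa$. Consequently
\[
|\langle\noise,\putativemeanTensor\rangle_\tensors|\le\|\ba\|_1\,\Gamma_{1,1}\le\sqrt R\,\|\ba\|_2\,\Gamma_{1,1}\le\frac{\sqrt R}{\kappa}\,\Gamma_{1,1}.
\]
Combining with Step 1 yields, deterministically,
\[
\|\hat{\putativemeanTensor}-\meanTensor\|_F^2\le\frac{4\sqrt R}{\kappa\lambda}\,\Gamma_{1,1}.
\]

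\emph{Step 3 (probability).} On the event $\{\Gamma_{1,1}\le u\}$, the bound~\eqref{eq:main_bound} follows from Step 2. By~\eqref{eq:delta_min}, for every $u\ge u_{\mathrm{IMF}}$ the complement has probability at most $\delta_{\min}(u)=2\delta_{\mathrm{IMF}}(u)$. This rests on three facts:
\begin{itemize}
\item the Kac--Rice bound $\widetilde\Gamma_{1,1}(u)\le\delta_0(u)$ of~\eqref{eq:KR_main_bound_intro};
\item Theorem~\ref{thm:imf_tail}, which gives $\delta_0\le\delta_{\mathrm{IMF}}$ on $[u_{\mathrm{IMF}},\infty)$;
\item the two-sided symmetrization~\eqref{eq:two_sided_intro}.
\end{itemize}
Together these prove the theorem.

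The only genuinely nontrivial ingredient is the tail estimate $\delta_0\le\delta_{\mathrm{IMF}}$ above $u_{\mathrm{IMF}}$. Establishing it requires positivity of the discarded term $\mathcal I_d^c(\rho x)H_{d-1}(\rho x)$ beyond Szeg\H{o}'s bound on the largest Hermite root. Since this is assumed from Theorem~\ref{thm:imf_tail}, the remaining bookkeeping is routine. The point to check carefully is that the Step~1 inequality uses the absolute value in $\Gamma_{R,\kappa}$. That absolute value is why the two-sided supremum, and hence the factor $2$ in $\delta_{\min}$, appears.
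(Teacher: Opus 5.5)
Your proposal is correct and matches the paper's proof: the Tube Method (Lemma~\ref{lem:geometric_bound}), then the rank reduction $\Gamma_{R,\kappa}\le(\sqrt R/\kappa)\,\Gamma_{1,1}$ (Lemma~\ref{lem:reduce_rank_one}), then the symmetry reduction~\eqref{eq:two_sided} combined with Theorem~\ref{thm:imf_tail} to bound $\P\{\Gamma_{1,1}>u\}$ by $\delta_{\min}(u)=2\,\delta_{\mathrm{IMF}}(u)$. Your one small deviation gives the same constant: you bound $|\sum_j a_jX(t_j)|$ via $\|\ba\|_1\le\sqrt R\,\|\ba\|_2$ (H\"older), whereas the paper applies Cauchy--Schwarz to $(\sum_j X(t_j)^2)^{1/2}$.
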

\end{subequations}
\noindent
The proof of Theorem~\ref{thm:main}, given in Section~\ref{sub:master_proof}, combines the deterministic Tube Method (Lemma~\ref{lem:geometric_bound}), the rank-reduction inequality (Lemma~\ref{lem:reduce_rank_one}), the symmetry reduction~\eqref{eq:two_sided}, and the IMF tail bound (Theorem~\ref{thm:imf_tail}). The choice $\delta_{\min}=2\,\delta_{\mathrm{IMF}}$ in~\eqref{eq:delta_min} is unconditionally tighter than $2\,\delta_{\mathrm{SMF}}$ on $[u_{\mathrm{SMF}},\infty)$ by Theorem~\ref{thm:uniform_domination}; the corresponding comparison with $2\,\delta_{\mathrm{SM}}$ on $[u_{\mathrm{SM}},\infty)$ holds for every $(k,d)$ of practical interest (Remark~\ref{rem:sm_domination}). The strictly tighter substitute $2\,\delta_{\mathrm{exact}}(u)\le\delta_{\min}(u)$ is available for numerical evaluation (Theorem~\ref{thm:imf_tail_exact}).

\paragraph{Tighter, weaker, and inverted variants}
The failure probability $\delta_{\min}(u)=2\,\delta_{\mathrm{IMF}}(u)$ in Theorem~\ref{thm:main} admits a strictly tighter substitute $2\,\delta_{\mathrm{exact}}(u)$ (Theorem~\ref{thm:imf_tail_exact}, Section~\ref{sub:exact_mf}) and a more analytically tractable, looser substitute $2\,\delta_{\mathrm{SMF}}(u)$ (Theorem~\ref{thm:smf_tail}, Section~\ref{sub:SMF}). The choice between them is purely a matter of computational convenience: $\delta_{\mathrm{exact}}$ is preferred when the bound is to be evaluated numerically; $\delta_{\mathrm{SMF}}$ when an explicit closed-form threshold $u_\alpha$ is required (Remark~\ref{rem:choice_u} and Appendix~\ref{app:u_alpha}); $\delta_{\mathrm{IMF}}$ is the natural default. The independent $\delta_{\mathrm{SM}}$ (Theorem~\ref{thm:sm_tail}, Section~\ref{sub:SM}) is dominated by $\delta_{\mathrm{IMF}}$ on $[u_{\mathrm{SM}},\infty)$ for every $(k,d)$ of practical interest (Remark~\ref{rem:sm_domination}) and is retained for its conceptual independence from the Mehta--Fyodorov algebra and as an unconditional check; in all cases the conservative bound $\min(2\delta_{\mathrm{IMF}},2\delta_{\mathrm{SM}})$ is unconditional on $[u_{\mathrm{SM}},\infty)$.

\begin{remark}[Choice of the level $u$]
\label{rem:choice_u}
For a target confidence level $\alpha\in(0,1)$, set
\begin{equation}\label{eq:u_alpha}
    u \;=\; \sqrt{\,2\log(1/\alpha)\,+\,n\log(2k)\,+\,2n\log\log(1/\alpha)\,}\,,\qquad n=d-1.
\end{equation}
There exists an explicit threshold $\alpha_0(k,d)\in(0,1)$ such that the choice~\eqref{eq:u_alpha} satisfies $\delta_{\min}(u)\le\alpha$ for every $\alpha\in(0,\alpha_0(k,d)]$. The dependence on $(k,d)$ enters only through the constants $\alpha_d$, $\beta_d$ of Lemma~\ref{lem:dominant_term} and $u^\star_d$ (the level beyond which the SMF remainder is negligible; Corollary~\ref{cor:single_term}), together with the absolute constant of the Gaussian tail bound; no further hidden quantities are involved. This threshold is exponentially small in the dimension: the closed form~\eqref{eq:u_alpha} is calibrated against the SMF main term, itself a bound only on $[u_{\mathrm{SMF}},\infty)$ with $u_{\mathrm{SMF}}=2\sqrt d$, so $\alpha_0(k,d)=O(e^{-2d})$ up to a polynomial-in-$d$ prefactor and the SMF bound is vacuous ($>1$) over the moderate-$\alpha$ range. For moderate confidence levels one therefore inverts the non-vacuous master bound $\delta_{\min}=2\delta_{\mathrm{IMF}}$ of Theorem~\ref{thm:main} numerically; this affects only the availability of the closed form~\eqref{eq:u_alpha}, not the validity of Theorem~\ref{thm:main}. Substituting~\eqref{eq:u_alpha} into~\eqref{eq:main_bound} yields the non-asymptotic rate
\[
    \|\hat{\putativemeanTensor} - \meanTensor\|_F^2 \;\lesssim\; \frac{\sqrt{R}}{\kappa\,\lambda}\,\sqrt{d\log k - \log\alpha}\,,
\]
which recovers, with explicit constants and an explicit dependence on $(R,\kappa)$, the asymptotic optimal rate of \citet{perry2020statistical} established for $d\to\infty$ in the rank-one case. The full proof, given in Appendix~\ref{app:u_alpha}, performs the inversion term-by-term against the SMF main contribution and absorbs the polynomial $\log u$ correction arising from $u^{n-1}$ into the $2n\log\log(1/\alpha)$ slack of~\eqref{eq:u_alpha}.
\end{remark}
\pagebreak[3]

\subsection{Related work and contributions}
\label{sub:related}

Tensor methods are widely used for learning latent variable models: community detection in stochastic block models, parameter estimation in mixtures of Gaussians, latent Dirichlet allocation, and Independent Component Analysis can all be recast as the low-rank decomposition of an empirical moment tensor of order $k\ge 3$, which resolves the identifiability obstructions that, for the matrix case $k=2$, prevent unique recovery of the low-rank components from the moment tensor alone \citep{anandkumar2014tensor}. Spiked Tensor PCA, introduced by \citet{montanari2014statistical}, isolates the low-rank-plus-noise structure of the estimation problem, abstracting away problem-specific details.
\citet[Theorem~1.3]{perry2020statistical} establish that detecting a rank-one signal below an explicit eigenvalue threshold is information-theoretically impossible as $d\to\infty$, and identify the optimal estimation rate $O(\sqrt{d\log k})$ in our normalization (which differs from theirs by a factor of $\sqrt{d/2}$, the noise-normalisation conversion detailed in \citealp{azais2024second}). Theorem~\ref{thm:main} recovers this rate non-asymptotically, with explicit constants and an explicit dependence on the rank~$R$ and the coherence~$\kappa$ that does not appear in the existing asymptotic literature (see Remark~\ref{rem:choice_u}). The matrix-analogue spectral signature of the detection threshold is the Baik--Ben Arous--P\'ech\'e (BBP) phase transition \citep{baik2005phase}, describing how the extreme eigenvalues of a deformed random matrix detach from the bulk only once the signal-to-noise ratio crosses a precise threshold; this transition is closely connected to the spectral edge of the Gaussian Orthogonal Ensemble whose conditional law is used throughout our Kac--Rice analysis.

\medskip

The geometric landscape of the likelihood function for tensor PCA is a smooth random function on the high-dimensional sphere, and its critical-point structure has been studied in the spin-glass literature. \citet{auffinger2013} establish that the number of critical points of the same Kostlan--Shub--Smale field vanishes above the spectral edge and relate this complexity to GOE matrix theory; we exploit in particular their conditional Hessian law, \citet[Lemma~3.2(b)]{auffinger2013}, as a main input to the Kac--Rice computation. The present paper differs from this body of work in focus: rather than counting critical points, we control the tail of the global maximum, and our bounds can be read as a non-asymptotic refinement of the large-deviation complexity theory in the region above the spectral edge. The connection to spin glasses extends further: aging phenomena and energy-landscape complexity in the $p$-spin spherical glass model studied by \citet{ben-guionnet}, \citet{fyodorov2004complexity} and \citet{arous2019landscape} are direct probabilistic analogues of the optimization problem we analyze, and the topology of local maxima governs the detectability threshold in both settings. On the algorithmic side, the high-dimensional dynamics of efficient solvers for the multi-spiked tensor model (Langevin dynamics and online stochastic gradient descent) were recently characterised by \citet{benarous2024langevin,benarous2024sgd}, who determine the sample complexity and signal-to-noise separation required for recovery; these address the \emph{computational} side of the statistical-to-computational gap, complementary to the \emph{statistical} profile-MLE guarantee of Theorem~\ref{thm:main}.

\medskip

The canonical Kostlan--Shub--Smale random field, originally introduced by \citet{shub1993complexity} and \citet{kostlan1993distribution} to study the distribution of roots of random polynomial systems, is at the algebraic core of our analysis. Its covariance structure $\langle\bm \theta,\bm v\rangle^k$ matches that of our tensor noise exactly, and underlies the structured conditioning of field value and Hessian. Our methodology is the Kac--Rice framework for level sets and extrema of random fields, treated in detail by \citet{azais2009level}. The present paper is a companion to \citet{azais2024second}, where the same conditional Kac--Rice analysis of the Kostlan--Shub--Smale field underlies an exact \emph{spacing test} for detecting sparse alternatives in Gaussian symmetric tensors, including an exact, non-asymptotic test for the spiked tensor model that needs no prior knowledge of the noise level, thus providing the \emph{detection} counterpart to the estimation and complexity results obtained here. Sharpening the resulting integrals depends on orthogonal-polynomial expansions of the GOE eigenvalue density formalised in \citet{mehta} and on Szeg\H{o}'s bounds on Hermite polynomial roots \citep{szego}.

\medskip

\noindent
{\bf Contributions.} \emph{(i)} A geometric framework for rank-$R$ tensor estimation via the profile MLE on $\mathfrak{S}_R$, with a deterministic Tube Method bound and a rank-reduction inequality controlling the rank-$R$ noise level by the rank-one Kostlan--Shub--Smale supremum (Lemmas~\ref{lem:geometric_bound},~\ref{lem:reduce_rank_one}). \emph{(ii)} A four-tier hierarchy of explicit non-asymptotic tail bounds on that supremum: $\delta_{\mathrm{exact}}$ (Theorem~\ref{thm:imf_tail_exact}), the asymptotically sharp $\delta_{\mathrm{IMF}}$ (Theorem~\ref{thm:imf_tail}), the inversion-friendly $\delta_{\mathrm{SMF}}$ (Theorem~\ref{thm:smf_tail}), and the independent $\delta_{\mathrm{SM}}$ (Theorem~\ref{thm:sm_tail}), with the domination established in Theorem~\ref{thm:uniform_domination}. \emph{(iii)} A unified main theorem (Theorem~\ref{thm:main}) with explicit failure probability $\delta_{\min}=2\delta_{\mathrm{IMF}}$, recovering the Perry--Wein--Bandeira rate $\sqrt{d\log k}$ with explicit $(R,\kappa)$-dependence. \emph{(iv)} A non-asymptotic two-sided bracketing of the annealed complexity of the spherical $k$-spin Hamiltonian (Theorem~\ref{thm:complexity}): writing $N^{\mathrm{lm}}_{[E,\infty)}$ for the number of local maxima of $X$ with $X(\bm\theta)\ge E$, for every finite $(k,d)$ and every $E\ge E_{\mathrm{BDG}}=8\sqrt{2(d-1)}/\rho$,
\[
\big(1-C_{\mathrm{amp}}^{d-1}\delta_{\mathrm{BDG}}^{1/2}(\rho E)\big)\,\delta_{\mathrm{exact}}(E)
\;\le\;\E\!\big[N^{\mathrm{lm}}_{[E,\infty)}\big]
\;\le\;\delta_{\mathrm{exact}}(E)\,,
\]
where $\delta_{\mathrm{BDG}}(\rho E)=e^{-2(\rho E)^2/9}$, $C_{\mathrm{amp}}=8\sqrt 2$, and the factor $C_{\mathrm{amp}}^{d-1}\delta_{\mathrm{BDG}}^{1/2}$ vanishes super-exponentially in $d$. This recovers the Auffinger--Ben Arous--\v{C}ern\'y complexity function~\citep[Theorem~2.4]{auffinger2013} as $d\to\infty$ (Corollary~\ref{cor:abc_match}, with the dual very-deep-minima statement in Corollary~\ref{cor:minima_dual}; at reduced energy $e=\rho E/\sqrt{2(d-1)}\ge 1$, the finite-$d$ bracket holding for $e\ge 8$), and connects to the spiked landscape of \citet{arous2019landscape}.

\subsection{Notation}
\label{sub:notation}

A comprehensive table of all mathematical notations, grouped by topic, is provided in Appendix~\ref{app:notations}; we record here only the conventions used most frequently in the body. We denote by $\Tensors$ the space of symmetric tensors of order $k$ and dimension $d$, equipped with the inner product $\langle\cdot,\cdot\rangle_\tensors$ and the Frobenius norm $\|\cdot\|_F$; $\sphereTensors$ is the unit sphere in~$\Tensors$. The unit sphere of $\R^d$ is $\mathbb{S}^{d-1}$, with surface area $|\mathbb{S}^{d-1}|=2\pi^{d/2}/\Gamma(d/2)$.  The subset $\mathfrak{S}_R\subset\sphereTensors$ collects tensors of rank at most~$R$. The standard Gaussian tail is $\bar\Phi(u):=\P(Z>u)$ with $Z\sim\mathcal{N}(0,1)$, and $\varphi(z):=(2\pi)^{-1/2}e^{-z^2/2}$. The integer $n:=d-1$ denotes the size of the GOE matrices arising in the Kac--Rice analysis (dimension of the tangent space to~$\mathbb{S}^{d-1}$); we use $n$ exclusively for the GOE matrix size and $d$ for the ambient tensor dimension.

\section{Expected GOE characteristic polynomial and Kac--Rice tail bounds}
\label{sec:tail_bounds}
\begin{subequations}
This section assembles a hierarchy of four explicit non-asymptotic bounds on the Kac--Rice integral~\eqref{eq:KR_combined_intro}. Throughout, the object of interest is the supremum probability $\P\{\sup_{\bm \theta\in\mathbb S^{d-1}} X(\bm \theta)>u\}$; the passage to $\P\{\Gamma_{1,1}>u\}$ relevant for Theorem~\ref{thm:main} is the symmetry reduction~\eqref{eq:two_sided} of Section~\ref{sub:master_proof}, applied once at the end of the argument.
The Kac--Rice formula \citep[Theorem~6.4]{azais2009level} bounds this supremum probability by an integral against the intensity of \emph{critical points} of $X$ at level $x$, which by isotropy reduces to the Gaussian integral against the expected absolute determinant of the conditional Hessian, namely
\begin{align*}
        \P\Big\{\sup_{\bm \theta\in\mathbb S^{d-1}}X(\bm \theta)>u\Big\}
        &\le\int_u^\infty \bar p(x)\,\mathrm{d}x\,,
        \qquad
        \bar{p}(x) := |\mathbb{S}^{d-1}| \, p_{\nabla X}(0) \, \E\!\big[ |\det \nabla^2 X(\bm{\theta})| \,\big|\, X(\bm{\theta})=x \big] \varphi(x)\,,
        \end{align*}
where $\bar p(x)$ is the intensity of critical points of~$X$ at level~$x$; $p_{\nabla X}(0)=(2\pi k)^{-\frac{d-1}2}$; and for any fixed $\bm \theta\in\mathbb S^{d-1}$,  $\nabla^2 X(\bm \theta)$ denotes the Riemannian Hessian at~$\bm \theta$. The bound 
\[
    \P\Big\{\sup_{\bm \theta\in\mathbb S^{d-1}}X(\bm \theta)>u\Big\}\le\E\Big[\#\big\{\bm\theta\in\mathbb S^{d-1}:\nabla X(\bm\theta)=0,\,X(\bm\theta)>u\big\}\Big]=\int_u^\infty\bar p(x)\,\mathrm{d}x\,,
\] 
holds because, on the event $\{\sup_{\bm\theta}X(\bm\theta)>u\}$, the smooth field $X$ attains its supremum at an interior critical point of the compact manifold $\mathbb S^{d-1}$, so the indicator of this event is dominated by the count of critical points above $u$. The bound is loose only by the (positive) contribution of saddle points above $u$; this slack is asymptotically negligible above the spectral edge and is the cost of a closed-form Kac--Rice integrand.
The conditioning on the level $X(\bm\theta)=x$ alone (rather than jointly on $\{\nabla X(\bm\theta)=0,\,X(\bm\theta)=x\}$, as the Kac--Rice intensity formally requires) is justified by isotropy: at any fixed~$\bm\theta$ the cross-covariances $\E[X(\bm\theta)\,\partial_i X(\bm\theta)]$ and $\E[\partial_i X(\bm\theta)\,\nabla^2 X(\bm\theta)]$ vanish, so the Gaussian gradient $\nabla X(\bm\theta)$ is independent of the pair $(X(\bm\theta),\nabla^2 X(\bm\theta))$ and the event $\{\nabla X(\bm\theta)=0\}$ may be dropped from the conditioning of the determinant.
The conditional Hessian is itself, by \citet[Lemma~3.2(b)]{auffinger2013}, a shifted GOE matrix:
\begin{equation}\label{eq:hessian_law}
    \E\!\big[ |\det \nabla^2 X(\bm{\theta})| \,\big|\, X(\bm{\theta})=x \big]
    \;=\;(2k(k-1))^{\frac{d-1}2}\, \E\!\big[ |\det (G_{d-1} - \rho x \, I_{d-1})| \big]\,,
\end{equation}
and combining these inputs yields the central reduction
\begin{equation}\label{eq:KR_combined}
    \P\Big\{\sup_{\bm \theta\in\mathbb S^{d-1}}X(\bm \theta)>u\Big\}
    \;\le\;
    C_{k,d}\int_u^\infty \E\big[|\det(G_{d-1}-\rho x\,I_{d-1})|\big]\,\varphi(x)\,\mathrm{d}x\,,
\end{equation}
where $\rho$ and $C_{k,d}$ are given in \eqref{eq:rho_ckd_intro}. We develop, in turn, the four explicit closed-form bounds on the right-hand side.
\end{subequations}

The remainder of this section derives the bounds, tightest first, with the sharpened variant $\delta_{\mathrm{IMF}}^\star$ interpolating between $\delta_{\mathrm{exact}}$ and $\delta_{\mathrm{IMF}}$: they are nested as $\delta_{\mathrm{exact}}\le\delta_{\mathrm{IMF}}^\star\le\delta_{\mathrm{IMF}}\le\delta_{\mathrm{SMF}}$ on $[u_{\mathrm{IMF}},\infty)$ (Theorems~\ref{thm:imf_tail_exact},~\ref{thm:imf_tail_sharp},~\ref{thm:uniform_domination}), with the independent $\delta_{\mathrm{SM}}$ dominated on $[u_{\mathrm{SM}},\infty)$ (Remark~\ref{rem:sm_domination}); $\delta_{\mathrm{IMF}}$ is the bound used in Theorem~\ref{thm:main}.

\subsection{Mehta--Fyodorov representation (MF)}
\label{sub:MF}

The Mehta--Fyodorov representation uses two identities: Fyodorov's formula (Lemma~\ref{lfyod}), which writes the expected absolute characteristic polynomial of a $(d-1)$-GOE at level $\nu$ through the one-point eigenvalue density of a one-size-larger $d$-GOE, and Mehta's Hermite expansion of that density (Lemma~\ref{lem:mehta}); together they give an exact closed algebraic form of the Kac--Rice integrand.

\begin{lemma}[Fyodorov's formula~\citep{fyodorov2004complexity}]
\label{lfyod}
For every $\nu\in\R$,
\begin{equation}\label{eq:fyo}
    \E\big[|\det(G_{d-1}-\nu I_{d-1})|\big]
    \;=\;\frac{2^{\frac{3}2}\,\Gamma((d+2)/2)}{d}\;Q_{d}(\nu)\,,
\end{equation}
where $Q_d(\nu):=e^{\nu^2/2}q_d(\nu)$ and $q_d$ is the eigenvalue intensity (one-point correlation function) of a $d\times d$ GOE matrix in the Mehta normalization, so that $\int_\R q_d(\nu)\,\mathrm{d}\nu = d$, the expected number of eigenvalues. The identity expresses the expected absolute characteristic polynomial of the $(d-1)\times(d-1)$ GOE through the one-point function of the \emph{one-size-larger} $d\times d$ GOE; see \citet{fyodorov2004complexity}.
\end{lemma}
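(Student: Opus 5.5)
The plan is to reduce both sides of \eqref{eq:fyo} to integrals against the joint eigenvalue density of the GOE, and to recognise the absolute characteristic polynomial as the one extra Vandermonde factor that turns an $n$-point density into an $(n+1)$-point density. Throughout, write $n=d-1$.

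\textbf{Step 1 (eigenvalue densities).} In the Mehta normalization the density of $G$ is proportional to $\exp(-\mathrm{tr}(G^2)/2)=\exp(-\sum_i\lambda_i^2/2)$. The Weyl change of variables then gives the unordered eigenvalue density of $\mathrm{GOE}(n)$:
\[
p_n(\lambda_1,\dots,\lambda_n)=Z_n^{-1}\prod_{i<j}|\lambda_i-\lambda_j|\,e^{-\sum_i\lambda_i^2/2},\qquad Z_n=\int_{\R^n}\prod_{i<j}|\lambda_i-\lambda_j|\,e^{-\sum_i\lambda_i^2/2}\,\mathrm{d}\lambda .
\]
The one-point function is the marginal times the number of eigenvalues, $q_{n+1}(\nu)=(n+1)\int_{\R^n}p_{n+1}(\nu,\lambda_1,\dots,\lambda_n)\,\mathrm{d}\lambda$, so that $\int q_{n+1}=n+1=d$ as required.

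\textbf{Step 2 (absorbing the determinant).} Since $|\det(G_n-\nu I_n)|=\prod_{i=1}^n|\lambda_i-\nu|$, we have
\[
\E|\det(G_n-\nu I_n)|=Z_n^{-1}\int_{\R^n}\prod_{i}|\lambda_i-\nu|\prod_{i<j}|\lambda_i-\lambda_j|\,e^{-\sum_i\lambda_i^2/2}\,\mathrm{d}\lambda .
\]
The integrand is exactly the $(n+1)$-point Vandermonde weight with $\lambda_{n+1}=\nu$, multiplied by $e^{\nu^2/2}$ to compensate for the missing Gaussian factor of the extra eigenvalue. Hence
\[
\E|\det(G_n-\nu I_n)|=\frac{Z_{n+1}}{Z_n}\,e^{\nu^2/2}\,\frac{q_{n+1}(\nu)}{n+1}=\frac{Z_{n+1}}{d\,Z_n}\,Q_d(\nu).
\]

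\textbf{Step 3 (the constant).} It remains to show $Z_{n+1}/Z_n=2^{3/2}\Gamma((d+2)/2)$. I will use Mehta's evaluation of the Selberg-type integral with $\beta=1$:
\[
Z_n=(2\pi)^{n/2}\prod_{j=1}^n\frac{\Gamma(1+j/2)}{\Gamma(3/2)} .
\]
This gives
\[
\frac{Z_{n+1}}{Z_n}=\sqrt{2\pi}\,\frac{\Gamma(1+(n+1)/2)}{\Gamma(3/2)}=\sqrt{2\pi}\,\frac{2}{\sqrt\pi}\,\Gamma\!\Big(\frac{d+2}{2}\Big)=2^{3/2}\,\Gamma\!\Big(\frac{d+2}{2}\Big),
\]
which is the constant in \eqref{eq:fyo}.

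\textbf{Main obstacle.} The bookkeeping of normalizations is the only delicate point. One must check that the Mehta convention ($\var G_{ii}=1$, $\var G_{ij}=1/2$) indeed gives the weight $e^{-\lambda^2/2}$ with no rescaling, and that the Weyl Jacobian constants, which depend on $n$, cancel in the ratio. They do cancel, because I define $Z_n$ directly as the eigenvalue-space normalizer, so the Jacobian constants never enter. I would cross-check the constant at $d=1$ as a sanity test. There $\E|\det(G_0-\nu I_0)|=1$ (empty determinant), and $q_1(\nu)=\varphi(\nu)$, so the right-hand side of \eqref{eq:fyo} is $2^{3/2}\Gamma(3/2)(2\pi)^{-1/2}=1$, which confirms the formula.
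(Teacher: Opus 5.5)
Your proof is correct. Reading $\prod_i|\lambda_i-\nu|$ as the missing Vandermonde factor of the $d$-point GOE eigenvalue density, compensated by $e^{\nu^2/2}$, gives $\E|\det(G_{d-1}-\nu I_{d-1})|=\frac{Z_d}{d\,Z_{d-1}}\,Q_d(\nu)$. Mehta's integral then yields $Z_d/Z_{d-1}=\sqrt{2\pi}\,\Gamma(1+d/2)/\Gamma(3/2)=2^{3/2}\Gamma((d+2)/2)$. Your normalization of $q_d$ (weight $e^{-\lambda^2/2}$ per eigenvalue, total mass $d$) is the same one that underlies the Mehta expansion \eqref{eq:fmehta}.

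The paper gives no proof of its own and simply cites \citet{fyodorov2004complexity}. Your argument is the standard derivation behind that citation. It also uses the same two ingredients the paper deploys in the proof of Lemma~\ref{lem:bdg}: isolating one eigenvalue in the joint density, and taking the partition-function ratio from \eqref{eq:mehta_integral}.
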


\begin{lemma}[Mehta expansion~{\citep[p.~221]{mehta}}]
\label{lem:mehta}
The GOE one-point function admits the following Hermite-function expansion. With $c_j:=(2^j j!\sqrt{\pi})^{-1/2}$ and $H_j$ the $j$-th physicist Hermite polynomial,
\begin{align}
    Q_d(\nu) &\;=\;
    \frac{1}{2}\sqrt{\frac d2}\,c_{d-1}c_d\,H_{d-1}(\nu)\big[\mu_d-2\mathcal{I}_d^c(\nu)\big]
    \;+\;\1_{\{d\text{ odd}\}}\,\frac{H_{d-1}(\nu)}{\mu_{d-1}}
    +e^{-\nu^2/2}\sum_{j=0}^{d-1} c_j^2\,H_j^2(\nu)\,, \label{eq:fmehta}
\end{align}
where
\[
    \mu_m:=\int_\R H_m(y)\,e^{-y^2/2}\,\mathrm{d}y\,,\qquad
    \mathcal{I}_d^c(\nu):=\int_\nu^\infty H_d(y)\,e^{-y^2/2}\,\mathrm{d}y\,.
\]
By parity, $\mu_m=0$ whenever $m$ is odd; in particular, for $d$ odd only the $H_{d-1}/\mu_{d-1}$ term contributes, whereas for $d$ even only the $\mu_d$ bracket does.
\end{lemma}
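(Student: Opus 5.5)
The plan is to establish the Mehta expansion by starting from the standard finite-$N$ GOE one-point function written with the Christoffel--Darboux kernel plus the orthogonal-ensemble correction. Then multiply by $e^{\nu^2/2}$ and identify each term.

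\emph{Step 1 (skew-orthogonal kernel).} The GOE density $\propto e^{-\mathrm{tr}(G^2)/2}$ has eigenvalue weight $e^{-x^2/2}$ per eigenvalue. Following \citet[Ch.~7]{mehta}, write the one-point function as
\[
q_d(\nu)=K_d(\nu,\nu)+\tfrac12\sqrt{\tfrac d2}\,c_{d-1}c_d\,\varphi_{d-1}(\nu)\int_\R\operatorname{sgn}(\nu-y)\,\varphi_d(y)\,\mathrm{d}y\;+\;\1_{\{d\text{ odd}\}}\,\frac{\varphi_{d-1}(\nu)}{\int\varphi_{d-1}}\,,
\]
where $\varphi_j(x)=H_j(x)e^{-x^2/2}$. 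Here $K_d(\nu,\nu)=e^{-\nu^2}\sum_{j<d}c_j^2H_j^2(\nu)$ is the GUE-type kernel with weight $e^{-x^2}$. The coefficient $\tfrac12\sqrt{d/2}\,c_{d-1}c_d$ comes from the raising relation $\varphi_d'$ and the normalisation. I would take this formula from Mehta rather than rederive the Pfaffian structure. I would only check the constants by verifying $\int q_d=d$.

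\emph{Step 2 (rewrite the sign integral).} Split the integral as
\[
\int_\R\operatorname{sgn}(\nu-y)\varphi_d\,\mathrm{d}y=\int_{-\infty}^{\nu}\varphi_d-\int_\nu^\infty\varphi_d=\mu_d-2\mathcal I_d^c(\nu).
\]
This uses $\int_{-\infty}^\nu=\mu_d-\mathcal I_d^c(\nu)$. Multiplying by $e^{\nu^2/2}$ turns $\varphi_{d-1}(\nu)$ into $H_{d-1}(\nu)$. It also turns $K_d(\nu,\nu)$ into $e^{-\nu^2/2}\sum c_j^2H_j^2(\nu)$, and the odd correction into $H_{d-1}(\nu)/\mu_{d-1}$. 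This gives exactly \eqref{eq:fmehta}.

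\emph{Step 3 (parity and closed forms).} The parity statement is immediate because $H_m$ is odd for odd $m$. For even $m=2p$, the value $\mu_{2p}=\sqrt{2\pi}(2p)!/p!$ follows from the generating function $\sum_m H_m t^m/m!=e^{2xt-t^2}$. Integrating it against $e^{-x^2/2}$ gives $\sqrt{2\pi}\,e^{t^2}$, and comparing coefficients of $t^{2p}$ yields the claim.

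\emph{Main obstacle.} The main obstacle is Step 1: matching the weight and normalisation conventions. Mehta's kernel is built from oscillator functions orthonormal for $e^{-x^2}$, while the GOE here carries $e^{-x^2/2}$. The half-weight factors $e^{-\nu^2/2}$ must combine correctly, and the constant in front of the $\varepsilon$-term must come out as $\tfrac12\sqrt{d/2}\,c_{d-1}c_d$. I would pin this down by two checks. First, the total mass $\int q_d=d$ using $\int K_d=d$. Second, the case $d=1$ (after adjusting conventions) or $d=2$, where $q_2$ is computable directly.
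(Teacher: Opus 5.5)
Your proposal is correct and follows the paper's route. The paper gives no argument beyond the citation to Mehta, and your proof is that citation plus the bookkeeping $\int_\R\operatorname{sgn}(\nu-y)H_d(y)e^{-y^2/2}\,\mathrm{d}y=\mu_d-2\mathcal{I}_d^c(\nu)$ and multiplication by $e^{\nu^2/2}$, all of which checks out, including your generating-function derivation of $\mu_{2p}$.

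One caution on your consistency checks. For even $d$ the whole $\varepsilon$-term integrates to zero: expand $\int_y^\infty H_{d-1}(x)e^{-x^2/2}\,\mathrm{d}x$ by Lemma~\ref{lem:hermite_tail_explicit} and use Hermite orthogonality. So $\int q_d=d$ cannot detect the prefactor $\tfrac12\sqrt{d/2}\,c_{d-1}c_d$ there. What actually tests it is your explicit $d=2$ computation, or, for general even $d$, matching $\E|\det(G_{d-1}-\nu I_{d-1})|\sim\nu^{d-1}$ through Lemma~\ref{lfyod}.
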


The combination of Lemmas~\ref{lfyod} and~\ref{lem:mehta} produces an exact integral representation of the Kac--Rice probability bound. Fyodorov's formula~\eqref{eq:fyo} gives the expected absolute characteristic polynomial as a constant multiple of $Q_d(\rho x)$, and Mehta's expansion~\eqref{eq:fmehta} supplies an explicit formula for $Q_d$. Substituting into the Kac--Rice reduction~\eqref{eq:KR_combined} therefore yields an integrand of the form $Q_d(\rho x)\,e^{-x^2/2}$: a polynomial-and-Hermite quantity weighted by the original Kac--Rice Gaussian, with no further exponential factor entering the integrand. Fyodorov's identity is already stated in the natural variable $Q_d$, so the eigenvalue intensity $q_d=e^{-\nu^2/2}Q_d$ never appears in the substitution: the Gaussian envelope that would otherwise be carried by $q_d$ is absorbed into the definition of $Q_d$. This algebraic identity, unavailable to the spectral-splitting route, is why the IMF and SMF bounds of Sections~\ref{sub:IMF} and~\ref{sub:SMF} close the residual factor-$2$ gap of Theorem~\ref{thm:sm_tail}. We state this in the following proposition.

\begin{proposition}[Exact MF representation]
\label{prop:ortho_exact}
For every $u\in\R$,
\begin{equation}\label{eq:ortho_exact}
    \P\Big\{\sup_{\bm \theta\in\mathbb S^{d-1}}X(\bm \theta)>u\Big\}
    \;\le\;
    2(k-1)^{\frac{d-1}2}\int_u^\infty Q_{d}(\rho x)\,e^{-x^2/2}\,\mathrm{d}x\,.
\end{equation}
\end{proposition}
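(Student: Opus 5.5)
The plan is to substitute Fyodorov's formula (Lemma~\ref{lfyod}) into the Kac--Rice reduction~\eqref{eq:KR_combined} and then simplify the constants. No new probabilistic input is needed. The inequality itself is exactly~\eqref{eq:KR_combined}, and the only content of the proposition is that its right-hand side equals the stated closed form.

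\emph{Step 1 (substitution).} For every $x\in\R$, I will apply~\eqref{eq:fyo} at $\nu=\rho x$:
\[
\E\big[|\det(G_{d-1}-\rho x\,I_{d-1})|\big]=\frac{2^{3/2}\,\Gamma((d+2)/2)}{d}\,Q_d(\rho x).
\]
This identity holds pointwise for every real $\nu$, so it can be inserted under the integral in~\eqref{eq:KR_combined}. Both sides are nonnegative and measurable in $x$. Writing $\varphi(x)=(2\pi)^{-1/2}e^{-x^2/2}$, the right-hand side of~\eqref{eq:KR_combined} becomes
\[
C_{k,d}\,\frac{2^{3/2}\Gamma((d+2)/2)}{d\sqrt{2\pi}}\int_u^\infty Q_d(\rho x)\,e^{-x^2/2}\,\mathrm dx .
\]

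\emph{Step 2 (constant bookkeeping).} I will use $\Gamma((d+2)/2)=\tfrac d2\,\Gamma(d/2)$, which gives $2^{3/2}\Gamma((d+2)/2)/d=\sqrt2\,\Gamma(d/2)$. Multiplying by $C_{k,d}=\frac{2\sqrt\pi}{\Gamma(d/2)}(k-1)^{\frac{d-1}2}$ and by $(2\pi)^{-1/2}$, the factors $\Gamma(d/2)$ cancel. The remaining numerical factors combine as $2\sqrt\pi\cdot\sqrt2/\sqrt{2\pi}=2$. The prefactor is therefore exactly $2(k-1)^{\frac{d-1}2}$, as claimed in~\eqref{eq:ortho_exact}.

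\emph{Main obstacle.} The algebra is routine, so the only delicate points are well-definedness and normalization. First, the integral $\int_u^\infty Q_d(\rho x)e^{-x^2/2}\,\mathrm dx$ must be finite. I would argue this from Lemma~\ref{lem:mehta}: $Q_d$ grows at most polynomially times $e^{\nu^2/2}e^{-\nu^2/2}$, with the squared-Hermite term carrying its own Gaussian factor. Since $\rho<1$, the weight $e^{-x^2/2}$ dominates. Equivalently, the expected absolute determinant is a polynomially bounded function of $x$. Second, I need to check that the GOE normalization in Fyodorov's formula (Mehta, density $\propto e^{-\mathrm{tr}G^2/2}$) matches the one used in the conditional Hessian law~\eqref{eq:hessian_law}. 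This holds by the convention fixed in Section~\ref{sub:reduction}, so no rescaling of $\nu$ enters.

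The identity then holds for every $u\in\R$, with no threshold, because neither Fyodorov's formula nor~\eqref{eq:KR_combined} requires one.
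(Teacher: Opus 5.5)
Your proposal is correct and follows the paper's proof essentially verbatim: you substitute Fyodorov's formula at $\nu=\rho x$ into the Kac--Rice bound~\eqref{eq:KR_combined}, then collapse the constants via $\Gamma((d+2)/2)=\tfrac d2\Gamma(d/2)$ to get exactly $2(k-1)^{\frac{d-1}2}$. The extra finiteness check is harmless but unnecessary: the integrands agree pointwise and are nonnegative, so the two integrals coincide even if infinite, and the appeal to $\rho<1$ plays no role since $Q_d$ is already polynomially bounded.
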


\begin{proof}[Proof of Proposition~\ref{prop:ortho_exact}]

\noindent
$\bullet$
Starting from the Kac--Rice reduction in~\eqref{eq:KR_combined}, Lemma~\ref{lfyod} evaluated at the shifted argument $\nu=\rho x$ gives an exact expression for the expected absolute determinant directly in terms of $Q_d$:
\begin{equation}\label{eq:fyo_substitute_app}
    \E\big[|\det(G_{d-1}-\rho x\,I_{d-1})|\big]
    \;=\;\frac{2^{\frac{3}2}\Gamma((d+2)/2)}{d}\,Q_d(\rho x)\,,
\end{equation}
where $Q_d(\nu)=e^{\nu^2/2}q_d(\nu)$ is the unweighted polynomial part of the eigenvalue intensity (one-point correlation function) of a $d\times d$ GOE matrix in the Mehta normalization. No exponential factor in $\rho x$ is introduced at this step: the Gaussian envelope that would otherwise be carried by $q_d$ is precisely what the definition of $Q_d$ absorbs.

\noindent
$\bullet$ 
We multiply this expression by the standard Gaussian density $\varphi(x)=(2\pi)^{-1/2}e^{-x^2/2}$ and integrate over the interval $[u,\infty)$ to match the Kac--Rice integral:
\[
    \int_u^\infty\E\big[|\det(G_{d-1}-\rho x\,I_{d-1})|\big]\,\varphi(x)\,\mathrm{d}x
    \;=\;\frac{2^{\frac{3}2}\Gamma((d+2)/2)}{d\sqrt{2\pi}}\int_u^\infty Q_d(\rho x)\,e^{-x^2/2}\,\mathrm{d}x\,.
\]
We can simplify the constant prefactor using the Gamma function identity $\Gamma((d+2)/2)=(d/2)\Gamma(d/2)$:
\[
    \frac{2^{\frac{3}2}\Gamma((d+2)/2)}{d\sqrt{2\pi}} 
    \;=\; \frac{2\sqrt{2} \cdot (d/2)\Gamma(d/2)}{d \sqrt{2} \sqrt{\pi}} 
    \;=\; \frac{\Gamma(d/2)}{\sqrt{\pi}}\,.
\]
Finally, substituting this result back into the original Kac--Rice bound~\eqref{eq:KR_combined} alongside the geometric volume prefactor $C_{k,d}=2\sqrt\pi(k-1)^{\frac{d-1}2}/\Gamma(d/2)$, the $\Gamma(d/2)$ and $\sqrt{\pi}$ terms cancel:
\[
    C_{k,d} \frac{\Gamma(d/2)}{\sqrt{\pi}} \;=\; \left( \frac{2\sqrt\pi(k-1)^{\frac{d-1}2}}{\Gamma(d/2)} \right) \frac{\Gamma(d/2)}{\sqrt{\pi}} \;=\; 2(k-1)^{\frac{d-1}2}\,.
\]
This yields the exact representation~\eqref{eq:ortho_exact}.
\end{proof}

\noindent
The identity~\eqref{eq:ortho_exact} is an \emph{exact representation}, not yet a bound: it expresses the Kac--Rice probability as an explicit integral against the Mehta function $Q_d(\rho x)$, which itself comprises a linear Hermite term $H_{d-1}(\rho x)$, a squared Hermite sum $\sum_j c_j^2\,H_j^2(\rho x)$, and a signed tail integral $\mathcal{I}_d^c(\rho x)$. The IMF (Section~\ref{sub:IMF}) and SMF (Section~\ref{sub:SMF}) bounds both take this identity as their starting point; they differ only in how aggressively they coarsen these three components. The IMF bound preserves the full partial Hermite sum structure and is asymptotically sharp; the SMF bound trades a small constant loss for an explicit, closed-form polynomial--exponential envelope.
\subsection{Strictly exact Mehta--Fyodorov tail bound}
\label{sub:exact_mf}

Substituting the full Mehta expansion~\eqref{eq:fmehta} into Proposition~\ref{prop:ortho_exact} produces the four-piece decomposition
\begin{equation}\label{eq:Q_d_decomp}
    \int_u^\infty Q_d(\rho x)\,e^{-x^2/2}\,\mathrm{d}x \;=\; D_1(u)\;+\;D_2(u)\;+\;D_3(u)\;+\;D_4(u)\,,
\end{equation}
where, with $\nu=\rho x$,
\begin{align*}
    D_1(u) &\;:=\;\tfrac12\sqrt{d/2}\,c_{d-1}c_d\,\mu_d\,\mathcal{L}_d(u)\,,\quad &\text{(zero for $d$ odd, since $\mu_d=0$),}\\
    D_2(u) &\;:=\;-\sqrt{d/2}\,c_{d-1}c_d\,\mathcal{C}_d(u)\,,\quad &\text{(cross integral $\mathcal{C}_d$ defined in~\eqref{eq:Cd_def}),}\\
    D_3(u) &\;:=\;\1_{\{d\,\mathrm{odd}\}}\,\mu_{d-1}^{-1}\,\mathcal{L}_d(u)\,,\quad &\text{(zero for $d$ even),}\\
    D_4(u) &\;:=\;T_d^{\mathrm{exact}}(u)\quad &\text{(squared-Hermite tail of \eqref{eq:Td_exact_intro}),}
\end{align*}
with $\mathcal{L}_d(u):=\int_u^\infty H_{d-1}(\rho x)\,e^{-x^2/2}\,\mathrm{d}x$ and the cross integral
\begin{equation}\label{eq:Cd_def}
    \mathcal{C}_d(u)\;:=\;\int_u^\infty H_{d-1}(\rho x)\,\mathcal{I}_d^c(\rho x)\,e^{-x^2/2}\,\mathrm{d}x\,.
\end{equation}

\begin{theorem}[Strictly exact MF tail bound]
\label{thm:imf_tail_exact}
\begin{subequations}
For every $u\in\R$,
\begin{equation}\label{eq:exact_bound}
    \P\Big\{\sup_{\bm \theta\in\mathbb S^{d-1}}X(\bm \theta)>u\Big\}
    \;\le\;
    \delta_{\mathrm{exact}}(u)
    \;:=\;
    2(k-1)^{\frac{d-1}2}\bigl[D_1(u)+D_2(u)+D_3(u)+D_4(u)\bigr]\,,
\end{equation}
where each $D_i$ is given in fully closed form by:
\begin{enumerate}
    \item[$(D_1,D_3)$] the linear Hermite tail $\mathcal{L}_d(u)$ admits the closed form
        \begin{equation}\label{eq:Ld_closed}
            \mathcal{L}_d(u)\;=\;\frac1\rho\,J_{d-1}^{1/\rho^2}(\rho u)\,,
        \end{equation}
        with $J_m^\beta$ given by Lemma~\ref{lem:Jm_beta};
    \item[$(D_4)$] the squared Hermite tail $T_d^{\mathrm{exact}}(u)=\rho^{-1}\sum_{j=0}^{d-1}c_j^2\,K_j^\beta(\rho u)$ of~\eqref{eq:Td_exact_intro}, with $\beta=(3k-2)/k$ and $K_j^\beta$ given by Corollary~\ref{cor:Kj_beta};
    \item[$(D_2)$] the cross integral $\mathcal{C}_d(u)$ admits the two-piece closed form
        \begin{align}
            \mathcal{C}_d(u) &\;=\;\frac1\rho\sum_{\ell=0}^{\lfloor (d-1)/2\rfloor}\frac{2^{\ell+1}\,(d-1)!!}{(d-2\ell-1)!!}\sum_{p=0}^{d-2\ell-1}2^p\,p!\,\binom{d-1}{p}\binom{d-2\ell-1}{p}\,J_{2d-2\ell-2-2p}^\beta(\rho u)\notag\\
            &\quad\;+\;\1_{\{d\,\mathrm{even}\}}\,2^{d/2}\,(d-1)!!\,\mathcal{F}_d(u)\,,\label{eq:Cd_closed}
        \end{align}
    where the Fubini remainder $\mathcal{F}_d(u)$ (only present for $d$ even) reads
        \begin{equation}\label{eq:Fubini_F}
            \mathcal{F}_d(u)\;=\;\sqrt{2\pi}\,\bar\Phi(\rho u)\,\frac{J_{d-1}^{1/\rho^2}(\rho u)}{\rho}
            \;-\;2\rho\sum_{\ell=0}^{(d-2)/2}(2\Lambda)^\ell\,\frac{(d-2)!!}{(d-2\ell-2)!!}\,J_{d-2\ell-2}^\beta(\rho u)\,.
        \end{equation}
\end{enumerate}
Moreover,
\[
    \delta_{\mathrm{exact}}(u)\;\le\;\delta_{\mathrm{IMF}}^\star(u)\;\le\;\delta_{\mathrm{IMF}}(u)
    \qquad\text{for every $u\ge u_{\mathrm{IMF}}$,}
\]
with strict inequality at every finite $u$.
\end{subequations}
\end{theorem}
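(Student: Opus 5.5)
The bound~\eqref{eq:exact_bound} will follow from Proposition~\ref{prop:ortho_exact}: substitute Mehta's expansion~\eqref{eq:fmehta} at $\nu=\rho x$ into the integrand $Q_d(\rho x)e^{-x^2/2}$ and integrate term by term. This yields the decomposition~\eqref{eq:Q_d_decomp}. The $\mu_d$ bracket splits into $D_1$ (the $\mu_d\,H_{d-1}$ part) and $D_2$ (the $-2\mathcal{I}_d^c\,H_{d-1}$ part). The odd-$d$ term gives $D_3$, and the squared Hermite sum, whose weight is $e^{-\rho^2x^2/2}e^{-x^2/2}$, gives $D_4$. Each piece is a finite combination of integrals of polynomials against Gaussians, and every one converges absolutely because $|\mathcal{I}_d^c|$ is bounded. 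So the only real work is the closed forms.

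For $\mathcal{L}_d$, the change of variables $y=\rho x$ gives
\[
\mathcal{L}_d(u)=\rho^{-1}\int_{\rho u}^\infty H_{d-1}(y)\,e^{-y^2/(2\rho^2)}\,\mathrm{d}y,
\]
which is $\rho^{-1}J_{d-1}^{1/\rho^2}(\rho u)$ by the definition in Lemma~\ref{lem:Jm_beta}. For $D_4$, the same substitution turns the weight into $e^{-y^2(1+1/\rho^2)/2}$. Since $1+1/\rho^2=1+2(k-1)/k=(3k-2)/k=\beta$, Corollary~\ref{cor:Kj_beta} applies directly to give $T_d^{\mathrm{exact}}$.

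The cross integral $\mathcal{C}_d$ is the main obstacle. My approach is to expand $\mathcal{I}_d^c$ explicitly by repeated integration by parts, using $H_d(y)e^{-y^2/2}$ and the relation $H_d'=2dH_{d-1}$. This writes $\mathcal{I}_d^c(\nu)$ as $e^{-\nu^2/2}$ times a polynomial, a sum over $\ell$ with coefficients $2^{\ell+1}(d-1)!!/(d-2\ell-1)!!$ in lower Hermite or monomial terms. When $d$ is even, the recursion bottoms out at $\int_\nu^\infty e^{-y^2/2}\,\mathrm{d}y=\sqrt{2\pi}\,\bar\Phi(\nu)$ rather than at a closed polynomial.

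Multiplying the polynomial part by $H_{d-1}(\nu)$ and linearising with the Hermite product formula $H_mH_n=\sum_p 2^p p!\binom mp\binom np H_{m+n-2p}$ produces the double sum in~\eqref{eq:Cd_closed}. The extra $e^{-\nu^2/2}$ combines with $e^{-x^2/2}$ into the $\beta$-weight, hence the terms $J^\beta$. The $\bar\Phi$ term remaining for $d$ even is handled by Fubini: swap the order in $\int_u^\infty H_{d-1}(\rho x)\,\bar\Phi(\rho x)\,e^{-x^2/2}\,\mathrm{d}x$, or integrate by parts once using $\mathcal{L}_d$ as the antiderivative. The boundary term is $\sqrt{2\pi}\,\bar\Phi(\rho u)\,\mathcal{L}_d(u)$, and the remaining integral involves $\mathcal{L}_d(x)\,e^{-\rho^2x^2/2}$. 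Expanding $J_{d-1}^{1/\rho^2}$ via Lemma~\ref{lem:Jm_beta}, with ratio $\Lambda=2\rho^2-1$ appearing, gives the sum in~\eqref{eq:Fubini_F}. Bookkeeping of the double-factorial coefficients is the error-prone step, and I would check it on $d=3,4$ symbolically.

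For the nesting, $\delta_{\mathrm{IMF}}$ is obtained by discarding the term $-2\mathcal{I}_d^c(\rho x)H_{d-1}(\rho x)$, that is, $D_2$. Nesting therefore reduces to $\mathcal{C}_d(u)>0$ for $u\ge u_{\mathrm{IMF}}$, plus the analogous intermediate comparison for $\delta^\star_{\mathrm{IMF}}$ from Theorem~\ref{thm:imf_tail_sharp}. Positivity of $\mathcal{C}_d$ on this range comes from Lemma~\ref{lem:Idc_positivity}. For $\rho x\ge\sqrt{2d-1}$, which is beyond the largest roots of $H_{d-1}$ and $H_d$ by Szeg\H{o}, both $H_{d-1}(\rho x)>0$ and $\mathcal{I}_d^c(\rho x)>0$. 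The integrand is then strictly positive, which also gives strictness at every finite $u$.
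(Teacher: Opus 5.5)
Your proposal is correct and follows the paper's proof essentially step for step. It inserts Mehta's expansion into Proposition~\ref{prop:ortho_exact}, uses the substitution $y=\rho x$ with Lemma~\ref{lem:Jm_beta} and Corollary~\ref{cor:Kj_beta} for $D_1,D_3,D_4$, and handles $\mathcal C_d$ via the tail series of Lemma~\ref{lem:hermite_tail_explicit} plus Hermite linearization, with a Fubini swap (equivalent to your integration by parts) for the $d$-even remainder $\mathcal F_d$; the nesting then comes from the discarding argument of Lemma~\ref{lem:Idc_positivity}.

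Two small precisions. First, dropping $D_2$ leaves $2(k-1)^{\frac{d-1}2}\bigl[\alpha_d\mathcal L_d(u)+T_d^{\mathrm{exact}}(u)\bigr]$, so reaching $\delta_{\mathrm{IMF}}^\star$ also needs $\mathcal L_d(u)\le\Phi_d(\rho,u)\,e^{-u^2/2}$ from Proposition~\ref{prop:IMF_decomp}; the paper leaves this step implicit too. Second, Szeg\H{o}'s bound $x_1^{(m)}<\sqrt{2m+1}$ certifies that $\rho x\ge\sqrt{2d-1}$ lies beyond the largest root of $H_{d-1}$ but not of $H_d$, so positivity of $\mathcal I_d^c(\rho x)$ should be taken from Lemma~\ref{lem:Idc_positivity}, as you do, rather than from positivity of $H_d$.
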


\noindent
$\delta_{\mathrm{exact}}$ has two roles. The quantity $2(k-1)^{(d-1)/2}\sum_{i=1}^4 D_i(u)$ is the closed-form \emph{evaluation} of the Kac--Rice integral $2(k-1)^{(d-1)/2}\int_u^\infty Q_d(\rho x)e^{-x^2/2}\,\mathrm{d}x$ of Proposition~\ref{prop:ortho_exact}; by the same Kac--Rice identity it equals, \emph{exactly}, the expected number of critical points of $X$ above level $u$ (cf.\ Remark~\ref{rem:kr_representation} and Section~\ref{sec:refinements}). The inequality~\eqref{eq:exact_bound} for $\P\{\sup X>u\}$ is then the first-moment (Markov) step
\[
    \P\{\sup X>u\}\;\le\;\E\big[\#\{\text{critical points above }u\}\big]\,.
\]
Thus $\delta_{\mathrm{exact}}$ is an equality for the critical-point count and an upper bound for the supremum probability; both readings are used in Sections~\ref{sec:statistical} and~\ref{sec:refinements}.

\begin{proof}
Deferred to Appendix~\ref{app:deferred}.
\end{proof}
\noindent
The identity $\delta_{\mathrm{exact}}(u)=2(k-1)^{(d-1)/2}\sum_{i=1}^4 D_i(u)$ matches high-precision quadrature of the integral~\eqref{eq:ortho_exact} to relative error $10^{-13}$--$10^{-8}$ on the grid $(k,d)\in\{(3,3),(3,5),(4,6),(5,7),(3,10)\}$; the full symbolic and numerical certification is in the companion repository \verifrepo.

\subsection{Improved Mehta--Fyodorov bound (IMF)}
\label{sub:IMF}

Concretely, inspecting the Mehta expansion~\eqref{eq:fmehta} one sees that the coefficient of the Hermite polynomial $H_{d-1}(\nu)$ is the bracket $[\mu_d-2\mathcal{I}_d^c(\nu)]$, where $\mathcal{I}_d^c(\nu):=\int_\nu^\infty H_d(y)e^{-y^2/2}\,\mathrm{d}y$ is a Hermite tail integral. If a regime can be identified in which $\mathcal{I}_d^c(\nu)$ is \emph{strictly positive}, the subtractive contribution $-2\mathcal{I}_d^c(\nu)H_{d-1}(\nu)$ to $Q_d$ is strictly negative (since $H_{d-1}(\nu)>0$ in the same regime), and discarding it yields a strict upper bound on the Kac--Rice integral.

Pinpointing this regime reduces to locating the largest root of $H_{d-1}$: beyond it, $H_{d-1}(\nu)$ is positive, and by a root-interlacing argument $\mathcal{I}_d^c(\nu)$ is too. Hence the threshold is set by the largest root of $H_{d-1}$. Szeg\H{o}'s classical estimate~\citep[Theorem~6.32]{szego} locates this largest root strictly below $\sqrt{2d-1}$, so that the relevant regime is $\nu\ge\sqrt{2d-1}$, which translates into the threshold $\rho u\ge\sqrt{2d-1}$ in the Kac--Rice integral. This threshold is smaller than the Spectral Method threshold $32\sqrt{d-1}/\rho$ and incurs no corrective factor $1+\eta_d(\rho,u)$.

The remainder of this subsection assembles the three ingredients required to turn the discarding argument into an explicit bound: a positivity-and-envelope statement for the Hermite tail integral $\mathcal{I}_d^c$ (Lemma~\ref{lem:Idc_positivity}); a recurrence-based evaluation of the resulting linear and squared Hermite tail integrals; and the explicit polynomial-rational functions $\Phi_d(\rho,u)$ and $\Psi_d(\rho,u)$ that encode, respectively, the linear Hermite tail and the squared Hermite tail.

\begin{lemma}[Positivity and upper bound of the Hermite tail integral]
\label{lem:Idc_positivity}
Let $d\ge 3$ and let $x_1^{(m)}$ denote the largest root of the (physicist) Hermite polynomial $H_m$. For every $\nu\ge x_1^{(d-1)}$, the tail integral $\mathcal{I}_d^c(\nu)$ is strictly positive. Moreover there exists a dimension-dependent constant $C_d>0$ such that, for all such $\nu$,
\[
    0 \;<\; \mathcal{I}_d^c(\nu) \;\le\; C_d\,\nu^{d-1}\,e^{-\nu^2/2}\,.
\]
The assumption $d\ge 3$ ensures $x_1^{(d-1)}\ge x_1^{(2)}=1/\sqrt 2>0$, which is used implicitly in the polynomial-envelope step of the proof.
\end{lemma}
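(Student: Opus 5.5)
The plan is to find an integration-by-parts identity that writes $\mathcal{I}_d^c(\nu)$ as a sum of two manifestly nonnegative terms on the range $\nu\ge x_1^{(d-1)}$. A naive monotonicity argument does not suffice. Writing $f(\nu)=\mathcal{I}_d^c(\nu)$, we have $f'(\nu)=-H_d(\nu)e^{-\nu^2/2}$. By root interlacing, $H_d<0$ on $(x_1^{(d-1)},x_1^{(d)})$, so $f$ increases there. This only gives positivity beyond $x_1^{(d)}$ and leaves the left end of the range undecided. The difficulty is that the weight is $e^{-y^2/2}$ rather than the Hermite weight $e^{-y^2}$, so $H_de^{-y^2/2}$ is not an exact derivative. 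Handling this mismatch is the main obstacle, and I would resolve it with a recurrence identity.

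\emph{Step 1 (key identity).} Differentiating and using $H_{d-1}'=2(d-1)H_{d-2}$ gives
\[
\frac{\mathrm d}{\mathrm dy}\big[H_{d-1}(y)e^{-y^2/2}\big]=\big(2(d-1)H_{d-2}(y)-yH_{d-1}(y)\big)e^{-y^2/2}.
\]
The three-term recurrence $H_d=2yH_{d-1}-2(d-1)H_{d-2}$ then yields
\[
H_d(y)e^{-y^2/2}=yH_{d-1}(y)e^{-y^2/2}-\frac{\mathrm d}{\mathrm dy}\big[H_{d-1}(y)e^{-y^2/2}\big].
\]
Integrating over $[\nu,\infty)$, the boundary term at infinity vanishes because of the Gaussian factor, and we obtain
\[
\mathcal{I}_d^c(\nu)=H_{d-1}(\nu)e^{-\nu^2/2}+\int_\nu^\infty yH_{d-1}(y)e^{-y^2/2}\,\mathrm dy .
\]

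\emph{Step 2 (positivity).} Take $\nu\ge x_1^{(d-1)}$. Since $d\ge3$, we have $x_1^{(d-1)}\ge x_1^{(2)}=1/\sqrt2>0$. On $(x_1^{(d-1)},\infty)$ both $y$ and $H_{d-1}(y)$ are strictly positive, the latter because the leading coefficient is positive and no roots lie beyond $x_1^{(d-1)}$. Hence the integral term is strictly positive. The boundary term is nonnegative, vanishing only at $\nu=x_1^{(d-1)}$. Therefore $\mathcal{I}_d^c(\nu)>0$.

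\emph{Step 3 (envelope).} Expand $yH_{d-1}(y)=\sum_{j\le d}a_jy^j$ and $H_{d-1}(\nu)=\sum_{j\le d-1}b_j\nu^j$, with explicit coefficients. For each monomial I would use the recursion $\int_\nu^\infty y^je^{-y^2/2}\,\mathrm dy=\nu^{j-1}e^{-\nu^2/2}+(j-1)\int_\nu^\infty y^{j-2}e^{-y^2/2}\,\mathrm dy$. Iterating down to $j\in\{0,1\}$ bounds this integral by a polynomial of degree $j-1$ times $e^{-\nu^2/2}$. The base case $j=0$ uses the Mills ratio $\int_\nu^\infty e^{-y^2/2}\,\mathrm dy\le \nu^{-1}e^{-\nu^2/2}$, which is valid since $\nu\ge1/\sqrt2$. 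Because $\nu\ge1/\sqrt2$, every lower power satisfies $\nu^{i}\le 2^{(d-1-i)/2}\nu^{d-1}$ for $-1\le i\le d-1$. Collecting the absolute values of all coefficients into one constant $C_d$ gives $0<\mathcal{I}_d^c(\nu)\le C_d\,\nu^{d-1}e^{-\nu^2/2}$. This is the only place where the lower bound $x_1^{(d-1)}\ge1/\sqrt2$, and hence $d\ge3$, is used quantitatively.
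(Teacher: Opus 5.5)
Your proof is correct, but it reaches positivity by a different decomposition than the paper.

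The paper substitutes the fully unrolled Hermite-tail series of Lemma~\ref{lem:hermite_tail_explicit}, $\mathcal{I}_d^c(\nu)=e^{-\nu^2/2}\sum_{k}2^{k+1}\frac{(d-1)!!}{(d-2k-1)!!}H_{d-2k-1}(\nu)+R_d(\nu)$ with $R_d\ge 0$, and signs it term by term. This needs root interlacing (strict growth of the largest root with the degree) to get $H_{d-2k-1}(\nu)>0$ for every $k\ge 1$. The upper bound then comes from the envelope $H_m(\nu)\le(2\nu)^m$ on each term, Mills' ratio on the remainder, and $\nu\ge x_1^{(d-1)}>0$ to absorb lower powers.

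Your identity is exactly the intermediate step of that recurrence, taken before $yH_{d-1}$ is integrated by parts once more. Doing that extra step gives $\int_\nu^\infty yH_{d-1}(y)e^{-y^2/2}\,\mathrm dy=H_{d-1}(\nu)e^{-\nu^2/2}+2(d-1)\int_\nu^\infty H_{d-2}(y)e^{-y^2/2}\,\mathrm dy$, which is the paper's recurrence~\eqref{eq:J_recurrence}. Stopping where you do leaves an integrand that is manifestly positive beyond $x_1^{(d-1)}$. So your positivity argument needs only the sign of one Hermite polynomial past its largest root and $x_1^{(d-1)}\ge 0$, with no interlacing. It in fact already works for $d\ge 2$; the hypothesis $d\ge 3$ is used only in the envelope step, as you say.

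Your Step~3 is the same envelope-plus-Mills mechanism as the paper's. Since both of your terms are nonnegative, you could avoid the coefficient bookkeeping: apply Lemma~\ref{lem:hermite_bound} to both terms, then run your finite monomial recursion on $\int_\nu^\infty y^d e^{-y^2/2}\,\mathrm dy$. The finite recursion is the right tool here rather than the geometric closed form of Lemma~\ref{lem:gauss_tail}, because that closed form needs $(d-1)/\nu^2<1$, which fails near $\nu=x_1^{(2)}$ when $d=3$.

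The paper's route pays off through reuse. The same explicit series feeds the closed form of the cross integral $\mathcal{C}_d$ in Theorem~\ref{thm:imf_tail_exact} and the uniform envelope $E_d$ in the proof of Lemma~\ref{lem:dominant_term}, so in context it costs nothing extra. Yours is shorter and more self-contained for this lemma taken on its own.
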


\begin{proof}[Proof of Lemma~\ref{lem:Idc_positivity}]

\noindent
$\bullet$
We apply the explicit finite series expansion from Lemma~\ref{lem:hermite_tail_explicit} (specifically Equation~\eqref{eq:hermite_tail_series_1}) with degree $m=d$. This yields:
\[
    \mathcal{I}_d^c(\nu) = e^{-\nu^2/2} \sum_{k=0}^{\lfloor (d-1)/2 \rfloor} 2^{k+1} \frac{(d-1)!!}{(d-2k-1)!!} H_{d-2k-1}(\nu) + R_d(\nu)\,,
\]
where the remainder term guarantees $R_d(\nu) \ge 0$.

\noindent
$\bullet$
The roots of consecutive orthogonal polynomials interlace~\citep[Theorem~3.3.2]{szego}; in particular, the largest root $x_1^{(m)}$ of $H_m$ is strictly increasing in the degree $m$. Therefore, for every $k \ge 1$, we have the strict inequality:
\[
    x_1^{(d-2k-1)} < x_1^{(d-1)}\,.
\]
When we evaluate the sum at any $\nu \ge x_1^{(d-1)}$, the leading polynomial ($k=0$) satisfies $H_{d-1}(\nu) \ge 0$. For all subsequent lower-degree polynomials ($k \ge 1$), since $\nu \ge x_1^{(d-1)} > x_1^{(d-2k-1)}$, they evaluate to strictly positive values: $H_{d-2k-1}(\nu) > 0$. Because all the constant combinatorial coefficients in the sum are strictly positive and $R_d(\nu) \ge 0$, the entire expression evaluates to a strictly positive number: $\mathcal{I}_d^c(\nu) > 0$.

\noindent
$\bullet$
To upper bound the integral for $\nu \ge x_1^{(d-1)}$, we apply the polynomial envelope from Lemma~\ref{lem:hermite_bound}, which bounds each polynomial by $H_{d-2k-1}(\nu) \le (2\nu)^{d-2k-1}$.
The dominant term in the sum corresponds to $k=0$, which is bounded by:
\[
    2 H_{d-1}(\nu) e^{-\nu^2/2} \;\le\; 2(2\nu)^{d-1} e^{-\nu^2/2}\,.
\]
For the lower-order terms ($k\ge 1$) the same envelope gives $H_{d-2k-1}(\nu)\le(2\nu)^{d-2k-1}$, and we promote the asymptotic gain into a uniform pointwise bound exactly as for the remainder term $R_d$: writing $\nu^{d-2k-1}=\nu^{d-1}\,\nu^{-2k}$, the factor $\nu^{-2k}$ is decreasing on $[x_1^{(d-1)},\infty)$ and attains its supremum $\big(x_1^{(d-1)}\big)^{-2k}<\infty$ at the left endpoint (here $x_1^{(d-1)}>0$ is used). Hence each lower-order term is bounded by a constant multiple of the leading envelope $\nu^{d-1}e^{-\nu^2/2}$ on the whole interval $[x_1^{(d-1)},\infty)$.

It remains to control the remainder $R_d(\nu)$, which is non-zero only when $d$ is even. In that case, Lemma~\ref{lem:hermite_tail_explicit} gives the explicit Gaussian-tail expression
\[
    R_d(\nu) \;=\; 2^{d/2}(d-1)!!\int_\nu^\infty e^{-x^2/2}\,\mathrm{d}x\,.
\]
Applying Mills' ratio (equivalently, Lemma~\ref{lem:gauss_tail} with $m=0$, $a=1$, i.e.~\eqref{eq:gauss_tail_mills}) yields $\int_\nu^\infty e^{-x^2/2}\,\mathrm{d}x \le \nu^{-1}e^{-\nu^2/2}$ for $\nu>0$, whence
\[
    R_d(\nu) \;\le\; 2^{d/2}(d-1)!!\,\nu^{-1}\,e^{-\nu^2/2} \;=\; O\!\big(\nu^{-1}\,e^{-\nu^2/2}\big)\,.
\]
To promote this asymptotic decay into a uniform pointwise bound on the full domain: the ratio of the remainder envelope to the leading envelope is $\nu^{-1}/\nu^{d-1} = \nu^{-d}$. Since $x_1^{(d-1)}>0$, this ratio is a positive continuous function on $[x_1^{(d-1)},\infty)$ which attains its supremum at the left endpoint:
\[
    \sup_{\nu\ge x_1^{(d-1)}}\nu^{-d} \;=\; \big(x_1^{(d-1)}\big)^{-d}\;<\;\infty\,.
\]
Consequently $R_d(\nu)\le 2^{d/2}(d-1)!!\,\big(x_1^{(d-1)}\big)^{-d}\,\nu^{d-1}\,e^{-\nu^2/2}$ for every $\nu\ge x_1^{(d-1)}$, a uniform pointwise bound on the whole semi-infinite interval (not only for large $\nu$). Absorbing the leading constant, the lower-order polynomial terms, and the remainder $R_d(\nu)$ into a single dimension-dependent constant $C_d$ yields the final bound:
\[
    \mathcal{I}_d^c(\nu) \;\le\; C_d\,\nu^{d-1}e^{-\nu^2/2}\,,
\]
\end{proof}

\medskip

With the positivity of $\mathcal{I}_d^c$ secured on $\{\nu\ge x_1^{(d-1)}\}$, the subtractive contribution in the Mehta expansion may then be discarded. The two remaining pieces (the linear Hermite tail $\int_u^\infty H_{d-1}(\rho x)e^{-x^2/2}\,\mathrm{d}x$ and the squared Hermite tail $\sum_j c_j^2\int_u^\infty H_j^2(\rho x)e^{-(1+\rho^2)x^2/2}\,\mathrm{d}x$) decay at different Gaussian rates, $e^{-u^2/2}$ and $e^{-(1+\rho^2)u^2/2}$ respectively. This two-scale structure is preserved in the proposition below; the SMF bound of the next subsection collapses it to a single dominant scale. Analytically, we evaluate the linear tail by iterating the three-term Hermite recurrence, and the squared tail by substituting the Szeg\H{o} envelope $H_j(\rho x)\le(2\rho x)^j$ into each squared term and invoking Lemma~\ref{lem:gauss_tail}.

\begin{proposition}[Explicit IMF upper bound on $Q_d$]
\label{prop:IMF_decomp}
\begin{subequations}
For every $u>0$ such that $\rho u\ge\sqrt{2d-1}$,
\begin{equation}\label{eq:IMF_decomp}
    \int_u^\infty Q_d(\rho x)\,e^{-x^2/2}\,\mathrm{d}x
    \;\le\;
    \alpha_d\,\Phi_d(\rho,u)\,e^{-u^2/2}\;+\;\Psi_d(\rho,u)\,e^{-(1+\rho^2)u^2/2}\,,
\end{equation}
where $\alpha_d$ is the dominant Mehta coefficient of Lemma~\ref{lem:dominant_term}, and the explicit polynomial--rational functions are given, with $\Lambda:=2\rho^2-1$, by
\begin{align}
    \Phi_d(\rho,u) &\;:=\;\sum_{k=0}^{\lfloor (d-2)/2\rfloor}\!2\rho\,(2\Lambda)^k\,\frac{(d-2)!!}{(d-2k-2)!!}\,(2\rho u)^{d-2k-2}
    \;+\;\frac{\1_{\{d\text{ odd}\}}}{u}\,(2\Lambda)^{\frac{d-1}2}\,(d-2)!!\,,\label{eq:phi_def}\\
    \Psi_d(\rho,u) &\;:=\;\frac{c_0^2}{(1+\rho^2)\,u}\;+\;\sum_{j=1}^{d-1}c_j^2\,\frac{(2\rho)^{2j}\,u^{2j-1}}{1+\rho^2-(2j-1)/u^2}\,.\label{eq:psi_def}
\end{align}
\end{subequations}
\end{proposition}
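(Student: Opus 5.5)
Starting from the Mehta expansion~\eqref{eq:fmehta} evaluated at $\nu=\rho x$ with $x\ge u$, so that $\nu\ge\rho u\ge\sqrt{2d-1}$, I would split $Q_d(\nu)$ into three pieces:
\begin{itemize}
\item the linear Hermite term $\alpha_d H_{d-1}(\nu)$, where $\alpha_d$ collects $\tfrac12\sqrt{d/2}\,c_{d-1}c_d\mu_d$ for $d$ even and $1/\mu_{d-1}$ for $d$ odd (by parity only one of these is nonzero);
\item the subtractive term $-\sqrt{d/2}\,c_{d-1}c_d\,H_{d-1}(\nu)\,\mathcal{I}_d^c(\nu)$;
\item the squared sum $e^{-\nu^2/2}\sum_j c_j^2H_j^2(\nu)$.
\end{itemize}
By Szeg\H{o}'s bound, the largest root of $H_{d-1}$ lies below $\sqrt{2d-1}$, so $\nu\ge x_1^{(d-1)}$. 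Lemma~\ref{lem:Idc_positivity} then gives $\mathcal{I}_d^c(\nu)>0$, and $H_{d-1}(\nu)>0$ as well, so the subtractive term is nonpositive and may be dropped. What remains is
\[
\int_u^\infty Q_d(\rho x)e^{-x^2/2}\,\mathrm{d}x\le \alpha_d\,\mathcal{L}_d(u)+\sum_{j=0}^{d-1}c_j^2\int_u^\infty H_j^2(\rho x)\,e^{-(1+\rho^2)x^2/2}\,\mathrm{d}x .
\]

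For the linear tail $\mathcal{L}_d(u)=\int_u^\infty H_{d-1}(\rho x)e^{-x^2/2}\,\mathrm{d}x$, I would integrate by parts using $H_m'=2mH_{m-1}$. Writing $\int_u^\infty H_m(\rho x)e^{-x^2/2}\,\mathrm{d}x$ and using the recurrence $H_m(\nu)=2\nu H_{m-1}(\nu)-2(m-1)H_{m-2}(\nu)$ with $\nu=\rho x$ gives
\[
\int_u^\infty H_m(\rho x)e^{-x^2/2}\,\mathrm{d}x=2\rho H_{m-1}(\rho u)e^{-u^2/2}+2(m-1)(2\rho^2-1)\int_u^\infty H_{m-2}(\rho x)e^{-x^2/2}\,\mathrm{d}x .
\]
Here $2\rho^2-1=\Lambda=1/(k-1)>0$. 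Iterating this $\lfloor(d-1)/2\rfloor$ times produces boundary terms $2\rho(2\Lambda)^{j}\frac{(d-2)!!}{(d-2j-2)!!}H_{d-2j-2}(\rho u)e^{-u^2/2}$. All coefficients are positive, and each $H_{d-2j-2}(\rho u)$ is positive by interlacing, since $\rho u$ exceeds the largest root. For each of these terms I would apply the envelope $H_m(\nu)\le(2\nu)^m$ of Lemma~\ref{lem:hermite_bound}, valid beyond the largest root. When $d$ is odd, the recursion ends at $H_0$, giving the Gaussian tail $(2\Lambda)^{(d-1)/2}(d-2)!!\int_u^\infty e^{-x^2/2}\,\mathrm{d}x$. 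Mills' ratio bounds this by $u^{-1}e^{-u^2/2}$, which yields the indicator term of $\Phi_d$. When $d$ is even, the recursion ends at $H_1$, which contributes a pure boundary term. Together these give $\mathcal{L}_d(u)\le\Phi_d(\rho,u)e^{-u^2/2}$, and the factor $\alpha_d$ passes through because $\alpha_d>0$.

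For the squared tail, I would bound $H_j^2(\rho x)\le(2\rho x)^{2j}$ on $[u,\infty)$, using the same envelope and the fact that $\rho x\ge\rho u$ exceeds every largest root. This reduces each term to the integral $(2\rho)^{2j}\int_u^\infty x^{2j}e^{-ax^2/2}\,\mathrm{d}x$ with $a=1+\rho^2$. For this integral I would use the one-step Mills-type bound of Lemma~\ref{lem:gauss_tail}. Integrating by parts,
\[
\int_u^\infty x^{m}e^{-ax^2/2}\,\mathrm{d}x=\frac{u^{m-1}}{a}e^{-au^2/2}+\frac{m-1}{a}\int_u^\infty x^{m-2}e^{-ax^2/2}\,\mathrm{d}x ,
\]
and since $x^{m-2}\le x^m/u^2$ on $[u,\infty)$, one can rearrange to get
\[
\int_u^\infty x^m e^{-ax^2/2}\,\mathrm{d}x\le\frac{u^{m-1}e^{-au^2/2}}{a-(m-1)/u^2}.
\]
This needs $a>(m-1)/u^2$. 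With $m=2j\le 2d-2$, $u^2\ge(2d-1)/\rho^2$ and $\rho^2\le 3/4$, one checks that $(2j-1)/u^2\le\rho^2(2d-3)/(2d-1)<1+\rho^2$, so the denominators are positive. The case $j=0$ is plain Mills, giving $c_0^2/((1+\rho^2)u)$. Summing over $j$ yields exactly $\Psi_d(\rho,u)e^{-(1+\rho^2)u^2/2}$.

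The main obstacle I expect is the bookkeeping in the Hermite recurrence for the linear tail. I need the double-factorial coefficients and powers of $2\Lambda$ to match~\eqref{eq:phi_def} exactly, and the terminal step to be handled correctly by parity. I also need to confirm that all intermediate Hermite values are positive, so that the monomial envelope can be applied termwise without sign issues. I would verify this first on small $d$, namely $d=3,4$.
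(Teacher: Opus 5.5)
Your proposal is correct and follows the paper's proof essentially step for step. You drop the nonpositive term $-\sqrt{d/2}\,c_{d-1}c_d\,H_{d-1}\,\mathcal{I}_d^c$ using Szeg\H{o}'s root bound and Lemma~\ref{lem:Idc_positivity}. You then unroll the recurrence $J_m=2\rho H_{m-1}(\rho u)e^{-u^2/2}+2\Lambda(m-1)J_{m-2}$ with the Szeg\H{o} envelope, closing with a Mills step when $d$ is odd. Finally you bound the squared-Hermite tail by $(2\rho x)^{2j}$ and apply Lemma~\ref{lem:gauss_tail} with $a=1+\rho^2$.

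The only difference is minor: you get the closed-form Gaussian tail bound by the one-step rearrangement $I_{m-2}\le I_m/u^2$, instead of the paper's geometric-series iteration. This is, if anything, cleaner, and your explicit check that $(2j-1)/u^2<1+\rho^2$ supplies a hypothesis the paper leaves implicit.
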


\begin{proof}[Proof of Proposition~\ref{prop:IMF_decomp}]

\noindent
$\bullet$
By Szeg\H{o}'s bound, the largest root of $H_{d-1}$ strictly satisfies $x_1^{(d-1)}<\sqrt{2d-1}$. The proposition assumes $\rho u\ge\sqrt{2d-1}$, ensuring that $\rho x > x_1^{(d-1)}$ for all integration variables $x\ge u$. By Lemma~\ref{lem:Idc_positivity}, this guarantees $\mathcal{I}_d^c(\rho x)>0$ over the entire integration domain. 
Returning to the exact Mehta expansion~\eqref{eq:fmehta}, the contribution of $-2\mathcal{I}_d^c(\rho x) H_{d-1}(\rho x)$ is therefore strictly negative. Discarding it yields a strict upper bound:
\begin{equation}\label{eq:IMF_split}
    \int_u^\infty Q_d(\rho x)e^{-x^2/2}\,\mathrm{d}x \;\le\; \alpha_d\,\mathcal{L}_d(u)\;+\;\mathcal{S}_d(u)\,,
\end{equation}
where $\alpha_d$ is the dominant Mehta coefficient given by~\eqref{eq:alpha_d}, and the components are defined as:
\begin{align*}
    \mathcal{L}_d(u) &:= \int_u^\infty H_{d-1}(\rho x)e^{-x^2/2}\,\mathrm{d}x\,, \\
    \mathcal{S}_d(u) &:= \sum_{j=0}^{d-1}c_j^2\int_u^\infty H_j^2(\rho x)e^{-(1+\rho^2)x^2/2}\,\mathrm{d}x\,.
\end{align*}

\noindent
$\bullet$
We derive a generalized recurrence for $J_m := \int_u^\infty H_m(\rho x)e^{-x^2/2}\,\mathrm{d}x$. Applying the identity $H_m(\rho x)=2\rho x H_{m-1}(\rho x)-2(m-1)H_{m-2}(\rho x)$ and evaluating the first term via integration by parts, we obtain:
\begin{equation*}
    J_m = 2\rho\,H_{m-1}(\rho u)\,e^{-u^2/2} + 2\Lambda(m-1)\,J_{m-2}\,, \qquad \text{where } \Lambda := 2\rho^2-1\,,
\end{equation*}
where we used $\frac{\mathrm{d}}{\mathrm{d}x}[H_{m-1}(\rho x)e^{-x^2/2}]=2\rho(m-1)H_{m-2}(\rho x)e^{-x^2/2}-xH_{m-1}(\rho x)e^{-x^2/2}$. Unrolling this recurrence for $m=d-1$ produces a finite series of boundary evaluations. We bound each boundary evaluation using the Szeg\H{o} envelope $H_{d-2k-2}(\rho u) \le (2\rho u)^{d-2k-2}$. If $d-1$ is even (i.e., $d$ is odd), the recurrence bottoms out with a Gaussian tail, which we bound using Mills' ratio $\int_u^\infty e^{-x^2/2}\,\mathrm{d}x \le u^{-1}e^{-u^2/2}$. Factoring out $e^{-u^2/2}$ yields exactly:
\begin{equation*}
    \mathcal{L}_d(u) \;\le\; \Phi_d(\rho,u)\,e^{-u^2/2}\,,
\end{equation*}
where $\Phi_d$ is the polynomial-rational function defined in~\eqref{eq:phi_def}.

\noindent
$\bullet$
For the squared component, we substitute the Szeg\H{o} bound $H_j(\rho x) \le (2\rho x)^j$ directly into the integral:
\begin{equation*}
    \mathcal{S}_d(u) \;\le\; \sum_{j=0}^{d-1} c_j^2 \int_u^\infty (2\rho x)^{2j} e^{-(1+\rho^2)x^2/2}\,\mathrm{d}x\,.
\end{equation*}
We evaluate each integral using Lemma~\ref{lem:gauss_tail} with parameters $a=1+\rho^2$ and $m=2j$. For the base case $j=0$, we use the Mills bound $I_0(u;a) \le (au)^{-1} e^{-au^2/2}$ of~\eqref{eq:gauss_tail_mills}. Factoring out the shared exponential weight $e^{-(1+\rho^2)u^2/2}$ leaves the rational function $\Psi_d(\rho,u)$ defined in~\eqref{eq:psi_def}, yielding:
\begin{equation*}
    \mathcal{S}_d(u) \;\le\; \Psi_d(\rho,u)\,e^{-(1+\rho^2)u^2/2}\,.
\end{equation*}

\noindent
$\bullet$
Substituting the bounds for $\mathcal{L}_d(u)$ and $\mathcal{S}_d(u)$ back into~\eqref{eq:IMF_split} completes the proof:
\begin{equation*}
    \int_u^\infty Q_d(\rho x)e^{-x^2/2}\,\mathrm{d}x \;\le\; \alpha_d\,\Phi_d(\rho,u)\,e^{-u^2/2} \;+\; \Psi_d(\rho,u)\,e^{-(1+\rho^2)u^2/2}\,. \qedhere
\end{equation*}
\end{proof}

\medskip

\begin{theorem}[IMF tail bound]
\label{thm:imf_tail}
Let $u_{\mathrm{IMF}}=\sqrt{2d-1}/\rho$. For all $u\ge u_{\mathrm{IMF}}$,
\begin{equation*}
    \P\Big\{\sup_{\bm \theta\in\mathbb S^{d-1}}X(\bm \theta)>u\Big\}
    \;\le\;
    \delta_{\mathrm{IMF}}(u)
    \;:=\;
    2(k-1)^{\frac{d-1}2}\Big(\alpha_d\,\Phi_d(\rho,u)\,e^{-u^2/2}+\Psi_d(\rho,u)\,e^{-(1+\rho^2)u^2/2}\Big)\,.
\end{equation*}
\end{theorem}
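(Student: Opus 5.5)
The theorem is a direct concatenation of two earlier results: the exact Mehta--Fyodorov representation of Proposition~\ref{prop:ortho_exact} and the explicit IMF envelope of Proposition~\ref{prop:IMF_decomp}. The plan is as follows.

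First, Proposition~\ref{prop:ortho_exact} holds for every $u\in\R$ and gives
\[
\P\Big\{\sup_{\bm\theta\in\mathbb S^{d-1}}X(\bm\theta)>u\Big\}\le 2(k-1)^{\frac{d-1}2}\int_u^\infty Q_d(\rho x)\,e^{-x^2/2}\,\mathrm{d}x .
\]
This inequality is itself the Kac--Rice first-moment bound~\eqref{eq:KR_combined}, combined with Fyodorov's formula (Lemma~\ref{lfyod}) and the constant simplification $C_{k,d}\Gamma(d/2)/\sqrt\pi=2(k-1)^{(d-1)/2}$.

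Second, observe that $u\ge u_{\mathrm{IMF}}=\sqrt{2d-1}/\rho$ is equivalent to $\rho u\ge\sqrt{2d-1}$. Since $\rho>0$ and $d\ge3$, we also have $u>0$. These are exactly the hypotheses of Proposition~\ref{prop:IMF_decomp}. Applying~\eqref{eq:IMF_decomp} to the integral bounds it by $\alpha_d\Phi_d(\rho,u)e^{-u^2/2}+\Psi_d(\rho,u)e^{-(1+\rho^2)u^2/2}$. Multiplying by the positive constant $2(k-1)^{(d-1)/2}$ then yields $\delta_{\mathrm{IMF}}(u)$.

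The only substantive content lies inside Proposition~\ref{prop:IMF_decomp}, which is taken as given. That proposition discards the subtractive term $-2\mathcal I_d^c(\rho x)H_{d-1}(\rho x)$, which is legitimate because both factors are positive for $\rho x\ge\sqrt{2d-1}>x_1^{(d-1)}$ by Szeg\H{o}'s bound and Lemma~\ref{lem:Idc_positivity}. If anything needs checking, it is two consistency points:
\begin{itemize}
\item The coefficient $\alpha_d$ in the bound must match the surviving Mehta coefficient for both parities. For $d$ even this is $\tfrac12\sqrt{d/2}\,c_{d-1}c_d\mu_d$, and for $d$ odd it is $1/\mu_{d-1}$, with the odd-$d$ tail term handled by Mills' ratio.
\item The denominators $1+\rho^2-(2j-1)/u^2$ in $\Psi_d$ must be positive on the range considered. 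This holds because $u^2\ge(2d-1)/\rho^2\ge 2d-1>2j-1$ for $j\le d-1$, so each denominator exceeds $\rho^2>0$.
\end{itemize}
With these verified, the theorem follows immediately.
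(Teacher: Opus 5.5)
Your proposal is correct and matches the paper's proof: the paper also applies Proposition~\ref{prop:IMF_decomp} to the right-hand side of the exact Mehta--Fyodorov representation~\eqref{eq:ortho_exact} and multiplies by the prefactor $2(k-1)^{\frac{d-1}2}$. Your extra checks are correct (that $u\ge u_{\mathrm{IMF}}$ yields $u>0$ and $\rho u\ge\sqrt{2d-1}$, and that the denominators in $\Psi_d$ are positive), but they only re-verify hypotheses and ingredients already handled inside that proposition.
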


\begin{proof}[Proof of Theorem~\ref{thm:imf_tail}]
Apply Proposition~\ref{prop:IMF_decomp} to the right-hand side of~\eqref{eq:ortho_exact} and substitute the constant prefactor $2(k-1)^{\frac{d-1}2}$ from Proposition~\ref{prop:ortho_exact}.
\end{proof}

\paragraph{A sharper bound $\delta_{\mathrm{IMF}}^\star$.} The IMF bound of Theorem~\ref{thm:imf_tail} performs two relaxations beyond $\delta_{\mathrm{exact}}$: the positivity-driven discarding of $-2\,\mathcal{I}_d^c\,H_{d-1}$ (Lemma~\ref{lem:Idc_positivity}) and the Szeg\H{o} envelope $H_j(\rho x)\le(2\rho x)^j$ on the squared-Hermite tail $\sum_j c_j^2\int_u^\infty H_j(\rho x)^2\,e^{-(1+\rho^2)x^2/2}\,\mathrm{d}x$. We introduce a sharper bound $\delta_{\mathrm{IMF}}^\star$ that performs only the first relaxation: the squared-Hermite tail is evaluated \emph{exactly} via the closed-form recurrence of Lemma~\ref{lem:Jm_beta}. The resulting $\delta_{\mathrm{IMF}}^\star$ is strictly sandwiched between $\delta_{\mathrm{exact}}$ and $\delta_{\mathrm{IMF}}$ on $[u_{\mathrm{IMF}},\infty)$ (Theorem~\ref{thm:imf_tail_sharp}) and can replace $\delta_{\mathrm{IMF}}$ everywhere without additional analytical cost.
\begin{theorem}[Sharpened IMF tail bound]
\label{thm:imf_tail_sharp}
For every $u\ge u_{\mathrm{IMF}}=\sqrt{2d-1}/\rho$,
\begin{equation}\label{eq:imf_sharp}
    \P\Big\{\sup_{\bm \theta\in\mathbb S^{d-1}}X(\bm \theta)>u\Big\}
    \;\le\;
    \delta_{\mathrm{IMF}}^{\star}(u)
    \;:=\;
    2(k-1)^{\frac{d-1}2}\Big[\alpha_d\,\Phi_d(\rho,u)\,e^{-u^2/2}\;+\;T_d^{\mathrm{exact}}(u)\Big]\,,
\end{equation}
where the squared-Hermite tail is evaluated exactly as
\begin{equation}\label{eq:Td_exact_intro}
    T_d^{\mathrm{exact}}(u)\;:=\;\frac1\rho\sum_{j=0}^{d-1} c_j^2\,K_j^\beta(\rho u)\,,
    \qquad
    \beta\;:=\;\frac{1+\rho^2}{\rho^2}\;=\;\frac{3k-2}{k}\,,
\end{equation}
and each $K_j^\beta$ is given in closed form by Corollary~\ref{cor:Kj_beta}. Moreover,
\[
    \delta_{\mathrm{IMF}}^\star(u)\;\le\;\delta_{\mathrm{IMF}}(u)\qquad\text{for every $u\ge u_{\mathrm{IMF}}$,}
\]
with strict inequality at every finite $u$.
\end{theorem}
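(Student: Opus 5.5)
The proof splits into two parts. First we establish the tail bound~\eqref{eq:imf_sharp}. Then we compare $\delta_{\mathrm{IMF}}^\star$ with $\delta_{\mathrm{IMF}}$.

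\emph{Tail bound.} We rerun the proof of Proposition~\ref{prop:IMF_decomp} up to~\eqref{eq:IMF_split} without change. On $u\ge u_{\mathrm{IMF}}$, Szeg\H{o}'s bound gives $\rho x>x_1^{(d-1)}$ for all $x\ge u$. Lemma~\ref{lem:Idc_positivity} then gives $\mathcal{I}_d^c(\rho x)>0$, so the term $-2\mathcal{I}_d^c H_{d-1}$ may be discarded. This leaves
\[
\int_u^\infty Q_d(\rho x)e^{-x^2/2}\,\mathrm{d}x\le\alpha_d\mathcal{L}_d(u)+\mathcal{S}_d(u),
\]
and the recurrence argument gives $\mathcal{L}_d(u)\le\Phi_d(\rho,u)e^{-u^2/2}$. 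The only new step is to show $\mathcal{S}_d(u)=T_d^{\mathrm{exact}}(u)$ exactly. Substituting $y=\rho x$ gives $\mathrm{d}x=\mathrm{d}y/\rho$ and $(1+\rho^2)x^2/2=\beta y^2/2$ with $\beta=(1+\rho^2)/\rho^2$. Hence
\[
\int_u^\infty H_j^2(\rho x)e^{-(1+\rho^2)x^2/2}\,\mathrm{d}x=\rho^{-1}\int_{\rho u}^\infty H_j^2(y)e^{-\beta y^2/2}\,\mathrm{d}y=\rho^{-1}K_j^\beta(\rho u),
\]
which is the defining quantity evaluated in closed form by Corollary~\ref{cor:Kj_beta}. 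Multiplying by $2(k-1)^{(d-1)/2}$ through Proposition~\ref{prop:ortho_exact} yields~\eqref{eq:imf_sharp}. As a check, $\beta=1+1/\rho^2=1+2(k-1)/k=(3k-2)/k$.

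\emph{Comparison.} The $\alpha_d\Phi_d$ terms are common to both bounds, so it suffices to show $T_d^{\mathrm{exact}}(u)<\Psi_d(\rho,u)e^{-(1+\rho^2)u^2/2}$. We compare term by term in $j$. Two facts are needed:
\begin{itemize}
\item[(a)] the pointwise Szeg\H{o} envelope $0\le H_j(\rho x)\le(2\rho x)^j$ for $\rho x\ge x_1^{(j)}$, which holds on the whole range since $\rho x\ge\sqrt{2d-1}>x_1^{(j)}$ for $j\le d-1$;
\item[(b)] the Gaussian moment tail bound of Lemma~\ref{lem:gauss_tail}, including the Mills bound for $j=0$, which gives $\int_u^\infty(2\rho x)^{2j}e^{-(1+\rho^2)x^2/2}\,\mathrm{d}x\le c_j^{-2}\cdot(\text{$j$-th summand of }\Psi_d)\,e^{-(1+\rho^2)u^2/2}$.
\end{itemize}
Chaining (a) and (b) gives the weak inequality $\delta_{\mathrm{IMF}}^\star\le\delta_{\mathrm{IMF}}$.

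\emph{Main obstacle: strictness.} For strictness I would isolate $j=0$, where Mills' ratio is strict at every finite $u>0$ because $\int_u^\infty e^{-ax^2/2}\,\mathrm{d}x<(au)^{-1}e^{-au^2/2}$. Since $H_0\equiv1$, this gives strict inequality in the $j=0$ summand. Alternatively, for $j\ge2$ one has $H_j(y)<(2y)^j$ strictly for $y>0$, because the subleading term $-j(j-1)(2y)^{j-2}$ is negative and the envelope in Lemma~\ref{lem:hermite_bound} is strict. Either route makes the inequality strict. The only care needed is that Lemma~\ref{lem:gauss_tail} applies with $1+\rho^2-(2j-1)/u^2>0$. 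This holds because $u^2\ge(2d-1)/\rho^2>2j-1$ for $j\le d-1$.
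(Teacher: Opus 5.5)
Your proof is correct and follows the paper's own argument. You reuse the split~\eqref{eq:IMF_split}, identify $\mathcal{S}_d(u)=T_d^{\mathrm{exact}}(u)$ exactly via $y=\rho x$, and compare termwise in $j$ using Lemma~\ref{lem:hermite_bound} and Lemma~\ref{lem:gauss_tail}. The only difference is where strictness comes from: you take it from the strict Mills inequality in the $j=0$ term, which works for every $d$, whereas the paper uses the strict envelope $H_j(\rho x)<(2\rho x)^j$ for $j\ge 2$ and hence needs $d\ge 3$. Note that your side claim that $H_j(y)<(2y)^j$ for all $y>0$ is false in general (e.g.\ $H_4(y)=16y^4-48y^2+12$ near $0$), but it holds on $y>x_1^{(j)}$, which is the only range you need.
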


\begin{proof}[Proof of Theorem~\ref{thm:imf_tail_sharp}]
We follow the proof of Theorem~\ref{thm:imf_tail}, replacing only the squared-Hermite step.

\noindent
$\bullet$ Inserting the Mehta expansion~\eqref{eq:fmehta} into the right-hand side of~\eqref{eq:ortho_exact} and discarding the strictly negative contribution $-2\,\mathcal{I}_d^c(\rho x)\,H_{d-1}(\rho x)$ (which is justified by Lemma~\ref{lem:Idc_positivity} on $\{\rho x\ge\sqrt{2d-1}\}$) reproduces the split~\eqref{eq:IMF_split}:
\[
    \int_u^\infty Q_d(\rho x)\,e^{-x^2/2}\,\mathrm{d}x \;\le\; \alpha_d\,\mathcal{L}_d(u)\;+\;\mathcal{S}_d(u)\,,
\]
with the linear-Hermite component $\mathcal{L}_d(u)=\int_u^\infty H_{d-1}(\rho x)\,e^{-x^2/2}\,\mathrm{d}x\le\Phi_d(\rho,u)\,e^{-u^2/2}$ bounded as in Proposition~\ref{prop:IMF_decomp} (the recurrence-based bound on $\mathcal{L}_d$ is unchanged).

\noindent
$\bullet$ For the squared-Hermite component $\mathcal{S}_d(u)=\sum_{j=0}^{d-1}c_j^2\int_u^\infty H_j(\rho x)^2\,e^{-(1+\rho^2)x^2/2}\,\mathrm{d}x$, the change of variable $y=\rho x$ converts each integral to
\[
    \int_u^\infty H_j(\rho x)^2\,e^{-(1+\rho^2)x^2/2}\,\mathrm{d}x
    \;=\;\frac1\rho\int_{\rho u}^\infty H_j(y)^2\,e^{-\beta y^2/2}\,\mathrm{d}y
    \;=\;\frac1\rho\,K_j^\beta(\rho u)\,,
\]
with $\beta=(1+\rho^2)/\rho^2=(3k-2)/k$, and Corollary~\ref{cor:Kj_beta} gives $K_j^\beta(\rho u)$ in closed form. Summing over $j$ yields $\mathcal{S}_d(u)=T_d^{\mathrm{exact}}(u)$ as defined in~\eqref{eq:Td_exact_intro}, with no inequality used in this step.

\noindent
$\bullet$ Substituting the constant prefactor $2(k-1)^{(d-1)/2}$ of Proposition~\ref{prop:ortho_exact} produces~\eqref{eq:imf_sharp}. Comparing $T_d^{\mathrm{exact}}(u)$ to the Szeg\H{o}-relaxed bound $\Psi_d(\rho,u)\,e^{-(1+\rho^2)u^2/2}$ of Proposition~\ref{prop:IMF_decomp}: by Lemma~\ref{lem:hermite_bound}, $H_j(\rho x)<(2\rho x)^j$ strictly for every $j\ge 2$ and $\rho x>x_1^{(j)}$ (for $j=0,1$ the comparison is equality, $H_0=1=(2\rho x)^0$ and $H_1(\rho x)=2\rho x=(2\rho x)^1$). On $[u_{\mathrm{IMF}},\infty)$, $\rho u\ge\sqrt{2d-1}>x_1^{(j)}$ for every $j\le d-1$, so every term satisfies
\[
    \int_u^\infty H_j(\rho x)^2\,e^{-(1+\rho^2)x^2/2}\,\mathrm{d}x\;\le\;\int_u^\infty(2\rho x)^{2j}\,e^{-(1+\rho^2)x^2/2}\,\mathrm{d}x\,,
\]
with strict inequality whenever $j\ge 2$. Under the standing assumption $d\ge 3$, the sum~\eqref{eq:Td_exact_intro} contains the index $j=2$, contributing strictly; the remaining $j\in\{0,1\}$ contribute equality but with non-negative integrands. Therefore $T_d^{\mathrm{exact}}(u)<\Psi_d(\rho,u)\,e^{-(1+\rho^2)u^2/2}$ on $[u_{\mathrm{IMF}},\infty)$, hence $\delta_{\mathrm{IMF}}^\star(u)<\delta_{\mathrm{IMF}}(u)$.
\end{proof}

\noindent
For $k\ge 3$, $\beta=(3k-2)/k\ge 7/3>2$, so $\theta=(2-\beta)/\beta<0$ in Lemma~\ref{lem:Jm_beta}, and the iterated form~\eqref{eq:Jm_beta_iter} carries alternating signs. The identity~\eqref{eq:Td_exact_intro} remains exact pointwise, but no individual $J_{2j-2p}^\beta$ should be replaced by its absolute value during subsequent estimates; this differentiates the exact evaluation from the Szeg\H{o}-envelope relaxation.

\subsection{Simplified Mehta--Fyodorov bound (SMF)}
\label{sub:SMF}

The SMF bound applies a second relaxation beyond IMF: the Szeg\H{o} envelopes $H_{d-1}(\nu)<(2\nu)^{d-1}$ and $(\nu^2+1)^{d-1}$ collapse $\Phi_d$ and the squared Hermite sum to a single monomial main term $u^{d-2}e^{-u^2/2}$ plus a faster $e^{-3u^2/4}$ remainder, at the cost of a uniform factor $2$ in the leading constant (Theorem~\ref{thm:uniform_domination}). Beyond the level $u^\star_d$ the remainder is dominated and the bound collapses to a single monomial (Corollary~\ref{cor:single_term}); this closed form is used in the inversion of Remark~\ref{rem:choice_u}.

\begin{lemma}[Hermite envelope]
\label{lem:hermite_bound}
For every $m\ge 2$ and every $x>x_1^{(m)}$, $0<H_m(x)<(2x)^m$.
\end{lemma}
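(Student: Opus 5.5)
Positivity is immediate: since $x_1^{(m)}$ is the largest root of $H_m$ and $H_m$ has positive leading coefficient $2^m$, we get $H_m(x)>0$ for every $x>x_1^{(m)}$.

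For the upper bound we use the factorization over the roots. All $m$ roots $x_1^{(m)}>x_2^{(m)}>\dots>x_m^{(m)}$ of $H_m$ are real and simple, and they are symmetric about the origin, $x_{m+1-i}^{(m)}=-x_i^{(m)}$. Hence
\[
H_m(x)\;=\;2^m\prod_{i=1}^m\big(x-x_i^{(m)}\big).
\]
Pair each root $r>0$ with $-r$. Each such pair contributes a factor
\[
(x-r)(x+r)=x^2-r^2 .
\]
When $m$ is odd, the root $0$ contributes one extra factor $x$. Therefore
\[
H_m(x)=2^m\,x^{m-2\lfloor m/2\rfloor}\prod_{r>0}\big(x^2-r^2\big).
\]

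For $x>x_1^{(m)}$, every factor satisfies $0<x^2-r^2<x^2$, since $x>r>0$ and $r\neq0$. For $m\ge2$ there is at least one strictly positive root, so at least one of these strict inequalities is actually present. Multiplying the factors gives
\[
0<H_m(x)<2^m x^m=(2x)^m .
\]

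The only point needing care is strictness. It requires a positive root, which holds exactly when $m\ge2$. For $m=1$ we have $H_1(x)=2x$ and the bound is an equality, which is why the hypothesis $m\ge2$ appears. No other obstacle arises: realness, simplicity and symmetry of the Hermite roots are classical facts from orthogonal polynomial theory and from the parity $H_m(-x)=(-1)^mH_m(x)$.
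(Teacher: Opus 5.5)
Your proof is correct, but it reaches the upper bound by a different route than the paper.

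\begin{itemize}
\item \emph{Your route:} you use the parity $H_m(-x)=(-1)^mH_m(x)$ to pair the roots as $\pm r$. Each pair contributes $x^2-r^2<x^2$, and the root at $0$ (present for odd $m$) contributes exactly $x$.
\item \emph{The paper's route:} it rewrites $H_m(x)<(2x)^m$ as $\prod_{j}(1-x_j^{(m)}/x)<1$. It then concludes from $\log(1-t)<-t$ for $t\neq 0$ together with $\sum_j x_j^{(m)}=0$, which holds because $H_m$ has no $x^{m-1}$ term. This is in effect an AM--GM argument.
\end{itemize}

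The paper's argument is slightly more general. It uses only that the roots are real and sum to zero, so it applies to any real-rooted polynomial with vanishing subleading coefficient, symmetric or not.

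Your argument needs the full symmetry of the root set, together with simplicity to place $0$ as a single root exactly when $m$ is odd. In exchange it is more elementary and avoids logarithms. It also makes explicit two points the paper leaves implicit:
\begin{itemize}
\item that $x>x_1^{(m)}>0$ when $m\ge 2$, which the paper needs in order to divide by $x$ and take logarithms;
\item that strictness comes from the existence of at least one nonzero root.
\end{itemize}
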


\begin{proof}[Proof of Lemma~\ref{lem:hermite_bound}]
Write $H_m(x)=2^m\prod_{j=1}^m(x-x_j^{(m)})$. For $x>x_1^{(m)}$, every factor is strictly positive, so $H_m(x)>0$. For the upper bound, the inequality $H_m(x)<(2x)^m$ is equivalent to $\prod_{j=1}^m(1-x_j^{(m)}/x)<1$. Taking logarithms,
\[
    \sum_{j=1}^m\log(1-x_j^{(m)}/x)\;<\;-\sum_{j=1}^m x_j^{(m)}/x\;=\;0\,,
\]
where the strict inequality uses $\log(1-t)<-t$ for $t\neq 0$ and the equality uses the vanishing sum of Hermite roots, which follows from the absence of the $x^{m-1}$ term in $H_m(x)$.
\end{proof}

\medskip

The Hermite envelope of Lemma~\ref{lem:hermite_bound} is the only tool needed for the SMF relaxation: past the largest root, every Hermite polynomial is majorized by the monomial $(2\nu)^m$, with a strict inequality and no logarithmic corrections. We use it twice, once to collapse the linear Hermite term $\alpha_d H_{d-1}(\nu)$ to a monomial and once inside the squared Hermite sum $\sum_j c_j^2 H_j^2(\nu)$, yielding a single-monomial dominant contribution to $Q_d$ together with a controlled remainder. The next lemma states this.

\begin{lemma}[Non-asymptotic decomposition of $Q_d$]
\label{lem:dominant_term}
\begin{subequations}
There exist explicit constants $\alpha_d,\beta_d>0$ such that
\begin{equation}\label{eq:Qd_decomp}
    Q_d(\nu) \;=\; \alpha_d\,H_{d-1}(\nu)\;+\;\mathcal{R}_d(\nu)\,,
\end{equation}
with the dominant coefficient given by
\begin{equation}\label{eq:alpha_d}
    \alpha_d \;=\;
    \begin{cases}
        \tfrac12\sqrt{d/2}\,c_{d-1}c_d\,\mu_d & \text{if $d$ even},\\[2pt]
        1/\mu_{d-1} & \text{if $d$ odd},
    \end{cases}
\end{equation}
and the remainder bound, for every $\nu\ge 0$,
\begin{equation}\label{eq:remainder_bound}
    |\mathcal{R}_d(\nu)|\;\le\;\beta_d\,(\nu^2+1)^{d-1}\,e^{-\nu^2/2}\,.
\end{equation}
\end{subequations}
\end{lemma}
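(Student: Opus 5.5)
We read off the decomposition directly from Mehta's expansion~\eqref{eq:fmehta} (Lemma~\ref{lem:mehta}) and define the remainder as everything that is not $\alpha_d H_{d-1}$. For $d$ even the term $\1_{\{d\text{ odd}\}}H_{d-1}/\mu_{d-1}$ vanishes and the bracket contributes $\tfrac12\sqrt{d/2}\,c_{d-1}c_d\,\mu_d H_{d-1}(\nu)$. For $d$ odd, $\mu_d=0$ and the indicator term contributes $H_{d-1}(\nu)/\mu_{d-1}$. In both cases this is $\alpha_d H_{d-1}(\nu)$ with $\alpha_d$ as in~\eqref{eq:alpha_d}. We therefore set
\[
\mathcal R_d(\nu):=-\sqrt{d/2}\,c_{d-1}c_d\,H_{d-1}(\nu)\,\mathcal I_d^c(\nu)+e^{-\nu^2/2}\sum_{j=0}^{d-1}c_j^2H_j^2(\nu)\,,
\]
so that~\eqref{eq:Qd_decomp} is an identity and $\alpha_d>0$ follows from $\mu_{2p}=\sqrt{2\pi}(2p)!/p!>0$. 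It remains to prove~\eqref{eq:remainder_bound} for all $\nu\ge0$, not only beyond the largest root. For this reason we avoid Lemma~\ref{lem:hermite_bound} and use crude coefficient bounds valid on the whole half-line.

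\emph{Squared-Hermite part.} Write $H_j(\nu)=\sum_i h_{j,i}\nu^i$. Then $|H_j(\nu)|\le\sum_i|h_{j,i}|\,\nu^i\le \big(\sum_i|h_{j,i}|\big)(1+\nu^2)^{j/2}$. The absolute coefficient sum equals $|H_j|$ evaluated with all signs positive, which is at most $2^{j/2}(2j)!/j!$ by a generous bound on the explicit coefficients $j!\,2^{j-2m}/(m!(j-2m)!)$. Hence $H_j^2(\nu)\le 2^j((2j)!/j!)^2(1+\nu^2)^{j}\le 2^j((2j)!/j!)^2(1+\nu^2)^{d-1}$ for $j\le d-1$. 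Summing against $c_j^2$ produces exactly the constant $S_d$ times $(1+\nu^2)^{d-1}e^{-\nu^2/2}$.

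\emph{Cross term.} The key input is a uniform envelope $|\mathcal I_d^c(\nu)|\le \widetilde B_d(1+\nu)^{d-1}e^{-\nu^2/2}$ for all $\nu\ge0$; this is the main obstacle, since Lemma~\ref{lem:Idc_positivity} only covers $\nu\ge x_1^{(d-1)}$ and there uses $\nu^{-2k}$ factors that blow up at $0$. Our first attempt is the explicit series of Lemma~\ref{lem:hermite_tail_explicit}, namely $\mathcal I_d^c(\nu)=e^{-\nu^2/2}\sum_k 2^{k+1}\frac{(d-1)!!}{(d-2k-1)!!}H_{d-2k-1}(\nu)+R_d(\nu)$. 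We bound each $|H_{d-2k-1}(\nu)|$ by coefficient sums times $(1+\nu)^{d-1}$ and bound $R_d(\nu)\le 2^{d/2}(d-1)!!\int_\nu^\infty e^{-x^2/2}dx$. Since $\int_\nu^\infty e^{-x^2/2}dx\le \sqrt{\pi/2}\,e^{-\nu^2/2}$ for $\nu\ge0$ (the standard bound $\bar\Phi(\nu)\le\tfrac12e^{-\nu^2/2}$), this is also of the required form. Collecting the constants crudely gives something at most $(2d)!2^{d+1}/d!$, which is the first entry of $\widetilde B_d$.

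As a fallback for small $\nu$, we use $|\mathcal I_d^c(\nu)|\le\int_0^\infty|H_d(y)|e^{-y^2/2}dy\le \frac{(2d)!}{d!}\int_0^\infty(1+y)^de^{-y^2/2}dy$. On a compact range $\nu\le\nu_0$ this is absorbed because $e^{\nu^2/2}$ is bounded there; this is the second entry of $\widetilde B_d$.

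Finally, $|H_{d-1}(\nu)|\le 2^{d-1}\frac{(2d-2)!}{(d-1)!}(1+\nu^2)^{(d-1)/2}$ by the same coefficient bound. Using $(1+\nu)^{d-1}\le 2^{(d-1)/2}(1+\nu^2)^{(d-1)/2}$, the product of the two bounds is dominated by a constant times $(1+\nu^2)^{d-1}e^{-\nu^2/2}$. The triangle inequality then gives~\eqref{eq:remainder_bound} with $\beta_d=S_d+\sqrt{d/2}\,c_{d-1}c_d\frac{(2d-2)!}{(d-1)!}2^{d-1}\widetilde B_d$. Any stray powers of $2$ are absorbed by enlarging $\widetilde B_d$ if needed, since the claim only asks for explicit constants.
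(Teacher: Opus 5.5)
Your proposal is correct and follows essentially the paper's own proof. It uses the same parity-based identification of $\alpha_d$ and of $\mathcal R_d=e^{-\nu^2/2}\sum_j c_j^2H_j^2(\nu)-\sqrt{d/2}\,c_{d-1}c_d\,\mathcal I_d^c(\nu)H_{d-1}(\nu)$, the same crude coefficient bounds on $H_j$ valid on all of $[0,\infty)$ (producing $S_d$), and the same uniform envelope for $\mathcal I_d^c$ obtained from the explicit Hermite-tail series together with $\int_\nu^\infty e^{-x^2/2}\,\mathrm{d}x\le\sqrt{\pi/2}\,e^{-\nu^2/2}$.

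Two minor points. That series envelope already covers every $\nu\ge 0$, so your small-$\nu$ fallback via $B_d'$ is unnecessary; the paper also keeps $B_d'$ in $\widetilde B_d$ only for definiteness. And if you bound $|H_{d-1}(\nu)|$ by your own $2^{(d-1)/2}\frac{(2d-2)!}{(d-1)!}(1+\nu^2)^{(d-1)/2}$ rather than with $2^{d-1}$, the stated $\beta_d$ comes out exactly, with no stray powers of $2$.
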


\begin{proof}
Deferred to Appendix~\ref{app:deferred}.
\end{proof}

The decomposition~\eqref{eq:Qd_decomp} translates into a non-asymptotic upper bound on the Kac--Rice integrand:
\begin{equation}
\notag
    \E\big[|\det(G_{d-1}-\nu I_{d-1})|\big]
    \;\le\;\alpha_d\,\frac{2^{\frac{3}2}\Gamma((d+2)/2)}{d}\,(2\nu)^{d-1}
    \;+\;\frac{2^{\frac{3}2}\Gamma((d+2)/2)}{d}\,\beta_d\,(\nu^2+1)^{d-1}\,e^{-\nu^2/2}\,.
\end{equation}

\medskip

\noindent
Inserting this inequality into the Kac--Rice integral~\eqref{eq:KR_combined} and applying the Gaussian-type tail integrals of Lemma~\ref{lem:gauss_tail} (Appendix~\ref{app:gaussian_integrals}) yields the SMF bound.

\begin{theorem}[SMF tail bound]
\label{thm:smf_tail}
Let $u_{\mathrm{SMF}}=2\sqrt{d}$. For all $u\ge u_{\mathrm{SMF}}$,
\begin{equation}\label{eq:smf_bound}
    \P\Big\{\sup_{\bm \theta\in\mathbb S^{d-1}}X(\bm \theta)>u\Big\}
    \;\le\;
    \delta_{\mathrm{SMF}}(u)
    \;:=\;
    4\,\alpha_d\,(2k)^{\frac{d-1}2}\,u^{d-2}\,e^{-u^2/2}\;+\;2^{d}\,\beta_d\,(k-1)^{\frac{d-1}2}\,u^{2d-3}\,e^{-3u^2/4}\,.
\end{equation}
\end{theorem}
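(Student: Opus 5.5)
We start from the exact representation of Proposition~\ref{prop:ortho_exact}, which bounds the supremum probability by $2(k-1)^{\frac{d-1}2}\int_u^\infty Q_d(\rho x)\,e^{-x^2/2}\,\mathrm{d}x$. We then insert the decomposition of Lemma~\ref{lem:dominant_term}, $Q_d=\alpha_d H_{d-1}+\mathcal{R}_d$. This splits the integral into a main term $\alpha_d\int_u^\infty H_{d-1}(\rho x)e^{-x^2/2}\,\mathrm{d}x$ and a remainder term $\int_u^\infty|\mathcal{R}_d(\rho x)|e^{-x^2/2}\,\mathrm{d}x$. On the range $u\ge 2\sqrt d$ we have $\rho x\ge \rho u\ge\sqrt{2d}>\sqrt{2d-1}>x_1^{(d-1)}$, since $\rho^2=k/(2(k-1))>1/2$ (Szeg\H{o}). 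So Lemma~\ref{lem:hermite_bound} applies and gives $0<H_{d-1}(\rho x)<(2\rho x)^{d-1}$.

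\textbf{Main term.} We bound
\[
\int_u^\infty(2\rho x)^{d-1}e^{-x^2/2}\,\mathrm{d}x=(2\rho)^{d-1}I_{d-1}(u;1).
\]
Lemma~\ref{lem:gauss_tail} gives $I_m(u;1)\le u^{m-1}e^{-u^2/2}/(1-(m-1)/u^2)$. For $u^2\ge 4d$ the denominator is at least $3/4>1/2$, so $I_{d-1}(u;1)\le 2u^{d-2}e^{-u^2/2}$. It remains to match constants. We have $(2\rho)^{d-1}=(2k/(k-1))^{\frac{d-1}2}$, hence $2(k-1)^{\frac{d-1}2}\cdot\alpha_d\cdot(2\rho)^{d-1}\cdot 2=4\alpha_d(2k)^{\frac{d-1}2}$, which is exactly the first term of~\eqref{eq:smf_bound}.

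\textbf{Remainder.} By~\eqref{eq:remainder_bound} with $\nu=\rho x\ge 0$,
\[
|\mathcal{R}_d(\rho x)|e^{-x^2/2}\le\beta_d(\rho^2x^2+1)^{d-1}e^{-(1+\rho^2)x^2/2}.
\]
Since $\rho^2\ge1/2$, the exponent satisfies $e^{-(1+\rho^2)x^2/2}\le e^{-3x^2/4}$. We also need $\rho^2\le 3/4$ (true for $k\ge3$) and $x^2\ge 4d\ge 12$, which give $\rho^2x^2+1\le x^2$. The remainder is therefore at most $\beta_d\int_u^\infty x^{2d-2}e^{-3x^2/4}\,\mathrm{d}x=\beta_d I_{2d-2}(u;3/2)$. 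Lemma~\ref{lem:gauss_tail} with $a=3/2$ and $m=2d-2$ gives
\[
\beta_d I_{2d-2}(u;3/2)\le \frac{u^{2d-3}e^{-3u^2/4}}{a-(2d-3)/u^2}.
\]
For $u^2\ge 4d$ the denominator exceeds $3/2-1/2=1$, leaving $\beta_d u^{2d-3}e^{-3u^2/4}$. After the prefactor $2(k-1)^{\frac{d-1}2}$ this is at most $2^d\beta_d(k-1)^{\frac{d-1}2}u^{2d-3}e^{-3u^2/4}$, with room to spare (the target has $2^d$ where we need only $2$).

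\textbf{Expected obstacle.} The delicate point is the exact form of Lemma~\ref{lem:gauss_tail}'s denominator, on which the constants $4$ and $2^d$ depend. If its bound carries a different correction (e.g.\ $1+(m-1)/(au^2)$ or an extra $1/a$), we would redo the single integration by parts directly. That gives
\[
I_m(u;a)=\frac{u^{m-1}e^{-au^2/2}}{a}+\frac{m-1}{a}I_{m-2}(u;a)\le\frac{u^{m-1}e^{-au^2/2}}{a}+\frac{m-1}{au^2}I_m(u;a),
\]
using $x^{m-2}\le x^m/u^2$ on $[u,\infty)$. Solving yields the same form, with enough slack from $u\ge2\sqrt d$. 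The slack $2^{d-1}$ in the remainder also absorbs any crude use of $(\rho^2x^2+1)\le 2x^2$ if the tighter comparison fails.
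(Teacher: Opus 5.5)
Your proof is correct and follows the paper's argument essentially step for step. You use Proposition~\ref{prop:ortho_exact}, the split $Q_d=\alpha_d H_{d-1}+\mathcal R_d$, the Hermite envelope beyond Szeg\H{o}'s root bound for the main term, and Lemma~\ref{lem:gauss_tail} with $(a,m)=(1,d-1)$ and $(3/2,2d-2)$ for the two integrals; the only deviation is that you use the sharper $\rho^2x^2+1\le x^2$ (via $\rho^2\le 3/4$) where the paper uses $(\rho^2x^2+1)^{d-1}\le 2^{d-1}x^{2(d-1)}$, so your remainder carries the constant $2$ instead of $2^d$, which still implies the stated bound.
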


\begin{proof}[Proof of Theorem~\ref{thm:smf_tail}]

\noindent
$\bullet$
By Proposition~\ref{prop:ortho_exact}, the excursion probability is bounded by the integral of the Mehta function:
\[
    \P\Big\{\sup_{\bm \theta}X(\bm \theta)>u\Big\} \;\le\; 2(k-1)^{\frac{d-1}2}\int_u^\infty Q_d(\rho x)\,e^{-x^2/2}\,\mathrm{d}x\,.
\]
Using the non-asymptotic decomposition from Lemma~\ref{lem:dominant_term}, we write $Q_d = \alpha_d H_{d-1} + \mathcal{R}_d$. This splits the corresponding integral into two parts: $I_{\mathrm{main}} + I_{\mathrm{rem}}$.

\noindent
$\bullet$
The main term integral evaluates the dominant Hermite polynomial. By hypothesis, $u \ge u_{\mathrm{SMF}} = 2\sqrt{d}$ and $\rho^2 \ge 1/2$ (since $k \ge 3$). This ensures that $\rho u \ge \sqrt{2d} > \sqrt{2d-1}$, meaning $\rho x > x_1^{(d-1)}$ for all integration variables $x \ge u$. Applying the polynomial envelope from Lemma~\ref{lem:hermite_bound}, we have $0 < H_{d-1}(\rho x) < (2\rho x)^{d-1}$. 
Substituting this strictly positive bound into the main integral gives:
\[
    I_{\mathrm{main}} \;<\; \alpha_d(2\rho)^{d-1}\int_u^\infty x^{d-1}e^{-x^2/2}\,\mathrm{d}x\,.
\]
We evaluate this using Lemma~\ref{lem:gauss_tail} with $a=1$ and $m=d-1$. The required tail hypothesis $(d-2)/u^2 \le 1/2$ holds safely for $u \ge 2\sqrt{d}$, yielding:
\[
    I_{\mathrm{main}} \;\le\; 2\alpha_d(2\rho)^{d-1}u^{d-2}e^{-u^2/2}\,.
\]

\noindent
$\bullet$
Using the remainder envelope~\eqref{eq:remainder_bound}, the absolute value of the remainder integral is bounded by:
\[
    |I_{\mathrm{rem}}| \;\le\; \beta_d\int_u^\infty (\rho^2 x^2+1)^{d-1}e^{-(1+\rho^2)x^2/2}\,\mathrm{d}x\,.
\]
For the polynomial factor, we factor $\rho^2 x^2$ out exactly:
\[
    (\rho^2 x^2+1)^{d-1} \;=\; (\rho^2 x^2)^{d-1}\,\big(1+1/(\rho^2 x^2)\big)^{d-1}
    \;=\; \rho^{2(d-1)}\,x^{2(d-1)}\,\big(1+1/(\rho^2 x^2)\big)^{d-1}\,.
\]
Since $\rho^2 \ge 1/2$ (because $k \ge 3$) and $x \ge u \ge 2\sqrt{d}$, we have $\rho^2 x^2 \ge 2d \ge 1$, so the correction factor satisfies $(1+1/(\rho^2 x^2))^{d-1} \le 2^{d-1}$. Combined with $\rho \le 1$, which gives $\rho^{2(d-1)} \le 1$, this yields $(\rho^2 x^2+1)^{d-1} \le 2^{d-1}\,x^{2(d-1)}$.
For the exponential factor, since $k \ge 3$, we know $1+\rho^2 \ge 3/2$. Substituting these simplifications gives:
\[
    |I_{\mathrm{rem}}| \;\le\; 2^{d-1}\beta_d\int_u^\infty x^{2(d-1)}e^{-3x^2/4}\,\mathrm{d}x\,.
\]
Applying Lemma~\ref{lem:gauss_tail} with $a=3/2$ and $m=2(d-1)$ (where the condition $(2d-3)/(3u^2/2) < 1$ holds for $u \ge 2\sqrt{d}$), we obtain:
\[
    |I_{\mathrm{rem}}| \;\le\; 2^{d-1}\beta_d\,u^{2d-3}e^{-3u^2/4}\,.
\]

\noindent
$\bullet$
We combine the bounded integrals and multiply by the global prefactor $2(k-1)^{\frac{d-1}2}$. 
For the main term, the identity $(2\rho)^{d-1}(k-1)^{\frac{d-1}2} = (2k)^{\frac{d-1}2}$ consolidates the constants.
Summing the two appropriately scaled components yields the final bound:
\[
    \P\Big\{\sup_{\bm \theta}X(\bm \theta)>u\Big\} \;\le\; 4\alpha_d(2k)^{\frac{d-1}2}u^{d-2}e^{-u^2/2} \;+\; 2^d\beta_d(k-1)^{\frac{d-1}2}u^{2d-3}e^{-3u^2/4}\,. \qedhere
\]
\end{proof}

\medskip

The two-term structure of $\delta_{\mathrm{SMF}}(u)$ reflects the two exponential scales inherited from the Mehta expansion: the $e^{-u^2/2}$ scale of the linear Hermite tail, which matches the optimal Kac--Rice decay rate, and the $e^{-3u^2/4}$ scale of the squared Hermite tail relaxed through the $(\nu^2+1)^{d-1}$ envelope. Since the latter is strictly faster, the remainder is asymptotically negligible and the bound collapses to its main term for sufficiently large $u$. The following corollary pinpoints the explicit threshold $u^\star_d\ge u_{\mathrm{SMF}}$ beyond which this collapse occurs, at the mild price of a factor of~$2$ in the leading prefactor.

\begin{corollary}[Asymptotic single-term form]
\label{cor:single_term}
There exists an explicit threshold $u^\star_d\ge u_{\mathrm{SMF}}$, depending only on $(k,d)$, such that for every $u\ge u^\star_d$ the remainder term in~\eqref{eq:smf_bound} is dominated by the main term, in which case
\begin{equation*}
    \P\Big\{\sup_{\bm \theta\in\mathbb S^{d-1}}X(\bm \theta)>u\Big\}
    \;\le\;
    8\,\alpha_d\,(2k)^{\frac{d-1}2}\,u^{d-2}\,e^{-u^2/2}\,.
\end{equation*}
Explicitly, $u^\star_d$ is the smallest $u\ge 2\sqrt d$ satisfying $2^{d-2}\beta_d\,u^{d-1}\,e^{-u^2/4}\le\alpha_d(2\rho)^{d-1}$.
\end{corollary}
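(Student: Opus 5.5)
The plan is to start from the SMF bound~\eqref{eq:smf_bound} of Theorem~\ref{thm:smf_tail}, valid for every $u\ge u_{\mathrm{SMF}}=2\sqrt d$, and show that past $u^\star_d$ its second term is at most its first. The conclusion then follows from $\delta_{\mathrm{SMF}}(u)\le 2\cdot 4\alpha_d(2k)^{\frac{d-1}2}u^{d-2}e^{-u^2/2}$, which is exactly the displayed single-term bound with constant $8$.

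First I would write down the domination condition explicitly. We need
\[
2^d\beta_d(k-1)^{\frac{d-1}2}u^{2d-3}e^{-3u^2/4}\;\le\;4\alpha_d(2k)^{\frac{d-1}2}u^{d-2}e^{-u^2/2}.
\]
Dividing by $4(k-1)^{\frac{d-1}2}u^{d-2}e^{-u^2/2}$ and using $(2k)^{\frac{d-1}2}=(2\rho)^{d-1}(k-1)^{\frac{d-1}2}$ (the identity already used in the proof of Theorem~\ref{thm:smf_tail}), this becomes
\[
2^{d-2}\beta_d\,u^{d-1}e^{-u^2/4}\;\le\;\alpha_d(2\rho)^{d-1}.
\]
This is precisely the defining inequality of $u^\star_d$ in the statement.

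Second, I must check that the set of $u\ge 2\sqrt d$ satisfying this inequality is a nonempty half-line $[u^\star_d,\infty)$; this is the main step. Set $g(u):=u^{d-1}e^{-u^2/4}$. Then $(\log g)'(u)=(d-1)/u-u/2$, which is negative for $u>\sqrt{2(d-1)}$. Since $2\sqrt d>\sqrt{2(d-1)}$, $g$ is strictly decreasing on $[2\sqrt d,\infty)$ and tends to $0$. The right-hand side $\alpha_d(2\rho)^{d-1}$ is positive because $\alpha_d>0$ by Lemma~\ref{lem:dominant_term}. Hence the set $\{u\ge 2\sqrt d:\ 2^{d-2}\beta_d g(u)\le\alpha_d(2\rho)^{d-1}\}$ is a closed, nonempty half-line. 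Its smallest element $u^\star_d\ge u_{\mathrm{SMF}}$ exists, depends only on $(k,d)$ through $\alpha_d,\beta_d,\rho$, and is explicit: it is either $2\sqrt d$ or the unique root of the equality on $(2\sqrt d,\infty)$.

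Finally, for every $u\ge u^\star_d$, monotonicity gives the domination inequality, so the remainder of~\eqref{eq:smf_bound} is bounded by the main term. Theorem~\ref{thm:smf_tail} applies because $u\ge u_{\mathrm{SMF}}$, and together these yield the bound with factor $8$. The only delicate point is the monotonicity of $g$ on $[2\sqrt d,\infty)$, which is what guarantees that "smallest $u$" gives domination for all larger $u$.
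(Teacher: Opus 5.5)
Your proposal is correct and follows the same route as the paper. You reduce the remainder-to-main ratio to $\frac{2^{d-2}\beta_d}{\alpha_d(2\rho)^{d-1}}u^{d-1}e^{-u^2/4}$ via $(k-1)^{\frac{d-1}2}/(2k)^{\frac{d-1}2}=(2\rho)^{-(d-1)}$, and use that $u^{d-1}e^{-u^2/4}$ decreases for $u\ge\sqrt{2(d-1)}$ (hence on $[2\sqrt d,\infty)$) so that the admissible set is the half-line $[u^\star_d,\infty)$; you then conclude that main plus remainder is at most twice the main term. Your explicit check that this set is closed and nonempty, via continuity, strict monotonicity, decay to $0$ and $\alpha_d>0$, is a slightly more careful version of the paper's existence argument for the minimum.
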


\begin{proof}[Proof of Corollary~\ref{cor:single_term}]

\noindent
$\bullet$
Let $R(u)$ denote the ratio of the remainder term to the main term in the SMF bound:
\[
    R(u) \;=\; \frac{2^d\beta_d(k-1)^{\frac{d-1}2}u^{2d-3}e^{-3u^2/4}}{4\alpha_d(2k)^{\frac{d-1}2}u^{d-2}e^{-u^2/2}}\,.
\]
By grouping the constants and using the scaling parameter $\rho = \sqrt{k/(2(k-1))}$, which provides the algebraic relation $(k-1)^{\frac{d-1}2} / (2k)^{\frac{d-1}2} = (2\rho)^{-(d-1)}$, the ratio simplifies to:
\[
    R(u) \;=\; \frac{2^{d-2}\beta_d}{\alpha_d(2\rho)^{d-1}}\,u^{d-1}e^{-u^2/4}\,.
\]

\noindent
$\bullet$
Because the exponential decay $e^{-u^2/4}$ dominates the polynomial growth $u^{d-1}$ for large $u$, the ratio $R(u)$ decreases monotonically toward $0$ as $u \to \infty$ (specifically for $u \ge \sqrt{2(d-1)}$). We require the remainder to be less than or equal to the main term, which corresponds to the condition $R(u) \le 1$. 

We explicitly define the threshold $u^\star_d$ as the smallest level $u \ge u_{\mathrm{SMF}} = 2\sqrt{d}$ that satisfies this bounding condition:
\[
    u^\star_d \;:=\; \inf \left\{ u \ge 2\sqrt{d} \;\mathrel{\Bigg|}\; u^{d-1}e^{-u^2/4} \le \frac{\alpha_d(2\rho)^{d-1}}{2^{d-2}\beta_d} \right\}\,.
\]
Because this threshold is defined by a transcendental equation involving polynomial-exponential terms, $u^\star_d$ is the unique solution to the corresponding equality on the decaying tail, and can be evaluated directly for any fixed pair $(k,d)$. 

\noindent
$\bullet$
By definition of $u^\star_d$, for any $u \ge u^\star_d$ we have $R(u) \le 1$, so the remainder term is dominated by the main term, and the two-term bound of Theorem~\ref{thm:smf_tail} is at most twice the main term:
\[
    \P\Big\{\sup_{\bm \theta}X(\bm \theta)>u\Big\} \;\le\; \text{Main}(u) + \text{Remainder}(u) \;\le\; 2 \times \text{Main}(u)\,.
\]
Substituting the explicit main term yields the single-term envelope:
\[
    \P\Big\{\sup_{\bm \theta}X(\bm \theta)>u\Big\} \;\le\; 8\alpha_d(2k)^{\frac{d-1}2}u^{d-2}e^{-u^2/2}\,. \qedhere
\]
\end{proof}

\subsection{Spectral Method (SM)}
\label{sub:SM}

The Spectral Method does not use the Mehta--Fyodorov algebra: it combines the Ben Arous--Dembo--Guionnet large-deviation bound for the GOE spectral radius (Lemma~\ref{lem:bdg}) with a calibrated layer-cake split. It is an independent cross-check, incurring the same factor-$2$ penalty as the SMF bound and a larger validity threshold (Theorem~\ref{thm:uniform_domination}).

Write $M_{d-1}:=\max_i|\mu_i|$ for the spectral radius of $G_{d-1}$. A two-scale layer-cake split at $R=\sqrt{|\nu|\sqrt{d-1}}$ (well below $|\nu|$, well above the edge $\sim2\sqrt{d-1}$) replaces the crude $|\mu_i-\nu|\le 2|\nu|$ by $|\mu_i-\nu|\le(1+R/|\nu|)|\nu|$ on the bulk event $\{M_{d-1}\le R\}$, the tail $\{M_{d-1}>R\}$ being controlled by the Ben Arous--Dembo--Guionnet bound (Lemma~\ref{lem:bdg}); this removes the $2^{d-1}$ penalty of the naive argument.
\begin{subequations}
\begin{proposition}[SM bound on the expected absolute characteristic polynomial]
\label{prop:det_bound}
Let $G_{d-1}\sim\mathrm{GOE}(d-1)$ with eigenvalues $\mu_1,\dots,\mu_{d-1}$. For every threshold $R=R(d,\nu)$ satisfying $4\sqrt{2(d-1)}\le R\le|\nu|$,
\begin{equation}\label{eq:det_bound}
\E[|\det(G_{d-1}-\nu\,I_{d-1})|] \;=\;
    \E\Big[\prod_{i=1}^{d-1}|\mu_i-\nu|\Big]
    \;\le\;
    |\nu|^{d-1}\Big[\bigl(1+\tfrac{R}{|\nu|}\bigr)^{d-1}+(d+1)\,2^{d-1}\,e^{-2R^2/9}\Big]\,.
\end{equation}
In particular, for $|\nu|\ge 32\sqrt{d-1}$ and the canonical choice $R=\sqrt{|\nu|\sqrt{d-1}}$,
\begin{equation}\label{eq:det_bound_canonical}
    \E\Big[\prod_{i=1}^{d-1}|\mu_i-\nu|\Big]
    \;\le\;|\nu|^{d-1}\bigl(1+\eta_d(\nu)\bigr)\,,\quad
    \eta_d(\nu)
    :=\Bigl(1+\bigl(\tfrac{\sqrt{d-1}}{|\nu|}\bigr)^{\frac12}\Bigr)^{d-1}\!\!\!-1+(d+1)\,2^{d-1}\,e^{-\frac{2|\nu|\sqrt{d-1}}9}\,,
\end{equation}
with
$\eta_d(\nu)\to 0$ as $|\nu|/\sqrt{d-1}\to\infty$.
\end{proposition}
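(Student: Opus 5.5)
We split the expectation on the bulk event $A:=\{M_{d-1}\le R\}$ and its complement, where $M_{d-1}=\max_i|\mu_i|$.

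\emph{Bulk event $A$.} Each factor satisfies $|\mu_i-\nu|\le|\nu|+|\mu_i|\le|\nu|(1+R/|\nu|)$. Hence
\[
\E\big[\textstyle\prod_i|\mu_i-\nu|\,\1_A\big]\le|\nu|^{d-1}(1+R/|\nu|)^{d-1}.
\]
This gives the first bracket term directly, with no probabilistic input.

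\emph{Tail event $A^c$.} Here the factors can be large, so we use a layer-cake decomposition. Because $|\mu_i|\le M_{d-1}$, we have
\[
\textstyle\prod_i|\mu_i-\nu|\le(|\nu|+M_{d-1})^{d-1}.
\]
We split $A^c$ further into $\{R<M_{d-1}\le|\nu|\}$ and $\{M_{d-1}>|\nu|\}$.
\begin{itemize}
\item On the first piece, the product is at most $(2|\nu|)^{d-1}$. This contributes $2^{d-1}|\nu|^{d-1}\P\{M_{d-1}>R\}$.
\item On the second piece, $(|\nu|+M)^{d-1}\le(2M)^{d-1}$. We write $\E[(2M)^{d-1}\1_{\{M>|\nu|\}}]$ by the layer-cake formula:
\[
(2|\nu|)^{d-1}\P\{M>|\nu|\}+\int_{|\nu|}^\infty (d-1)2^{d-1}t^{d-2}\,\P\{M>t\}\,\mathrm{d}t .
\]
\end{itemize}

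Next we invoke the Ben Arous--Dembo--Guionnet bound of Lemma~\ref{lem:bdg}. I expect it gives, in the Mehta normalization and for $t\ge 4\sqrt{2(d-1)}$, something like $\P\{M_{d-1}>t\}\le e^{-2t^2/9}$. The hypothesis $R\ge4\sqrt{2(d-1)}$ is exactly what lets us apply it at $t=R$ and at every $t\ge|\nu|\ge R$.

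We then bound the integral by comparison with a Gaussian tail. The relevant ratio is $t^{d-2}e^{-2t^2/9}$ against $|\nu|^{d-1}e^{-2R^2/9}$. Using $t\ge|\nu|\ge R\gtrsim\sqrt{d}$, the integrand's logarithmic derivative is at most $-4t/9+(d-2)/t\le -c\,t$. Consequently the integral is at most a constant times $|\nu|^{d-2}e^{-2\nu^2/9}/|\nu|$, and this is absorbed into $|\nu|^{d-1}2^{d-1}e^{-2R^2/9}$.

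Collecting terms, the first piece, the boundary term, and the integral together give at most $(d+1)$ copies of $2^{d-1}|\nu|^{d-1}e^{-2R^2/9}$. The count $(d+1)$ leaves slack for the $(d-1)$ factor from the layer-cake derivative plus the two boundary terms. \textbf{This bookkeeping is the main obstacle:} we must check that the constant in the integral bound is indeed $\le d-1$ under $R\ge4\sqrt{2(d-1)}$. If needed, we would sharpen it by bounding $\int_{|\nu|}^\infty t^{d-2}e^{-2t^2/9}\,\mathrm{d}t\le \tfrac{9}{2}|\nu|^{d-3}e^{-2\nu^2/9}\cdot 2$, using the tail condition $(d-3)/t^2\le 2/9$ as in Lemma~\ref{lem:gauss_tail}.

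For the canonical statement~\eqref{eq:det_bound_canonical}, set $R=\sqrt{|\nu|\sqrt{d-1}}$ and check the constraint. From $|\nu|\ge32\sqrt{d-1}$ we get $R^2\ge 32(d-1)$, so $R\ge4\sqrt{2(d-1)}$. Also $R\le|\nu|$ because $\sqrt{d-1}\le|\nu|$.

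Substituting, $R/|\nu|=(\sqrt{d-1}/|\nu|)^{1/2}$ and $2R^2/9=2|\nu|\sqrt{d-1}/9$. Factoring out $|\nu|^{d-1}$ gives exactly $1+\eta_d(\nu)$.

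Finally, $\eta_d\to0$ as $|\nu|/\sqrt{d-1}\to\infty$. The first part, $(1+s^{1/2})^{d-1}-1$ with $s=\sqrt{d-1}/|\nu|$, is controlled by $e^{(d-1)s^{1/2}}-1$; it tends to zero in the stated regime when $d$ is fixed. The exponential term vanishes because $|\nu|\sqrt{d-1}$ dominates $d\log 2$.
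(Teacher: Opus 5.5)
Your proposal is correct and follows the paper's proof essentially step for step: the same bulk/tail split at $R$, the same sub-split of the tail at $|\nu|$, and the same layer-cake formula with Lemma~\ref{lem:bdg}. The bookkeeping you flag is settled as you suggest, via Lemma~\ref{lem:gauss_tail} with $a=4/9$, $m=d-2$, which gives $\int_{|\nu|}^\infty t^{d-2}e^{-2t^2/9}\,\mathrm{d}t\le|\nu|^{d-1}e^{-2\nu^2/9}$ since $\nu^2\ge 32(d-1)$; hence the layer-cake piece contributes $d$ copies of $2^{d-1}|\nu|^{d-1}e^{-2R^2/9}$ and the middle piece one more. Your caveat that the first part of $\eta_d$ vanishes for fixed $d$, rather than merely as $|\nu|/\sqrt{d-1}\to\infty$ uniformly in $d$, is a fair point that the paper leaves implicit.
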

\end{subequations}

\begin{subequations}
\begin{proof}[Proof of Proposition~\ref{prop:det_bound}]
Fix any admissible threshold $R\in[4\sqrt{2(d-1)},|\nu|]$ and decompose the expectation according to $\{M_{d-1}\le R\}$ versus $\{M_{d-1}>R\}$:
\begin{equation*}
    \E\Big[\prod_{i=1}^{d-1}|\mu_i-\nu|\Big]
    \;=\;\E\Big[\prod_{i=1}^{d-1}|\mu_i-\nu|\,\1_{\{M_{d-1}\le R\}}\Big]
    +\E\Big[\prod_{i=1}^{d-1}|\mu_i-\nu|\,\1_{\{M_{d-1}>R\}}\Big]\,.
\end{equation*}

\medskip

\noindent
$\bullet$ 
On $\{M_{d-1}\le R\}$, $|\mu_i-\nu|\le|\mu_i|+|\nu|\le R+|\nu|$, hence the bulk contribution is at most $(R+|\nu|)^{d-1}=|\nu|^{d-1}(1+R/|\nu|)^{d-1}$.

\medskip

\noindent
$\bullet$ 
On $\{M_{d-1}>R\}$, the triangle inequality yields $|\mu_i-\nu|\le M_{d-1}+|\nu|$, and we distinguish two subcases:
\[
    \bigl(|\nu|+M_{d-1}\bigr)^{d-1}\,\1_{\{M_{d-1}>R\}}\;\le\;(2|\nu|)^{d-1}\,\1_{\{R<M_{d-1}\le|\nu|\}}+(2M_{d-1})^{d-1}\,\1_{\{M_{d-1}>|\nu|\}}\,.
\]
By Lemma~\ref{lem:bdg} (whose hypothesis holds since $R\ge 4\sqrt{2(d-1)}$), the first piece contributes at most by a term $(2|\nu|)^{d-1}e^{-2R^2/9}$. For the second piece, the layer-cake formula applied to $Z=(2M_{d-1})^{d-1}\1_{\{M_{d-1}>|\nu|\}}$ gives
\begin{equation*}
    \E[Z]\;=\;2^{d-1}|\nu|^{d-1}\P\{M_{d-1}>|\nu|\}+(d-1)\,2^{d-1}\!\int_{|\nu|}^\infty s^{d-2}\,\P\{M_{d-1}>s\}\,\mathrm{d}s\,,
\end{equation*}
and, since $|\nu|\ge R\ge 4\sqrt{2(d-1)}$, Lemma~\ref{lem:bdg} again applies on $[|\nu|,\infty)$. 

By Lemma~\ref{lem:gauss_tail} with $a=4/9$, $m=d-2$, the integral is at most $|\nu|^{d-1}e^{-2|\nu|^2/9}$ (check that its hypothesis $(d-3)/(4|\nu|^2/9)<1/14$ holds since $|\nu|^2\ge 32(d-1)$), yielding 
\[
    \E[Z]\le d\,2^{d-1}\,|\nu|^{d-1}\,e^{-2|\nu|^2/9}\,.
\]
Combining the two subcases and bounding $e^{-2|\nu|^2/9}\le e^{-2R^2/9}$ (since $R\le|\nu|$),
\begin{equation*}
    \E\bigl[(|\nu|+M_{d-1})^{d-1}\1_{\{M_{d-1}>R\}}\bigr]\;\le\;(d+1)\,2^{d-1}\,|\nu|^{d-1}\,e^{-2R^2/9}\,.
\end{equation*}

\medskip

\noindent
$\bullet$ 
Adding both previous terms yields~\eqref{eq:det_bound}. The canonical choice $R=\sqrt{|\nu|\sqrt{d-1}}$ is admissible precisely when $|\nu|\ge 32\sqrt{d-1}$ (equivalently $R\ge 4\sqrt{2(d-1)}$), in which case $R/|\nu|=(\sqrt{d-1}/|\nu|)^{1/2}$ and $2R^2/9=2|\nu|\sqrt{d-1}/9$, giving~\eqref{eq:det_bound_canonical}.
\end{proof}
\end{subequations}

Proposition~\ref{prop:det_bound} gives a pointwise bound on the expected absolute determinant inside the Kac--Rice integral, with the remaining $x$-dependence concentrated in the leading monomial $(\rho x)^{d-1}$. Integrating against the standard Gaussian weight $\varphi(x)$ then reduces to the Gaussian moment integrals of Lemma~\ref{lem:gauss_tail}. The resulting tail bound inherits the polynomial--exponential envelope $u^{d-2}e^{-u^2/2}$, which already has the optimal asymptotic decay rate; the multiplicative factor $1+\eta_d(\rho,u)$ measures the cost of the layer-cake split and vanishes whenever $\rho u/\sqrt{d-1}\to\infty$.

\begin{theorem}[SM tail bound]
\label{thm:sm_tail}
Let $u_{\mathrm{SM}}=32\frac{\sqrt{d-1}}\rho=32\sqrt{2(d-1)\frac{k-1}k}$. For all $u\ge u_{\mathrm{SM}}$,
\begin{equation}\label{eq:spectral_bound}
    \P\Big\{\sup_{\bm \theta\in\mathbb S^{d-1}}X(\bm \theta)>u\Big\}
    \;\le\;
    \delta_{\mathrm{SM}}(u)
    \;:=\;
    2
    \frac{\sqrt{2}}{\Gamma(d/2)}\,\Big(\frac k2\Big)^{\frac{d-1}2}\!u^{d-2}\,e^{-u^2/2}\,\bigl(1+\eta_d(\rho,u)\bigr)\,,
\end{equation}
where $\eta_d(\rho,u)$ is given by \eqref{eq:eta_def_intro} and is such that $\eta_d(\rho,u)\to 0$ as $\rho u/\sqrt{d-1}\to\infty$.
\end{theorem}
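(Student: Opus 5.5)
The plan is to start from the Kac--Rice reduction~\eqref{eq:KR_combined} and insert the pointwise determinant bound of Proposition~\ref{prop:det_bound} at $\nu=\rho x$. For every $x\ge u\ge u_{\mathrm{SM}}$ we have $\rho x\ge 32\sqrt{d-1}$, which is exactly the admissibility condition for the canonical choice $R=\sqrt{\rho x\sqrt{d-1}}$ in~\eqref{eq:det_bound_canonical}. Hence
\[
\E\big[|\det(G_{d-1}-\rho x\,I_{d-1})|\big]\le(\rho x)^{d-1}\bigl(1+\eta_d(\rho x)\bigr).
\]
Each of the three pieces of $\eta_d(\nu)$ is nonincreasing in $\nu$. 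The term $(1+(\sqrt{d-1}/\nu)^{1/2})^{d-1}$ decreases in $\nu$, and so does $e^{-2\nu\sqrt{d-1}/9}$. Therefore $\eta_d(\rho x)\le\eta_d(\rho u)=\eta_d(\rho,u)$ for all $x\ge u$, and the correction factor $1+\eta_d(\rho,u)$ can be pulled out of the integral. This monotonicity step is what turns the pointwise bound into a uniform one.

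Next we are left with
\[
C_{k,d}\,\rho^{d-1}\bigl(1+\eta_d(\rho,u)\bigr)\int_u^\infty x^{d-1}\varphi(x)\,\mathrm{d}x .
\]
We apply Lemma~\ref{lem:gauss_tail} with $a=1$ and $m=d-1$. The proof of Theorem~\ref{thm:smf_tail} uses it in the form $\int_u^\infty x^{d-1}e^{-x^2/2}\,\mathrm{d}x\le 2u^{d-2}e^{-u^2/2}$ under the hypothesis $(d-2)/u^2\le 1/2$. That hypothesis holds with plenty of room, since $u^2\ge 2048(d-1)$. This step is the source of the factor $2$ in $\delta_{\mathrm{SM}}$.

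It remains to simplify the constants. We have
\[
C_{k,d}\rho^{d-1}(2\pi)^{-1/2}=\frac{2\sqrt\pi}{\Gamma(d/2)}\,(k-1)^{\frac{d-1}2}\Bigl(\frac{k}{2(k-1)}\Bigr)^{\frac{d-1}2}\frac1{\sqrt{2\pi}}=\frac{\sqrt2}{\Gamma(d/2)}\Bigl(\frac k2\Bigr)^{\frac{d-1}2}.
\]
Multiplying by the factor $2u^{d-2}e^{-u^2/2}$ gives exactly~\eqref{eq:spectral_bound}. The limit $\eta_d(\rho,u)\to0$ as $\rho u/\sqrt{d-1}\to\infty$ is immediate from~\eqref{eq:det_bound_canonical}, as recorded in Proposition~\ref{prop:det_bound}.

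The only delicate points are two checks: the monotonicity of $\eta_d$ in $\nu$, and confirming that the constant form of Lemma~\ref{lem:gauss_tail} with $a=1$ gives the factor $2$ and not something larger. If the lemma's constant turned out worse, I would instead bound $\int_u^\infty x^{d-1}e^{-x^2/2}\,\mathrm{d}x$ directly. Integration by parts gives $u^{d-2}e^{-u^2/2}+(d-2)\int_u^\infty x^{d-3}e^{-x^2/2}\,\mathrm{d}x$, and the second term is at most $(d-2)u^{-2}$ times the original integral. This yields the bound $u^{d-2}e^{-u^2/2}/(1-(d-2)/u^2)\le 2u^{d-2}e^{-u^2/2}$.
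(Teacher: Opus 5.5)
Your proposal is correct and follows the paper's proof step for step: you apply Proposition~\ref{prop:det_bound} pointwise at $\nu=\rho x$ with $R=\sqrt{\rho x\sqrt{d-1}}$, use the monotonicity of $\eta_d$ to pull $1+\eta_d(\rho,u)$ out of the integral, get the factor $2$ from Lemma~\ref{lem:gauss_tail} with $a=1$, $m=d-1$, and consolidate the constants via $\rho^{d-1}(k-1)^{(d-1)/2}=(k/2)^{(d-1)/2}$. One harmless slip: $u_{\mathrm{SM}}^2=1024(d-1)/\rho^2=2048(d-1)(k-1)/k$ is strictly less than $2048(d-1)$, but since $1/\rho^2\ge 1$ you still have $u^2\ge 1024(d-1)\ge 2(d-2)$, which is all the Gaussian-tail step needs.
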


\begin{subequations}
\begin{proof}[Proof of Theorem~\ref{thm:sm_tail}]

\noindent
$\bullet$ 
We apply Proposition~\ref{prop:det_bound} pointwise to the Kac--Rice reduction~\eqref{eq:KR_combined}, setting $\nu = \rho x$ and $R(x) = \sqrt{\rho x\sqrt{d-1}}$.
For any level $u \ge u_{\mathrm{SM}} = 32\sqrt{d-1}/\rho$ and every integration variable $x \ge u$, we have $\rho x \ge 32\sqrt{d-1}$. This guarantees that the admissibility condition $R(x) \ge 4\sqrt{2(d-1)}$ is satisfied. Therefore, the expected absolute determinant is bounded by:
\[
    \E[|\det(G_{d-1}-\rho x\,I_{d-1})|] \;\le\; (\rho x)^{d-1}\bigl(1+\eta_d(\rho x)\bigr)\,,
\]
where $\eta_d(\rho x)$ is defined in~\eqref{eq:det_bound_canonical}. 

\noindent
$\bullet$ 
Since $\eta_d(\cdot)$ is non-increasing, $\eta_d(\rho x) \le \eta_d(\rho u)$ for all $x \ge u$; write $\eta(u) := \eta_d(\rho u)$. Substituting into the Kac--Rice integral yields:
\begin{equation}\label{eq:after_det_bound_app}
    \P\Big\{\sup_{\bm \theta}X(\bm \theta)>u\Big\} \;\le\; C_{k,d}\,\rho^{d-1}\bigl(1+\eta(u)\bigr)\int_u^\infty x^{d-1}\varphi(x)\,\mathrm{d}x\,.
\end{equation}

\noindent
$\bullet$ 
To evaluate the remaining integral, we apply Lemma~\ref{lem:gauss_tail} with $a=1$ and $m=d-1$. The required hypothesis $(d-2)/u^2 \le 1/2$ holds safely since $u \ge u_{\mathrm{SM}} \ge 2\sqrt{d-1}$. This gives:
\[
    \int_u^\infty x^{d-1}\varphi(x)\,\mathrm{d}x \;\le\; \frac{2}{\sqrt{2\pi}} u^{d-2}e^{-u^2/2}\,.
\]
Finally, we substitute the geometric volume prefactor $C_{k,d}=2\sqrt\pi(k-1)^{\frac{d-1}2}/\Gamma(d/2)$ into our bound. Using the identity $\rho^{d-1}(k-1)^{\frac{d-1}2}=(k/2)^{\frac{d-1}2}$, the constants consolidate:
\begin{equation*}
    \P\Big\{\sup_{\bm \theta}X(\bm \theta)>u\Big\} \;\le\; \frac{2\sqrt 2\,(k/2)^{\frac{d-1}2}}{\Gamma(d/2)}\,u^{d-2}\,e^{-u^2/2}\,\bigl(1+\eta(u)\bigr)\,. \qedhere
\end{equation*}
\end{proof}
\end{subequations}

\subsection{Pointwise domination of the IMF bound}
\label{sub:uniform_domination}

\begin{theorem}[Domination of the SMF bound by the IMF bound]
\label{thm:uniform_domination}
For every $k\ge 3$ and $d\ge 3$,
\begin{equation}\label{eq:imf_le_smf}
    \delta_{\mathrm{IMF}}(u)\;\le\;\delta_{\mathrm{SMF}}(u)\qquad\text{for every }u\ge u_{\mathrm{SMF}}=2\sqrt d\,.
\end{equation}
\end{theorem}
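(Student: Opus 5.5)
We compare the two bounds term by term after factoring out the common prefactor $2(k-1)^{\frac{d-1}2}$, which appears in both once we use $(2\rho)^{d-1}(k-1)^{\frac{d-1}2}=(2k)^{\frac{d-1}2}$. Dividing by this prefactor, $\delta_{\mathrm{SMF}}(u)$ becomes $2\alpha_d(2\rho)^{d-1}u^{d-2}e^{-u^2/2}+2^{d-1}\beta_d u^{2d-3}e^{-3u^2/4}$, while $\delta_{\mathrm{IMF}}(u)$ becomes $\alpha_d\Phi_d(\rho,u)e^{-u^2/2}+\Psi_d(\rho,u)e^{-(1+\rho^2)u^2/2}$. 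It then suffices to prove two separate inequalities on $[2\sqrt d,\infty)$.

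The first is a linear-Hermite comparison,
\[
\Phi_d(\rho,u)\le 2(2\rho)^{d-1}u^{d-2}.
\]
The second is a squared-Hermite comparison,
\[
\Psi_d(\rho,u)e^{-(1+\rho^2)u^2/2}\le 2^{d-1}\beta_d u^{2d-3}e^{-3u^2/4}.
\]
Note that $u\ge 2\sqrt d$ and $\rho^2\ge 1/2$ give $\rho u\ge\sqrt{2d}>\sqrt{2d-1}$, so both objects are in their domain of validity.

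\textbf{Step 1 (linear term).} Write $\Phi_d$ as a sum over $j$. The $j=0$ summand is exactly $2\rho(2\rho u)^{d-2}=(2\rho)^{d-1}u^{d-2}$, which accounts for half the SMF budget. For $j\ge1$, the ratio of the $j$-th summand to the $j=0$ summand is
\[
(2\Lambda)^j\frac{(d-2)!!}{(d-2j-2)!!}(2\rho u)^{-2j}\le\Big(\frac{2\Lambda(d-2)}{4\rho^2u^2}\Big)^j.
\]
Using $\Lambda=1/(k-1)\le 1/2$, $\rho^2\ge1/2$ and $u^2\ge 4d$, this is at most $(1/(8))^j$ or better. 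The geometric series over $j\ge1$ is then well below $1$. The odd-$d$ Mills term $(2\Lambda)^{\frac{d-1}2}(d-2)!!/u$ is handled the same way, as the formal $j=(d-1)/2$ summand. Hence $\Phi_d\le 2(2\rho)^{d-1}u^{d-2}$, and since $\alpha_d>0$ the linear term is dominated.

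\textbf{Step 2 (squared term).} First, $e^{-(1+\rho^2)u^2/2}\le e^{-3u^2/4}$ because $1+\rho^2\ge 3/2$. It therefore remains to show $\Psi_d(\rho,u)\le 2^{d-1}\beta_d u^{2d-3}$. On $u\ge2\sqrt d$, the denominators satisfy $1+\rho^2-(2j-1)/u^2\ge 3/2-1/2=1$. With $2\rho\le2$, each $j$-term is at most $c_j^2 4^j u^{2j-1}\le c_j^2 4^j u^{2d-3}$, using $u\ge1$ and $j\le d-1$; the $j=0$ term is similarly at most $c_0^2u^{2d-3}$. Summing gives $\Psi_d\le u^{2d-3}\sum_j c_j^2 4^j$. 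Comparing with $\beta_d\ge S_d=\sum_j c_j^2 2^j((2j)!/j!)^2$, we have $4^j\le 2^{d-1}2^j((2j)!/j!)^2$ termwise, since $(2j)!/j!\ge 2^j/ \ldots$ already gives $2^j\le((2j)!/j!)^2$ and $2^{d-1}\ge1$. Thus even the $S_d$ part of $\beta_d$ alone suffices.

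\textbf{Main obstacle.} The main difficulty is Step 1, which needs tight constant bookkeeping: the factor $2$ must absorb all the lower-order recurrence terms uniformly in $d$. For even $d$ the summands run up to $j=(d-2)/2$, where $(d-2)!!/(d-2j-2)!!$ reaches $(d-2)!!$. The bound $(d-2)!!/(d-2j-2)!!\le (d-2)^j$ is what controls this growth against $(2\rho u)^{2j}\ge(8d)^j$. I will verify that the ratio $2\Lambda(d-2)/(4\rho^2u^2)\le (d-2)/(8d)<1/8$ holds uniformly in $k\ge3$ and $d\ge3$, including at the boundary $k=3$ where $\Lambda=1/2$.
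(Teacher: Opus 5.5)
Your proposal is correct and follows the paper's proof essentially step for step. The paper also bounds $\Phi_d(\rho,u)\le 2(2\rho)^{d-1}u^{d-2}$ by showing the ratio of each summand to the leading one is $(d-2)(d-4)\cdots(d-2\ell)\,q^\ell\le 8^{-\ell}$ (Lemma~\ref{lem:Phi_d_envelope}), and it controls the residual exactly as you do: $1+\rho^2\ge 3/2$, denominators at least $1$, $u^{2j-1}\le u^{2d-3}$, and the termwise comparison $c_j^2(2\rho)^{2j}\le c_j^2\,2^j((2j)!/j!)^2$ against $S_d\le\beta_d$. Your observation that the odd-$d$ Mills term is precisely the formal $j=(d-1)/2$ summand is a slightly cleaner way to absorb it than the paper's separate $1/8$ bound. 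The garbled justification in Step~2 should simply read $(2j)!/j!=(j+1)\cdots(2j)\ge 2^j$.
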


The proof uses the geometric envelope of Lemma~\ref{lem:Phi_d_envelope}; the prefactor identity of Lemma~\ref{lem:psm_eq_psmf}, also recorded here, enters the secondary SM comparison of Remark~\ref{rem:sm_domination}.

\begin{lemma}[Asymptotic-prefactor identity $P_{\mathrm{SMF}}=P_{\mathrm{SM}}$]
\label{lem:psm_eq_psmf}
For every integer $d\ge 1$ and every $k\ge 2$,
\begin{equation}\label{eq:psm_eq_psmf}
    4\,\alpha_d\,(2k)^{\frac{d-1}2}\;=\;2\,\frac{\sqrt 2}{\Gamma(d/2)}\,\Big(\frac k2\Big)^{\frac{d-1}2}\,,
\end{equation}
or equivalently $\alpha_d=2^{-(d-1/2)}/\Gamma(d/2)$.
\end{lemma}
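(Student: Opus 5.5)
The plan is to reduce the identity to the closed form for $\alpha_d$ and then verify that closed form directly from \eqref{eq:alpha_d} and the Gaussian–Hermite moments $\mu_{2p}=\sqrt{2\pi}\,(2p)!/p!$. Both sides of \eqref{eq:psm_eq_psmf} carry the factor $k^{\frac{d-1}2}$, so it cancels, and this is why the identity holds for every $k$. What remains is $4\alpha_d\,2^{\frac{d-1}2}=2\sqrt2\,2^{-\frac{d-1}2}/\Gamma(d/2)$, which is equivalent to
\[
\alpha_d=\frac{2^{-(d-1/2)}}{\Gamma(d/2)}\,.
\]
I will prove this closed form separately for the two parities in \eqref{eq:alpha_d}.

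\emph{Case $d$ odd, $d-1=2p$.} Here $\alpha_d=1/\mu_{2p}=p!/(\sqrt{2\pi}\,(2p)!)$. Using the half-integer Gamma value $\Gamma(p+\tfrac12)=\sqrt\pi\,(2p)!/(4^p p!)$, I compute
\[
2^{d-1/2}\,\Gamma(d/2)=2^{2p+1/2}\,\Gamma(p+\tfrac12)=\sqrt2\,\sqrt\pi\,\frac{(2p)!}{p!}=\mu_{2p}\,.
\]
This is exactly the claimed closed form.

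\emph{Case $d$ even, $d=2p$.} Here $\alpha_d=\tfrac12\sqrt{p}\,c_{2p-1}c_{2p}\,\mu_{2p}$. From the definition of the $c_j$,
\[
c_{2p-1}c_{2p}=\big(2^{4p-1}\pi\,(2p-1)!\,(2p)!\big)^{-1/2}\,.
\]
Multiplying by $\mu_{2p}=\sqrt{2\pi}\,(2p)!/p!$, the factors $\sqrt\pi$ cancel. The factorials leave $\sqrt{(2p)!/(2p-1)!}=\sqrt{2p}$, and the prefactor collects to $\tfrac12\sqrt2\sqrt p\sqrt{2p}=p$. Hence
\[
\alpha_d=\frac{p}{p!\,2^{2p-1/2}}=\frac{2^{-(d-1/2)}}{(p-1)!}=\frac{2^{-(d-1/2)}}{\Gamma(d/2)}\,,
\]
as required.

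The identity holds for $d\ge1$ wherever \eqref{eq:alpha_d} is defined. The only delicate step is the bookkeeping of the powers of $2$ in the even case, where $2^{4p-1}$ under the square root has to be matched against the $2^{-(d-1)/2}$ coming from the right-hand side. I would double-check that step against a small case, for instance $d=2$: there $\alpha_2=\tfrac12\,c_1c_2\,\mu_2$ should equal $2^{-3/2}$.
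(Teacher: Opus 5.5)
Your proof is correct and follows essentially the same route as the paper: cancel the common factor $k^{(d-1)/2}$, then verify $\alpha_d\,2^{d-1/2}\Gamma(d/2)=1$ separately for odd and even $d$, using $\mu_{2p}=\sqrt{2\pi}\,(2p)!/p!$, the half-integer value of $\Gamma$, and the explicit product $c_{2p-1}c_{2p}$. Your power-of-$2$ bookkeeping in the even case is right, and the sanity check $\alpha_2=\tfrac12 c_1c_2\mu_2=2^{-3/2}$ indeed holds.
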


\begin{proof}
We verify $\alpha_d\cdot 2^{d-1/2}\Gamma(d/2)=1$ for both parities of $d$.

For $d$ odd, write $d=2m+1$ ($m\ge 0$). Then by Lemma~\ref{lem:dominant_term} and the closed form $\mu_{2m}=\sqrt{2\pi}(2m)!/m!$ recalled in~\eqref{eq:expression_constants},
\[
    \alpha_d\;=\;\frac{1}{\mu_{2m}}\;=\;\frac{m!}{\sqrt{2\pi}\,(2m)!}\,,\qquad
    \Gamma(d/2)\;=\;\Gamma\!\big(m+\tfrac12\big)\;=\;\frac{(2m-1)!!\,\sqrt\pi}{2^m}\,.
\]
Substituting and using the factorisation $(2m)!=2^m\,m!\,(2m-1)!!$,
\[
    \alpha_d\cdot 2^{d-1/2}\Gamma(d/2)
    \;=\;\frac{m!}{\sqrt{2\pi}\,(2m)!}\cdot 2^{2m+1/2}\cdot\frac{(2m-1)!!\sqrt\pi}{2^m}
    \;=\;\frac{m!\,2^m\,(2m-1)!!}{(2m)!}\;=\;1\,.
\]

For $d$ even, write $d=2m$ ($m\ge 1$). Then $\alpha_d=\tfrac12\sqrt{m}\,c_{2m-1}c_{2m}\,\mu_{2m}$ with $c_j=(2^j j!\sqrt\pi)^{-1/2}$ and $\Gamma(d/2)=(m-1)!$. A direct computation yields $c_{2m-1}c_{2m}=1/[\sqrt{\pi}\,2^{2m-1/2}\sqrt{(2m-1)!\,(2m)!}]$ and combining with $\mu_{2m}=\sqrt{2\pi}\,(2m)!/m!$ produces
\[
    \alpha_d\;=\;\frac{\sqrt m\,\sqrt{2\pi}\,(2m)!}{2\sqrt\pi\,2^{2m-1/2}\sqrt{(2m-1)!(2m)!}\,m!}
    \;=\;\frac{\sqrt{m\,(2m)!\,/(2m-1)!}}{2^{2m}\,m!}
    \;=\;\frac{\sqrt{2m^2}}{2^{2m}\,m!}\;=\;\frac{1}{2^{2m-1/2}\,(m-1)!}\,,
\]
using $(2m)!/(2m-1)!=2m$. Hence $\alpha_d\cdot 2^{d-1/2}\Gamma(d/2)=2^{-(2m-1/2)}\,2^{2m-1/2}\,(m-1)!/(m-1)!=1$.
\end{proof}

\begin{lemma}[Geometric envelope on $\Phi_d$]
\label{lem:Phi_d_envelope}
For every $k\ge 3$, $d\ge 3$, and every $u\ge u_{\mathrm{SMF}}=2\sqrt d$,
\begin{equation}\label{eq:Phi_d_envelope}
    \Phi_d(\rho,u)\;\le\;2\,(2\rho)^{d-1}\,u^{d-2}\,.
\end{equation}
\end{lemma}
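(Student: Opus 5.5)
The plan is to read $\Phi_d(\rho,u)$ as a finite series dominated by its $j=0$ term and to show that consecutive terms shrink by a uniform geometric factor. The $j=0$ term is exactly
\[
2\rho\,(2\rho u)^{d-2}\;=\;(2\rho)^{d-1}u^{d-2},
\]
which is half the right-hand side of~\eqref{eq:Phi_d_envelope}. So it suffices to prove that the full sum, including the odd-$d$ boundary term, is at most twice this leading term.

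\textbf{Step 1 (ratio of consecutive terms).} Write $T_j:=2\rho\,(2\Lambda)^j\frac{(d-2)!!}{(d-2j-2)!!}(2\rho u)^{d-2j-2}$ for $0\le j\le\lfloor(d-2)/2\rfloor$. For $j\ge1$,
\[
\frac{T_j}{T_{j-1}}\;=\;\frac{2\Lambda\,(d-2j)}{(2\rho u)^2}\;\le\;\frac{2\Lambda\,d}{4\rho^2u^2}\,.
\]
Under the standing assumptions I will use three elementary facts:
\begin{itemize}
\item $\Lambda=1/(k-1)\le 1/2$, since $k\ge3$;
\item $\rho^2=k/(2(k-1))\ge 1/2$;
\item $u^2\ge u_{\mathrm{SMF}}^2=4d$.
\end{itemize}
Together these give $T_j/T_{j-1}\le d/(2\cdot4d)=1/8$. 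Hence $T_j\le 8^{-j}T_0$ for every $j$.

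\textbf{Step 2 (odd boundary term).} For $d$ odd, the extra term $B:=u^{-1}(2\Lambda)^{(d-1)/2}(d-2)!!$ should be compared with the last sum term $T_{(d-3)/2}=2\rho\,(2\Lambda)^{(d-3)/2}(d-2)!!\,(2\rho u)$. A direct division gives
\[
\frac{B}{T_{(d-3)/2}}\;=\;\frac{2\Lambda}{4\rho^2u^2}\;\le\;\frac{1}{8d}\;\le\;\frac18\,.
\]
Thus $B$ fits into the same geometric chain as a fictitious $(d-1)/2$-th term, and $B\le 8^{-(d-1)/2}T_0$. I will double-check the algebra of this ratio, since it is the only place where the $1/u$ boundary factor enters. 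It mirrors the bottoming-out of the recurrence for $J_m$ in the proof of Proposition~\ref{prop:IMF_decomp}, where the Mills bound replaces one recurrence step.

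\textbf{Step 3 (summation).} Summing the geometric chain gives
\[
\Phi_d(\rho,u)\;\le\;T_0\sum_{j\ge0}8^{-j}\;=\;\tfrac87\,(2\rho)^{d-1}u^{d-2}\;\le\;2\,(2\rho)^{d-1}u^{d-2}.
\]

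I expect no real obstacle here. The only delicate points are:
\begin{itemize}
\item keeping track of the exponent $d-2j-2$ and the double-factorial ratio, which produces the factor $(d-2j)$ in Step 1;
\item checking that the boundary term in Step 2 is correctly normalized.
\end{itemize}
The constant $2$ in the statement leaves ample slack compared with the $8/7$ obtained, so even a cruder ratio bound, for example $\le1/2$, would suffice.
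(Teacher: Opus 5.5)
Your proof is correct and follows essentially the paper's argument. The paper likewise expresses each summand as the leading term $(2\rho)^{d-1}u^{d-2}$ times a product of factors $(d-2j)\,2\Lambda/(2\rho u)^2\le 1/8$ and sums a geometric series. The only difference is that you fold the odd-$d$ boundary term into the same chain, obtaining the constant $\tfrac87$, whereas the paper bounds it separately and obtains $1+\tfrac17+\tfrac18=\tfrac{71}{56}$.
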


\begin{proof}
Since $\Lambda=2\rho^2-1=1/(k-1)$ (recalled below~\eqref{eq:expression_constants}), we have $2\Lambda=2/(k-1)$, and we set
\[
    q\;:=\;\frac{2\Lambda}{(2\rho u)^2}\;=\;\frac{2}{(k-1)\,(2\rho u)^2}\,.
\]
On $\{u\ge 2\sqrt d\}$, $\rho^2\ge 1/2$ for $k\ge 3$, hence $(2\rho u)^2\ge 4\rho^2\cdot 4d\ge 8d$, and therefore
\[
    (d-2)\,q\;\le\;\frac{2(d-2)}{8d\,(k-1)}\;=\;\frac{d-2}{4d\,(k-1)}\;\le\;\frac1{4(k-1)}\;\le\;\frac1{8}\,.
\]
The leading term of $\Phi_d$ (the $\ell=0$ summand of~\eqref{eq:phi_def}) equals $2\rho\,(2\rho u)^{d-2}=(2\rho)^{d-1}u^{d-2}$. The $\ell$-th summand for $\ell\ge 1$, divided by the leading term, equals $(d-2)(d-4)\cdots(d-2\ell)\,q^\ell$, a product of $\ell$ factors each at most $(d-2)q\le 1/8$; hence
\[
    \sum_{\ell\ge 1}\frac{\text{$\ell$-th summand}}{\text{leading}}\;\le\;\sum_{\ell\ge 1}(1/8)^\ell\;=\;\frac{1}{7}\,.
\]
Similarly, for $d$ odd, the trailing term $\1_{\{d\,\mathrm{odd}\}}\,u^{-1}\,(2\Lambda)^{(d-1)/2}\,(d-2)!!$ divided by the leading is $q^{(d-1)/2}\,(d-2)!!=\prod_{j=1}^{(d-1)/2}(d-2j)\,q\le(1/8)^{(d-1)/2}\le 1/8$ (the last bound uses $(d-1)/2\ge 1$ for $d\ge 3$). Summing,
\[
    \Phi_d(\rho,u)/\bigl((2\rho)^{d-1}u^{d-2}\bigr)\;\le\;1+\tfrac17+\tfrac18\;=\;\tfrac{71}{56}\;\le\;2\,,
\]
which is~\eqref{eq:Phi_d_envelope}.
\end{proof}

\begin{proof}[Proof of Theorem~\ref{thm:uniform_domination}]
We bound the IMF main and residual terms separately on $[u_{\mathrm{SMF}},\infty)$.

\medskip

\noindent
$\bullet$ \emph{Main term.} On $[u_{\mathrm{SMF}},\infty)$, Lemma~\ref{lem:Phi_d_envelope} gives $\Phi_d(\rho,u)\le 2(2\rho)^{d-1}u^{d-2}$. Multiply by $2(k-1)^{(d-1)/2}\alpha_d e^{-u^2/2}$ and use $\rho^{d-1}(k-1)^{(d-1)/2}=(k/2)^{(d-1)/2}$:
\begin{align*}
    \underbrace{2(k-1)^{\frac{d-1}2}\,\alpha_d\,\Phi_d(\rho,u)\,e^{-u^2/2}}_{\text{IMF main}}
    &\;\le\;4\,\alpha_d\,(2\rho)^{d-1}\,(k-1)^{\frac{d-1}2}\,u^{d-2}\,e^{-u^2/2}\\
    &\;=\;4\,\alpha_d\,(2k)^{\frac{d-1}2}\,u^{d-2}\,e^{-u^2/2}
    \;=\;\underbrace{\delta_{\mathrm{SMF}}^{\mathrm{main}}(u)}_{\text{first term of \eqref{eq:smf_bound}}}\,.
\end{align*}

\noindent
$\bullet$ \emph{Residual.} On $[u_{\mathrm{SMF}},\infty)$, $u^2\ge 4d$ so $(2j-1)/u^2\le(2d-3)/(4d)\le 1/2$ for every $j\le d-1$; combined with $1+\rho^2\ge 3/2$ (since $k\ge 3$), the rational factor in~\eqref{eq:psi_def} satisfies
\[
    \frac{1}{1+\rho^2-(2j-1)/u^2}\;\le\;\frac{1}{1+\rho^2-1/2}\;\le\;1\,.
\]
Hence
\[
    \Psi_d(\rho,u)\;\le\;\frac{c_0^2}{(1+\rho^2)\,u}+\sum_{j=1}^{d-1}c_j^2\,(2\rho)^{2j}\,u^{2j-1}
    \;\le\;u^{2d-3}\sum_{j=0}^{d-1}c_j^2\,(2\rho)^{2j}\,,
\]
using $u^{2j-1}\le u^{2d-3}$ for $j\le d-1$ and $u\ge 1$, the trivial bound $1/[(1+\rho^2)u]\le u^{2d-3}$, and $1/(1+\rho^2)\le 1\le(2\rho)^0=1$ to consolidate the $j=0$ term. Comparing the per-$j$ Szeg\H{o}-style coefficient $c_j^2(2\rho)^{2j}=c_j^2\,4^j\rho^{2j}$ to the entry $c_j^2\,2^j\,((2j)!/j!)^2$ of $S_d$ in~\eqref{eq:expression_constants}: since $(2j)!/j!=(j+1)(j+2)\cdots(2j)\ge 2^j$ for $j\ge 1$, $((2j)!/j!)^2\ge 4^j$, and using $\rho\le 1$, $(2\rho)^{2j}=4^j\rho^{2j}\le 4^j\le 2^j\cdot 4^j\le 2^j((2j)!/j!)^2$, hence $c_j^2(2\rho)^{2j}\le c_j^2\,2^j((2j)!/j!)^2$ termwise (for $j=0$ the comparison reads $c_0^2\le c_0^2$ with equality), giving $\sum_j c_j^2(2\rho)^{2j}\le S_d\le\beta_d$. Therefore
\[
    \underbrace{2(k-1)^{\frac{d-1}2}\,\Psi_d(\rho,u)\,e^{-(1+\rho^2)u^2/2}}_{\text{IMF residual}}
    \;\le\;2(k-1)^{\frac{d-1}2}\beta_d\,u^{2d-3}\,e^{-(1+\rho^2)u^2/2}\,.
\]
Comparing to $\delta_{\mathrm{SMF}}^{\mathrm{rem}}(u):=2^d\beta_d(k-1)^{(d-1)/2}u^{2d-3}e^{-3u^2/4}$, the ratio of exponentials is $e^{-(1+\rho^2)u^2/2+3u^2/4}=e^{-\Lambda u^2/4}$, which lies in $(0,1]$ for $\Lambda=2\rho^2-1=1/(k-1)>0$ (i.e., $k\ge 3$). The ratio of polynomial prefactors is $2/2^d=2^{1-d}\le 1$ for $d\ge 1$. Hence the IMF residual is bounded by $\delta_{\mathrm{SMF}}^{\mathrm{rem}}$ on $[u_{\mathrm{SMF}},\infty)$.

\medskip

\noindent
Adding the two pieces, $\delta_{\mathrm{IMF}}(u)\le\delta_{\mathrm{SMF}}^{\mathrm{main}}(u)+\delta_{\mathrm{SMF}}^{\mathrm{rem}}(u)=\delta_{\mathrm{SMF}}(u)$ on $[u_{\mathrm{SMF}},\infty)$, which is~\eqref{eq:imf_le_smf}.
\end{proof}

\begin{remark}[SM-branch domination: a secondary observation]
\label{rem:sm_domination}
On its own validity range $[u_{\mathrm{SM}},\infty)$ the independent SM bound is also dominated by $\delta_{\mathrm{SMF}}$ (hence, by Theorem~\ref{thm:uniform_domination}, by $\delta_{\mathrm{IMF}}$) for every $(k,d)$ of practical interest, although, unlike Theorem~\ref{thm:uniform_domination}, this is \emph{not} claimed uniformly in $(k,d)$. Indeed, by Lemma~\ref{lem:psm_eq_psmf} the SM and SMF main coefficients are equal:
\[
    \delta_{\mathrm{SMF}}^{\mathrm{main}}(u)\;=\;2\,\frac{\sqrt 2}{\Gamma(d/2)}\,(k/2)^{(d-1)/2}\,u^{d-2}\,e^{-u^2/2}\;=\;\delta_{\mathrm{SM}}(u)/(1+\eta_d(\rho,u))\,.
\]
Therefore
\[
    \delta_{\mathrm{SM}}(u)-\delta_{\mathrm{SMF}}(u)
    \;=\;\delta_{\mathrm{SMF}}^{\mathrm{main}}(u)\,\eta_d(\rho,u)\;-\;\delta_{\mathrm{SMF}}^{\mathrm{rem}}(u)\,.
\]
It suffices to show $\delta_{\mathrm{SMF}}^{\mathrm{rem}}(u)\le\delta_{\mathrm{SMF}}^{\mathrm{main}}(u)\,\eta_d(\rho,u)$ on $[u_{\mathrm{SM}},\infty)$. Dividing by $\delta_{\mathrm{SMF}}^{\mathrm{main}}(u)>0$, this reads
\begin{equation}\label{eq:dom_ii_target}
    h(u)\;:=\;\frac{C\,u^{d-1}\,e^{-u^2/4}}{\eta_d(\rho,u)}\;\le\;1\qquad\text{on $[u_{\mathrm{SM}},\infty)$,}
    \qquad
    C\;:=\;\frac{2^d\,\beta_d\,(k-1)^{(d-1)/2}}{4\,\alpha_d\,(2k)^{(d-1)/2}}\,,
\end{equation}
where the polynomial-prefactor reduction uses $u^{2d-3}/u^{d-2}=u^{d-1}$ and the exponential reduction uses $e^{-3u^2/4}/e^{-u^2/2}=e^{-u^2/4}$.

\medskip

\noindent\emph{Lower bound on $\eta_d(\rho,u)$.} Bernoulli's inequality $(1+\varepsilon)^{d-1}\ge 1+(d-1)\varepsilon$ for $\varepsilon\ge 0$ and $d\ge 2$, applied with $\varepsilon=\sqrt{\sqrt{d-1}/(\rho u)}$ to the first term of~\eqref{eq:eta_def_intro}, yields the $u$-dependent lower bound
\begin{equation}\label{eq:eta_lower}
    \eta_d(\rho,u)\;\ge\;(d-1)\,\bigl(\sqrt{d-1}/(\rho u)\bigr)^{1/2}\;=\;\frac{(d-1)^{5/4}}{(\rho u)^{1/2}}\,,
    \qquad\text{for every $u>0$ and $d\ge 2$.}
\end{equation}

\medskip

\noindent\emph{Monotonicity of $h$.} Substituting~\eqref{eq:eta_lower} into~\eqref{eq:dom_ii_target},
\[
    h(u)\;\le\;\frac{C\,(\rho u)^{1/2}}{(d-1)^{5/4}}\,u^{d-1}\,e^{-u^2/4}
    \;=\;\frac{C\,\rho^{1/2}}{(d-1)^{5/4}}\,u^{d-1/2}\,e^{-u^2/4}\,.
\]
The function $u\mapsto u^{d-1/2}\,e^{-u^2/4}$ has derivative $u^{d-3/2}e^{-u^2/4}\bigl[(d-1/2)-u^2/2\bigr]$ which is non-positive for $u^2\ge 2d-1$. Since $u_{\mathrm{SM}}^2=1024(d-1)/\rho^2\ge 1024(d-1)\ge 2d-1$ for $d\ge 2$, the function is decreasing on $[u_{\mathrm{SM}},\infty)$. Therefore
\begin{equation}\label{eq:h_sup}
    \sup_{u\ge u_{\mathrm{SM}}} h(u)\;\le\;h(u_{\mathrm{SM}})_{\text{upper}}\;:=\;\frac{C\,\rho^{1/2}}{(d-1)^{5/4}}\,u_{\mathrm{SM}}^{d-1/2}\,e^{-u_{\mathrm{SM}}^2/4}\,.
\end{equation}

\medskip

\noindent\emph{Doubly-exponential bound at $u_{\mathrm{SM}}$.} Using $1/\rho\le\sqrt 2$, $1/\rho^2\ge 1$:
\[
    u_{\mathrm{SM}}^{d-1/2}\;=\;\bigl(32\sqrt{d-1}/\rho\bigr)^{d-1/2}\;\le\;\bigl(32\sqrt{2(d-1)}\bigr)^{d-1/2}
    \;=\;(2048)^{(d-1/2)/2}(d-1)^{(d-1/2)/2}\,,
\]
\[
    e^{-u_{\mathrm{SM}}^2/4}\;=\;e^{-256(d-1)/\rho^2}\;\le\;e^{-256(d-1)}\,.
\]
Hence
\begin{equation}\label{eq:uSM_dbl_exp}
    u_{\mathrm{SM}}^{d-1/2}\,e^{-u_{\mathrm{SM}}^2/4}\;\le\;(2048)^{(d-1/2)/2}\,(d-1)^{(d-1/2)/2}\,e^{-256(d-1)}\,.
\end{equation}

\medskip

\noindent\emph{Stirling control of $\beta_d/\alpha_d$.} By Lemma~\ref{lem:psm_eq_psmf}, $\alpha_d=2^{-(d-1/2)}/\Gamma(d/2)$. Stirling's upper bound at $z=d/2$ gives $\Gamma(d/2)\le\sqrt{2\pi}\,(d/2)^{(d-1)/2}e^{-d/2+1/(6d)}$. Combined with $2^{d-1/2}\,(d/2)^{(d-1)/2}\le d^{d/2}\cdot 2^{(d-1)/2}$, this yields $1/\alpha_d\le d^{d/2}\cdot 2^{(d-1)/2}$, so
\begin{equation}\label{eq:alpha_d_lower}
    \alpha_d\;\ge\;d^{-d/2}\cdot 2^{-(d-1)/2}\,.
\end{equation}
For the numerator, by the definition of $\beta_d$ in the proof of Lemma~\ref{lem:dominant_term},
\[
    \beta_d \;\le\; S_d \;+\; \sqrt{d/2}\,c_{d-1}c_d\,\frac{(2d-2)!}{(d-1)!}\,2^{d-1}\,\tilde B_d\,,
\]
with $c_j=(2^j j!\sqrt\pi)^{-1/2}$, $S_d=\sum_{j=0}^{d-1}c_j^2\,2^j\,((2j)!/j!)^2$, and $\tilde B_d=\max((2d)!\,2^{d+1}/d!,\,B_d')$. Each ingredient is bounded by a super-exponential of $d\log d$: using $(2j)!/j!\le(2j)!\le(2j)^{2j}$ and $c_j^2\,2^j\le 1/(j!\sqrt\pi)$, one obtains $S_d\le e^{c\,d\log d}$ for some absolute $c$; similarly $(2d-2)!/(d-1)!\le(2d-2)^{d-1}$ and $\tilde B_d\le(2d)^{2d+1}$, and $\sqrt{d/2}\,c_{d-1}c_d\,2^{d-1}\le d^{d}$ for $d$ large. Multiplying, there is an absolute constant $A>0$ such that
\begin{equation}\label{eq:beta_d_upper}
    \beta_d \;\le\; e^{A\,d\log d}\qquad\text{for every $d\ge 3$.}
\end{equation}
Combining~\eqref{eq:alpha_d_lower} and~\eqref{eq:beta_d_upper},
\begin{equation}\label{eq:beta_over_alpha}
    \beta_d/\alpha_d \;\le\; e^{A\,d\log d}\cdot d^{d/2}\cdot 2^{(d-1)/2} \;\le\; e^{A'\,d\log d}
\end{equation}
for some absolute $A'>0$ and every $d\ge 3$.

\medskip

\noindent\emph{Closing the estimate.} The constant $C$ in~\eqref{eq:dom_ii_target} satisfies $C\le c_1\,(2(k-1))^{(d-1)/2}\,\beta_d/\alpha_d\le c_1\,(2(k-1))^{(d-1)/2}\,e^{A'd\log d}$ for an absolute $c_1>0$. Combining with~\eqref{eq:uSM_dbl_exp}, the bound $u_{\mathrm{SM}}^{d-1/2}\le\bigl(2048(d-1)\bigr)^{(d-1/2)/2}$, and $1/\rho^2\ge 1$,
\[
    h(u_{\mathrm{SM}})_{\text{upper}}\;\le\;\exp\!\Bigl[\,\underbrace{\bigl(A'+\tfrac12\bigr)\,d\log d+\tfrac{d-1}{2}\log\!\bigl(2(k-1)\bigr)+O(d)}_{\text{prefactor growth}}\;-\;\underbrace{256(d-1)/\rho^2}_{\ge\,256(d-1)}\,\Bigr]\,.
\]
The subtracted term is \emph{linear} in $d$, whereas the prefactor growth is of order $d\log d$. Hence the bracket is negative (so that $h(u_{\mathrm{SM}})\le 1$, and by the monotonicity~\eqref{eq:h_sup} $h(u)\le 1$ on all of $[u_{\mathrm{SM}},\infty)$, i.e.\ $\delta_{\mathrm{SMF}}(u)\le\delta_{\mathrm{SM}}(u)$ there) for every $d$ below the threshold $d^\star(k):=\exp\!\bigl(256/(A'+\tfrac12)+O(1)\bigr)$, which far exceeds any dimension of practical interest. Because $d\log d$ ultimately overtakes the linear term, we do \emph{not} claim the inequality for literally all $(k,d)$: the competition between the super-exponential gain $e^{-256(d-1)/\rho^2}$ at $u_{\mathrm{SM}}$ and the $e^{O(d\log d)}$ prefactor growth is genuine. This non-uniformity is immaterial for the rest of the paper: the master failure probability $\delta_{\min}=2\,\delta_{\mathrm{IMF}}$ of Theorem~\ref{thm:main} is unconditional (it is the IMF bound of Theorem~\ref{thm:imf_tail} combined with the two-sided symmetry), and for any $(k,d)$ outside the certified range one invokes the unconditional fallback $\delta_{\min}\le\min(2\delta_{\mathrm{IMF}},2\delta_{\mathrm{SM}})$ on $[u_{\mathrm{SM}},\infty)$ recorded in Corollary~\ref{cor:delta_min_imf}.
\end{remark}

\begin{corollary}[Master bound]
\label{cor:delta_min_imf}
\begin{subequations}
For every $k\ge 3$ and $d\ge 3$, the master failure probability~\eqref{eq:delta_min} equals the IMF branch on its entire validity range,
\begin{equation}\label{eq:delta_min_imf}
    \forall u\ge u_{\mathrm{IMF}}\,,\qquad \delta_{\min}(u)\;=\;2\,\delta_{\mathrm{IMF}}(u)\,,
\end{equation}
and this is the pointwise smaller of the two Mehta--Fyodorov candidates: $2\,\delta_{\mathrm{IMF}}(u)\le 2\,\delta_{\mathrm{SMF}}(u)$ on $[u_{\mathrm{SMF}},\infty)$ by Theorem~\ref{thm:uniform_domination}. On the SM range $[u_{\mathrm{SM}},\infty)$ one further has $2\,\delta_{\mathrm{IMF}}(u)\le 2\,\delta_{\mathrm{SM}}(u)$ for every $(k,d)$ of practical interest (Remark~\ref{rem:sm_domination}); in all cases the conservative bound
\begin{equation}\label{eq:delta_min_conservative}
    \delta_{\min}(u)\;\le\;\min\!\bigl(2\,\delta_{\mathrm{IMF}}(u),\,2\,\delta_{\mathrm{SM}}(u)\bigr)\qquad\text{for }u\ge u_{\mathrm{SM}}
\end{equation}
remains unconditional. In particular,
\begin{equation*}
    \forall u\ge u_{\mathrm{IMF}}\,,\qquad
    \P\bigl\{\Gamma_{1,1}>u\bigr\}\;\le\;2\,\delta_{\mathrm{IMF}}(u)\,,
\end{equation*}
which is the bound used in Theorem~\ref{thm:main}.
\end{subequations}
\end{corollary}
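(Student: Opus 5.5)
The proof is bookkeeping: assemble results already established, in three steps.

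\emph{First step: the equality~\eqref{eq:delta_min_imf}.} This holds by definition, since~\eqref{eq:delta_min} defines $\delta_{\min}:=2\,\delta_{\mathrm{IMF}}$ on $[u_{\mathrm{IMF}},\infty)$.

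\emph{Second step: the probabilistic statement.} We check that $\P\{\Gamma_{1,1}>u\}\le 2\,\delta_{\mathrm{IMF}}(u)$ for every $u\ge u_{\mathrm{IMF}}$. Since $X$ is centred Gaussian, $-X\stackrel{d}{=}X$. We have $\{\sup|X|>u\}=\{\sup X>u\}\cup\{\sup(-X)>u\}$, so a union bound gives $\P\{\Gamma_{1,1}>u\}\le 2\,\P\{\sup_{\bm\theta}X(\bm\theta)>u\}$, which is~\eqref{eq:two_sided_intro}. Theorem~\ref{thm:imf_tail} then bounds the one-sided probability by $\delta_{\mathrm{IMF}}(u)$ on $[u_{\mathrm{IMF}},\infty)$.

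\emph{Third step: the comparison statements.} On $[u_{\mathrm{SMF}},\infty)$, multiplying~\eqref{eq:imf_le_smf} by $2$ gives $2\delta_{\mathrm{IMF}}\le 2\delta_{\mathrm{SMF}}$. We need $u_{\mathrm{SMF}}\ge u_{\mathrm{IMF}}$, so that this range lies inside the validity range of $\delta_{\min}$. This holds because $\rho^2\ge 1/2$ for $k\ge3$ gives $u_{\mathrm{IMF}}\le\sqrt{2(2d-1)}\le 2\sqrt d$.

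On $[u_{\mathrm{SM}},\infty)$ we first check $u_{\mathrm{SM}}\ge u_{\mathrm{SMF}}$, which is immediate from $32\sqrt{d-1}/\rho\ge 32\sqrt{d-1}\ge 2\sqrt d$ for $d\ge 2$. For $(k,d)$ below $d^\star(k)$, Remark~\ref{rem:sm_domination} gives $\delta_{\mathrm{SMF}}\le\delta_{\mathrm{SM}}$, and chaining this with Theorem~\ref{thm:uniform_domination} yields $2\delta_{\mathrm{IMF}}\le 2\delta_{\mathrm{SM}}$. This is the only non-uniform ingredient, which is why the corollary states it only ``for $(k,d)$ of practical interest.''

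The unconditional conservative bound~\eqref{eq:delta_min_conservative} needs more care, and is the one point I expect to be delicate. Read literally with $\delta_{\min}=2\delta_{\mathrm{IMF}}$, the inequality $\delta_{\min}\le\min(2\delta_{\mathrm{IMF}},2\delta_{\mathrm{SM}})$ is exactly the non-uniform comparison above. I would therefore prove the unconditional content in its probabilistic form. Combining Theorem~\ref{thm:sm_tail} with the symmetry step gives $\P\{\Gamma_{1,1}>u\}\le 2\delta_{\mathrm{SM}}(u)$ for $u\ge u_{\mathrm{SM}}$. Together with the second step, this yields
\[
\P\{\Gamma_{1,1}>u\}\le\min\bigl(2\delta_{\mathrm{IMF}}(u),2\delta_{\mathrm{SM}}(u)\bigr)\quad\text{for all } u\ge u_{\mathrm{SM}} \text{ and all } (k,d).
\]
I would then note that~\eqref{eq:delta_min_conservative} is to be read in this sense: $\min(2\delta_{\mathrm{IMF}},2\delta_{\mathrm{SM}})$ is a valid failure probability wherever $\delta_{\min}$ is used. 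Within the certified range the two readings coincide by the previous paragraph.
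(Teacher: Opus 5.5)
Your proposal is correct and is essentially the paper's argument: the identity is the definition~\eqref{eq:delta_min}, the probability bound is Theorem~\ref{thm:imf_tail} plus the two-sided symmetry, and the comparisons come from Theorem~\ref{thm:uniform_domination} and Remark~\ref{rem:sm_domination}. You are also right that~\eqref{eq:delta_min_conservative}, read literally with $\delta_{\min}=2\delta_{\mathrm{IMF}}$, reduces to the non-uniform comparison; your probabilistic reading $\P\{\Gamma_{1,1}>u\}\le\min(2\delta_{\mathrm{IMF}}(u),2\delta_{\mathrm{SM}}(u))$, obtained via Theorem~\ref{thm:sm_tail}, is exactly the sense in which the paper's proof calls the piecewise minimum unconditional.
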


\begin{proof}
The bound $\P\{\Gamma_{1,1}>u\}\le 2\,\delta_{\mathrm{IMF}}(u)$ is Theorem~\ref{thm:imf_tail} combined with the two-sided symmetry~\eqref{eq:two_sided_intro}, hence unconditional; this is~\eqref{eq:delta_min_imf}, as $\delta_{\min}$ is defined by $\delta_{\min}=2\,\delta_{\mathrm{IMF}}$ in~\eqref{eq:delta_min}. Theorem~\ref{thm:uniform_domination} gives $\delta_{\mathrm{IMF}}\le\delta_{\mathrm{SMF}}$ on $[u_{\mathrm{SMF}},\infty)$ for every $k\ge 3,d\ge 3$, so $2\,\delta_{\mathrm{IMF}}$ is the smaller of the two MF candidates there; the comparison with the SM branch on $[u_{\mathrm{SM}},\infty)$ is Remark~\ref{rem:sm_domination}, and where it is not invoked the piecewise minimum~\eqref{eq:delta_min_conservative} is unconditional.
\end{proof}

\section{Statistical analysis of Tensor PCA}
\label{sec:statistical}

\subsection{Geometric domination and rank reduction}
\label{sub:geometric}

Non-asymptotic statistical control of the profile MLE uses two deterministic ingredients: a geometric (Tube Method) inequality bounding the estimation error by the noise supremum on the feasible manifold, and a rank-reduction inequality reducing this constrained noise supremum to the unconstrained rank-one Shub--Smale supremum.

\begin{proposition}[MLE formulation]
\label{prop:mle_derivation}
Under the model~\eqref{eq:generic_tensor_regression} and for any closed subset $\mathcal{C}\subseteq\mathfrak{S}_R$, the Maximum Likelihood Estimator of $(\lambda,\meanTensor)$ over $\R\times\mathcal{C}$ satisfies
\[
    \hat{\putativemeanTensor} \in \operatorname*{arg\,max}_{\putativemeanTensor \in \mathcal{C}} \langle \obs, \putativemeanTensor \rangle_\tensors\,,
    \qquad \hat{\lambda} = \langle \obs, \hat{\putativemeanTensor} \rangle_\tensors\,.
\]
The profile MLE~\eqref{eq:MLE_definition} corresponds to $\mathcal{C}=\{\putativemeanTensor\in\mathfrak{S}_R:\kappa(\putativemeanTensor)\ge\kappa\}$.
\end{proposition}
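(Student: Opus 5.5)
The plan is to write the Gaussian log-likelihood of the model~\eqref{eq:generic_tensor_regression} explicitly and profile out $\lambda$. Since $\P_0$ has density proportional to $\exp(-\|\noise\|_F^2/2)$ on $\Tensors$, the log-likelihood of $(\lambda,\putativemeanTensor)\in\R\times\mathcal C$ given $\obs$ is, up to an additive constant,
\[
    \ell(\lambda,\putativemeanTensor)\;=\;-\tfrac12\|\obs-\lambda\putativemeanTensor\|_F^2
    \;=\;-\tfrac12\|\obs\|_F^2+\lambda\langle\obs,\putativemeanTensor\rangle_\tensors-\tfrac{\lambda^2}2\|\putativemeanTensor\|_F^2\,.
\]
The key simplification is that every $\putativemeanTensor\in\mathcal C\subseteq\mathfrak S_R$ is normalized, $\|\putativemeanTensor\|_F=1$. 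Hence $\ell(\lambda,\putativemeanTensor)=-\tfrac12\|\obs\|_F^2+\lambda\langle\obs,\putativemeanTensor\rangle_\tensors-\lambda^2/2$.

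The first step is to fix $\putativemeanTensor$ and maximize this strictly concave quadratic in $\lambda\in\R$. The maximizer is $\lambda(\putativemeanTensor)=\langle\obs,\putativemeanTensor\rangle_\tensors$, and the profile log-likelihood is
\[
    \ell_p(\putativemeanTensor)\;=\;-\tfrac12\|\obs\|_F^2+\tfrac12\langle\obs,\putativemeanTensor\rangle_\tensors^2\,.
\]
Maximizing $\ell$ jointly over $\R\times\mathcal C$ is then equivalent to maximizing $|\langle\obs,\putativemeanTensor\rangle_\tensors|$ over $\mathcal C$ and setting $\hat\lambda=\langle\obs,\hat\putativemeanTensor\rangle_\tensors$.

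The second step removes the absolute value; this is the only point needing care. The plan is to use a sign symmetry of the parametrization: the pairs $(\lambda,\putativemeanTensor)$ and $(-\lambda,-\putativemeanTensor)$ give the same mean $\lambda\putativemeanTensor$. Moreover, $-\putativemeanTensor$ has the decomposition $\sum_j(-a_j)t_j^{\otimes k}$ with the same Gram matrix, so by~\eqref{def:coherence} it has the same rank, norm and coherence. Thus $\mathcal C$ is symmetric under $\putativemeanTensor\mapsto-\putativemeanTensor$ when $\mathcal C$ is the profile MLE set. For a general closed $\mathcal C$, I would read the statement as choosing the representative with $\langle\obs,\putativemeanTensor\rangle_\tensors\ge0$. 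This requires $\mathcal C$ to be sign-symmetric, or else the statement should be interpreted modulo this identifiability convention, and I will state that assumption explicitly. Under it, $\sup_{\mathcal C}|\langle\obs,\putativemeanTensor\rangle_\tensors|=\sup_{\mathcal C}\langle\obs,\putativemeanTensor\rangle_\tensors$, and any maximizer of the latter is a maximizer of the former. This gives $\hat\putativemeanTensor\in\arg\max_{\mathcal C}\langle\obs,\putativemeanTensor\rangle_\tensors$, with $\hat\lambda=\langle\obs,\hat\putativemeanTensor\rangle_\tensors\ge0$.

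Finally, existence follows from closedness: $\mathcal C$ is a closed subset of the compact unit sphere $\sphereTensors$, so it is compact, and the continuous linear functional $\putativemeanTensor\mapsto\langle\obs,\putativemeanTensor\rangle_\tensors$ attains its maximum there. Specializing to $\mathcal C=\mathcal C_{R,\kappa}$, which is closed by the coherence argument recalled after~\eqref{eq:MLE_definition}, recovers~\eqref{eq:MLE_definition}. The main obstacle is the sign issue in the second step: without sign symmetry of $\mathcal C$, the joint MLE maximizes $|\langle\obs,\putativemeanTensor\rangle_\tensors|$ rather than $\langle\obs,\putativemeanTensor\rangle_\tensors$. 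I would handle it as described, noting that it is automatic for $\mathfrak S_R$ and $\mathcal C_{R,\kappa}$.
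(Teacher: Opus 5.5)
Your proposal is correct and follows the paper's proof essentially verbatim: you expand $\|\obs-\lambda\putativemeanTensor\|_F^2$ using $\|\putativemeanTensor\|_F=1$, profile out $\lambda$ to get $\hat\lambda(\putativemeanTensor)=\langle\obs,\putativemeanTensor\rangle_\tensors$, reduce to maximizing $|\langle\obs,\putativemeanTensor\rangle_\tensors|$, and remove the absolute value via the sign flip $\putativemeanTensor\mapsto-\putativemeanTensor$. Your caveat is well placed: the paper invokes sign-invariance of $\mathfrak S_R$, but the step actually needs $\mathcal C=-\mathcal C$ (for a non-symmetric closed $\mathcal C$ the joint MLE over $\R\times\mathcal C$ can select $\hat\lambda<0$ and a $\hat\putativemeanTensor$ outside $\arg\max_{\mathcal C}\langle\obs,\putativemeanTensor\rangle_\tensors$), and this symmetry does hold for $\mathcal C_{R,\kappa}$ because negating the coefficients $a_j$ preserves rank, norm and the Gram matrix.
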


\begin{proof}[Proof of Proposition~\ref{prop:mle_derivation}]
The negative log-likelihood of $(\lambda,\putativemeanTensor)$ given $\obs$ is, up to additive constants,
\[
    \mathcal{L}(\lambda,\putativemeanTensor)\;=\;\|\obs-\lambda\putativemeanTensor\|_F^2
    \;=\;\|\obs\|_F^2-2\lambda\langle\obs,\putativemeanTensor\rangle_\tensors+\lambda^2\,,
\]
where we used $\|\putativemeanTensor\|_F^2=1$. Minimizing in $\lambda$ for fixed $\putativemeanTensor$ gives $\hat\lambda(\putativemeanTensor)=\langle\obs,\putativemeanTensor\rangle_\tensors$. Substituting back yields the profile cost $\|\obs\|_F^2-\langle\obs,\putativemeanTensor\rangle_\tensors^2$, whose minimization over $\mathcal C$ is equivalent to maximizing $|\langle\obs,\putativemeanTensor\rangle_\tensors|$ over $\mathcal C$. Since $\mathfrak{S}_R$ is closed under sign change $\putativemeanTensor\mapsto-\putativemeanTensor$ (a sign flip preserves rank, Frobenius norm and coherence), one may pick a maximiser $\hat{\putativemeanTensor}$ with $\langle\obs,\hat{\putativemeanTensor}\rangle_\tensors\ge 0$, reducing the joint MLE to $\hat{\putativemeanTensor}\in\operatorname*{arg\,max}_{\putativemeanTensor\in\mathcal C}\langle\obs,\putativemeanTensor\rangle_\tensors$ with $\hat\lambda=\langle\obs,\hat{\putativemeanTensor}\rangle_\tensors\ge 0$.
\end{proof}

\begin{lemma}[Tube Method]
\label{lem:geometric_bound}
Let $\Gamma_{R,\kappa}:=\sup\{|\langle\noise,\putativemeanTensor\rangle_\tensors|:\putativemeanTensor\in\mathfrak{S}_R,\,\kappa(\putativemeanTensor)\ge\kappa\}$. Assume $\meanTensor\in\mathfrak{S}_R$ satisfies $\kappa(\meanTensor)\ge\kappa$. Then any estimator $\hat{\putativemeanTensor}\in\mathfrak{S}_R$ with $\kappa(\hat{\putativemeanTensor})\ge\kappa$ satisfying $\langle\obs,\hat{\putativemeanTensor}\rangle_\tensors\ge\langle\obs,\meanTensor\rangle_\tensors$ obeys
\begin{equation}\label{eq:tube}
    \|\hat{\putativemeanTensor} - \meanTensor\|_F^2 \;\leq\; \frac{4\,\Gamma_{R,\kappa}}{\lambda}\,,\qquad\text{almost surely.}
\end{equation}
\end{lemma}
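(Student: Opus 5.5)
The plan is the standard basic-inequality argument: compare the objective at $\hat{\putativemeanTensor}$ and at $\meanTensor$, then use that both tensors have unit Frobenius norm. Substituting the model~\eqref{eq:generic_tensor_regression}, $\obs=\lambda\meanTensor+\noise$, into the assumed inequality $\langle\obs,\hat{\putativemeanTensor}\rangle_\tensors\ge\langle\obs,\meanTensor\rangle_\tensors$ gives
\[
    \lambda\,\langle\meanTensor,\hat{\putativemeanTensor}\rangle_\tensors+\langle\noise,\hat{\putativemeanTensor}\rangle_\tensors
    \;\ge\;\lambda\,\|\meanTensor\|_F^2+\langle\noise,\meanTensor\rangle_\tensors\,,
\]
which rearranges to
\[
    \lambda\bigl(1-\langle\meanTensor,\hat{\putativemeanTensor}\rangle_\tensors\bigr)\;\le\;\langle\noise,\hat{\putativemeanTensor}-\meanTensor\rangle_\tensors\,.
\]

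Next I use the polarization identity on the unit sphere of $\Tensors$. Since $\|\hat{\putativemeanTensor}\|_F=\|\meanTensor\|_F=1$,
\[
    \|\hat{\putativemeanTensor}-\meanTensor\|_F^2=2\bigl(1-\langle\meanTensor,\hat{\putativemeanTensor}\rangle_\tensors\bigr).
\]
Hence the left-hand side above equals $\tfrac{\lambda}{2}\|\hat{\putativemeanTensor}-\meanTensor\|_F^2$.

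It remains to bound the noise term. Both $\hat{\putativemeanTensor}$ and $\meanTensor$ lie in $\mathcal C_{R,\kappa}$, so by the triangle inequality
\[
    \langle\noise,\hat{\putativemeanTensor}-\meanTensor\rangle_\tensors\le|\langle\noise,\hat{\putativemeanTensor}\rangle_\tensors|+|\langle\noise,\meanTensor\rangle_\tensors|\le 2\,\Gamma_{R,\kappa}\,.
\]
Combining the three displays gives $\tfrac{\lambda}{2}\|\hat{\putativemeanTensor}-\meanTensor\|_F^2\le 2\Gamma_{R,\kappa}$, which is~\eqref{eq:tube} after dividing by $\lambda/2>0$.

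Every step is a deterministic inequality holding for each realization of $\noise$, which is where ``almost surely'' comes from. There is no real obstacle. The only points to check are that the unit-norm constraint in $\mathfrak{S}_R$ is what makes the polarization identity exact, and that membership of both tensors in the feasible set is exactly what licenses bounding each noise inner product by $\Gamma_{R,\kappa}$. A finite supremum is not needed for the inequality itself, though it holds by compactness as noted earlier.
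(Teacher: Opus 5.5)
Your proposal is correct and follows essentially the same route as the paper's proof: substitute $\obs=\lambda\meanTensor+\noise$, use the unit-norm identity $\|\hat{\putativemeanTensor}-\meanTensor\|_F^2=2(1-\langle\hat{\putativemeanTensor},\meanTensor\rangle_\tensors)$, then bound $\langle\noise,\hat{\putativemeanTensor}-\meanTensor\rangle_\tensors$ by $2\Gamma_{R,\kappa}$ via the triangle inequality and feasibility of both tensors. Your added remarks are also accurate: the bound holds deterministically for each realization of $\noise$, and finiteness of $\Gamma_{R,\kappa}$ is not needed for the inequality.
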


\begin{proof}[Proof of Lemma~\ref{lem:geometric_bound}]
Substituting $\obs=\lambda\meanTensor+\noise$ into $\langle\obs,\hat{\putativemeanTensor}\rangle_\tensors\ge\langle\obs,\meanTensor\rangle_\tensors$ gives
\[
    \lambda\langle\meanTensor,\hat{\putativemeanTensor}\rangle_\tensors+\langle\noise,\hat{\putativemeanTensor}\rangle_\tensors
    \;\ge\;\lambda\|\meanTensor\|_F^2+\langle\noise,\meanTensor\rangle_\tensors\,.
\]
Using $\|\meanTensor\|_F=\|\hat{\putativemeanTensor}\|_F=1$ together with $\|\hat{\putativemeanTensor}-\meanTensor\|_F^2=2(1-\langle\hat{\putativemeanTensor},\meanTensor\rangle_\tensors)$,
\[
    \tfrac\lambda 2\,\|\hat{\putativemeanTensor}-\meanTensor\|_F^2
    \;\le\;\langle\noise,\hat{\putativemeanTensor}-\meanTensor\rangle_\tensors
    \;\le\;|\langle\noise,\hat{\putativemeanTensor}\rangle_\tensors|+|\langle\noise,\meanTensor\rangle_\tensors|
    \;\le\;2\,\Gamma_{R,\kappa}\,,
\]
where the last inequality uses $\kappa(\hat{\putativemeanTensor}),\kappa(\meanTensor)\ge\kappa$, so that both inner products are bounded by the supremum defining $\Gamma_{R,\kappa}$. Multiplying by $2/\lambda$ yields~\eqref{eq:tube}.
\end{proof}

\begin{remark}[Role of the coherence constraint]
\label{rem:coherence}
The restriction $\kappa(\hat{\putativemeanTensor})\ge\kappa$ is essential. Without it, the noise supremum $\sup\{|\langle\noise,\putativemeanTensor\rangle_\tensors|:\putativemeanTensor\in\mathfrak{S}_R\}$ diverges as the rank-$R$ components $t_j$ become near-collinear: in that regime the coefficients $a_j$ in~\eqref{eq:mean_tensor_rank_r} can be arbitrarily large in magnitude while keeping $\|\putativemeanTensor\|_F^2=1$, since the Gram matrix $\bm G$ becomes singular. Restricting to $\kappa(\hat{\putativemeanTensor})\ge\kappa$ confines the analysis to configurations where $\lambda_{\min}(\bm G)\ge\kappa^2$, ensuring $\|\ba\|_2\le 1/\kappa$ (Lemma~\ref{lem:reduce_rank_one}, Step~1) and a non-trivial tail bound.
\end{remark}

\begin{lemma}[Rank reduction]
\label{lem:reduce_rank_one}
For any tensor $\unnormalizedTensor\in\Tensors$ and any normalized rank-$R$ tensor $\putativemeanTensor\in\mathfrak{S}_R$,
\begin{equation}\label{eq:rank_red}
    |\langle\unnormalizedTensor,\putativemeanTensor\rangle_\tensors|
    \;\le\;\frac{\sqrt R}{\kappa(\putativemeanTensor)}\;\sup_{\bm \theta\in\mathbb S^{d-1}}|\langle\unnormalizedTensor,\bm \theta^{\otimes k}\rangle_\tensors|\,.
\end{equation}
In particular, $\Gamma_{R,\kappa}\le(\sqrt R/\kappa)\,\Gamma_{1,1}$ with $\Gamma_{1,1}$ defined by~\eqref{eq:gamma_11}.
\end{lemma}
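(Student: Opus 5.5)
The proof has two steps: bound the coefficient vector $\ba$ using the Gram matrix, then apply linearity and Cauchy--Schwarz.

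\textbf{Step~1 (coefficient bound).} Fix a decomposition $\putativemeanTensor=\sum_{j=1}^R a_j t_j^{\otimes k}$ attaining the maximum in the definition~\eqref{def:coherence} of $\kappa(\putativemeanTensor)$. If the maximum is not attained, take a near-optimal decomposition and pass to the limit at the end; the bound is continuous in the minimal eigenvalue. Since $\langle t_i^{\otimes k},t_j^{\otimes k}\rangle_\tensors=\langle t_i,t_j\rangle^k=G_{ij}$, we get
\[
1=\|\putativemeanTensor\|_F^2=\ba^\top\bm G\,\ba\;\ge\;\lambda_{\min}(\bm G)\,\|\ba\|_2^2=\kappa^2(\putativemeanTensor)\,\|\ba\|_2^2 ,
\]
so $\|\ba\|_2\le 1/\kappa(\putativemeanTensor)$. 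This is the coefficient bound cited in Remark~\ref{rem:coherence} and in the compactness argument of Section~\ref{sub:model}.

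\textbf{Step~2 (linearity and Cauchy--Schwarz).} Set $M:=\sup_{\bm\theta\in\mathbb S^{d-1}}|\langle\unnormalizedTensor,\bm\theta^{\otimes k}\rangle_\tensors|$. By linearity, the triangle inequality, and $t_j\in\mathbb S^{d-1}$,
\[
|\langle\unnormalizedTensor,\putativemeanTensor\rangle_\tensors|\le\sum_{j=1}^R|a_j|\,|\langle\unnormalizedTensor,t_j^{\otimes k}\rangle_\tensors|\le M\,\|\ba\|_1 .
\]
Cauchy--Schwarz in $\R^R$ gives $\|\ba\|_1\le\sqrt R\,\|\ba\|_2$, and combining with Step~1 yields~\eqref{eq:rank_red}.

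\textbf{Consequence.} Take $\unnormalizedTensor=\noise$ and let $\putativemeanTensor$ range over $\mathcal C_{R,\kappa}$. There $1/\kappa(\putativemeanTensor)\le1/\kappa$, so taking suprema gives $\Gamma_{R,\kappa}\le(\sqrt R/\kappa)\Gamma_{1,1}$.

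The only delicate point is Step~1: it must use a decomposition whose Gram matrix actually has $\lambda_{\min}(\bm G)$ equal to $\kappa^2(\putativemeanTensor)$. Note, however, that $\ba^\top\bm G\ba\ge\lambda_{\min}(\bm G)\|\ba\|_2^2$ holds for every decomposition. Hence any decomposition with $\lambda_{\min}(\bm G)\ge\kappa^2-\varepsilon$ works, and letting $\varepsilon\to0$ removes any attainment issue.
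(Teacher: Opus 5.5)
Your proof is correct and follows the paper's argument. You use the same Rayleigh-quotient bound $\|\ba\|_2\le 1/\kappa(\putativemeanTensor)$ from $\ba^\top\bm G\,\ba=1$, and then Cauchy--Schwarz: the paper applies it to the pair $\ba$ and $(\langle\unnormalizedTensor,t_j^{\otimes k}\rangle_\tensors)_j$, whereas you first bound each inner product by the supremum and then use $\|\ba\|_1\le\sqrt R\,\|\ba\|_2$, which gives the same estimate. Your $\varepsilon$-argument for attainment of the maximum in~\eqref{def:coherence} is a harmless extra; the paper simply takes that maximum as attained.
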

\begin{proof}[Proof of Lemma~\ref{lem:reduce_rank_one}]
Let $\putativemeanTensor=\sum_{j=1}^R a_j t_j^{\otimes k}$ be a decomposition attaining the maximum in~\eqref{def:coherence}, and write~$\bm G$ for the Gram matrix $G_{ij}=\langle t_i,t_j\rangle^k$, whose smallest eigenvalue is $\kappa^2(\putativemeanTensor)$ by definition.

\noindent
$\bullet$
The normalization $\|\putativemeanTensor\|_F^2=1$ expands as
\[
    1\;=\;\Big\langle\sum_{i=1}^R a_i t_i^{\otimes k},\,\sum_{j=1}^R a_j t_j^{\otimes k}\Big\rangle_\tensors
    \;=\;\sum_{i,j=1}^R a_i a_j\langle t_i,t_j\rangle^k
    \;=\;\ba^\top\bm G\,\ba\,.
\]
Combined with the Rayleigh quotient inequality $\ba^\top\bm G\,\ba\ge\lambda_{\min}(\bm G)\,\|\ba\|_2^2=\kappa^2(\putativemeanTensor)\,\|\ba\|_2^2$,
\begin{equation}
\notag
    \|\ba\|_2 \;\le\; 1/\kappa(\putativemeanTensor)\,.
\end{equation}

\noindent
$\bullet$ By Cauchy-Schwarz
\[
    |\langle\unnormalizedTensor,\putativemeanTensor\rangle_\tensors|
    \;=\;\Big|\sum_{j=1}^R a_j\,\langle\unnormalizedTensor,t_j^{\otimes k}\rangle_\tensors\Big|
    \;\le\;\|\ba\|_2\,\Big(\sum_{j=1}^R\langle\unnormalizedTensor,t_j^{\otimes k}\rangle_\tensors^2\Big)^{1/2}\,.
\]

\noindent
$\bullet$
Each $t_j\in\mathbb S^{d-1}$, so
\[
    \Big(\sum_{j=1}^R\langle\unnormalizedTensor,t_j^{\otimes k}\rangle_\tensors^2\Big)^{1/2}
    \;\le\;\sqrt R\,\sup_{\bm \theta\in\mathbb S^{d-1}}|\langle\unnormalizedTensor,\bm \theta^{\otimes k}\rangle_\tensors|\,.
\]
Combining the three steps yields~\eqref{eq:rank_red}.
\end{proof}

\subsection{Proof of the main Theorem}
\label{sub:master_proof}

\begin{proof}[Proof of Theorem~\ref{thm:main}]
By construction of the profile MLE~\eqref{eq:MLE_definition}, $\hat{\putativemeanTensor}$ satisfies $\langle\obs,\hat{\putativemeanTensor}\rangle_\tensors\ge\langle\obs,\meanTensor\rangle_\tensors$ and $\kappa(\hat{\putativemeanTensor})\ge\kappa$, so Lemma~\ref{lem:geometric_bound} applies and yields $\|\hat{\putativemeanTensor}-\meanTensor\|_F^2\le 4\Gamma_{R,\kappa}/\lambda$. Lemma~\ref{lem:reduce_rank_one} then gives $\Gamma_{R,\kappa}\le(\sqrt R/\kappa)\,\Gamma_{1,1}$, hence
\begin{equation}\label{eq:proof_master_step1}
    \|\hat{\putativemeanTensor}-\meanTensor\|_F^2 \;\le\; \frac{4\sqrt R}{\kappa\,\lambda}\,\Gamma_{1,1}\,.
\end{equation}
On the complementary event $\{\Gamma_{1,1}\le u\}$, substituting~\eqref{eq:proof_master_step1} immediately yields $\|\hat{\putativemeanTensor}-\meanTensor\|_F^2\le 4\sqrt R\,u/(\kappa\lambda)$, which is~\eqref{eq:main_bound}.

It remains to bound the probability of the bad event $\{\Gamma_{1,1}>u\}$. By the symmetry $X\stackrel{d}{=}-X$ of the Gaussian field, the two-sided supremum satisfies
\begin{equation}\label{eq:two_sided}
    \P\{\Gamma_{1,1}>u\}\;\le\;2\,\P\Big\{\sup_{\bm \theta\in\mathbb S^{d-1}}X(\bm \theta)>u\Big\}\,.
\end{equation}
For $u\ge u_{\mathrm{IMF}}$, Theorem~\ref{thm:imf_tail} bounds the right-hand side by $\delta_{\mathrm{IMF}}(u)$. Multiplying by~$2$ per the symmetry reduction~\eqref{eq:two_sided} and the definition~\eqref{eq:delta_min} of $\delta_{\min}(u):=2\,\delta_{\mathrm{IMF}}(u)$ gives $\P\{\Gamma_{1,1}>u\}\le\delta_{\min}(u)$, which completes the proof. The looser bounds $2\,\delta_{\mathrm{SMF}}(u)$ on $[u_{\mathrm{SMF}},\infty)$ and $2\,\delta_{\mathrm{SM}}(u)$ on $[u_{\mathrm{SM}},\infty)$ (from Theorems~\ref{thm:smf_tail} and~\ref{thm:sm_tail}) are not needed for this proof, but appear in the comparison of Section~\ref{sub:uniform_domination}.
\end{proof}

\subsection{Information-theoretic baselines and the likelihood ratio test}
\label{sub:info_theory}

The geometric error of the profile MLE can be benchmarked against the information-theoretic distances between the planted-signal and pure-noise distributions.

\begin{proposition}[KL divergence, $\chi^2$ divergence, and log-likelihood ratio]
\label{prop:kl_div}
Let $\unnormalizedTensor=\lambda\meanTensor\in\Tensors$ with $\lambda>0$ and $\meanTensor\in\sphereTensors$. Then
\begin{equation*}
    \mathrm{KL}(\P_0\,\|\,\P_{\unnormalizedTensor}) \;=\; \frac{\lambda^2}{2}\,,\qquad
    \chi^2(\P_{\unnormalizedTensor}\,\|\,\P_0) \;=\; e^{\lambda^2}-1\,,
\end{equation*}
and almost surely
\begin{equation}\label{eq:llr}
    \log\frac{\mathrm{d}\P_{\unnormalizedTensor}}{\mathrm{d}\P_0}(\obs) \;=\; \lambda\,\langle\obs,\meanTensor\rangle_\tensors - \frac{\lambda^2}{2}\,.
\end{equation}
\end{proposition}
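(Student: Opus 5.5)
The plan is to reduce everything to the explicit Gaussian density on the finite-dimensional Euclidean space $(\Tensors,\langle\cdot,\cdot\rangle_\tensors)$. I would fix an orthonormal basis of $\Tensors$ for the Frobenius inner product, which identifies $\Tensors$ isometrically with $\R^{N}$, $N=\binom{d+k-1}{k}$. Since $\P_0$ has density proportional to $\exp(-\|\noise\|_F^2/2)$ with respect to the Lebesgue measure induced by this inner product, $\noise$ is a standard Gaussian vector in these coordinates. The model~\eqref{eq:generic_tensor_regression} then says that $\P_{\unnormalizedTensor}$ is the translate of $\P_0$ by $\unnormalizedTensor=\lambda\meanTensor$, with density proportional to $\exp(-\|\obs-\unnormalizedTensor\|_F^2/2)$ and the same normalising constant.

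\textbf{Step 1: the likelihood ratio~\eqref{eq:llr}.} I would expand $\|\obs-\unnormalizedTensor\|_F^2=\|\obs\|_F^2-2\langle\obs,\unnormalizedTensor\rangle_\tensors+\|\unnormalizedTensor\|_F^2$ and take the ratio of the two densities. The normalising constants cancel, giving $\log(\mathrm{d}\P_{\unnormalizedTensor}/\mathrm{d}\P_0)(\obs)=\langle\obs,\unnormalizedTensor\rangle_\tensors-\|\unnormalizedTensor\|_F^2/2$. Substituting $\unnormalizedTensor=\lambda\meanTensor$ and $\|\meanTensor\|_F=1$ yields~\eqref{eq:llr}, valid everywhere and in particular almost surely.

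\textbf{Step 2: the law of the linear statistic.} Under $\P_0$, the variable $Z:=\langle\obs,\meanTensor\rangle_\tensors$ is a linear functional of a standard Gaussian vector in orthonormal coordinates. Hence $Z\sim\mathcal N(0,\|\meanTensor\|_F^2)=\mathcal N(0,1)$. This is the one point needing care. It uses that the density of $\noise$ is isotropic for the \emph{Frobenius} inner product, rather than having unit-variance entries in the standard coordinate basis. With the latter convention the off-diagonal entries would carry multiplicity weights and the variance would no longer equal $\|\meanTensor\|_F^2$. The paper's definition of $\P_0$ settles this, and it matches the covariance $\E[X(\bm\theta)X(\bm v)]=\langle\bm\theta,\bm v\rangle^k=\langle\bm\theta^{\otimes k},\bm v^{\otimes k}\rangle_\tensors$ used earlier.

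\textbf{Step 3: the two divergences.} For the KL divergence, I would compute
\[
\mathrm{KL}(\P_0\|\P_{\unnormalizedTensor})=\E_0\bigl[-\log(\mathrm d\P_{\unnormalizedTensor}/\mathrm d\P_0)\bigr]=\E_0[-\lambda Z+\lambda^2/2]=\lambda^2/2,
\]
using $\E_0Z=0$. For the $\chi^2$ divergence, I would use $\chi^2(\P_{\unnormalizedTensor}\|\P_0)=\E_0[(\mathrm d\P_{\unnormalizedTensor}/\mathrm d\P_0)^2]-1=\E_0[e^{2\lambda Z-\lambda^2}]-1$. The Gaussian moment generating function $\E e^{tZ}=e^{t^2/2}$ with $t=2\lambda$ then gives $e^{2\lambda^2-\lambda^2}-1=e^{\lambda^2}-1$.

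No step is a genuine obstacle. The only thing to state explicitly is the normalisation remark in Step 2.
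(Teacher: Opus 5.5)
Your proposal is correct and follows the paper's proof essentially step for step. Both take the density ratio $\tfrac12\|\obs\|_F^2-\tfrac12\|\obs-\unnormalizedTensor\|_F^2$ for the log-likelihood ratio, obtain the KL divergence from $\E_{\P_0}\langle\noise,\meanTensor\rangle_\tensors=0$, and obtain the $\chi^2$ divergence from the Gaussian moment generating function. Your explicit remark that $\P_0$ is isotropic for the Frobenius inner product, so that $\langle\noise,\meanTensor\rangle_\tensors\sim\mathcal N(0,1)$, makes precise a point the paper uses only implicitly in its MGF identity $\E_{\P_0}\exp(\langle\noise,\bm A\rangle_\tensors)=\exp(\tfrac12\|\bm A\|_F^2)$.
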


\begin{proof}[Proof of Proposition~\ref{prop:kl_div}]
The log-likelihood ratio reads
\[
    \log\frac{\mathrm{d}\P_{\unnormalizedTensor}}{\mathrm{d}\P_0}(\obs)
    \;=\;\tfrac12\|\obs\|_F^2-\tfrac12\|\obs-\unnormalizedTensor\|_F^2
    \;=\;\langle\obs,\unnormalizedTensor\rangle_\tensors-\tfrac12\|\unnormalizedTensor\|_F^2
    \;=\;\lambda\langle\obs,\meanTensor\rangle_\tensors-\tfrac{\lambda^2}{2}\,,
\]
since $\|\unnormalizedTensor\|_F=\lambda$ and $\unnormalizedTensor=\lambda\meanTensor$. Under $\P_0$ one has $\E_{\P_0}[\langle\obs,\meanTensor\rangle_\tensors]=\E_{\P_0}[\langle\noise,\meanTensor\rangle_\tensors]=0$ (as $\E[\noise]=0$), so $\E_{\P_0}\!\big[\log\tfrac{\mathrm d\P_{\unnormalizedTensor}}{\mathrm d\P_0}\big]=-\tfrac{\lambda^2}{2}$; hence $\mathrm{KL}(\P_0\,\|\,\P_{\unnormalizedTensor})=-\E_{\P_0}\!\big[\log\tfrac{\mathrm d\P_{\unnormalizedTensor}}{\mathrm d\P_0}\big]=\tfrac{\lambda^2}{2}$. For the $\chi^2$ divergence, the Gaussian moment generating function $\E_{\P_0}\exp(\langle\noise,\bm A\rangle_\tensors)=\exp(\tfrac12\|\bm A\|_F^2)$ gives
\[
    \E_{\P_0}\Big[\Big(\tfrac{\mathrm{d}\P_{\unnormalizedTensor}}{\mathrm{d}\P_0}(\noise)\Big)^2\Big]
    \;=\;e^{-\lambda^2}\,\E_{\P_0}\exp(2\lambda\langle\noise,\meanTensor\rangle_\tensors)
    \;=\;e^{-\lambda^2}\,e^{2\lambda^2}\;=\;e^{\lambda^2}\,,
\]
hence $\chi^2(\P_{\unnormalizedTensor}\,\|\,\P_0)=e^{\lambda^2}-1$.
\end{proof}

The likelihood ratio test (LRT) for the detection problem $H_0:\lambda=0$ versus $H_1:\lambda>0$ amounts, by~\eqref{eq:llr}, to thresholding the test statistic $\hat\lambda_{\mathrm{LRT}}=\max_{\putativemeanTensor\in\mathcal C}\langle\obs,\putativemeanTensor\rangle_\tensors$. Under $H_0$ one has $\obs=\noise$, and since $\mathcal C$ is closed under $\putativemeanTensor\mapsto-\putativemeanTensor$ this statistic equals the (two-sided) noise supremum $\Gamma_{R,\kappa}$. The available tail control is on the \emph{rank-one} field: $\delta_{\min}(u)=2\delta_{\mathrm{IMF}}(u)$ bounds $\P\{\Gamma_{1,1}>u\}$, while the rank-reduction inequality (Lemma~\ref{lem:reduce_rank_one}) only gives the deterministic, one-sided bound $\Gamma_{R,\kappa}\le(\sqrt R/\kappa)\,\Gamma_{1,1}$, with $\sqrt R/\kappa\ge 1$. Fix a target false-positive rate $\alpha\in(0,1)$ and let $u_\alpha$ be the smallest level $u\ge u_{\mathrm{IMF}}$ with
\begin{equation}\label{eq:type_one}
    \delta_{\min}(u_\alpha)\;\le\;\alpha\,.
\end{equation}
The induced LRT rejects $H_0$ when $\hat\lambda_{\mathrm{LRT}}>(\sqrt R/\kappa)\,u_\alpha$, equivalently when the rescaled statistic $(\kappa/\sqrt R)\,\hat\lambda_{\mathrm{LRT}}$ exceeds $u_\alpha$; its Type~I error is then controlled at level $\alpha$,
\[
    \P_{H_0}\!\Big\{\hat\lambda_{\mathrm{LRT}}>\tfrac{\sqrt R}{\kappa}\,u_\alpha\Big\}
    =\P\Big\{\Gamma_{R,\kappa}>\tfrac{\sqrt R}{\kappa}\,u_\alpha\Big\}
    \le\P\{\Gamma_{1,1}>u_\alpha\}\le\delta_{\min}(u_\alpha)\le\alpha\,.
\]
The inflation factor $\sqrt R/\kappa$ is exactly the one carried by the estimation bound~\eqref{eq:main_bound}; for rank-one detection ($R=1,\kappa=1$) it equals $1$. The non-asymptotic rate of Remark~\ref{rem:choice_u} gives $u_\alpha=O(\sqrt{d\log k+\log(1/\alpha)})$, hence a critical value $(\sqrt R/\kappa)\,O(\sqrt{d\log k+\log(1/\alpha)})$. Power against the alternative is governed by the recovery threshold $\lambda\gg\sqrt d$.

\begin{remark}[Numerical inversion of the tail bound]\label{rem:numerical_inversion}
The master bound $\delta_{\min}(u)=2\,\delta_{\mathrm{IMF}}(u)$ of Theorem~\ref{thm:main} is, throughout its validity range $[u_{\mathrm{IMF}},\infty)$, exactly twice the improved Mehta--Fyodorov bound $\delta_{\mathrm{IMF}}(u)$ of Theorem~\ref{thm:imf_tail}. This function admits a closed-form expression involving the Hermite partial sum $\Phi_d(\rho,u)$ and the rational function $\Psi_d(\rho,u)$ and is readily evaluated numerically; it is moreover strictly decreasing on $[u_{\mathrm{IMF}},\infty)$, as is immediate from the Gaussian factors $e^{-u^2/2}$ and $e^{-(1+\rho^2)u^2/2}$ that dominate the polynomial prefactors. Consequently, for any prescribed significance level $\alpha\in(0,1)$ one may apply a standard bisection method to~\eqref{eq:type_one} and obtain, in a few iterations, a numerically sharp estimate of the critical threshold $u_\alpha$ satisfying $\delta_{\min}(u_\alpha)=\alpha$.

This estimate is quantitatively sharp because the IMF bound itself is close to the true Kac--Rice probability: the Monte Carlo experiments in the companion repository \verifrepo{} (\emph{cf.}~Figure~\ref{fig:delta_min}) report a relative discrepancy between $\delta_{\mathrm{IMF}}(u)$ and the Monte Carlo estimate of the Kac--Rice integral $C_{k,d}\int_u^\infty\E[|\det(G_{d-1}-\rho x\,I)|]\,\varphi(x)\,\mathrm{d}x$ of at most $\sim 10^{-2}$ over the tested range of $(k,d)$ and $u$, so that the threshold obtained by inverting $\delta_{\mathrm{IMF}}$ differs from the exact Kac--Rice threshold by at most a fraction of a decimal place at the practical confidence levels $\alpha\in\{10^{-3},5\times 10^{-2}\}$ documented in Figure~\ref{fig:threshold_inversion}.

When ultimate precision is needed, one may alternatively invert the exact Kac--Rice expression~\eqref{eq:KR_combined} itself: the integrand $\E[|\det(G_{d-1}-\rho x\,I)|]\,\varphi(x)$ is estimated by Monte Carlo sampling of $\mathrm{GOE}(d-1)$ in the Mehta convention, quadrature yields a strictly decreasing function of $u$, and a second bisection returns the exact threshold $u_\alpha^{\mathrm{exact}}$. The super-exponential accuracy of the Kac--Rice representation guarantees that $u_\alpha^{\mathrm{exact}}$ is the statistically correct threshold for the likelihood ratio test.
We implemented both routines and confirm that the discrepancy $|u_\alpha-u_\alpha^{\mathrm{exact}}|$ is negligible for practical significance levels.
\end{remark}

\begin{figure}[!t]
\centering
\includegraphics[width=0.95\linewidth]{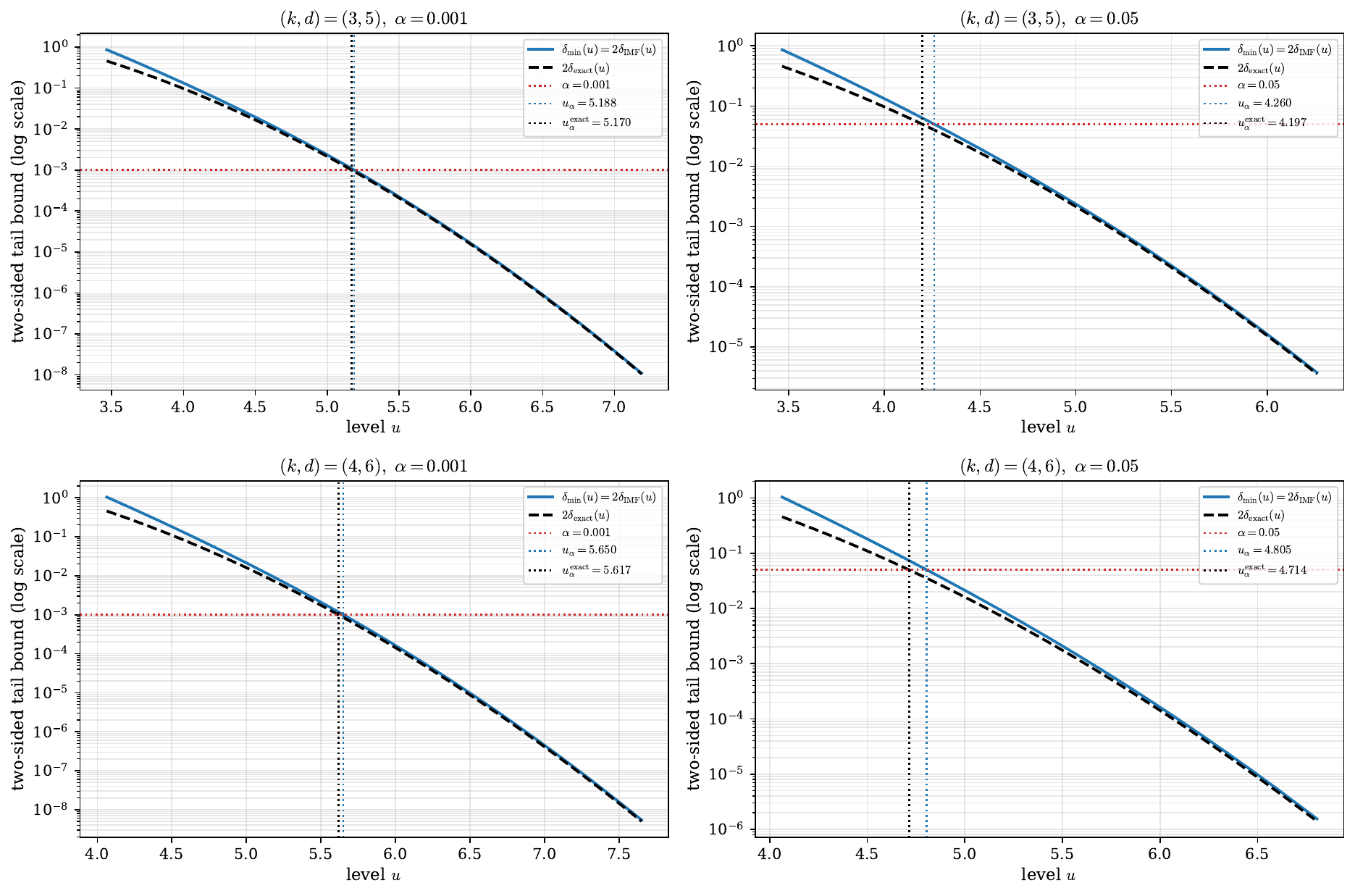}
\caption{Numerical inversion of the \emph{two-sided} master tail bound at significance levels $\alpha\in\{10^{-3},5\times 10^{-2}\}$ for $(k,d)\in\{(3,5),(4,6)\}$. In each panel: the two-sided IMF bound $\delta_{\min}(u)=2\delta_{\mathrm{IMF}}(u)$ (blue), the two-sided exact bound $2\delta_{\mathrm{exact}}(u)$ of Theorem~\ref{thm:imf_tail_exact} (black dashed), the horizontal target level $\alpha$ (red dotted), the threshold $u_\alpha$ obtained by bisecting $\delta_{\min}(u)-\alpha$ (blue dotted vertical), and the exact threshold $u_\alpha^{\mathrm{exact}}$ obtained by bisecting $2\delta_{\mathrm{exact}}(u)-\alpha$ (black dotted vertical). The two-sided $\delta_{\min}$ is the master bound of~\eqref{eq:delta_min} that governs the Type~I error of the LRT, whose statistic is the two-sided supremum $\Gamma_{R,\kappa}$. Since $\delta_{\mathrm{exact}}\le\delta_{\mathrm{IMF}}$ one has $u_\alpha^{\mathrm{exact}}\le u_\alpha$, so the IMF threshold is a conservative surrogate: here $u_\alpha=5.188,\,4.260$ for $(3,5)$ and $5.650,\,4.805$ for $(4,6)$, against $u_\alpha^{\mathrm{exact}}=5.170,\,4.197$ and $5.617,\,4.714$ respectively (the gap widens mildly as $\alpha$ grows and as $d$ increases). The induced LRT rejects $H_0$ when $\hat\lambda_{\mathrm{LRT}}>(\sqrt R/\kappa)\,u_\alpha$ (the rank-reduction inflation of Lemma~\ref{lem:reduce_rank_one}; for rank-one detection $R=\kappa=1$ the factor is~$1$), guaranteeing Type~I error at most~$\alpha$.}
\label{fig:threshold_inversion}
\end{figure}

Figure~\ref{fig:threshold_inversion} illustrates the inversion routine of Remark~\ref{rem:numerical_inversion} on the pairs $(k,d)\in\{(3,5),(4,6)\}$ at the two levels $\alpha\in\{10^{-3},5\times 10^{-2}\}$. In each panel, the two-sided curves $u\mapsto\delta_{\min}(u)=2\delta_{\mathrm{IMF}}(u)$ and $u\mapsto 2\delta_{\mathrm{exact}}(u)$ are plotted on logarithmic scale and intersected with the horizontal line at height~$\alpha$; the two vertical dotted lines report the thresholds $u_\alpha$ (bisection on $\delta_{\min}=2\delta_{\mathrm{IMF}}$) and $u_\alpha^{\mathrm{exact}}$ (bisection on $2\delta_{\mathrm{exact}}$). Three points stand out. \emph{First}, at the stringent level $\alpha=10^{-3}$ the two thresholds agree to within $|u_\alpha-u_\alpha^{\mathrm{exact}}|\le 3.4\times 10^{-2}$ in every tested configuration, so the IMF bound is tight enough to set the LRT threshold at low false-positive rates. \emph{Second}, at the loose level $\alpha=5\times 10^{-2}$ the agreement degrades to $|u_\alpha-u_\alpha^{\mathrm{exact}}|\approx 9\times 10^{-2}$, reflecting the fact that the Hermite and Szeg\H{o} bounds of Lemma~\ref{lem:dominant_term} are increasingly slack as $u$ descends toward $u_{\mathrm{IMF}}$, yet the theoretical threshold $u_\alpha$ remains a strict upper bound on $u_\alpha^{\mathrm{exact}}$, producing a test with actual Type~I error at most~$\alpha$. \emph{Third}, the gap widens with $d$: the super-exponential prefactor $\alpha_d\sim(e/(2d))^{d/2}$ shrinks faster than its numerical surrogate, so the theoretical inversion progressively over-estimates the threshold, which is exactly the \emph{safe} direction for a statistical test.

\begin{remark}[Sharpness of the validity threshold $u_{\mathrm{IMF}}$]
\label{rem:imf_empirical_validity}
Figure~\ref{fig:threshold_inversion} plots $\delta_{\mathrm{IMF}}$ and $\delta_{\mathrm{exact}}$ on $[u_{\mathrm{IMF}},\infty)$, where Theorem~\ref{thm:imf_tail} certifies $\delta_{\mathrm{exact}}\le\delta_{\mathrm{IMF}}$ via the discarding argument of Lemma~\ref{lem:Idc_positivity}. Below $u_{\mathrm{IMF}}$ the discarding argument of Lemma~\ref{lem:Idc_positivity} fails (the positivity of $\mathcal I_d^c(\rho x)$ on $[u_{\mathrm{IMF}},\infty)$ is not guaranteed), so $\delta_{\mathrm{IMF}}(u)$ ceases to be an upper bound on the Kac--Rice integral. An exploratory numerical evaluation (not displayed) suggests that $u_{\mathrm{IMF}}$ is moreover sharp in the following empirical sense: in every tested configuration, $\delta_{\mathrm{IMF}}(u)$ drops strictly below $\delta_{\mathrm{exact}}(u)$ on a window immediately to the left of $u_{\mathrm{IMF}}$, before rising again at very small~$u$ via the rational $1/u$ contribution in $\Phi_d$ for $d$ odd.
\end{remark}

\section{A non-asymptotic annealed complexity}
\label{sec:refinements}

The Kostlan--Shub--Smale field $X(\bm \theta)=\langle\noise,\bm \theta^{\otimes k}\rangle_\tensors$ on $\mathbb S^{d-1}$ coincides with the Hamiltonian of the (centred, isotropic) spherical $k$-spin model from spin-glass physics \citep{auffinger2013,arous2019landscape}. Its critical-point structure, the count of local maxima above an energy level $E$, is a central object of study. \citet{auffinger2013} write $\Theta_p$ for the \emph{total} critical-point growth rate of the order-$p$ model and $\Theta_{0,p}$ for the local-maximum (index-$0$) rate, with $\Theta_{0,p}(u)=\Theta_p(u)$ above the spectral edge (that is, for $u\le-E_\infty$; their Eqs.~(2.14),(2.16)). We write $\Theta_k:=\Theta_{0,p}\big|_{p=k}$ for the local-maximum rate of the $k$-spin field, so that $\lim_{d\to\infty}d^{-1}\log\E[N_{[E,\infty)}^{\mathrm{lm}}]=\Theta_k(-u_{\mathrm{ABC}})$, where $u_{\mathrm{ABC}}=E/\sqrt{d}$ in our normalisation (the ABC energy scale $H/N$ with ambient dimension $N=d$; see Step~3(b) of the proof) and $N_{[E,\infty)}^{\mathrm{lm}}$ counts local maxima with $X(\bm \theta)\ge E$. Here $E_\infty:=2\sqrt{(k-1)/k}$ is the ABC spectral-edge energy \citep[Eq.~(2.13)]{auffinger2013} (above which critical points are local maxima with overwhelming probability; equivalently the right edge of the limiting GOE spectral support, with $\rho E_\infty=\sqrt2$), and $E_0>E_\infty$ is the ground-state energy above which no critical points survive \citep[Theorem~2.5]{auffinger2013}.
Our four-tier hierarchy turns this asymptotic statement into a non-asymptotic two-sided bracketing at every finite $(k,d)$.

\begin{subequations}
\begin{theorem}[Non-asymptotic annealed complexity]
\label{thm:complexity}
Let $k,d\ge 3$, let $X$ be the canonical Kostlan--Shub--Smale field on $\mathbb S^{d-1}$, and let $N_{[E,\infty)}^{\mathrm{lm}}$ denote the number of local maxima of $X$ with value at least $E$. Set
\begin{equation}\label{eq:complexity_thresholds}
    E_{\mathrm{BDG}}\;:=\;\frac{8\sqrt{2(d-1)}}{\rho}\,,\qquad C_{\mathrm{amp}}\;:=\;8\sqrt 2\,,\qquad \delta_{\mathrm{BDG}}(\rho E)\;:=\;e^{-2(\rho E)^2/9}\,.
\end{equation}
\begin{enumerate}
    \item[\textup{(i)}] \emph{(Upper bound.)} For every $E\in\R$,
    \begin{equation}\label{eq:complexity_upper}
        \E\big[N_{[E,\infty)}^{\mathrm{lm}}\big]\;\le\;\E\big[N_{[E,\infty)}^{\mathrm{cp}}\big]\;=\;\delta_{\mathrm{exact}}(E)\,,
    \end{equation}
    where $N_{[E,\infty)}^{\mathrm{cp}}$ counts \emph{all} critical points (not only local maxima) with $X(\bm\theta)\ge E$. 
    \item[\textup{(ii)}] \emph{(Two-sided bracketing, high-energy regime.)} For every $E\ge E_{\mathrm{BDG}}$,
    \begin{equation}\label{eq:complexity_bracket}
        \big(1-C_{\mathrm{amp}}^{d-1}\,\delta_{\mathrm{BDG}}^{1/2}(\rho E)\big)\,\delta_{\mathrm{exact}}(E)
        \;\le\;\E\big[N_{[E,\infty)}^{\mathrm{lm}}\big]
        \;\le\;\delta_{\mathrm{exact}}(E)\,.
    \end{equation}
\end{enumerate}
\end{theorem}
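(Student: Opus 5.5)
For part~(i), I would use the Kac--Rice formula directly. It gives, with equality,
\[
\E\big[N^{\mathrm{cp}}_{[E,\infty)}\big]=\int_E^\infty\bar p(x)\,\mathrm dx ,
\]
where $\bar p$ is the critical-point intensity of Section~\ref{sec:tail_bounds}. The isotropy argument there removes the gradient from the conditioning, and~\eqref{eq:hessian_law} gives the conditional Hessian law. Together these identify this integral with $C_{k,d}\int_E^\infty\E|\det(G_{d-1}-\rho xI)|\,\varphi(x)\,\mathrm dx$. Proposition~\ref{prop:ortho_exact} shows that this quantity equals $2(k-1)^{(d-1)/2}\int_E^\infty Q_d(\rho x)e^{-x^2/2}\,\mathrm dx$; it is an exact computation, even though it is stated with $\le$. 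Theorem~\ref{thm:imf_tail_exact} evaluates it as $\delta_{\mathrm{exact}}(E)$. The inequality $N^{\mathrm{lm}}\le N^{\mathrm{cp}}$ holds pointwise, which proves~(i) for every $E\in\R$.

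For the lower bound in~(ii), I would write the local-maximum count through Kac--Rice as well. Its intensity carries $\E\big[|\det(G-\rho xI)|\,\mathbf 1\{G-\rho xI\prec 0\}\big]$ in place of the unrestricted absolute determinant; up to the sign convention of the Hessian, a local maximum corresponds to all eigenvalues of $G$ lying below $\rho x$. The difference between the critical-point and local-maximum intensities is therefore
\[
\E\big[|\det(G-\rho xI)|\,\mathbf 1\{\lambda_{\max}(G)\ge \rho x\}\big].
\]
I would bound this defect by Cauchy--Schwarz:
\[
\E\big[\det(G-\rho xI)^2\big]^{1/2}\,\P\{M_{d-1}\ge\rho x\}^{1/2}.
\]
For $\rho x\ge\rho E\ge 8\sqrt{2(d-1)}\ge4\sqrt{2(d-1)}$, Lemma~\ref{lem:bdg} gives $\P\{M_{d-1}\ge\rho x\}\le e^{-2(\rho x)^2/9}$, so its square root is at most $\delta_{\mathrm{BDG}}^{1/2}(\rho x)\le\delta_{\mathrm{BDG}}^{1/2}(\rho E)$. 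This explains the $1/2$ power in the statement.

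It remains to show that the second moment satisfies $\E[\det(G-\nu I)^2]^{1/2}\le C_{\mathrm{amp}}^{d-1}\,\E|\det(G-\nu I)|$ for $\nu=\rho x\ge 8\sqrt{2(d-1)}$. For the numerator I would use the crude bound $|\det(G-\nu I)|\le(|\nu|+M_{d-1})^{d-1}$. Splitting on $\{M_{d-1}\le\nu\}$, where the bound is $(2\nu)^{d-1}$, and on its complement, where a layer-cake argument with Lemma~\ref{lem:bdg} applies exactly as in Proposition~\ref{prop:det_bound}, gives $\E[\det^2]^{1/2}\le c\,(2\nu)^{d-1}$ with a modest constant. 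For the denominator I need a lower bound $\E|\det(G-\nu I)|\ge(\nu/(4\sqrt2))^{d-1}$ or similar. On $\{M_{d-1}\le\nu/2\}$ every factor satisfies $|\mu_i-\nu|\ge\nu/2$, and this event has probability at least $1/2$ because $\nu/2\ge4\sqrt{2(d-1)}$ and Lemma~\ref{lem:bdg} applies. The resulting ratio is $(2\nu)^{d-1}\cdot c/\big(\tfrac12(\nu/2)^{d-1}\big)=2c\cdot4^{d-1}$. Absorbing the constants into $(8\sqrt2)^{d-1}$ should be feasible, since $8\sqrt2>4$ leaves room for $2c$ when $d\ge3$.

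Integrating the pointwise estimate, the defect intensity is at most $C_{\mathrm{amp}}^{d-1}\delta_{\mathrm{BDG}}^{1/2}(\rho E)$ times the critical-point intensity, uniformly for $x\ge E$. Integrating over $[E,\infty)$ then gives $\E N^{\mathrm{lm}}\ge(1-C_{\mathrm{amp}}^{d-1}\delta_{\mathrm{BDG}}^{1/2})\,\delta_{\mathrm{exact}}(E)$. The main obstacle is the comparison of the second moment with the first moment: it needs a clean first-moment lower bound and careful tracking of the layer-cake constants so that they fit inside $C_{\mathrm{amp}}=8\sqrt2$. If that fails, I would instead compute $\E\det(G-\nu I)^2$ exactly via the GOE two-point structure, but I would try the crude route first.
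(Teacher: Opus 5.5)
Your proposal is correct and follows the paper's proof: the exact Kac--Rice count plus dropping the indicator gives (i), and for (ii) you apply Cauchy--Schwarz to the defect with the BDG tail, bound the second moment through $(\rho x+M_{d-1})^{d-1}$, and use the first-moment lower bound $\E|\det(G_{d-1}-\rho x I_{d-1})|\ge(\rho x)^{d-1}/2^{d}$ obtained from the event $\{M_{d-1}\le\rho x/2\}$. The differences are cosmetic: the paper uses the fixed event $\{M_{d-1}>\rho E\}$ and bounds $\E[M_{d-1}^{2(d-1)}]$ globally by layer-cake, reaching a ratio of $2(4\sqrt2)^{d-1}$ where your split gives $2c\cdot4^{d-1}$ with $c$ close to $1$, so both fit inside $C_{\mathrm{amp}}^{d-1}$ and your constant tracking closes.
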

\end{subequations}

\begin{subequations}
\begin{corollary}[Annealed complexity of very deep local minima, by sign symmetry]
\label{cor:minima_dual}
Let $N_{(-\infty,E']}^{\mathrm{lm,min}}$ denote the number of local minima of the Kostlan--Shub--Smale field $X$ on $\mathbb S^{d-1}$ with value at most $E'$. The Gaussian symmetry $X\stackrel d=-X$ gives the distributional identity $N_{(-\infty,E']}^{\mathrm{lm,min}}\stackrel d= N_{[-E',\infty)}^{\mathrm{lm}}$ for every $E'\in\R$, and Theorem~\ref{thm:complexity} transfers verbatim:
\begin{enumerate}
    \item[\textup{(i$'$)}] \emph{(Upper bound.)} For every $E'\in\R$,
    \begin{equation}\label{eq:complexity_min_upper}
        \E\big[N_{(-\infty,E']}^{\mathrm{lm,min}}\big]\;\le\;\delta_{\mathrm{exact}}(-E')\,.
    \end{equation}
    \item[\textup{(ii$'$)}] \emph{(Two-sided bracketing, very-deep-minima regime.)} For every $E'\le -E_{\mathrm{BDG}}$ (equivalently, $|E'|\ge 8\sqrt{2(d-1)}/\rho$),
    \begin{equation}\label{eq:complexity_min_bracket}
        \big(1-C_{\mathrm{amp}}^{d-1}\,\delta_{\mathrm{BDG}}^{1/2}(-\rho E')\big)\,\delta_{\mathrm{exact}}(-E')
        \;\le\;\E\big[N_{(-\infty,E']}^{\mathrm{lm,min}}\big]
        \;\le\;\delta_{\mathrm{exact}}(-E')\,.
    \end{equation}
\end{enumerate}
The validity range $E'\le -E_{\mathrm{BDG}}$ corresponds asymptotically to the reduced energy $E'/\sqrt d\le -8E_\infty$: the bracket covers \emph{very deep local minima}, far below the ground-state band $[-E_0,-E_\infty]$ identified by \citet[Theorem~2.5]{auffinger2013}. Local minima in the moderate-energy band $E'/\sqrt d\in[-8E_\infty,-E_\infty]$ (where \citet[Theorem~2.4]{auffinger2013} provides the case-1 log-rate $\Theta_{0,p}$) and in the bulk band $E'/\sqrt d\in[-E_\infty,0]$ (where ABC give the constant log-rate $\tfrac12\log(p-1)-(p-2)/p$) are outside the scope of the BDG-amplifier framework developed in this paper; the upper bound \textup{(i$'$)} remains unconditional throughout, however.
\end{corollary}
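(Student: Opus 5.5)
The plan is to deduce the corollary from Theorem~\ref{thm:complexity} through the sign symmetry, treating the count of minima as a measurable functional of the sample path. I will use four elementary observations.

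First, $X(\bm\theta)=\langle\noise,\bm\theta^{\otimes k}\rangle_\tensors$ is linear in $\noise$, and $\noise\stackrel d=-\noise$ because the density is proportional to $\exp(-\|\noise\|_F^2/2)$. Hence the whole field satisfies $X\stackrel d=-X$ as random elements of $C^2(\mathbb S^{d-1})$, not only in finite-dimensional marginals.

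Second, I write down the deterministic identity. For any smooth $f$ on $\mathbb S^{d-1}$, a point $\bm\theta$ is a local minimum of $f$ with $f(\bm\theta)\le E'$ if and only if it is a local maximum of $-f$ with $(-f)(\bm\theta)\ge -E'$. Consequently
\[
N^{\mathrm{lm,min}}_{(-\infty,E']}(f)=N^{\mathrm{lm}}_{[-E',\infty)}(-f)
\]
pathwise. Both counts are almost surely finite, because $X$ is almost surely Morse by the Kac--Rice nondegeneracy already used in Section~\ref{sec:tail_bounds}.

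Third, I combine the two. The count is the same measurable map applied to $X$ and to $-X$, so the first two points give the distributional identity $N^{\mathrm{lm,min}}_{(-\infty,E']}\stackrel d=N^{\mathrm{lm}}_{[-E',\infty)}$. Taking expectations, which may be $+\infty$ a priori but are finite by the bounds, yields
\[
\E\big[N^{\mathrm{lm,min}}_{(-\infty,E']}\big]=\E\big[N^{\mathrm{lm}}_{[-E',\infty)}\big].
\]

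Fourth, I substitute $E=-E'$ into Theorem~\ref{thm:complexity}. Part~(i) holds for every real $E$, so it gives (i$'$) directly. For (ii$'$), the condition $E'\le -E_{\mathrm{BDG}}$ is exactly $E=-E'\ge E_{\mathrm{BDG}}$, and $\delta_{\mathrm{BDG}}(\rho E)=\delta_{\mathrm{BDG}}(-\rho E')$. The bracket~\eqref{eq:complexity_bracket} therefore transfers verbatim to~\eqref{eq:complexity_min_bracket}.

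The closing remark on reduced energies is a rescaling. Since $\rho E_\infty=\sqrt2$, one has $E_{\mathrm{BDG}}/\sqrt d=8\sqrt{2(d-1)}/(\rho\sqrt d)\to 8\sqrt2/\rho=8E_\infty$. This places the valid range at $E'/\sqrt d\le -8E_\infty$, which lies below $-E_0$ provided $E_0<8E_\infty$; that inequality holds by the explicit ABC ground-state values.

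The main obstacle is only the measurability and almost-sure Morse property needed to pass from equality in law of the fields to equality in law of the counts. I would settle it by citing the almost-sure nondegeneracy of critical points of $X$, from \citet{azais2009level} via the Bulinskaya lemma. Everything else is substitution.
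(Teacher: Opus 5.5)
Your argument is correct and matches the paper's. The corollary is justified inside its own statement by $X\stackrel d=-X$, the pathwise identity $N^{\mathrm{lm,min}}_{(-\infty,E']}(X)=N^{\mathrm{lm}}_{[-E',\infty)}(-X)$, and the substitution $E=-E'$ in Theorem~\ref{thm:complexity}; no further proof is given.

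The one soft spot is in the descriptive closing remark. The inequality $E_0<8E_\infty$ does not follow ``from the explicit ABC values'' uniformly in $k$. By \eqref{eq:Thetap_explicit} with $e=8$,
\[
\Theta_k(-8E_\infty)=\tfrac12\log(k-1)-\tfrac{64(k-2)}{k}-8\sqrt{63}+\log(8+\sqrt{63}),
\]
which becomes positive once $\log k\gtrsim 250$. At that point $E_0>8E_\infty$, consistent with $E_0(k)\sim\sqrt{\log k}$. So the ``far below the ground-state band'' reading holds only for $k$ of practical interest, a caveat the paper's own statement shares.
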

\end{subequations}

\begin{remark}[Exact Kac--Rice representation]
\label{rem:kr_representation}
The Kac--Rice formula combined with the conditional shifted-GOE law of the Hessian \citep[Lemma~3.2(b)]{auffinger2013} yields, for every $E\in\R$,
\begin{equation}\label{eq:complexity_exact}
    \E\big[N_{[E,\infty)}^{\mathrm{lm}}\big]
    \;=\;C_{k,d}\int_E^\infty\E\big[|\det(G_{d-1}-\rho x\,I_{d-1})|\,\mathds 1\{G_{d-1}-\rho x I\prec 0\}\big]\,\varphi(x)\,\mathrm{d}x\,,
\end{equation}
with $C_{k,d}=2\sqrt\pi(k-1)^{(d-1)/2}/\Gamma(d/2)$ and $\varphi$ the standard Gaussian density. The upper bound~\eqref{eq:complexity_upper} is obtained by dropping the indicator and applying Proposition~\ref{prop:ortho_exact}: $$\E[N^{\mathrm{lm}}_{[E,\infty)}]\le\E[\#\{\text{critical points of }X\text{ above }E\}]=\delta_{\mathrm{exact}}(E).$$
\end{remark}


\begin{subequations}

\begin{corollary}[Asymptotic match with \citet{auffinger2013} in the high-energy regime]
\label{cor:abc_match}
Recall $E_\infty$, $E_0$, and the local-maximum rate $\Theta_k$ from the start of this section: exponentially many local maxima persist throughout the band $E_\infty<E/\sqrt d<E_0$, where $\Theta_k(-eE_\infty)>0$. The ABC complexity function~\citep[Theorem~2.4 and Eq.~(2.15)]{auffinger2013} is given for $u\le -E_\infty$ by
\begin{equation}\label{eq:Thetap_definition}
    \Theta_k(u)\;=\;\tfrac12\log(k-1)\,-\,\tfrac{k-2}{4(k-1)}u^2\,-\,I_1(u)\,,\qquad I_1(u):=\tfrac{2}{E_\infty^2}\int_u^{-E_\infty}\sqrt{z^2-E_\infty^2}\,\mathrm{d}z\,.
\end{equation}
As $d\to\infty$ with $E=e\sqrt{2(d-1)}/\rho$ for fixed reduced energy $e\ge 1$, the level correspondence $E/\sqrt d\to eE_\infty$ holds, and the exact Kac--Rice equality $\E[N^{\mathrm{cp}}_{[E,\infty)}]=\delta_{\mathrm{exact}}(E)$ of part~\textup{(i)} together with \citet[Theorem~2.4]{auffinger2013} gives
\begin{equation}\label{eq:complexity_asymp}
    \lim_{d\to\infty}\frac{1}{d}\log\E\big[N_{[E,\infty)}^{\mathrm{lm}}\big]
    \;=\;\Theta_k(-eE_\infty)\,,
\end{equation}
which, after the substitution $z=-E_\infty w$ in $I_1$, yields the explicit closed form
\begin{equation}\label{eq:Thetap_explicit}
    \Theta_k(-eE_\infty)\;=\;\tfrac12\log(k-1)\,-\,\tfrac{(k-2)e^2}{k}\,-\,2\!\int_1^e\sqrt{w^2-1}\,\mathrm{d}w\,.
\end{equation}
\end{corollary}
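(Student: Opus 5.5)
The argument has three parts: a sandwich of the local-maximum count between two quantities with the same exponential rate, an appeal to Auffinger--Ben Arous--\v{C}ern\'y for that rate, and a direct evaluation of $I_1$.

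\emph{Sandwich and the rate.} For the upper side, part (i) of Theorem~\ref{thm:complexity} gives $\E[N^{\mathrm{lm}}_{[E,\infty)}]\le\delta_{\mathrm{exact}}(E)$. For $e\ge 8$, part (ii) gives the matching lower bound $(1-C_{\mathrm{amp}}^{d-1}\delta_{\mathrm{BDG}}^{1/2}(\rho E))\,\delta_{\mathrm{exact}}(E)$. With $\rho E=e\sqrt{2(d-1)}$, the correction factor equals $\exp\big((d-1)[\log(8\sqrt2)-2e^2/9]\big)$, which is super-exponentially small for $e\ge 8$. Hence $d^{-1}\log$ of the local-maximum count and $d^{-1}\log\delta_{\mathrm{exact}}(E)$ have the same limit.

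For general $e\ge 1$ the bracket is not available, so the rate comes from \citet[Theorem~2.4]{auffinger2013}. We identify our field with their spherical $k$-spin Hamiltonian on $\mathbb S^{N-1}$ with $N=d$: their $H_N$ has variance $N$ on the sphere of radius $\sqrt N$, and our $X$ has unit variance, so $H_N\stackrel d=\sqrt d\,X$ up to the sign convention. Local maxima of $X$ above $E$ then correspond to local minima of $H_N$ below $-\sqrt d\,(E/\sqrt d)$. Their Theorem~2.4 (index $0$, $u=-E/\sqrt d$ in the limit, using the continuity of $\Theta$) gives the limit $\Theta_k(-\lim E/\sqrt d)$. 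The level correspondence is $E/\sqrt d=e\sqrt{2(d-1)/d}/\rho\to e\sqrt2/\rho=eE_\infty$, since $\rho E_\infty=\sqrt2$. The variance normalisations need care: ABC's Hamiltonian has covariance $N(\sigma\cdot\sigma'/N)^p$, which matches $d\langle\theta,\theta'\rangle^k$ exactly. The $O(1/d)$ drift of the level is absorbed by continuity and monotonicity of $\Theta_k$ together with monotonicity of $E\mapsto\E N$.

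\emph{Evaluating $I_1$.} Substitute $z=-E_\infty w$, so $dz=-E_\infty\,dw$ and $\sqrt{z^2-E_\infty^2}=E_\infty\sqrt{w^2-1}$. The limits $u=-eE_\infty$ and $-E_\infty$ become $w=e$ and $w=1$. This gives $I_1=\frac{2}{E_\infty^2}E_\infty^2\int_1^e\sqrt{w^2-1}\,dw=2\int_1^e\sqrt{w^2-1}\,dw$. Also $\frac{k-2}{4(k-1)}e^2E_\infty^2=\frac{k-2}{4(k-1)}\cdot\frac{4(k-1)}{k}e^2=\frac{(k-2)e^2}{k}$, and together these give \eqref{eq:Thetap_explicit}.

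The main obstacle is the normalisation bookkeeping between ABC's $N$, $H_N/N$ and our $(d,E)$. It must also be checked that at $e=1$ (the edge) the statement still holds via ABC's formula, which is continuous there.
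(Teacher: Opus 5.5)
Your proposal is correct and is essentially the paper's argument (Step~3 in the proof of Theorem~\ref{thm:complexity}). Both use the same identification of $X$ with ABC's rescaled $k$-spin field and the same level correspondence. Both read off the local-maximum rate from ABC via the sign symmetry and $\Theta_{0,p}=\Theta_p$ on $u\le -E_\infty$; the paper additionally records the rate of $\delta_{\mathrm{exact}}$ through the total critical-point count. Both use the $e\ge 8$ bracket only as a finite-$d$ cross-check, and both evaluate $I_1$ by the substitution $z=-E_\infty w$.

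Your monotonicity-plus-continuity treatment of the $d$-dependent level $E/\sqrt d\uparrow eE_\infty$ is more explicit than the paper, which leaves this step implicit.

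One slip: the ABC threshold is $Nu=d\cdot(-E/\sqrt d)=-\sqrt d\,E$, not $-\sqrt d\,(E/\sqrt d)$. Your subsequent identification $u=-E/\sqrt d$ is the right one.
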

\end{subequations}
\begin{proof}
Deferred to Appendix~\ref{app:deferred}.
\end{proof}
The threshold $e\ge 8$ governs only the \emph{non-asymptotic two-sided bracket}~\eqref{eq:complexity_bracket} -- the validity range of the BDG amplifier in Theorem~\ref{thm:complexity}\textup{(ii)} ($E\ge E_{\mathrm{BDG}}=8\sqrt{2(d-1)}/\rho$). The asymptotic identity~\eqref{eq:complexity_asymp} itself holds on the full ABC range $e\ge 1$, i.e.\ $u_{\mathrm{ABC}}\ge E_\infty$ for maxima (equivalently $u_{\mathrm{ABC}}\le-E_\infty$ for minima after sign symmetry), the case-1 branch of $\Theta_k$, since it follows from the exact equality of part~\textup{(i)} and \citet[Theorem~2.4]{auffinger2013} rather than from the bracket. The gap $e\in[1,8)$ -- moderate maxima above the spectral edge but below the BDG-amplifier threshold -- is therefore still covered by the asymptotic~\eqref{eq:complexity_asymp} (via the GOE semicircle confinement of \citet[Theorem~2.1.22]{anderson2010introduction}) and by the unconditional upper bound~\eqref{eq:complexity_upper}, but lies \emph{outside the reach of the non-asymptotic two-sided bracket}: the BDG amplifier collapses below $e=8$, and a matching finite-$d$ \emph{lower} bracket on $e\in[1,8)$ would require finer control of the largest eigenvalue of the GOE near the spectral edge (e.g.~the LDP of \citet[Theorem~6.2]{ben-guionnet}). We leave that moderate-energy lower bracket as an open problem; in that range the asymptotic~\eqref{eq:complexity_asymp} and the upper bound~\eqref{eq:complexity_upper} -- which holds for every $E\in\R$ -- remain in force.
\begin{remark}[Normalisation reconciliation with \citet{auffinger2013}]
\label{rem:abc_normalisation}
\citet{auffinger2013} use $H_{N,p}$ on the sphere of radius $\sqrt N$ ($\E[H^2]=N$) with energy $u=H/N$; we use $f=H(\sqrt N\,\cdot)/\sqrt N$ on the unit sphere ($\E[f^2]=1$) with energy $E$ on the scale $\sqrt N\,u$. The reduced energy satisfies $e=u_{\mathrm{ABC}}/E_\infty$ by~\eqref{eq:uABC_to_e}, identifying the two conventions without further rescaling.
\end{remark}

\begin{remark}[Four nested non-asymptotic complexity bounds]
\label{rem:complexity_hierarchy}
Substituting $\delta_{\mathrm{IMF}},\delta_{\mathrm{SMF}},\delta_{\mathrm{SM}}$ for $\delta_{\mathrm{exact}}$ in~\eqref{eq:complexity_bracket} yields four nested non-asymptotic bounds on the annealed complexity: $\delta_{\mathrm{exact}}$ tightest (Theorem~\ref{thm:imf_tail_exact}), $\delta_{\mathrm{IMF}}$ asymptotically sharp, $\delta_{\mathrm{SMF}}$ inversion-friendly for complexity-vs-energy curves (Corollary~\ref{cor:single_term}), and $\delta_{\mathrm{SM}}$ independent of the Mehta--Fyodorov algebra.
\end{remark}

\begin{remark}[Spiked landscape: connection to the BAMMN result]
\label{rem:spiked_landscape}
\citet{arous2019landscape} extend the complexity to the rank-one spiked model, where the conditional Hessian becomes the rank-one deformation $\widetilde G_{d-1}=\theta(m)\,e_1 e_1^\top+G_{d-1}-t(m,x)I_{d-1}$ with $\theta(m)=\sqrt{2k(k-1)}\,\lambda m^{k-2}(1-m^2)$, $t(m,x)=\sqrt{2k/(k-1)}\,x$ \citep[Eq.~(4.42)]{arous2019landscape}; our unspiked result~\eqref{eq:complexity_asymp} is its $\lambda=0$ marginal ($\theta(m)=0$). The \emph{asymptotic} spiked complexity is well understood: \citet{piccolo2023topological} establishes the large-$d$ annealed complexity for an arbitrary finite number $r$ of spikes of mixed degrees, replacing the rank-one large deviation of \citet{maida2007large} by the finite-rank spherical-integral asymptotics of \citet{guionnet2022spherical}. Its $\lambda=0$ specialisation coincides with~\eqref{eq:complexity_asymp} after the noise-normalisation change of variables $u=\sqrt 2\,x$ (both equal the Auffinger--Ben Arous--\v{C}ern\'y rate, as we checked numerically). Our framework leaves open the \emph{non-asymptotic, finite-$(k,d)$} spiked bracket. This would require redoing the Mehta--Fyodorov algebra for the rank-one-deformed ensemble, which has no closed Fyodorov representation \citep[Lemma~4.4]{arous2019landscape}, together with non-asymptotic counterparts of those large-deviation and spherical-integral inputs (compare \citealp[Theorem~1.1]{maida2007large} and \citealp[Theorem~6]{guionnet2005fourier}). We leave that finite-$d$ refinement as a follow-up.
\end{remark}

\bibliography{shub_tensor_bibliography}

\newpage
\appendix

\section{Deferred proofs}
\label{app:deferred}

For readability the three longest proofs are collected here.

\begin{proof}[Proof of Theorem~\ref{thm:imf_tail_exact}]
We check each $D_i$ in turn; only $D_2$ requires care.

\medskip

\noindent
$\bullet$ \emph{$D_4$ (squared Hermite).} This is exactly the content of Theorem~\ref{thm:imf_tail_sharp}: by the change of variable $y=\rho x$ and Corollary~\ref{cor:Kj_beta},
\[
    D_4(u)\;=\;T_d^{\mathrm{exact}}(u)\;=\;\rho^{-1}\sum_{j=0}^{d-1}c_j^2\,K_j^\beta(\rho u)\,,\qquad\beta=(3k-2)/k\,.
\]

\medskip

\noindent
$\bullet$ \emph{$D_1,D_3$ (linear Hermite).} The change of variable $y=\rho x$ in $\mathcal{L}_d(u)=\int_u^\infty H_{d-1}(\rho x)\,e^{-x^2/2}\,\mathrm{d}x$ gives $\mathcal{L}_d(u)=\rho^{-1}J_{d-1}^{1/\rho^2}(\rho u)$, and Lemma~\ref{lem:Jm_beta} with $\beta=1/\rho^2$, $\gamma=2\rho^2$, $\theta=2\rho^2-1=\Lambda$ supplies the closed form. For $d$ odd, $D_3$ exhibits a Mills-ratio-type remainder $\bar\Phi(u\sqrt\beta)$ (since $d-1$ is even and $J_0^{1/\rho^2}$ enters); for $d$ even, $D_1$ has no such remainder (since $d-1$ is odd).

\medskip

\noindent
$\bullet$ \emph{$D_2$, cross-integral, polynomial part.} Apply Lemma~\ref{lem:hermite_tail_explicit} to $\mathcal{I}_d^c(\rho x)$:
\[
    \mathcal{I}_d^c(\rho x)\;=\;e^{-(\rho x)^2/2}\sum_{\ell=0}^{\lfloor(d-1)/2\rfloor}\!\frac{2^{\ell+1}(d-1)!!}{(d-2\ell-1)!!}\,H_{d-2\ell-1}(\rho x)\;+\;\1_{\{d\,\mathrm{even}\}}\,2^{d/2}(d-1)!!\!\!\int_{\rho x}^\infty\!e^{-y^2/2}\,\mathrm{d}y\,.
\]
Substituting into~\eqref{eq:Cd_def}, the Hermite-series part becomes
\[
    \sum_\ell\frac{2^{\ell+1}(d-1)!!}{(d-2\ell-1)!!}\!\!\int_u^\infty\!H_{d-1}(\rho x)\,H_{d-2\ell-1}(\rho x)\,e^{-(1+\rho^2)x^2/2}\,\mathrm{d}x\,.
\]
The classical product linearization $H_a(\rho x)\,H_b(\rho x)=\sum_{p=0}^{\min(a,b)}2^p\,p!\,\binom{a}{p}\binom{b}{p}\,H_{a+b-2p}(\rho x)$, combined with $y=\rho x$ and Lemma~\ref{lem:Jm_beta} with $\beta=(3k-2)/k$, evaluates each integral to $\rho^{-1}J_{a+b-2p}^\beta(\rho u)$, producing the first line of~\eqref{eq:Cd_closed}.

\medskip

\noindent
$\bullet$ \emph{$D_2$, cross-integral, Fubini remainder ($d$ even only).} The remainder contribution to $\mathcal{C}_d(u)$ is, after substituting $\mathcal{I}_d^c$'s Gaussian-tail piece,
\[
    2^{d/2}(d-1)!!\,\mathcal{F}_d(u)\,,\qquad \mathcal{F}_d(u)\;:=\;\int_u^\infty H_{d-1}(\rho x)\,\Big[\!\!\int_{\rho x}^\infty e^{-y^2/2}\,\mathrm{d}y\Big]\,e^{-x^2/2}\,\mathrm{d}x\,.
\]
The double integral is over $\{(x,y):x\ge u,\ y\ge\rho x\}$. By Fubini, swapping the order, $y$ ranges over $[\rho u,\infty)$ and for each such $y$, $x$ ranges over $[u,y/\rho]$:
\[
    \mathcal{F}_d(u)\;=\;\int_{\rho u}^\infty\!e^{-y^2/2}\!\int_u^{y/\rho}\!H_{d-1}(\rho x)\,e^{-x^2/2}\,\mathrm{d}x\,\mathrm{d}y
    \;=\;\frac1\rho\!\int_{\rho u}^\infty\!e^{-y^2/2}\bigl[J_{d-1}^{1/\rho^2}(\rho u)-J_{d-1}^{1/\rho^2}(y)\bigr]\,\mathrm{d}y\,,
\]
where the second equality applies the change of variable $z=\rho x$ to the inner integral and writes the difference of $J^{1/\rho^2}_{d-1}$ at the two endpoints. Splitting at the bracket gives
\begin{equation}\label{eq:Fd_split}
    \mathcal{F}_d(u)\;=\;\underbrace{\frac{J_{d-1}^{1/\rho^2}(\rho u)}{\rho}\!\int_{\rho u}^\infty\!e^{-y^2/2}\,\mathrm{d}y}_{=:\,\mathcal{F}_d^{(1)}(u)}\;-\;\underbrace{\frac{1}{\rho}\!\int_{\rho u}^\infty\!e^{-y^2/2}\,J_{d-1}^{1/\rho^2}(y)\,\mathrm{d}y}_{=:\,\mathcal{F}_d^{(2)}(u)}\,.
\end{equation}

\emph{First piece $\mathcal F_d^{(1)}$.} Direct evaluation: $\int_{\rho u}^\infty e^{-y^2/2}\,\mathrm{d}y = \sqrt{2\pi}\,\bar\Phi(\rho u)$, so
\[
    \mathcal{F}_d^{(1)}(u)\;=\;\sqrt{2\pi}\,\bar\Phi(\rho u)\,\frac{J_{d-1}^{1/\rho^2}(\rho u)}{\rho}\,,
\]
which is the first term of~\eqref{eq:Fubini_F}.

\emph{Second piece $\mathcal F_d^{(2)}$.} For $d$ even, $d-1$ is odd, so the iterated form~\eqref{eq:Jm_beta_iter} of Lemma~\ref{lem:Jm_beta} applied with $\beta=1/\rho^2$, $\gamma=2/\beta=2\rho^2$, $\theta=(2-\beta)/\beta=2\rho^2-1=\Lambda$ has $R^{1/\rho^2}_{d-1}=0$ and reads
\[
    J_{d-1}^{1/\rho^2}(y)\;=\;2\rho^2\,e^{-y^2/(2\rho^2)}\!\sum_{k=0}^{(d-2)/2}(2\Lambda)^k\,\frac{(d-2)!!}{(d-2k-2)!!}\,H_{d-2k-2}(y)\,.
\]
Substituting into $\mathcal{F}_d^{(2)}(u)$ and pulling the constants $2\rho^2$ and the outer $1/\rho$ outside the sum,
\begin{align*}
    \mathcal{F}_d^{(2)}(u)
    &\;=\;\frac{1}{\rho}\!\int_{\rho u}^\infty\!e^{-y^2/2}\cdot 2\rho^2\,e^{-y^2/(2\rho^2)}\sum_{k=0}^{(d-2)/2}(2\Lambda)^k\,\frac{(d-2)!!}{(d-2k-2)!!}\,H_{d-2k-2}(y)\,\mathrm{d}y\\
    &\;=\;\underbrace{\frac{2\rho^2}{\rho}}_{=\,2\rho}\;\sum_{k=0}^{(d-2)/2}(2\Lambda)^k\,\frac{(d-2)!!}{(d-2k-2)!!}\!\int_{\rho u}^\infty\!H_{d-2k-2}(y)\,e^{-(1+1/\rho^2)y^2/2}\,\mathrm{d}y\\
    &\;=\;2\rho\,\sum_{k=0}^{(d-2)/2}(2\Lambda)^k\,\frac{(d-2)!!}{(d-2k-2)!!}\,J_{d-2k-2}^{\beta}(\rho u)\,,
\end{align*}
where the combined exponent $-(1/\rho^2+1)y^2/2=-\beta y^2/2$ identifies $\beta=1+1/\rho^2=(1+\rho^2)/\rho^2=(3k-2)/k$, the same $\beta$ appearing in $D_4$ (Corollary~\ref{cor:Kj_beta}), and the last line applies the definition $J_m^\beta(\rho u)=\int_{\rho u}^\infty H_m(y)\,e^{-\beta y^2/2}\,\mathrm{d}y$.

\emph{Assembly.} Substituting $\mathcal{F}_d^{(1)}$ and $\mathcal{F}_d^{(2)}$ into~\eqref{eq:Fd_split} and noting the explicit minus sign in front of $\mathcal{F}_d^{(2)}$, the second-piece coefficient becomes $-2\rho$, yielding the second line of~\eqref{eq:Fubini_F}.

\medskip

\noindent
$\bullet$ \emph{Pointwise tightness.} The chain $\delta_{\mathrm{exact}}\le\delta_{\mathrm{IMF}}^\star\le\delta_{\mathrm{IMF}}$ follows from the chain of relaxations: $\delta_{\mathrm{IMF}}^\star$ discards $-2\mathcal{I}_d^c\,H_{d-1}\,\le 0$ on $\{\rho x\ge\sqrt{2d-1}\}$, and $\delta_{\mathrm{IMF}}$ further replaces each $H_j(\rho x)\le(2\rho x)^j$ via Lemma~\ref{lem:hermite_bound}. Both inequalities are strict at every finite $u$.
\end{proof}

\medskip

\begin{proof}[Proof of Lemma~\ref{lem:dominant_term}]

\noindent
$\bullet$
We separate the Mehta expansion~\eqref{eq:fmehta} into three distinct pieces, $Q_d = T_1 + T_2 + T_3$, defined as follows:
\begin{align*}
    T_1(\nu) &= e^{-\nu^2/2}\sum_{j=0}^{d-1}c_j^2 H_j^2(\nu)\,, \\
    T_2(\nu) &= \tfrac{1}{2}\sqrt{d/2}\,c_{d-1}c_d\,H_{d-1}(\nu)\,\big[\mu_d-2\mathcal{I}_d^c(\nu)\big]\,, \\
    T_3(\nu) &= \1_{\{d\text{ odd}\}}\,\frac{H_{d-1}(\nu)}{\mu_{d-1}}\,.
\end{align*}

\noindent
$\bullet$
We extract the constant prefactor multiplying $H_{d-1}(\nu)$, which depends on the parity of the dimension $d$:
\begin{itemize}
    \item \textbf{For $d$ even:} $T_3 = 0$, and the constant part of the bracket in $T_2$ isolates $\alpha_d = \tfrac{1}{2}\sqrt{d/2}\,c_{d-1}c_d\,\mu_d$.
    \item \textbf{For $d$ odd:} $\mu_d = 0$ (by parity), leaving only $T_3$ to contribute. This gives $\alpha_d = 1/\mu_{d-1}$ (here $d-1$ is even, so $\mu_{d-1} \neq 0$).
\end{itemize}
The exact values of the integrals are obtained via direct evaluation of the Gaussian moments
\[
    \int_{\mathbb{R}} y^{2j}\,e^{-y^2/2}\,\mathrm{d}y \;=\; \sqrt{2\pi}\,(2j-1)!!\,,
\]
yielding the closed form $\mu_{2p}=\sqrt{2\pi}\,(2p)!/p!$. Applying Stirling's approximation produces the asymptotic equivalents of the lemma.

\noindent
$\bullet$
The total remainder $\mathcal R_d(\nu):=Q_d(\nu)-\alpha_d H_{d-1}(\nu)$ has the same closed form in both parities:
\begin{itemize}
    \item \textbf{For $d$ even}, $T_3=0$ and the constant part of $T_2$'s bracket is $\alpha_d H_{d-1}(\nu)$, so $\mathcal R_d = T_1 + (T_2-\alpha_d H_{d-1}) = T_1 - \sqrt{d/2}\,c_{d-1}c_d\,\mathcal I_d^c(\nu)\,H_{d-1}(\nu)$.
    \item \textbf{For $d$ odd}, $\mu_d=0$ so $T_2 = -\sqrt{d/2}\,c_{d-1}c_d\,\mathcal I_d^c(\nu)\,H_{d-1}(\nu)$, and $T_3 = H_{d-1}(\nu)/\mu_{d-1} = \alpha_d H_{d-1}(\nu)$, so $\mathcal R_d = T_1 + T_2 = T_1 - \sqrt{d/2}\,c_{d-1}c_d\,\mathcal I_d^c(\nu)\,H_{d-1}(\nu)$.
\end{itemize}
In both cases
\begin{equation}\label{eq:Rd_unified}
    \mathcal R_d(\nu) \;=\; T_1(\nu) \;-\; \sqrt{d/2}\,c_{d-1}c_d\,\mathcal I_d^c(\nu)\,H_{d-1}(\nu)\,,
\end{equation}
and we bound the two terms separately.

We first bound $T_1$. The physicist Hermite polynomial admits the explicit expansion
\[
    H_j(\nu)\;=\;\sum_{m=0}^{\lfloor j/2\rfloor}\frac{(-1)^m\,j!}{m!\,(j-2m)!}(2\nu)^{j-2m}\,,
\]
and we use the elementary coefficient inequality $j!\,2^{j-2m}/m!\le(2j)!/(2m)!$, valid for every $j\ge m\ge 0$. The latter is proved by induction: the base case $j=m$ gives $2^{-m}\le 1$, and the step $j\to j+1$ multiplies the LHS by $2(j+1)$ and the RHS by $2(2j+1)(j+1)$, giving a ratio of $2j+1\ge 1$. The triangle inequality then yields
\[
    |H_j(\nu)| \;\le\; \sum_{m=0}^{\lfloor j/2\rfloor}\frac{(2j)!}{(2m)!(j-2m)!}|\nu|^{j-2m}
    \;\le\;\sum_{p=0}^{j}\frac{(2j)!}{j!}\binom{j}{p}|\nu|^p
    \;=\;\frac{(2j)!}{j!}(1+|\nu|)^j\,,
\]
where the second inequality adds the (non-negative) odd-power monomials $|\nu|^p$ with $p\not\equiv j\pmod 2$, using $(2j)!/((2m)!(j-2m)!)=(2j)!/j!\cdot j!/((2m)!(j-2m)!)\le(2j)!/j!\cdot\binom{j}{j-2m}$. Squaring and applying $(1+|\nu|)^2\le 2(1+\nu^2)$,
\[
    T_1(\nu) \;\le\; S_d(1+\nu^2)^{d-1}e^{-\nu^2/2}\,, \qquad \text{where } S_d := \sum_{j=0}^{d-1}c_j^2\,2^j\left[\frac{(2j)!}{j!}\right]^2\,.
\]

\noindent
$\bullet$ \emph{A uniform envelope for $\mathcal{I}_d^c$.} We bound $\mathcal{I}_d^c$ on the whole half-line $\{\nu\ge 0\}$ by a single envelope carrying the same polynomial--Gaussian form $(1+|\nu|)^{d-1}e^{-\nu^2/2}$ as $H_{d-1}$. The explicit Hermite-tail series of Lemma~\ref{lem:hermite_tail_explicit} (applied with degree $m=d$) reads
\[
    \mathcal{I}_d^c(\nu)\;=\;e^{-\nu^2/2}\!\!\sum_{k=0}^{\lfloor(d-1)/2\rfloor}\!2^{k+1}\frac{(d-1)!!}{(d-2k-1)!!}\,H_{d-2k-1}(\nu)\;+\;R_d(\nu)\,,
\]
with $R_d(\nu)=0$ for $d$ odd and $R_d(\nu)=2^{d/2}(d-1)!!\int_\nu^\infty e^{-y^2/2}\,\mathrm{d}y$ for $d$ even. By the coefficient bound $|H_m(\nu)|\le\frac{(2m)!}{m!}(1+|\nu|)^m$ established above for $T_1$ (whence $|H_{d-2k-1}(\nu)|\le\frac{(2(d-2k-1))!}{(d-2k-1)!}(1+|\nu|)^{d-2k-1}\le\frac{(2(d-2k-1))!}{(d-2k-1)!}(1+|\nu|)^{d-1}$ since $1+|\nu|\ge 1$) and the monotone-Mills bound $\int_\nu^\infty e^{-y^2/2}\,\mathrm{d}y\le\sqrt{\pi/2}\,e^{-\nu^2/2}$ (valid for every $\nu\ge 0$, because $\nu\mapsto e^{\nu^2/2}\int_\nu^\infty e^{-y^2/2}\,\mathrm{d}y$ is non-increasing, with value $\sqrt{\pi/2}$ at $\nu=0$, since its derivative $\nu\,e^{\nu^2/2}\!\int_\nu^\infty e^{-y^2/2}\mathrm{d}y-1$ is negative by the Mills inequality $\nu\,e^{\nu^2/2}\!\int_\nu^\infty e^{-y^2/2}\mathrm{d}y<1$), we obtain
\begin{equation}\label{eq:Idc_uniform_env}
\begin{aligned}
    |\mathcal{I}_d^c(\nu)|&\;\le\;E_d\,(1+|\nu|)^{d-1}\,e^{-\nu^2/2}\,,\\
    E_d&\;:=\;\sum_{k=0}^{\lfloor(d-1)/2\rfloor}2^{k+1}\frac{(d-1)!!}{(d-2k-1)!!}\frac{(2(d-2k-1))!}{(d-2k-1)!}\;+\;2^{d/2}(d-1)!!\sqrt{\tfrac\pi2}\,.
\end{aligned}
\end{equation}

\noindent
$\bullet$ \emph{$E_d$ is dominated by $\widetilde B_d$.} The summands $a_k:=2^{k+1}\frac{(d-1)!!}{(d-2k-1)!!}\frac{(2(d-2k-1))!}{(d-2k-1)!}$ are strictly decreasing in $k$: with $p:=d-2k-1\ge 2$, a direct computation gives the ratio $a_{k+1}/a_k=\frac{p}{2(2p-1)(2p-3)}\le\frac{1}{2(2p-3)}<1$. Hence, using $a_0=2\frac{(2d-2)!}{(d-1)!}$ and that the sum has at most $\lfloor(d-1)/2\rfloor+1\le\frac{d+1}{2}$ terms,
\[
    \sum_{k}a_k\;\le\;\frac{d+1}{2}\,a_0\;=\;(d+1)\frac{(2d-2)!}{(d-1)!}\,,\qquad
    2^{d/2}(d-1)!!\sqrt{\tfrac\pi2}\;\le\;2^{d}\frac{(2d-2)!}{(d-1)!}\,,
\]
the second bound using $(d-1)!!\le(d-1)!\le\frac{(2d-2)!}{(d-1)!}$ and $\sqrt{\pi/2}\le 2^{d/2}$. Therefore
\[
    E_d\;\le\;(d+1+2^d)\frac{(2d-2)!}{(d-1)!}\;\le\;(2d-1)\,2^{d+2}\,\frac{(2d-2)!}{(d-1)!}\;=\;\frac{(2d)!\,2^{d+1}}{d!}\;\le\;\widetilde B_d\,,
\]
where
\[
    \widetilde{B}_d := \max\!\left( \frac{(2d)!\,2^{d+1}}{d!},\; B_d' \right)\,,\qquad
    B_d' := \frac{(2d)!}{d!}\int_0^\infty (1+y)^d e^{-y^2/2}\,\mathrm{d}y\,,
\]
($B_d'$ is the constant from the crude global bound $|\mathcal I_d^c(\nu)|\le\int_0^\infty|H_d(y)|e^{-y^2/2}\,\mathrm{d}y=B_d'$, retained in the definition of $\widetilde B_d$ for definiteness; the envelope~\eqref{eq:Idc_uniform_env} already gives $E_d\le\widetilde B_d$ directly). Substituting into~\eqref{eq:Idc_uniform_env},
\[
    |\mathcal{I}_d^c(\nu)|\;\le\;\widetilde B_d\,(1+|\nu|)^{d-1}\,e^{-\nu^2/2}\qquad\text{for every $\nu\ge 0$.}
\]

\noindent
$\bullet$ \emph{Assembly.} Combining the last bound with $|H_{d-1}(\nu)|\le\frac{(2d-2)!}{(d-1)!}(1+|\nu|)^{d-1}$ and the elementary inequality $(1+|\nu|)^{2(d-1)}\le 2^{d-1}(1+\nu^2)^{d-1}$ (which follows from $(1+|\nu|)^2\le 2(1+\nu^2)$), the cross term in~\eqref{eq:Rd_unified} obeys
\[
    \sqrt{d/2}\,c_{d-1}c_d\,|\mathcal{I}_d^c(\nu)|\,|H_{d-1}(\nu)|\;\le\;\sqrt{d/2}\,c_{d-1}c_d\,\frac{(2d-2)!}{(d-1)!}\,2^{d-1}\,\widetilde B_d\,(1+\nu^2)^{d-1}e^{-\nu^2/2}\,.
\]
Adding the bound $T_1(\nu)\le S_d\,(1+\nu^2)^{d-1}e^{-\nu^2/2}$ established above and setting
\[
    \beta_d := S_d + \sqrt{d/2}\,c_{d-1}c_d\,\frac{(2d-2)!}{(d-1)!}\,2^{d-1}\,\tilde{B}_d
\]
yields the remainder bound~\eqref{eq:remainder_bound}.
\end{proof}

\medskip

\begin{proof}[Proof of Theorem~\ref{thm:complexity}]
The inputs are the Kac--Rice formula for smooth Gaussian fields on the sphere \citep[Theorem~6.4]{azais2009level}, the conditional GOE law of the Hessian \citep[Lemma~3.2(b)]{auffinger2013}, and the Ben Arous--Dembo--Guionnet large-deviation bound on the GOE spectral radius (Lemma~\ref{lem:bdg}).

\medskip

\noindent
$\bullet$ \emph{Step~1: Exact Kac--Rice representation and upper bound.}
The Kac--Rice computation here is the one of Section~\ref{sec:tail_bounds} (displays~\eqref{eq:hessian_law}--\eqref{eq:KR_combined} and Proposition~\ref{prop:ortho_exact}), with two changes. First, the target is the expected \emph{count} of local maxima with value in $[E,\infty)$, not a supremum probability: for this count the Kac--Rice formula \citep[Theorem~6.4]{azais2009level} is an \emph{equality}, with the same critical-point intensity $\bar p(x)$ as in Section~\ref{sec:tail_bounds} and with the indicator $\mathds 1\{\nabla^2 X(\bm\theta)\prec 0\}$ kept inside the conditional expectation. Second, this indicator passes to the GOE side: by the conditional Hessian law~\eqref{eq:hessian_law}, $\nabla^2 X(\bm\theta)$ given $X(\bm\theta)=x$ is the \emph{positive} multiple $\sqrt{2k(k-1)}\,(G_{d-1}-\rho x\,I_{d-1})$ of a shifted GOE \citep[Lemma~3.2(b)]{auffinger2013}, so $\nabla^2 X(\bm\theta)\prec 0$ if and only if $G_{d-1}-\rho x\,I_{d-1}\prec 0$, equivalently $\max_i\mu_i<\rho x$, where $\mu_1\le\cdots\le\mu_{d-1}$ are the eigenvalues of $G_{d-1}$. With these two changes, the constants gather exactly as in~\eqref{eq:KR_combined} and give the exact representation~\eqref{eq:complexity_exact}.

Dropping the indicator $\mathds 1\{G_{d-1}-\rho x I\prec 0\}\le 1$ in~\eqref{eq:complexity_exact} removes the only difference with the Kac--Rice integral of Section~\ref{sec:tail_bounds}, so the same calculation as in Proposition~\ref{prop:ortho_exact} gives
\[
    \E[N_{[E,\infty)}^{\mathrm{lm}}]\;\le\;C_{k,d}\int_E^\infty\E\!\big[|\det(G_{d-1}-\rho x I)|\big]\,\varphi(x)\,\mathrm{d}x\;=\;2(k-1)^{\frac{d-1}2}\int_E^\infty Q_d(\rho x)\,e^{-x^2/2}\,\mathrm{d}x\;=\;\delta_{\mathrm{exact}}(E)\,,
\]
where the middle integral is the Kac--Rice expected number of \emph{all} critical points of $X$ above level $E$, and the last equality is the integral form of $\delta_{\mathrm{exact}}$ in Theorem~\ref{thm:imf_tail_exact} (eq.~\eqref{eq:exact_bound}). This proves part~(i), with no symmetrisation factor: the bound on $\P\{\sup X>u\}$ delivered by $\delta_{\mathrm{exact}}$ is already one-sided, and the two-sided factor 2 of~\eqref{eq:two_sided} appears explicitly in Theorem~\ref{thm:main} (as $2\,\delta_{\mathrm{exact}}\le\delta_{\min}$), not inside $\delta_{\mathrm{exact}}$ itself.

\smallskip

\begin{subequations}
\noindent
$\bullet$ \emph{Step~2: Lower bound, valid for $E\ge E_{\mathrm{BDG}}$, with explicit constants.}
For the lower bound we exploit that on the event $\mathcal E:=\{M_{d-1}\le\rho E\}$, where $M_{d-1}:=\max_i|\mu_i|$, one has $|\mu_i|\le\rho E\le\rho x$ for every $x\ge E$, hence $\rho x-\mu_i>0$ and a fortiori $G_{d-1}-\rho x I\prec 0$. Lemma~\ref{lem:bdg} applies on $\rho E\ge 4\sqrt{2(d-1)}$, which is automatic under $E\ge E_{\mathrm{BDG}}=8\sqrt{2(d-1)}/\rho$:
\[
    \P(\mathcal E^c)\;=\;\P(M_{d-1}>\rho E)\;\le\;e^{-2(\rho E)^2/9}\;=:\;\delta_{\mathrm{BDG}}(\rho E)\,.
\]
On $\mathcal E$ the determinant satisfies $|\det(G-\rho x I)|=\prod_i(\rho x-\mu_i)\ge 0$, so
\[
    \E\!\big[|\det(G-\rho x I)|\,\mathds 1\{G-\rho x I\prec 0\}\big]\;\ge\;\E\!\big[|\det(G-\rho x I)|\,\mathds 1_{\mathcal E}\big]\;=\;\E\!\big[|\det(G-\rho x I)|\big]-\E\!\big[|\det(G-\rho x I)|\mathds 1_{\mathcal E^c}\big]\,.
\]
We bound the remainder term, and then the denominator, with explicit constants.

\smallskip

\emph{(4a) GOE moment bound: $C_1=64$.} By Lemma~\ref{lem:bdg}, $\P(M_{d-1}>t)\le e^{-2t^2/9}$ for $t\ge T_1:=4\sqrt{2(d-1)}$. The layer-cake formula gives
\[
    \E[M_{d-1}^{2(d-1)}]\;=\;\int_0^\infty 2(d-1)\,t^{2d-3}\,\P(M_{d-1}>t)\,\mathrm{d}t\;\le\;T_1^{2(d-1)}\;+\;2(d-1)\int_{T_1}^\infty t^{2d-3}e^{-2t^2/9}\,\mathrm{d}t\,.
\]
The first term equals $(4\sqrt{2(d-1)})^{2(d-1)}=32^{d-1}(d-1)^{d-1}$. For the second, Lemma~\ref{lem:gauss_tail} with $m=2d-3$, $a=4/9$ gives, on $T_1^2=32(d-1)$ and $(m-1)/T_1^2=(2d-4)/(32(d-1))\le 1/16$,
\[
    \int_{T_1}^\infty t^{2d-3}e^{-2t^2/9}\mathrm{d}t\;\le\;\frac{T_1^{2d-4}\,e^{-2T_1^2/9}}{4/9-1/16}\;=\;\frac{(32(d-1))^{d-2}\,e^{-64(d-1)/9}}{55/144}\,.
\]
Multiplying by $2(d-1)$ and simplifying,
\[
    2(d-1)\int_{T_1}^\infty t^{2d-3}e^{-2t^2/9}\mathrm{d}t\;\le\;\frac{288}{55}\cdot\frac{(d-1)(32(d-1))^{d-2}}{1}\,e^{-64(d-1)/9}\;\le\;32^{d-1}(d-1)^{d-1}\cdot e^{-1}
\]
for $d\ge 2$, where the last step uses $(288/(55\cdot 32))\cdot e^{-64(d-1)/9+1}\le 1$. Adding the two pieces,
\begin{equation}\label{eq:GOE_moment_explicit}
    \E[M_{d-1}^{2(d-1)}]\;\le\;32^{d-1}(d-1)^{d-1}\,(1+e^{-1})\;\le\;64^{d-1}(d-1)^{d-1}
    \qquad\text{for every }d\ge 2.
\end{equation}
This pins $C_1=64$ explicitly.

\smallskip

\emph{(4b) Numerator bound: $C_2=8$.} On $\mathcal E^c$, by the triangle inequality $|\rho x-\mu_i|\le\rho x+|\mu_i|\le\rho x+M_{d-1}$, so $|\det(G-\rho x I)|\le(\rho x+M_{d-1})^{d-1}$, and by Cauchy--Schwarz,
\[
    \E[(\rho x+M_{d-1})^{d-1}\mathds 1_{\mathcal E^c}]\;\le\;\sqrt{\E[(\rho x+M_{d-1})^{2(d-1)}]\,\P(\mathcal E^c)}\,.
\]
Using $(\rho x+M_{d-1})^{2(d-1)}\le 4^{d-1}((\rho x)^{2(d-1)}+M_{d-1}^{2(d-1)})$ and~\eqref{eq:GOE_moment_explicit},
\[
    \E[(\rho x+M_{d-1})^{2(d-1)}]\;\le\;4^{d-1}\big[(\rho x)^{2(d-1)}+64^{d-1}(d-1)^{d-1}\big]\,.
\]
On $\rho x\ge\rho E\ge 8\sqrt{2(d-1)}$, $(\rho x)^2\ge 2\cdot64 (d-1)$, hence $64^{d-1}(d-1)^{d-1}\leq (\rho x)^{2(d-1)}/2^{d-1}$,
so
\begin{equation}\label{eq:numerator_bound}
    \E[(\rho x+M_{d-1})^{2(d-1)}]\;\le\;4^{d-1}\big(1+2^{-(d-1)}\big)(\rho x)^{2(d-1)}\;\le\;8^{d-1}(\rho x)^{2(d-1)}\,,
\end{equation}
i.e., $C_2=8$ in the notation of~\eqref{eq:numerator_bound}. Therefore
\[
    \E[(\rho x+M_{d-1})^{d-1}\mathds 1_{\mathcal E^c}]\;\le\;(2\sqrt 2)^{d-1}\,(\rho x)^{d-1}\,\sqrt{\delta_{\mathrm{BDG}}(\rho E)}\,.
\]

\smallskip

\emph{(4c) Denominator lower bound: $\E[|\det|]\ge(\rho x)^{d-1}/2^d$.}
For any deterministic $R\in(0,\rho x)$, the bulk argument gives
\[
    \E[|\det(G-\rho x I)|]\;\ge\;(\rho x-R)^{d-1}\,\P(M_{d-1}\le R)\,.
\]
Choose $R=\rho x/2$. Then $\rho x-R=\rho x/2$. On $\rho x\ge 8\sqrt{2(d-1)}$, $\rho x/2\ge 4\sqrt{2(d-1)}$, so Lemma~\ref{lem:bdg} applies at $R=\rho x/2$:
\[
    \P(M_{d-1}\le \rho x/2)\;\ge\;1-e^{-2(\rho x/2)^2/9}\;=\;1-e^{-(\rho x)^2/18}\;\ge\;1-e^{-128(d-1)/18}\;\ge\;1-e^{-64/9}\;\ge\;1/2\,,
\]
for every $d\ge 2$, since $e^{-64/9}\approx 8\times 10^{-4}<1/2$. Hence
\begin{equation}\label{eq:denominator_bound}
    \E[|\det(G_{d-1}-\rho x I_{d-1})|]\;\ge\;\frac12\,(\rho x/2)^{d-1}\;=\;(\rho x)^{d-1}/2^d
    \qquad\text{on }\rho x\ge 8\sqrt{2(d-1)}.
\end{equation}

\smallskip

\emph{(4d) Combining: $C_{\mathrm{amp}}=8\sqrt 2$.}
Dividing (4b) by (4c),
\[
    \frac{\E\!\big[|\det(G-\rho x I)|\mathds 1_{\mathcal E^c}\big]}{\E\!\big[|\det(G-\rho x I)|\big]}\;\le\;\frac{(2\sqrt 2)^{d-1}(\rho x)^{d-1}\sqrt{\delta_{\mathrm{BDG}}(\rho E)}}{(\rho x)^{d-1}/2^d}\;=\;2\cdot(4\sqrt 2)^{d-1}\,\sqrt{\delta_{\mathrm{BDG}}(\rho E)}\,.
\]
The leading factor $2\cdot(4\sqrt 2)^{d-1}$ is absorbed into $(8\sqrt 2)^{d-1}=2^{d-1}(4\sqrt 2)^{d-1}\ge 2\cdot(4\sqrt 2)^{d-1}$ for $d\ge 2$, giving the explicit bound
\begin{equation}\label{eq:amplifier_bound}
    \frac{\E\!\big[|\det(G-\rho x I)|\mathds 1_{\mathcal E^c}\big]}{\E\!\big[|\det(G-\rho x I)|\big]}\;\le\;C_{\mathrm{amp}}^{d-1}\,\sqrt{\delta_{\mathrm{BDG}}(\rho E)}\,,\qquad C_{\mathrm{amp}}:=8\sqrt 2\,.
\end{equation}
Combining with $\E[|\det|\mathds 1\{G-\rho x I\prec 0\}]\ge\E[|\det|](1-C_{\mathrm{amp}}^{d-1}\delta_{\mathrm{BDG}}^{1/2}(\rho E))$ established above and integrating over $x\in[E,\infty)$ against $C_{k,d}\varphi(x)$, then applying the identity $C_{k,d}\int_E^\infty\E[|\det(G-\rho x I)|]\varphi(x)\mathrm{d}x=\delta_{\mathrm{exact}}(E)$ from Step~1,
\[
    \E[N_{[E,\infty)}^{\mathrm{lm}}]\;\ge\;\big(1-C_{\mathrm{amp}}^{d-1}\delta_{\mathrm{BDG}}^{1/2}(\rho E)\big)\,\delta_{\mathrm{exact}}(E)\,,
\]
which is the lower bound in~\eqref{eq:complexity_bracket}. The amplifier $C_{\mathrm{amp}}^{d-1}\delta_{\mathrm{BDG}}^{1/2}(\rho E)$ is super-exponentially small on the validity range: $\sqrt{\delta_{\mathrm{BDG}}(\rho E)}=e^{-(\rho E)^2/9}$ and $(\rho E)^2\ge 128(d-1)$ give $\delta_{\mathrm{BDG}}^{1/2}\le e^{-128(d-1)/9}$, while $C_{\mathrm{amp}}=8\sqrt 2$ gives $\log C_{\mathrm{amp}}\approx 2.42$; hence $C_{\mathrm{amp}}^{d-1}\delta_{\mathrm{BDG}}^{1/2}\le[C_{\mathrm{amp}}e^{-128/9}]^{d-1}\le[8\sqrt 2\cdot 6.66\times 10^{-7}]^{d-1}\le[7.6\times 10^{-6}]^{d-1}$.
\end{subequations}

\smallskip

\begin{subequations}
\noindent
$\bullet$ \emph{Step~3: High-dimensional limit via the ABC complexity theorem.}
We avoid redoing the asymptotic of $\delta_{\mathrm{exact}}$ from scratch via the Mehta--Fyodorov decomposition, which would require fine control of the partial Hermite sum $\Phi_d(\rho,E)$ at the natural scale $\rho E=e\sqrt{2(d-1)}\asymp\sqrt d$, where the leading-monomial approximation $\Phi_d\sim 2\rho(2\rho E)^{d-2}$ is invalid (every term in the sum contributes at comparable order). Instead, we reduce~\eqref{eq:complexity_asymp} directly to \citet[Theorem~2.4]{auffinger2013} via an explicit field/level correspondence between our paper's KSS field $X$ on $\mathbb S^{d-1}$ and ABC's spherical $k$-spin Hamiltonian $H_{N,p}$ on $S^{N-1}(\sqrt N)$. The argument has four steps.

\smallskip

\emph{(3a) Field and Hessian correspondence.} \citet{auffinger2013} work, throughout their Section~3, with the rescaled field
\begin{equation}\label{eq:abc_rescaled_field}
    f_{N,p}(\boldsymbol\theta)\;:=\;\frac{1}{\sqrt N}\,H_{N,p}(\sqrt N\,\boldsymbol\theta)\,,\qquad\boldsymbol\theta\in\mathbb S^{N-1}\,,
\end{equation}
of unit variance on the unit sphere, with covariance $\E[f(\boldsymbol\theta)f(\boldsymbol\theta')]=\langle\boldsymbol\theta,\boldsymbol\theta'\rangle^p$ \citep[eq.~(3.1)]{auffinger2013}. Identifying $N=d$, $p=k$, the law of $f$ on $\mathbb S^{d-1}$ coincides with that of our paper's $X$. ABC's conditional-Hessian formula \citep[Lemma~3.2, part~(b)]{auffinger2013} is stated directly in this unit-sphere setting: for every $\boldsymbol\theta\in\mathbb S^{N-1}$ and every $x\in\R$,
\begin{equation}\label{eq:abc_conditional_hessian}
    \nabla^2 f(\boldsymbol\theta)\,\big|\,f(\boldsymbol\theta)=x\;\stackrel d=\;M^{N-1}\,\sqrt{2(N-1)\,p(p-1)}\;-\;x\,p\,I_{N-1}\,,
\end{equation}
where $M^{N-1}$ is a GOE in the ABC normalisation $\E[(M^{N-1}_{ij})^2]=(1+\delta_{ij})/(2(N-1))$. Setting $G_{N-1}:=\sqrt{N-1}\,M^{N-1}$, which has the Mehta normalisation $\E[(G_{N-1})_{ij}^2]=(1+\delta_{ij})/2$, and using $p/\sqrt{2p(p-1)}=\sqrt{p/(2(p-1))}=:\rho$, the right-hand side rearranges to $\sqrt{2p(p-1)}\,(G_{N-1}-\rho\,x\,I_{N-1})$. With $(N,p)=(d,k)$ this is exactly the conditional Hessian law~\eqref{eq:hessian_law} used in Step~1
\begin{equation*}
    \nabla^2 X(\boldsymbol\theta)\,\big|\,X(\boldsymbol\theta)=x\;\stackrel d=\;\sqrt{2k(k-1)}\,(G_{d-1}-\rho\,x\,I_{d-1})\,,\qquad\rho=\sqrt{k/(2(k-1))}\,,
\end{equation*}
recovering the conditional shifted-GOE law used throughout Section~\ref{sec:tail_bounds}. No metric rescaling of the Hessian under a sphere-radius change is needed: ABC perform the rescaling once in passing from $H_{N,p}$ on $S^{N-1}(\sqrt N)$ to $f_{N,p}$ on $\mathbb S^{N-1}$ via~\eqref{eq:abc_rescaled_field}, and their Hessian formula~\eqref{eq:abc_conditional_hessian} is already on the unit sphere.

\smallskip

\emph{(3b) Level correspondence.} The threshold $X(\theta)\ge E$ in our paper corresponds to $H(\sigma)\ge E\sqrt N$, i.e., $H/N\ge E/\sqrt N=:u_{\mathrm{ABC}}$. On the scale $E=e\sqrt{2(d-1)}/\rho$, using $\rho E_\infty=\sqrt 2$ (which follows from $\rho^2=k/(2(k-1))$ and $E_\infty^2=4(k-1)/k$),
\begin{equation}\label{eq:uABC_to_e}
    u_{\mathrm{ABC}}\;=\;\frac{E}{\sqrt d}\;=\;\frac{e\sqrt{2(d-1)}}{\rho\sqrt d}\;\xrightarrow[d\to\infty]{}\;\frac{e\sqrt 2}{\rho}\;=\;\frac{e\sqrt 2}{\rho E_\infty}\cdot E_\infty\;=\;e\,E_\infty\,.
\end{equation}
For $e\ge 1$, $u_{\mathrm{ABC}}\to e E_\infty\ge E_\infty$, so $-u_{\mathrm{ABC}}\le-E_\infty$ and the case-1 branch of~\eqref{eq:Thetap_definition} applies.

\smallskip

\emph{(3c) Asymptotic rate for the total critical-point count.} By \citet[Theorem~2.4]{auffinger2013} (log-rate for the total critical-point count below level $Nu$) combined with the upper-tail symmetry $H\stackrel d=-H$ (an immediate consequence of $J\stackrel d=-J$ in the disorder), for every fixed $u>0$:
\[
    \lim_{N\to\infty}\frac1N\log\E\big[\#\{\sigma:\nabla H(\sigma)=0,\,H(\sigma)\in(Nu,\infty)\}\big]\;=\;\Theta_k(-u)\,.
\]
By the Kac--Rice equality of Proposition~\ref{prop:ortho_exact} (recall from Step~1 that $\delta_{\mathrm{exact}}(E)$ equals the Kac--Rice expected critical-point count above level $E$), the left-hand side equals $\lim_d \tfrac1d\log\delta_{\mathrm{exact}}(E)$ under the field/level correspondence of (3a)--(3b). Hence
\begin{equation}\label{eq:delta_exact_asymptotic}
    \lim_{d\to\infty}\frac1d\log\delta_{\mathrm{exact}}(E)\;=\;\Theta_k(-eE_\infty)
    \;=\;\tfrac12\log(k-1)-\tfrac{(k-2)e^2}{k}-2\!\int_1^e\sqrt{w^2-1}\,\mathrm{d}w
\end{equation}
for $e\ge 1$, using~\eqref{eq:Thetap_explicit}.

\smallskip

\emph{(3d) Local-maxima specialisation and the bracketing.} ABC's Theorem~2.4 covers the total critical-point count; for local maxima (Morse index $N-1$ on the sphere) specifically, the symmetry remark following \citet[Theorem~2.3]{auffinger2013} (their Remark~3.1) gives
$\lim_N\frac1N\log\E[\#\{\text{local maxima of }H\text{ above }Nu\}]=\Theta_{0,p}(-u)$,
and the case-1 branch $\Theta_{0,p}(u)=\Theta_p(u)$ on $u\le-E_\infty$~\citep[Eq.~(2.14) and Eq.~(2.16)]{auffinger2013} states that for $e\ge 1$, the local-maxima rate coincides with the total-critical-point rate: every critical point above the spectral edge is, with overwhelming probability, a local maximum (the conditional Hessian $G_{d-1}-\rho xI$ is strictly negative-definite on the high-probability event $M_{d-1}<\rho x$, which has probability $1-o(1)$ for $\rho x\ge 2\sqrt{d-1}$). Therefore
\[
    \lim_{d\to\infty}\frac1d\log\E[N_{[E,\infty)}^{\mathrm{lm}}]\;=\;\Theta_k(-eE_\infty).
\]
Equivalently, and consistently with our non-asymptotic bracketing of~\eqref{eq:complexity_bracket}, combining the upper bound $\E[N^{\mathrm{lm}}]\le\delta_{\mathrm{exact}}$ with the lower bound $\E[N^{\mathrm{lm}}]\ge(1-C_{\mathrm{amp}}^{d-1}\delta_{\mathrm{BDG}}^{1/2})\delta_{\mathrm{exact}}$ (valid for $e\ge 8$), and observing that on $\rho E\ge 8\sqrt{2(d-1)}$ the amplifier $C_{\mathrm{amp}}^{d-1}\delta_{\mathrm{BDG}}^{1/2}(\rho E)\le[C_{\mathrm{amp}}e^{-128/9}]^{d-1}$ vanishes super-exponentially as $d\to\infty$,
\[
    \tfrac1d\log\delta_{\mathrm{exact}}+\tfrac1d\log\big(1-C_{\mathrm{amp}}^{d-1}\delta_{\mathrm{BDG}}^{1/2}\big)\;\le\;\tfrac1d\log\E[N^{\mathrm{lm}}]\;\le\;\tfrac1d\log\delta_{\mathrm{exact}}\,,
\]
both bracketing terms converge to $\Theta_k(-eE_\infty)$ by~\eqref{eq:delta_exact_asymptotic}. The limit~\eqref{eq:complexity_asymp} is thus established for every $e\ge 1$ by the exact Kac--Rice equality of Step~1 and \citet[Theorem~2.4]{auffinger2013} (Steps~3c--3d); on the smaller range $e\ge 8$ the non-asymptotic two-sided bracket above re-derives the same limit with explicit finite-$d$ error bars. The moderate-energy regime $1\le e<8$ is therefore covered by the asymptotic statement~\eqref{eq:complexity_asymp}, while only the matching finite-$d$ \emph{lower} bracket -- not the limit itself -- lies outside the scope of the present argument.\qedhere
\end{subequations}
\end{proof}

\section{Inversion of the SMF tail bound: proof of Remark~\ref{rem:choice_u}}
\label{app:u_alpha}

This appendix gives the full derivation of the level $u_\alpha$ of~\eqref{eq:u_alpha} that ensures $\delta_{\min}(u_\alpha)\le\alpha$. We isolate the SMF asymptotic single-term form (Corollary~\ref{cor:single_term}), invert it term-by-term, and identify the explicit threshold $\alpha_0(k,d)$ above which the closed-form choice~\eqref{eq:u_alpha} is sufficient.

\medskip

Throughout the appendix we set $n:=d-1$ to lighten notation and write $\ell:=\log(1/\alpha)>0$, so that~\eqref{eq:u_alpha} reads
\begin{equation}\label{eq:u_alpha_app}
    u_\alpha^2 \;=\; 2\ell+n\log(2k)+2n\log\ell\,.
\end{equation}

\begin{proposition}[Explicit inversion of $\delta_{\min}$]
\label{prop:u_alpha_inversion}
Let $k\ge 3$, $d\ge 3$, and let $\alpha_d,\beta_d,u^\star_d$ be the explicit constants of Lemma~\ref{lem:dominant_term} and Corollary~\ref{cor:single_term}. Define
\begin{equation}\label{eq:alpha_zero_def}
    \alpha_0(k,d) \;:=\; \min\!\Big\{\,\tfrac12\,e^{-(u^\star_d)^2/2}\,,\;e^{-n}\,,\;\exp\!\big(-\tfrac{n\log(2k)}{2}\big)\,,\;\exp\!\big(-16\,\alpha_d\,(2e)^{(n-1)/2}\big)\Big\}\,.
\end{equation}
Then for every $\alpha\in(0,\alpha_0(k,d)]$, the level $u_\alpha$ defined in~\eqref{eq:u_alpha_app} satisfies
\[
    \delta_{\min}(u_\alpha)\;\le\;\alpha\,.
\]
\end{proposition}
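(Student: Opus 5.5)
The plan is to chain the comparison results of Section~\ref{sec:tail_bounds} down to the single-monomial SMF envelope, and then substitute the explicit level~\eqref{eq:u_alpha_app}. Throughout, write $n=d-1$ and $\ell=\log(1/\alpha)$.

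\textbf{Step 1: $u_\alpha$ lies in every validity range.} From~\eqref{eq:u_alpha_app}, $u_\alpha^2\ge 2\ell$, because $\ell\ge n\ge 2$ forces $\log\ell>0$. The first entry of~\eqref{eq:alpha_zero_def} gives $\ell\ge (u^\star_d)^2/2+\log 2$, hence $u_\alpha\ge u^\star_d$. Corollary~\ref{cor:single_term} states $u^\star_d\ge u_{\mathrm{SMF}}=2\sqrt d$, and $u_{\mathrm{SMF}}\ge u_{\mathrm{IMF}}$ as recorded in Section~\ref{sub:three_bounds}. So $u_\alpha$ lies in the validity ranges of Theorem~\ref{thm:imf_tail}, Theorem~\ref{thm:uniform_domination} and Corollary~\ref{cor:single_term}.

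\textbf{Step 2: collapse to one monomial.} By Theorem~\ref{thm:uniform_domination}, $\delta_{\min}(u_\alpha)=2\delta_{\mathrm{IMF}}(u_\alpha)\le 2\delta_{\mathrm{SMF}}(u_\alpha)$. Since $u_\alpha\ge u^\star_d$, the remainder of~\eqref{eq:smf_bound} is dominated by the main term, as in the proof of Corollary~\ref{cor:single_term}. This gives
\[
\delta_{\min}(u_\alpha)\;\le\;16\,\alpha_d\,(2k)^{n/2}\,u_\alpha^{\,n-1}\,e^{-u_\alpha^2/2}.
\]
The point to watch is the extra factor $2$ from the two-sided reduction: it turns the constant $8$ of Corollary~\ref{cor:single_term} into the $16$ that appears in the fourth entry of~\eqref{eq:alpha_zero_def}.

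\textbf{Step 3: substitute $u_\alpha$.} By construction, $e^{-u_\alpha^2/2}=\alpha\,(2k)^{-n/2}\,\ell^{-n}$. The $(2k)^{n/2}$ factors cancel, so
\[
\delta_{\min}(u_\alpha)\;\le\;\alpha\cdot 16\,\alpha_d\,u_\alpha^{\,n-1}\,\ell^{-n}.
\]
It therefore suffices to prove
\[
16\,\alpha_d\,u_\alpha^{\,n-1}\;\le\;\ell^{\,n}.
\]

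\textbf{Step 4 (main obstacle): bound $u_\alpha$ polynomially in $\ell$.} This step absorbs the $\log u$ correction. I aim for $u_\alpha^2\le 2e\,\ell^2$. The second and third entries of~\eqref{eq:alpha_zero_def} give $\ell\ge n$ and $n\log(2k)\le 2\ell$. Together with $n\log\ell\le\ell\log\ell$, they yield
\[
u_\alpha^2\;\le\;4\ell+2\ell\log\ell.
\]
One then checks the elementary inequality $4+2\log\ell\le 2e\,\ell$ for $\ell\ge 2$, which gives $u_\alpha^2\le 2e\,\ell^2$. If this constant turns out too tight, I would adjust it, since only the base of $(2e)^{(n-1)/2}$ in $\alpha_0$ is affected.

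\textbf{Conclusion.} Step 4 gives $u_\alpha^{\,n-1}\le (2e)^{(n-1)/2}\ell^{\,n-1}$. The required inequality then reduces to
\[
16\,\alpha_d\,(2e)^{(n-1)/2}\;\le\;\ell,
\]
which is exactly the fourth entry of~\eqref{eq:alpha_zero_def}. Hence $\delta_{\min}(u_\alpha)\le\alpha$ for every $\alpha\in(0,\alpha_0(k,d)]$.
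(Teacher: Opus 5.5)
Your proposal is correct and is essentially the paper's proof: it uses the same reduction $\delta_{\min}(u)\le 16\,\alpha_d(2k)^{n/2}u^{n-1}e^{-u^2/2}$ on $[u^\star_d,\infty)$ via Theorem~\ref{thm:uniform_domination} and Corollary~\ref{cor:single_term}, followed by substituting $u_\alpha$. Your direct estimate $u_\alpha^2\le 4\ell+2\ell\log\ell\le 2e\,\ell^2$ is exactly what the paper's logarithmic bound $\log u_\alpha^2\le\log(2\ell)+\frac{n\log(2k)+2n\log\ell}{2\ell}\le\log(2e)+2\log\ell$ gives (it uses the same clauses $\ell\ge n$ and $2\ell\ge n\log(2k)$), and both arguments close on the same condition $\ell\ge 16\,\alpha_d(2e)^{(n-1)/2}$ from the fourth clause.
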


\begin{proof}
\medskip

\begin{subequations}
\noindent$\bullet$ For $u\ge u^\star_d\ge u_{\mathrm{SMF}}=2\sqrt d$, Corollary~\ref{cor:single_term} gives the one-sided single-term bound
\begin{equation}\label{eq:smf_oneside}
    \P\Big\{\sup_{\bm \theta\in\mathbb S^{d-1}}X(\bm \theta)>u\Big\}\;\le\;8\,\alpha_d\,(2k)^{n/2}\,u^{n-1}\,e^{-u^2/2}\,.
\end{equation}
By definition $\delta_{\min}(u)\le 2\,\delta_{\mathrm{SMF}}(u)$ for $u\ge u_{\mathrm{SMF}}$, and Corollary~\ref{cor:single_term} yields $\delta_{\mathrm{SMF}}(u)\le 8\alpha_d(2k)^{n/2}u^{n-1}e^{-u^2/2}$ on $[u^\star_d,\infty)$. Multiplying by the symmetry factor~$2$,
\begin{equation}\label{eq:smf_twoside}
    u\ge u^\star_d\;\Longrightarrow\;\delta_{\min}(u)\;\le\;C_{k,d}^\star\,u^{n-1}\,e^{-u^2/2}\,,\qquad C_{k,d}^\star:=16\,\alpha_d\,(2k)^{n/2}\,.
\end{equation}
Hence, to prove the proposition it suffices to find $u_\alpha\ge u^\star_d$ with $C_{k,d}^\star\,u_\alpha^{n-1}\,e^{-u_\alpha^2/2}\le\alpha$, i.e.
\begin{equation}\label{eq:to_solve}
    \frac{u_\alpha^2}{2}-(n-1)\log u_\alpha\;\ge\;\log(1/\alpha)+\log C_{k,d}^\star\;=\;\ell+\log(16\,\alpha_d)+\tfrac n2\log(2k)\,.
\end{equation}
\end{subequations}

\medskip

\noindent$\bullet$  Set $s:=u_\alpha^2$ and $g(s):=s/2-(n-1)\log\sqrt s=s/2-\tfrac{n-1}2\log s$. Inequality~\eqref{eq:to_solve} is exactly $g(s)\ge \ell+\log(16\,\alpha_d)+\tfrac n2\log(2k)$. The function $g$ is increasing on $[n-1,\infty)$ and grows like $s/2$ for large $s$, while the subtracted term $\tfrac{n-1}2\log s$ is logarithmically slow. We will choose $s$ large enough that this logarithmic correction is dominated by the slack term $2n\log\ell$ in the ansatz~\eqref{eq:u_alpha_app}.

\medskip

\begin{subequations}
\noindent$\bullet$ Plug $s=2\ell+n\log(2k)+2n\log\ell$ from~\eqref{eq:u_alpha_app} into $g(s)$:
\begin{align}
    g(s)
    &\;=\;\ell+\tfrac n2\log(2k)+n\log\ell-\tfrac{n-1}2\log\!\big(2\ell+n\log(2k)+2n\log\ell\big)\,. \label{eq:g_at_ansatz}
\end{align}
The first three terms exactly produce the right-hand side of~\eqref{eq:to_solve} up to two corrections: a positive slack of $n\log\ell$ on the left, and a missing summand $\log(16\,\alpha_d)$ on the right. We must absorb the remaining $\log(2\ell+n\log(2k)+2n\log\ell)$ on the left. Using $\log(a+b+c)\le\log a+\log(1+b/a+c/a)\le\log a+(b+c)/a$ for $a>0$ and $b,c\ge 0$, with $a=2\ell$:
\begin{align}
    \log\!\big(2\ell+n\log(2k)+2n\log\ell\big)
    &\;\le\;\log(2\ell)+\frac{n\log(2k)+2n\log\ell}{2\ell}\,.\label{eq:log_envelope}
\end{align}
Substituting~\eqref{eq:log_envelope} into~\eqref{eq:g_at_ansatz},
\begin{align}
    g(s)
    &\;\ge\;\ell+\tfrac n2\log(2k)+n\log\ell-\tfrac{n-1}2\log(2\ell)-\tfrac{n-1}2\cdot\frac{n\log(2k)+2n\log\ell}{2\ell}\,.\label{eq:g_lower_bound}
\end{align}
\end{subequations}

\medskip

\noindent$\bullet$ Subtracting the right-hand side of~\eqref{eq:to_solve} from~\eqref{eq:g_lower_bound}, the residual reads
\begin{equation}\label{eq:residual}
    \mathcal{R}(\alpha;k,d)\;:=\;n\log\ell-\log(16\,\alpha_d)-\tfrac{n-1}2\log(2\ell)-\tfrac{n-1}{4\ell}\bigl(n\log(2k)+2n\log\ell\bigr)\,.
\end{equation}
The proof is complete once we show that $\mathcal{R}(\alpha;k,d)\ge 0$ for $\alpha\in(0,\alpha_0(k,d)]$. Each clause of~\eqref{eq:alpha_zero_def} translates into a lower bound on $\ell=\log(1/\alpha)$, and we combine them to bound the two negative ``correction'' contributions in~\eqref{eq:residual} explicitly.
\begin{itemize}
    \item[\emph{(a)}] The clause $\alpha\le e^{-n}$ gives $\ell\ge n$, hence $n/\ell\le 1$ and the $\log\ell$-correction obeys
    \[
        \tfrac{n-1}{4\ell}\cdot 2n\log\ell\;=\;\tfrac{n(n-1)\log\ell}{2\ell}\;\le\;\tfrac{n-1}2\log\ell\,.
    \]
    \item[\emph{(b)}] The clause $\alpha\le e^{-n\log(2k)/2}$ gives $2\ell\ge n\log(2k)$, hence the $\log(2k)$-correction obeys $\tfrac{n-1}{4\ell}\,n\log(2k)\le \tfrac{n-1}{4\ell}\cdot 2\ell=\tfrac{n-1}2$.
\end{itemize}
Substituting (a) and (b) into~\eqref{eq:residual} and using $\log(2\ell)=\log 2+\log\ell$,
\[
\begin{aligned}
    \mathcal{R}(\alpha;k,d)&\;\ge\;n\log\ell-\log(16\,\alpha_d)-\tfrac{n-1}2\log\ell-\tfrac{n-1}2\log 2-\tfrac{n-1}2\log\ell-\tfrac{n-1}2\\
    &\;=\;\log\ell-\log(16\,\alpha_d)-\tfrac{n-1}2(1+\log 2)\,,
\end{aligned}
\]
since $n\log\ell-(n-1)\log\ell=\log\ell$. The clause $\alpha\le\exp(-16\,\alpha_d(2e)^{(n-1)/2})$ gives $\ell\ge 16\,\alpha_d(2e)^{(n-1)/2}$, hence $\log\ell\ge\log(16\,\alpha_d)+\tfrac{n-1}2(1+\log 2)$ (as $\log(2e)=1+\log 2$), so the bound above is $\ge 0$. Finally, the clause $\alpha\le\tfrac12 e^{-(u^\star_d)^2/2}$ guarantees $u_\alpha\ge u^\star_d$ (the prerequisite for the single-term reduction~\eqref{eq:smf_twoside}) since $u_\alpha^2\ge 2\ell\ge(u^\star_d)^2$ on this range. Combining the four clauses, $\mathcal{R}(\alpha;k,d)\ge 0$ on $(0,\alpha_0(k,d)]$.
\end{proof}

\begin{remark}[Explicitness of the threshold $\alpha_0(k,d)$]
\label{rem:universal}
The threshold $\alpha_0(k,d)$ defined in~\eqref{eq:alpha_zero_def} is fully explicit: it depends on $(k,d)$ only through the constants $\alpha_d$, $\beta_d$, $u^\star_d$ of Lemma~\ref{lem:dominant_term} and Corollary~\ref{cor:single_term}, with no further hidden quantities. In particular, $\alpha_0(k,d)>0$ for every fixed $(k,d)$, so~\eqref{eq:u_alpha} is a sufficient choice on the entire confidence range $(0,\alpha_0(k,d)]$. Larger levels, including $\alpha=0.05$ (which generally exceeds $\alpha_0(k,d)$, since $\alpha_0(k,d)\le\tfrac12 e^{-(u^\star_d)^2/2}\le\tfrac12 e^{-2d}$ as $u^\star_d\ge 2\sqrt d$), are handled by the numerical inversion of Remark~\ref{rem:numerical_inversion}.
\end{remark}

\section{Asymptotic Analysis of the Tail Bound Prefactors}
\label{app:prefactor_asymptotics}

In this section, we analyze the prefactors of the three tail bounds presented in Table \ref{tab:tail_bounds}. We first isolate the exact prefactors for fixed $d$ and $k$ in the large-$u$ regime ($u \to \infty$), and then evaluate their high-dimensional asymptotic behavior ($d \to \infty$) using Stirling's approximation to compare them against the exact Kac--Rice baseline.

\begin{subequations}
\subsection{\texorpdfstring{Fixed $(k,d)$ prefactors as $u \to \infty$}{Fixed (k,d) prefactors as u to infinity}}
As $u \to \infty$, the tail bounds and the baseline all decay at the leading rate of $u^{d-2} e^{-u^2/2}$. We define the prefactor $P$ for each method such that the tail probability behaves as $P \cdot u^{d-2} e^{-u^2/2}$.

\paragraph{Asymptotic Baseline.} 
From the exact limit of the Kac--Rice integral, the optimal baseline prefactor is:
\begin{equation}
    P_{\mathrm{base}} = \frac{\sqrt{2}}{\Gamma(d/2)} \left(\frac{k}{2}\right)^{\frac{d-1}{2}}.
\end{equation}

\paragraph{Spectral Method (SM).} 
The SM tail bound is $2 \frac{\sqrt{2}}{\Gamma(d/2)}\left(\frac{k}{2}\right)^{\frac{d-1}{2}}u^{d-2}e^{-u^2/2}(1+\eta_d(\rho,u))$. Since the correction term $\eta_d(\rho,u) \to 0$ as $u \to \infty$, the SM prefactor is twice the baseline:
\begin{equation}
    P_{\mathrm{SM}} = 2 \frac{\sqrt{2}}{\Gamma(d/2)} \left(\frac{k}{2}\right)^{\frac{d-1}{2}} = 2 P_{\mathrm{base}}.
\end{equation}

\paragraph{Improved Mehta--Fyodorov (IMF) Bound.} 
The IMF bound is driven by the polynomial-exponential function $\Phi_d(\rho, u)$. For large $u$, the leading monomial of $\Phi_d(\rho, u)$ (corresponding to $k=0$ in its series) is $2\rho(2\rho u)^{d-2}$. Multiplying this by the global terms in the IMF bound gives the leading behavior:
\begin{equation*}
    2(k-1)^{\frac{d-1}{2}} \alpha_d \left[ 2\rho(2\rho u)^{d-2} \right] e^{-u^2/2} = 4 \alpha_d \rho^{d-1} (k-1)^{\frac{d-1}{2}} 2^{d-2} u^{d-2} e^{-u^2/2}.
\end{equation*}
Using the definition of the shift parameter $\rho = \sqrt{k/(2(k-1))}$, we simplify the algebraic constants:
\begin{equation*}
    \rho^{d-1} (k-1)^{\frac{d-1}{2}} = \left( \frac{k}{2(k-1)} \right)^{\frac{d-1}{2}} (k-1)^{\frac{d-1}{2}} = \left( \frac{k}{2} \right)^{\frac{d-1}{2}}.
\end{equation*}
Substituting this back yields the IMF prefactor:
\begin{equation}
    P_{\mathrm{IMF}} = 4 \alpha_d \left( \frac{k}{2} \right)^{\frac{d-1}{2}} 2^{d-2} = 2 \alpha_d (2k)^{\frac{d-1}{2}}.
\end{equation}

\paragraph{Simplified Mehta--Fyodorov (SMF) Bound.} 
By Corollary~\ref{cor:single_term}, the remainder term $e^{-3u^2/4}$ of the SMF bound vanishes for large $u$, leaving the single main term $4 \alpha_d (2k)^{\frac{d-1}2} u^{d-2} e^{-u^2/2}$. Thus, its prefactor is:
\begin{equation}
    P_{\mathrm{SMF}} = 4 \alpha_d (2k)^{\frac{d-1}{2}} = 2 P_{\mathrm{IMF}}.
\end{equation}
The factor of $2$ penalty compared to the IMF bound arises directly from replacing the partial Hermite sum with the looser Szeg\H{o} envelope $H_{d-1}(x) < (2x)^{d-1}$.

\end{subequations}

\begin{subequations}
\subsection{\texorpdfstring{High-dimensional asymptotics as $d \to \infty$}{High-dimensional asymptotics as d to infinity}}
To compare the methods in high dimensions, we expand $P_{\mathrm{base}}$ and $P_{\mathrm{IMF}}$ as $d \to \infty$ (keeping $k$ fixed) using Stirling's approximation.

\paragraph{Expanding the Baseline.} 
Using Stirling's approximation for the Gamma function, $\Gamma(z) \sim \sqrt{2\pi} z^{z-1/2} e^{-z}$, we expand $\Gamma(d/2)$:
\begin{equation*}
    \Gamma(d/2) \sim \sqrt{2\pi} \left(\frac{d}{2}\right)^{\frac{d-1}{2}} e^{-d/2} = \sqrt{\frac{4\pi}{d}} \left(\frac{d}{2e}\right)^{d/2}.
\end{equation*}
Substituting this into $P_{\mathrm{base}}$ yields:
\begin{equation}\label{eq:p_base_asymp}
    P_{\mathrm{base}} \sim \frac{\sqrt{2}}{\sqrt{4\pi/d} \left(\frac{d}{2e}\right)^{d/2}} \frac{(k/2)^{d/2}}{\sqrt{k/2}} = \sqrt{\frac{d}{\pi k}} \left( \frac{ek}{d} \right)^{d/2}.
\end{equation}

\paragraph{Expanding the Mehta--Fyodorov Prefactor.} 
From Lemma~\ref{lem:dominant_term}, the Stirling approximation for the dominant Hermite coefficient $\alpha_d$ (taking $d$ even for simplicity) is:
\begin{equation*}
    \alpha_d \sim \sqrt{\frac{d}{2\pi}} \left(\frac{e}{2d}\right)^{d/2}.
\end{equation*}
Substituting this into $P_{\mathrm{IMF}}$ yields:
\begin{equation}\label{eq:p_imf_asymp}
    P_{\mathrm{IMF}} = 2 \alpha_d (2k)^{\frac{d-1}{2}} \sim 2 \sqrt{\frac{d}{2\pi}} \left(\frac{e}{2d}\right)^{d/2} \frac{(2k)^{d/2}}{\sqrt{2k}} = \sqrt{\frac{d}{\pi k}} \left( \frac{ek}{d} \right)^{d/2}.
\end{equation}

\paragraph{Conclusion.}
Comparing \eqref{eq:p_base_asymp} and \eqref{eq:p_imf_asymp}, as $d \to \infty$ one has $P_{\mathrm{IMF}} \sim P_{\mathrm{base}}$. The Improved Mehta--Fyodorov bound captures the exact leading Stirling behavior and constant factor of the Kac--Rice integral without any loss. In contrast, both $P_{\mathrm{SM}}$ and $P_{\mathrm{SMF}}$ satisfy:
\begin{equation}
    P_{\mathrm{SM}} \sim P_{\mathrm{SMF}} \sim 2 \sqrt{\frac{d}{\pi k}} \left( \frac{ek}{d} \right)^{d/2}.
\end{equation}
All bounds capture the exponential scale $\left( \frac{ek}{d} \right)^{d/2}$; the SM and SMF bounds pay a uniform factor-$2$ penalty compared to the optimal baseline, while the IMF bound is sharp.

\end{subequations}

\section{Technical lemmas}

\subsection{Gaussian-type tail integrals}
\label{app:gaussian_integrals}
\begin{subequations}
The proofs of the three tail bounds use a common tool: integration by parts on tail integrals of the form $\int_u^\infty x^m e^{-ax^2/2}\,\mathrm{d}x$, iterated in steps of two. We isolate it here.

\begin{lemma}[Gaussian-type tail integrals]
\label{lem:gauss_tail}
For $a>0$, integer $m\ge 0$, and $u>0$, define $I_m(u;a):=\int_u^\infty x^m e^{-ax^2/2}\,\mathrm{d}x$. The base case $m=0$ obeys the Mills-type bound
\begin{equation}\label{eq:gauss_tail_mills}
    I_0(u;a)\;=\;\int_u^\infty e^{-ax^2/2}\,\mathrm{d}x\;\le\;\frac{e^{-au^2/2}}{a\,u}\,,
\end{equation}
and $I_1(u;a)=a^{-1}e^{-au^2/2}$ exactly. For every integer $m\ge 2$,
\begin{equation}\label{eq:gauss_tail_rec}
    I_m(u;a) \;=\; \frac{u^{m-1}\,e^{-au^2/2}}{a}\;+\;\frac{m-1}{a}\,I_{m-2}(u;a)\,,
\end{equation}
and, provided $(m-1)/(au^2)<1$,
\begin{equation}\label{eq:gauss_tail_closed}
    I_m(u;a)\;\le\;\frac{u^{m-1}\,e^{-au^2/2}}{a-(m-1)/u^2}\,.
\end{equation}
In particular, 
\begin{align*}
    I_{d-1}(u;1)    &\le 2u^{d-2}e^{-u^2/2}\,,
    &\text{for }u^2\ge 2(d-2)\,,\\
    I_{2d-2}(u;3/2) &\le u^{2d-3}e^{-3u^2/4}\,,
    &\text{for }u\ge 2\sqrt d\,,\\ 
    I_{d-2}(u;4/9) &\le 3\,u^{d-3}e^{-2u^2/9}\le u^{d-1}e^{-2u^2/9}\,,
    &\text{for }u\ge 4\sqrt{2(d-1)}\,.
\end{align*}
\end{lemma}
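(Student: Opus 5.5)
The plan is to prove the statement in three parts: the base cases, then the recurrence, then the closed-form bound and its three specializations.

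\emph{Base cases.} Write $x^m e^{-ax^2/2}=x^{m-1}\cdot x e^{-ax^2/2}$ and note that $x e^{-ax^2/2}=-a^{-1}\frac{\mathrm d}{\mathrm dx}e^{-ax^2/2}$. For $m=1$ this gives $I_1(u;a)=a^{-1}e^{-au^2/2}$ exactly. For $m=0$ use the Mills argument: on $[u,\infty)$ we have $1\le x/u$, so $I_0(u;a)\le u^{-1}I_1(u;a)=e^{-au^2/2}/(au)$, which is~\eqref{eq:gauss_tail_mills}.

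\emph{Recurrence.} For $m\ge 2$, integrate by parts with $f=x^{m-1}$ and $\mathrm dg=xe^{-ax^2/2}\,\mathrm dx$, so $g=-a^{-1}e^{-ax^2/2}$. The boundary term at infinity vanishes because the Gaussian factor dominates. At $x=u$ the boundary term is $u^{m-1}e^{-au^2/2}/a$, and the remaining integral is $\frac{m-1}{a}\int_u^\infty x^{m-2}e^{-ax^2/2}\,\mathrm dx$. This is exactly~\eqref{eq:gauss_tail_rec}.

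\emph{Closed bound~\eqref{eq:gauss_tail_closed}.} The point is to avoid unrolling the recurrence. On $[u,\infty)$ we have $x^{m-2}=x^m/x^2\le x^m/u^2$, hence $I_{m-2}\le u^{-2}I_m$. Inserting this into the recurrence gives
\[
I_m\bigl(1-\tfrac{m-1}{au^2}\bigr)\le \tfrac{u^{m-1}e^{-au^2/2}}{a}.
\]
Since $(m-1)/(au^2)<1$, we may divide. Multiplying numerator and denominator by $a$ then gives the denominator $a-(m-1)/u^2$. For the edge cases $m=0,1$ (where the recurrence is not needed), the base cases above are consistent with this form.

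\emph{Specializations.} Each of the three displayed bounds is read off from~\eqref{eq:gauss_tail_closed} by checking the denominator.
\begin{enumerate}
\item[(i)] For $a=1$ and $m=d-1$: if $u^2\ge 2(d-2)$, then $1-(d-2)/u^2\ge 1/2$, which gives the factor $2$.
\item[(ii)] For $a=3/2$ and $m=2d-2$: if $u^2\ge 4d$, then $(2d-3)/u^2\le 1/2$, so the denominator is at least $1$.
\item[(iii)] For $a=4/9$ and $m=d-2$: if $u^2\ge 32(d-1)$, then $(d-3)/u^2\le 1/32$, so the denominator is at least $4/9-1/32>1/3$, which gives the factor $3$. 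The second inequality $3u^{d-3}\le u^{d-1}$ holds because $u^2\ge 32(d-1)\ge 3$.
\end{enumerate}
The degenerate small-$m$ instances, such as $m\le 1$ when $d$ is small, are covered by the base cases. In the case $m=1$ the closed form holds with equality, since then $m-1=0$.

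The only mildly delicate step is the self-bounding trick $I_{m-2}\le u^{-2}I_m$. Everything else is bookkeeping on the constants.
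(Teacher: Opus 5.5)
Your proof is correct. It matches the paper on the base cases and on the integration by parts, and it also checks the three specializations, which the paper's proof leaves implicit. The genuine difference is how you obtain the closed-form bound~\eqref{eq:gauss_tail_closed}.

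The paper unrolls the recurrence~\eqref{eq:gauss_tail_rec} into the finite series $\frac{u^{m-1}e^{-au^2/2}}{a}\bigl[1+\frac{m-1}{au^2}+\frac{(m-1)(m-3)}{(au^2)^2}+\cdots\bigr]$. It then majorizes each descending product by a power of $q:=(m-1)/(au^2)$ and sums the geometric series to $1/(1-q)$.

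You instead apply the absorption estimate $I_{m-2}\le u^{-2}I_m$ once and solve the resulting linear inequality for $I_m$. This is legitimate since $I_m<\infty$ and $1-q>0$, and it yields exactly the same factor $1/(1-q)$.

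Your route is shorter and avoids the termination bookkeeping of the unrolled series. For even $m$ the paper's chain ends in $I_0$, which is not literally of the displayed monomial form and must itself go through the Mills bound, so the paper's displayed equality is really an inequality there. The paper's expansion, for its part, exhibits the exact finite sum, which could give sharper constants if kept, though the lemma does not use it.

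One side remark of yours is inaccurate. For $m=0$ the closed form would read $I_0(u;a)\le u\,e^{-au^2/2}/(1+au^2)$. This is false: it reverses the classical lower Mills inequality $\int_u^\infty e^{-ax^2/2}\,\mathrm{d}x> u\,e^{-au^2/2}/(1+au^2)$. So the Mills base case is not \emph{consistent with} the closed form at $m=0$, and~\eqref{eq:gauss_tail_closed} must be read for $m\ge 1$ only.

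This does not affect your specializations. The only $m=0$ instance that can arise is the third bound at $d=2$, and it needs just the Mills bound with constant $9/4\le 3$, which is what you actually use.
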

\end{subequations}

\begin{proof}
\emph{Base case $m=0$.} On $[u,\infty)$ one has $1\le x/u$, hence
\[
    I_0(u;a)=\int_u^\infty e^{-ax^2/2}\,\mathrm{d}x\;\le\;\frac1u\int_u^\infty x\,e^{-ax^2/2}\,\mathrm{d}x\;=\;\frac{1}{u}\cdot\frac{e^{-au^2/2}}{a}\;=\;\frac{e^{-au^2/2}}{a\,u}\,,
\]
which is~\eqref{eq:gauss_tail_mills}; the case $m=1$ is the displayed exact evaluation.
For $m\ge 2$ we want to evaluate the integral
\begin{equation*}
    I_m(u;a) = \int_u^\infty x^m e^{-ax^2/2} \,\mathrm{d}x.
\end{equation*}
We integrate by parts, splitting the integrand as $x^{m-1} \cdot x e^{-ax^2/2}$ and choosing:
\begin{align*}
    f(x) = x^{m-1} &\implies f'(x) = (m-1)x^{m-2} \\
    g'(x) = x e^{-ax^2/2} &\implies g(x) = -\frac{1}{a}e^{-ax^2/2}
\end{align*}
Applying the integration by parts formula $\int f \,\mathrm{d}g = fg - \int g \,\mathrm{d}f$:
\begin{equation*}
    I_m(u;a) = \left[ -x^{m-1} \frac{1}{a} e^{-ax^2/2} \right]_u^\infty - \int_u^\infty \left( -\frac{1}{a}e^{-ax^2/2} \right) (m-1)x^{m-2} \,\mathrm{d}x.
\end{equation*}
Since $a > 0$, the exponential term dominates the polynomial term as $x \to \infty$, making the upper boundary evaluate to $0$. Evaluating at the lower bound $u$ gives:
\begin{equation*}
    I_m(u;a) = \frac{u^{m-1} e^{-au^2/2}}{a} + \frac{m-1}{a} \int_u^\infty x^{m-2} e^{-ax^2/2} \,\mathrm{d}x.
\end{equation*}
The remaining integral is $I_{m-2}(u;a)$, giving the recurrence:
\begin{equation*}
    I_m(u;a) = \frac{u^{m-1}\,e^{-au^2/2}}{a} + \frac{m-1}{a}\,I_{m-2}(u;a).
\end{equation*}

\vspace{1em}
We iterate this recurrence relation to express $I_m(u;a)$ as a series:
\begin{equation*}
    I_m(u;a) = \frac{e^{-au^2/2}}{a} \left[ u^{m-1} + \frac{m-1}{a}u^{m-3} + \frac{(m-1)(m-3)}{a^2}u^{m-5} + \dots \right].
\end{equation*}
Factoring out $u^{m-1}$, we get:
\begin{equation*}
    I_m(u;a) = \frac{u^{m-1}e^{-au^2/2}}{a} \left[ 1 + \frac{m-1}{au^2} + \frac{(m-1)(m-3)}{(au^2)^2} + \dots \right].
\end{equation*}
The numerator coefficients satisfy $m-3 < m-1$, $m-5 < m-1$, and so on. Replacing these descending factors by $m-1$ gives a strict upper bound:
\begin{equation*}
    \frac{(m-1)(m-3)}{(au^2)^2} \le \left( \frac{m-1}{au^2} \right)^2.
\end{equation*}
Applying this bound to all terms allows us to upper bound the finite sum by an infinite geometric series:
\begin{equation*}
    I_m(u;a) \le \frac{u^{m-1} e^{-au^2/2}}{a} \sum_{k=0}^{\infty} \left( \frac{m-1}{au^2} \right)^k.
\end{equation*}
By hypothesis $\frac{m-1}{au^2} < 1$, so the geometric series converges, yielding:
\begin{equation*}
    I_m(u;a) \le \frac{u^{m-1} e^{-au^2/2}}{a} \left( \frac{1}{1 - \frac{m-1}{au^2}} \right).
\end{equation*}
which gives the closed-form bound:
\begin{equation*}
    I_m(u;a) \le \frac{u^{m-1} e^{-au^2/2}}{a - (m-1)/u^2}.
\end{equation*}
\end{proof}

\subsection{Asymptotic leading term}
\begin{lemma}[Asymptotic Equivalence of the Expected Determinant]
\label{lem:exp_det}
Let $G_{d-1} \sim \text{GOE}(d-1)$ with eigenvalues $\mu_1, \dots, \mu_{d-1}$. As $x \to \infty$, the expected absolute characteristic polynomial is asymptotically equivalent to its leading monomial:
\begin{equation}
    \E\Big[\prod_{i=1}^{d-1}|\mu_i-\rho x|\Big] \sim (\rho x)^{d-1}.
\end{equation}
\end{lemma}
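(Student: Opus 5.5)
The plan is to prove the asymptotic equivalence by writing the expected absolute characteristic polynomial as an expectation of a product normalized by $(\rho x)^{d-1}$ and applying dominated convergence. Set $\nu=\rho x$ and observe that
\[
    \frac{\E\big[\prod_{i=1}^{d-1}|\mu_i-\nu|\big]}{\nu^{d-1}}\;=\;\E\Big[\prod_{i=1}^{d-1}\big|1-\mu_i/\nu\big|\Big]\,.
\]
Since $d$ is fixed and the eigenvalues are almost surely finite, each factor $|1-\mu_i/\nu|\to 1$ pointwise as $\nu\to\infty$. Hence the integrand converges almost surely to $1$, and it remains only to justify the interchange of limit and expectation.

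For domination, we restrict to $\nu\ge 1$ and use $|1-\mu_i/\nu|\le 1+|\mu_i|\le 1+M_{d-1}$. Here $M_{d-1}=\max_i|\mu_i|$ is the spectral radius introduced in Section~\ref{sub:SM}. The integrand is then bounded by $(1+M_{d-1})^{d-1}$, and integrability follows from the finiteness of all moments of $M_{d-1}$. The cleanest justification is $M_{d-1}^2\le\mathrm{tr}(G_{d-1}^2)$, a finite sum of squared Gaussian entries, so every polynomial moment is finite. Alternatively, one could invoke the explicit moment bound~\eqref{eq:GOE_moment_explicit} derived from Lemma~\ref{lem:bdg}, but the Frobenius-norm argument is self-contained. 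Dominated convergence then gives that the ratio tends to $1$, which is the claimed equivalence.

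I also plan a remark on consistency with the rest of the paper. The same conclusion can be read off Fyodorov's formula (Lemma~\ref{lfyod}) and the decomposition of Lemma~\ref{lem:dominant_term}. Since $Q_d(\nu)=\alpha_d H_{d-1}(\nu)+\mathcal R_d(\nu)$ with $\mathcal R_d$ Gaussian-small, one gets $\E|\det|\sim \tfrac{2^{3/2}\Gamma((d+2)/2)}{d}\,\alpha_d 2^{d-1}\nu^{d-1}$. Lemma~\ref{lem:psm_eq_psmf} shows that this constant equals $1$, which cross-checks the normalization. I would present the dominated-convergence argument as the proof and mention this as a check only. There is no real obstacle here: the only point requiring care is the uniform integrable envelope, which the Frobenius bound supplies.
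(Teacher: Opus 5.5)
Your proof is correct and takes a different, more elementary route than the paper. The paper argues by an explicit sandwich. The upper bound $(\rho x)^{d-1}\bigl(1+\eta_d(\rho x)\bigr)$ is imported from Proposition~\ref{prop:det_bound}: a bulk/tail split at $R=\sqrt{\rho x\sqrt{d-1}}$, with the tail handled by the Ben Arous--Dembo--Guionnet bound of Lemma~\ref{lem:bdg} plus a layer-cake integral. The lower bound $(\rho x-R)^{d-1}\,\P(M_{d-1}\le R)$ comes from restricting to the bulk event, with Lemma~\ref{lem:bdg} again sending $\P(M_{d-1}>R)\to 0$.

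You instead normalize by $\nu^{d-1}$ and apply dominated convergence with the envelope $(1+M_{d-1})^{d-1}$, which is integrable because $M_{d-1}^2\le\mathrm{tr}(G_{d-1}^2)$. This needs no large-deviation input and no admissibility threshold such as $|\nu|\ge 32\sqrt{d-1}$, and it gives both directions at once. For the purely qualitative statement of the lemma it is the more economical argument.

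What the paper's route buys is an explicit rate, $1+O(x^{-1/2})$ on both sides. It is built from the same machinery reused in Theorem~\ref{thm:sm_tail} and in the lower bracket of Theorem~\ref{thm:complexity}, whereas dominated convergence gives no rate at all.

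Your cross-check is also consistent. By Lemma~\ref{lem:psm_eq_psmf} one has $\frac{2^{3/2}\Gamma((d+2)/2)}{d}\,\alpha_d=2^{-(d-1)}$. Fyodorov's formula together with Lemma~\ref{lem:dominant_term} would then give
$\E|\det(G_{d-1}-\nu I_{d-1})|=2^{-(d-1)}H_{d-1}(\nu)+O\bigl((\nu^2+1)^{d-1}e^{-\nu^2/2}\bigr)=\nu^{d-1}\bigl(1+O(\nu^{-2})\bigr)$.
That is a sharper rate than the paper's, at the cost of relying on the Fyodorov and Mehta identities rather than on elementary facts about the GOE.
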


\begin{subequations}
\begin{proof}
Let $M_{d-1} = \max_i |\mu_i|$ be the spectral radius of the GOE matrix. We split the expectation using a threshold $R(x) = \sqrt{\rho x \sqrt{d-1}}$.

\noindent $\bullet$ 
We partition the expectation into a bulk event $\{M_{d-1} \le R\}$ and a tail event $\{M_{d-1} > R\}$. 
On the bulk event, by the triangle inequality, $|\mu_i - \rho x| \le R + \rho x$. 
On the tail event, we apply the Ben Arous--Dembo--Guionnet large deviation bound of Lemma~\ref{lem:bdg}, which guarantees $\P(M_{d-1} > t) \le e^{-2t^2/9}$ for every $t\ge 4\sqrt{2(d-1)}$.
As established in Proposition~\ref{prop:det_bound}, balancing these two regimes yields:
\begin{equation}
    \E\Big[\prod_{i=1}^{d-1}|\mu_i-\rho x|\Big] \le (\rho x)^{d-1} \big(1 + \eta_d(\rho x)\big),
\end{equation}
where $\eta_d(\rho x) = \mathcal{O}\left( x^{-1/2} \right)$ as $x \to \infty$, meaning $\eta_d(\rho x) \to 0$.

\noindent $\bullet$ 
Since the integrand is non-negative, we lower-bound it by dropping the tail event and integrating over the bulk:
\begin{equation}
    \E\Big[\prod_{i=1}^{d-1}|\mu_i-\rho x|\Big] \ge \E\Big[\prod_{i=1}^{d-1}|\mu_i-\rho x| \1_{\{M_{d-1} \le R\}}\Big].
\end{equation}
On $\{M_{d-1} \le R\}$, $|\mu_i| \le R$ for every $i$, so $\rho x - \mu_i \ge \rho x - R$. For $x$ large enough that $\rho x > R$ (which holds since $R(x)=\sqrt{\rho x\sqrt{d-1}}=o(\rho x)$), $\rho x - \mu_i>0$, so the absolute value may be dropped: $|\mu_i - \rho x| = \rho x - \mu_i \ge \rho x - R$. Therefore:
\begin{equation}
    \E\Big[\prod_{i=1}^{d-1}|\mu_i-\rho x|\Big] \ge (\rho x - R)^{d-1} \P(M_{d-1} \le R).
\end{equation}
Factoring out $(\rho x)^{d-1}$, we get:
\begin{equation}
    \E\Big[\prod_{i=1}^{d-1}|\mu_i-\rho x|\Big] \ge (\rho x)^{d-1} \left(1 - \frac{R}{\rho x}\right)^{d-1} \big(1 - \P(M_{d-1} > R)\big).
\end{equation}
Since $R = \sqrt{\rho x \sqrt{d-1}}$, the ratio $\frac{R}{\rho x} \to 0$ as $x \to \infty$. Furthermore, by the super-exponential tail bound, $\P(M_{d-1} > R) \to 0$.

\noindent $\bullet$ 
Let $E(x) = \E\big[\prod_{i=1}^{d-1}|\mu_i-\rho x|\big]$. We have shown that for large $x$:
\begin{equation}
    (\rho x)^{d-1}(1 - o(1)) \le E(x) \le (\rho x)^{d-1}(1 + o(1)).
\end{equation}
Hence $\lim_{x \to \infty} E(x)/(\rho x)^{d-1} = 1$, i.e.\ $E(x) \sim (\rho x)^{d-1}$, the asymptotic equivalence used to define the baseline.
\end{proof}
\end{subequations}

\begin{lemma}[Asymptotic Equivalence of the Tail Integrals]
\label{lem:equiv_tail}
Let $E(x) = \E\big[|\det(G_{d-1}-\rho x\,I_{d-1})|\big]$. As the threshold $u \to \infty$, the Kac-Rice tail integral is asymptotically equivalent to the baseline integral:
\begin{equation}
    C_{k,d}\int_u^\infty E(x)\,\varphi(x)\,\mathrm{d}x \sim C_{k,d}\int_u^\infty (\rho x)^{d-1}\,\varphi(x)\,\mathrm{d}x.
\end{equation}
\end{lemma}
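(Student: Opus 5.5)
The plan is to transfer the pointwise equivalence of Lemma~\ref{lem:exp_det} to the tail integrals by a standard comparison of non-negative integrands. Write $E(x)=(\rho x)^{d-1}(1+r(x))$. Lemma~\ref{lem:exp_det} gives $r(x)\to 0$ as $x\to\infty$. Fix $\varepsilon>0$ and choose $x_\varepsilon$ such that $|r(x)|\le\varepsilon$ for all $x\ge x_\varepsilon$. Then for every $u\ge\max(x_\varepsilon,1)$, integrating the pointwise sandwich $(1-\varepsilon)(\rho x)^{d-1}\varphi(x)\le E(x)\varphi(x)\le(1+\varepsilon)(\rho x)^{d-1}\varphi(x)$ over $[u,\infty)$ gives
\[
(1-\varepsilon)\,B(u)\;\le\;A(u)\;\le\;(1+\varepsilon)\,B(u),
\]
where $A(u)=C_{k,d}\int_u^\infty E\varphi$ and $B(u)=C_{k,d}\int_u^\infty(\rho x)^{d-1}\varphi$. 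Since $\varepsilon$ is arbitrary, $A(u)/B(u)\to 1$.

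The only points to check are finiteness and positivity. Finiteness of $A(u)$ follows from the upper bound of Proposition~\ref{prop:det_bound} (or simply from $E(x)\le\E[(\rho x+M_{d-1})^{d-1}]<\infty$, which has polynomial growth), since polynomial growth times $\varphi$ is integrable. Positivity of $B(u)$ is immediate for $u>0$, so the ratio is well defined. No uniformity in $d$ is needed, since $d$ is fixed.

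Finally, to identify the baseline $\delta_{\mathrm{bl}}$ of~\eqref{eq:asymptotic_baseline_approx}, I would note that $B(u)=C_{k,d}\rho^{d-1}(2\pi)^{-1/2}I_{d-1}(u;1)$. By the recurrence~\eqref{eq:gauss_tail_rec}, $I_{d-1}(u;1)\sim u^{d-2}e^{-u^2/2}$, because the remainder term $(d-2)I_{d-3}$ is $O(u^{d-4}e^{-u^2/2})$. Simplifying $C_{k,d}\rho^{d-1}/\sqrt{2\pi}=\sqrt2\,(k/2)^{(d-1)/2}/\Gamma(d/2)$ then yields $\delta_{\mathrm{bl}}$. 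The main (mild) obstacle is just the first step, and Lemma~\ref{lem:exp_det} already does the substantive work, so the argument is short.
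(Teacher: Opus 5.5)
Your proof is correct and rests on the same key input as the paper, namely the pointwise equivalence $E(x)\sim(\rho x)^{d-1}$ of Lemma~\ref{lem:exp_det}. The paper transfers this to the tail integrals via L'H\^opital's rule, whereas your $\varepsilon$-sandwich of the non-negative integrands is the direct, slightly more self-contained form of the same step; your extra identification of $B(u)$ with $\delta_{\mathrm{bl}}$ via the recurrence~\eqref{eq:gauss_tail_rec} is also correct, though it goes beyond what the lemma itself asserts.
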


\begin{subequations}
\begin{proof}
To prove the asymptotic equivalence, we must show that the limit of the ratio of the two integrals evaluates to 1 as $u \to \infty$:
\begin{equation}
    L = \lim_{u \to \infty} \frac{\int_u^\infty E(x)\,\varphi(x)\,\mathrm{d}x}{\int_u^\infty (\rho x)^{d-1}\,\varphi(x)\,\mathrm{d}x}.
\end{equation}

The constant $C_{k,d}$ cancels from the ratio, and both integrands are integrable, so numerator and denominator tend to $0$ as $u\to\infty$. By L'Hôpital's rule, using $\frac{\mathrm{d}}{\mathrm{d}u}\int_u^\infty f(x)\,\mathrm{d}x=-f(u)$, differentiating numerator and denominator yields:
\begin{equation}
    L = \lim_{u \to \infty} \frac{\frac{\mathrm{d}}{\mathrm{d}u} \int_u^\infty E(x)\,\varphi(x)\,\mathrm{d}x}{\frac{\mathrm{d}}{\mathrm{d}u} \int_u^\infty (\rho x)^{d-1}\,\varphi(x)\,\mathrm{d}x} = \lim_{u \to \infty} \frac{-E(u)\,\varphi(u)}{-(\rho u)^{d-1}\,\varphi(u)}.
\end{equation}

The negative signs cancel, and because the standard normal density $\varphi(u)$ is strictly positive for all $u$, it also cancels out of the ratio. This leaves:
\begin{equation}
    L = \lim_{u \to \infty} \frac{E(u)}{(\rho u)^{d-1}}.
\end{equation}

By Lemma~\ref{lem:exp_det}, we know that $E(u) \sim (\rho u)^{d-1}$ as $u \to \infty$. By definition of asymptotic equivalence, this means:
\begin{equation}
    \lim_{u \to \infty} \frac{E(u)}{(\rho u)^{d-1}} = 1.
\end{equation}

Since the limit of the ratio of derivatives is $1$, so is the limit of the original ratio, giving the asymptotic equivalence:
\begin{equation}
    \int_u^\infty E(x)\,\varphi(x)\,\mathrm{d}x \sim \int_u^\infty (\rho x)^{d-1}\,\varphi(x)\,\mathrm{d}x.
\end{equation}
Multiplying both sides by the global constant $C_{k,d}$ concludes the proof.
\end{proof}
\end{subequations}

\subsection{The Ben Arous--Dembo--Guionnet lemma}
We first record the Ben~Arous-Dembo--Guionnet large-deviation bound \citep[Lemma-6.3]{ben-guionnet} for the spectral radius of the GOE in the Mehta normalization adopted here.

\begin{lemma}[Spectral radius tail]
\label{lem:bdg}
Let $G_{d-1}\sim\mathrm{GOE}(d-1)$ with eigenvalues $\mu_1,\dots,\mu_{d-1}$, and denote $M_{d-1}=\max_i|\mu_i|$. Then
\begin{equation*}
    \P\{M_{d-1}>t\}\;\le\;e^{-2t^2/9}\qquad\text{for all }t\ge 4\sqrt{2(d-1)}\,.
\end{equation*}
\end{lemma}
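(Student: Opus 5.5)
The plan is to derive the bound from Gaussian concentration of Lipschitz functions combined with a bound on the expected largest eigenvalue, rather than invoking the large-deviation principle of \citet{ben-guionnet} directly. Write $n:=d-1$ and realise the Mehta-normalised $\mathrm{GOE}(n)$ as $G=(Z+Z^\top)/2$, where $Z$ is an $n\times n$ matrix of i.i.d.\ $\mathcal N(0,1)$ entries. This gives the right normalisation, since $\var(G_{ii})=1$ and $\var(G_{ij})=(1+1)/4=1/2$ for $i\neq j$.

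\emph{Step 1: union over the two signs.} Since $M_n=\max(\lambda_{\max}(G),\lambda_{\max}(-G))$ and $G\stackrel d=-G$, we have $\P\{M_n>t\}\le 2\,\P\{\lambda_{\max}(G)>t\}$.

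\emph{Step 2: Lipschitz constant.} Parametrise $G$ by the standard Gaussian vector $g=(G_{ii})_i\cup(\sqrt2\,G_{ij})_{i<j}$. Then $\|G\|_F^2=\sum_i G_{ii}^2+2\sum_{i<j}G_{ij}^2=|g|^2$. By Weyl's inequality, $|\lambda_{\max}(G)-\lambda_{\max}(G')|\le\|G-G'\|_{\mathrm{op}}\le\|G-G'\|_F=|g-g'|$, so $\lambda_{\max}$ is $1$-Lipschitz in $g$. The Gaussian concentration inequality (Tsirelson--Ibragimov--Sudakov) then yields $\P\{\lambda_{\max}(G)>\E\lambda_{\max}(G)+s\}\le e^{-s^2/2}$ for all $s\ge0$.

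\emph{Step 3: the mean.} We need $\E\lambda_{\max}(G)\le\sqrt{2n}$, the classical edge location in this normalisation. We would obtain it by Sudakov--Fernique: write $\lambda_{\max}(G)=\sup_{v\in\mathbb S^{n-1}}X_v$ with $X_v:=v^\top Gv$, and compare with $Y_v:=\sqrt2\langle h,v\rangle$, where $h\sim\mathcal N(0,I_n)$. The increments satisfy
\[
\E(X_v-X_w)^2=\|vv^\top-ww^\top\|_F^2=2-2\langle v,w\rangle^2\le 4-4\langle v,w\rangle=\E(Y_v-Y_w)^2 .
\]
Hence $\E\lambda_{\max}(G)\le\sqrt2\,\E|h|\le\sqrt{2n}$. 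This comparison is the step most likely to need care. If it turns out to be off by a constant, we would fall back on the standard bound $\E\|G\|_{\mathrm{op}}\le\sqrt{2n}(1+o(1))$ together with the ample slack computed below. We expect the slack to absorb any constant up to roughly $4/3$.

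\emph{Step 4: calibration.} For $t\ge4\sqrt{2n}$, take $s=t-\sqrt{2n}\ge\tfrac34t$. Steps 1--3 then give
\[
\P\{M_n>t\}\le 2e^{-9t^2/32}=e^{-2t^2/9}\cdot 2e^{-17t^2/288}.
\]
It remains to check that $2e^{-17t^2/288}\le1$. Since $t^2\ge32n\ge64$ for $d\ge3$, we have $17t^2/288\ge 3.7>\log2$, so the claim $\P\{M_{d-1}>t\}\le e^{-2t^2/9}$ follows on the whole stated range. The only genuinely delicate input is the mean bound of Step 3. The constants $4\sqrt{2(d-1)}$ and $2/9$ in the statement leave a comfortable margin over the sharp value $\E\lambda_{\max}\approx\sqrt{2n}$.
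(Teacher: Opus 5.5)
Your argument is correct and complete, and it takes a genuinely different route from the paper.

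\textbf{The paper's route.} The paper follows the density method of Ben Arous--Dembo--Guionnet. In the normalisation with eigenvalue density $\propto\prod_{i<j}|\lambda_i-\lambda_j|\,e^{-\frac N4\sum_i\lambda_i^2}$, it writes the one-eigenvalue marginal as $\frac{Z_{N-1}}{Z_N}\int_M^\infty e^{-Nx^2/4}\,\E_{\sigma^{N-1}}\bigl[\prod_{i\ge 2}|x-\lambda_i|e^{-\lambda_i^2/4}\bigr]\,\mathrm dx$. It then bounds $Z_{N-1}/Z_N\le e^{3N/4}$ via Mehta's integral and Stirling. The integrand is controlled by $(x+1)^{N-1}\le x^{N-1}e^{N/8}$ and $\log x\le x^2/30$ for $x\ge 8$. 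A union bound over the $N$ eigenvalues and both signs follows. Finally the rescaling $\lambda=\sqrt{2/N}\,\mu$ turns $M\ge 8$ and the rate $NM^2/9$ into $t\ge 4\sqrt{2(d-1)}$ and $2t^2/9$.

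\textbf{Your route and what it buys.} You replace all of this with two soft Gaussian-process facts: concentration of the $1$-Lipschitz functional $\lambda_{\max}$, and Sudakov--Fernique for its mean. This avoids partition functions and the factor $N$ in the union bound. It needs no invariance or explicit eigenvalue density, only the Gaussian covariance structure of the entries. It also yields the stronger $\P\{M_n>t\}\le 2e^{-(t-\sqrt{2n})^2/2}$ for all $t\ge\sqrt{2n}$, with the correct Gaussian rate $1/2$ rather than $2/9$. The paper's route, in exchange, is a self-contained reproduction of the cited BDG lemma using only the joint eigenvalue law and elementary calculus.

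\textbf{Two small remarks.} First, the hedge in Step~3 is unnecessary. The identity $(4-4c)-(2-2c^2)=2(1-c)^2\ge 0$ makes $\E\lambda_{\max}(G)\le\sqrt{2n}$ exact. Your claim that the slack absorbs constants up to about $4/3$ is also a bit generous: at $4/3$ the exponent equals $2/9$ exactly, so the factor $2$ is no longer absorbed. Second, your final check already works for $n=1$, since $t^2\ge 32$ gives $17t^2/288>1.8>\log 2$. So the lemma holds for every $d\ge 2$ as stated, not only for $d\ge 3$.
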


\begin{subequations}
\begin{proof}
\noindent
$\bullet$
Let $J_N$ be the symmetric random matrix defined in the Ben Arous--Dembo--Guionnet normalization, with joint eigenvalue density 
$\sigma^N \propto \prod_{i<j}|\lambda_i-\lambda_j|\exp(-\frac{N}{4}\sum_{i=1}^N\lambda_i^2)$. 
Isolating $\lambda_1 = x$ and splitting the Gaussian weight as $e^{-\frac{N}{4}\sum_{i=2}^N \lambda_i^2} = e^{-\frac{N-1}{4}\sum_{i=2}^N \lambda_i^2} e^{-\frac{1}{4}\sum_{i=2}^N \lambda_i^2}$, the remaining interaction exactly matches the $(N-1)$-dimensional density $\sigma^{N-1}$. Thus,
\begin{equation}
    \sigma^N(\lambda_1 \ge M) = \frac{Z_{N-1}}{Z_N} \int_M^\infty e^{-\frac{N}{4}x^2} \mathbb{E}_{\sigma^{N-1}}\!\left[ \prod_{i=2}^N |x-\lambda_i| e^{-\lambda_i^2/4} \right] \mathrm{d}x\,.
\end{equation}
The partition function $Z_N=\int_{\R^N}\prod_{i<j}|\lambda_i-\lambda_j|\,e^{-\frac N4\sum_i\lambda_i^2}\,\mathrm d\lambda$ of this $\beta=1$ Gaussian ensemble is computed from the Mehta integral~\citep[Ch.~17]{mehta} (the $\gamma=\tfrac12$ case of $\int_{\R^N}\prod_{i<j}|x_i-x_j|^{2\gamma}e^{-\frac12\sum x_i^2}\mathrm dx=(2\pi)^{N/2}\prod_{j=1}^N\Gamma(1+j\gamma)/\Gamma(1+\gamma)$):
\begin{equation}\label{eq:mehta_integral}
    M_N\;:=\;\int_{\R^N}\prod_{i<j}|x_i-x_j|\,e^{-\frac12\sum_i x_i^2}\,\mathrm dx\;=\;(2\pi)^{N/2}\prod_{j=1}^N\frac{\Gamma(1+j/2)}{\Gamma(3/2)}\,.
\end{equation}
The rescaling $\lambda_i=\sqrt{2/N}\,x_i$ turns the weight $e^{-\frac N4\sum\lambda_i^2}$ into $e^{-\frac12\sum x_i^2}$, and contributes a Jacobian $(2/N)^{N/2}$ together with a Vandermonde factor $(2/N)^{N(N-1)/4}$ (one factor $\sqrt{2/N}$ per pair, $\binom N2=\tfrac{N(N-1)}2$ pairs), so that
\begin{equation}\label{eq:Z_N_closed}
    Z_N\;=\;(2/N)^{N(N+1)/4}\,(2\pi)^{N/2}\prod_{j=1}^N\frac{\Gamma(1+j/2)}{\Gamma(3/2)}\,.
\end{equation}
Taking the ratio at sizes $N-1$ and $N$: the Gamma product telescopes to $P_{N-1}/P_N=\Gamma(3/2)/\Gamma(1+N/2)$ (with $P_N:=\prod_{j=1}^N\Gamma(1+j/2)/\Gamma(3/2)$), the Gaussian prefactor contributes $(2\pi)^{-1/2}$, and the dimension factors combine through the exponent identity $\tfrac{N(N-1)}4-\tfrac{N(N+1)}4=-\tfrac N2$ into $2^{-N/2}\,(1+\tfrac1{N-1})^{N(N-1)/4}\,N^{N/2}$. Using $(2\pi)^{-1/2}\Gamma(3/2)=\tfrac1{2\sqrt2}$ and $2^{-N/2}N^{N/2}=(N/2)^{N/2}$, this yields the exact identity
\begin{equation}
    \frac{Z_{N-1}}{Z_N} = \left(1+\frac{1}{N-1}\right)^{\frac{N(N-1)}{4}} \frac{(N/2)^{N/2}}{2\sqrt{2}\,\Gamma(1+N/2)} \le e^{N/4}\, \frac{e^{N/2}}{2\sqrt{2\pi N}} \le e^{3N/4}\,.
\end{equation}
Here the first inequality bounds the two factors separately: $\bigl(1+\tfrac1{N-1}\bigr)^{N(N-1)/4}=\exp\bigl(\tfrac{N(N-1)}4\log(1+\tfrac1{N-1})\bigr)\le\exp(\tfrac N4)$ via $\log(1+x)\le x$, while $\tfrac{(N/2)^{N/2}}{2\sqrt2\,\Gamma(1+N/2)}\le\tfrac{e^{N/2}}{2\sqrt{2\pi N}}$ uses the standard lower bound $\Gamma(1+N/2) \ge (N/2)^{N/2} e^{-N/2} \sqrt{\pi N}$; the last step uses $2\sqrt{2\pi N}\ge 1$.
For the integrand, since $\sup_\lambda |\lambda|e^{-\lambda^2/4} = \sqrt{2/e} \le 1$, we have $|x-\lambda_i|e^{-\lambda_i^2/4} \le x + 1$. For $x \ge 8$, $x+1 = x(1+1/x) \le x e^{1/8}$, meaning the product is bounded by $x^{N-1} e^{N/8}$. 

\medskip

\noindent
$\bullet$
Combining the bounds, the probability is bounded by the tail integral:
\begin{equation}
    \sigma^N(\lambda_1 \ge M) \le e^{7N/8} \int_M^\infty x^{N-1} e^{-\frac{N}{4}x^2} \,\mathrm{d}x\,.
\end{equation}
For $x \ge 8$, the logarithmic bound $\ln x \le x^2/30$ holds uniformly: the function $g(x):=x^2/30-\ln x$ has $g'(x)=x/15-1/x>0$ on $x\ge\sqrt{15}$, hence on $[8,\infty)$, and $g(8)=64/30-\ln 8\approx 0.054>0$, so $g(x)\ge g(8)>0$ throughout. This yields $x^{N-1} \le e^{\frac{N-1}{30}x^2} \le e^{\frac{N}{30}x^2}$, and bounds the integral by $\int_M^\infty e^{-N x^2 (1/4 - 1/30)} \mathrm{d}x = \int_M^\infty e^{-13Nx^2/60} \mathrm{d}x \le e^{-13NM^2/60}$ (valid for $NM\ge 60/26\approx 2.31$, automatic under $N\ge 1$, $M\ge 8$).
By a union bound across the $N$ eigenvalues and both signs (using $\sigma^N(\lambda_i\le -M)=\sigma^N(\lambda_1\ge M)$ by the symmetry $\lambda\mapsto-\lambda$ of $\sigma^N$),
\begin{equation}
    \sigma^N\Big(\max_{1\le i\le N}|\lambda_i|\ge M\Big) \le 2N\, \sigma^N(\lambda_1 \ge M) \le \exp\!\Big(-\frac{13NM^2}{60} + \frac{7N}{8} + \ln(2N)\Big)\,.
\end{equation}
To achieve the target bound of $e^{-NM^2/9}$, we require $M^2(\frac{13}{60} - \frac{1}{9}) \ge \frac{7}{8} + \frac{\ln(2N)}{N}$. Since $\frac{13}{60} - \frac{1}{9} = \frac{19}{180}$, evaluated at the constraint $M \ge 8$ (so $M^2 \ge 64$), the left-hand side is at least $64 \times \frac{19}{180} \approx 6.76$. This is strictly greater than $\frac{7}{8} + \frac{\ln(2N)}{N} \le \frac{7}{8}+\log 2+\frac{1}{e}\approx 1.94$ for all $N \ge 1$. Hence the bound holds for all $M \ge 8$.

\medskip

\noindent
$\bullet$
The matrix $G_{d-1}$ drawn from the $\mathrm{GOE}(d-1)$ in the Mehta normalization satisfies the distributional identity $J_{d-1} \stackrel{d}{=} \sqrt{2/(d-1)}\,G_{d-1}$. This establishes the exact spectral relation $\lambda_i(J_{d-1}) = \sqrt{2/(d-1)}\,\mu_i(G_{d-1})$. 
Applying the established bound with $N=d-1$ and substituting $M = t\sqrt{2/(d-1)}$, the threshold condition $M \ge 8$ translates identically to:
\begin{equation}
    t\sqrt{\frac{2}{d-1}} \ge 8 \implies t \ge 4\sqrt{2(d-1)}\,.
\end{equation}
The exponential decay rate maps smoothly via:
\begin{equation}
    -\frac{N M^2}{9} = -\frac{d-1}{9}\left(t\sqrt{\frac{2}{d-1}}\right)^2 = -\frac{2t^2}{9}\,.
\end{equation}
Mapping the probability events gives the required inequality:
\begin{equation}
    \P\Big(\max_{1\le i\le d-1}|\mu_i(G_{d-1})|\ge t\Big) \le e^{-2t^2/9}\,.
\end{equation}
\end{proof}
\end{subequations}

\subsection{Explicit Hermite tail integrals}

\begin{subequations}
\begin{lemma}[Explicit Hermite tail integrals]
\label{lem:hermite_tail_explicit}
For every $u\in\R$ and integer $m\ge 1$ (the integration-by-parts identities below use only the finiteness of the boundary term $H_{m-1}(u)\,e^{-u^2/2}$ at the endpoint $u$ and the vanishing of the integrand at $+\infty$, so no positivity of $u$ is required),
\begin{equation}\label{eq:hermite_tail_series_1}
    \int_u^\infty H_m(x)\,e^{-x^2/2}\,\mathrm{d}x
    \;=\; e^{-u^2/2}\sum_{k=0}^{\lfloor (m-1)/2\rfloor}2^{k+1}\frac{(m-1)!!}{(m-2k-1)!!}H_{m-2k-1}(u)\;+\;R_m(u)\,,
\end{equation}
where $R_m(u)=0$ if $m$ is odd and $R_m(u)=2^{m/2}(m-1)!!\int_u^\infty e^{-x^2/2}\,\mathrm{d}x$ if $m$ is even. The following companion identity (the special weight $e^{-x^2}$) is recorded for completeness and is \emph{not used in the sequel}; the squared-Hermite tails entering $\delta_{\mathrm{exact}}$ are the general-weight form $K_j^\beta$ of Corollary~\ref{cor:Kj_beta}:
\begin{equation*}
    \int_u^\infty H_j(x)^2\,e^{-x^2}\,\mathrm{d}x
    \;=\; e^{-u^2}\sum_{k=0}^{j-1}2^k\frac{j!}{(j-k)!}H_{j-k-1}(u)H_{j-k}(u)+2^j j!\int_u^\infty e^{-x^2}\,\mathrm{d}x\,.
\end{equation*}
\end{lemma}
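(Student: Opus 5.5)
The plan is to derive a two-step recurrence in the degree by one integration by parts, then unroll it. For the first identity, write $I_m(u):=\int_u^\infty H_m(x)e^{-x^2/2}\,\mathrm{d}x$ and combine the two standard Hermite relations $H_m(x)=2xH_{m-1}(x)-H_{m-1}'(x)$ and $H_{m-1}'=2(m-1)H_{m-2}$. Differentiating the boundary candidate gives
\[
-\frac{\mathrm{d}}{\mathrm{d}x}\Big[H_{m-1}(x)e^{-x^2/2}\Big]=\big(xH_{m-1}(x)-H_{m-1}'(x)\big)e^{-x^2/2}.
\]
Substituting $xH_{m-1}=\tfrac12(H_m+H_{m-1}')$ turns the right-hand side into $\tfrac12\big(H_m-H_{m-1}'\big)e^{-x^2/2}$. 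Hence
\[
H_m e^{-x^2/2}=-2\,\frac{\mathrm{d}}{\mathrm{d}x}\Big[H_{m-1}e^{-x^2/2}\Big]+2(m-1)H_{m-2}\,e^{-x^2/2}.
\]
Integrating over $[u,\infty)$ gives the exact recurrence
\[
I_m(u)=2H_{m-1}(u)e^{-u^2/2}+2(m-1)I_{m-2}(u)\qquad(m\ge 2).
\]
Only two facts are used here: the boundary term at $+\infty$ vanishes because of Gaussian decay, and the value at the finite endpoint $u$ is finite. This is why no sign condition on $u$ is needed.

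Next, unroll the recurrence by induction on $m$ in steps of two. After $k$ steps the accumulated coefficient is $2^{k}(m-1)(m-3)\cdots(m-2k+1)=2^k(m-1)!!/(m-2k-1)!!$. The new boundary term $2H_{m-2k-1}(u)e^{-u^2/2}$ then carries the coefficient $2^{k+1}(m-1)!!/(m-2k-1)!!$, as in~\eqref{eq:hermite_tail_series_1}. The two parities terminate differently:
\begin{itemize}
\item If $m$ is odd, the chain ends at $I_1(u)=2e^{-u^2/2}=2H_0(u)e^{-u^2/2}$. This is exactly the $k=(m-1)/2$ summand, so $R_m=0$.
\item If $m$ is even, the chain ends at $I_0(u)=\int_u^\infty e^{-x^2/2}\,\mathrm{d}x$. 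Its multiplier is $2^{m/2}(m-1)!!$, which gives the stated $R_m$.
\end{itemize}
The only care needed is this bookkeeping of the double-factorial ratios and the last index $\lfloor(m-1)/2\rfloor$. I expect it to be the main (mild) obstacle: it is where off-by-one errors can slip in, so I would check $m=1,2,3$ by hand.

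For the companion identity, the same scheme applies with the boundary candidate $H_{j-1}H_je^{-x^2}$. Using $H_j'=2jH_{j-1}$, $H_{j-1}'=2(j-1)H_{j-2}$ and $2xH_{j-1}=H_j+2(j-1)H_{j-2}$, the derivative simplifies to
\[
\frac{\mathrm{d}}{\mathrm{d}x}\Big[H_{j-1}H_je^{-x^2}\Big]=\big(2jH_{j-1}^2-H_j^2\big)e^{-x^2}.
\]
Writing $K_j(u):=\int_u^\infty H_j^2e^{-x^2}\,\mathrm{d}x$ and integrating gives
\[
K_j(u)=e^{-u^2}H_{j-1}(u)H_j(u)+2jK_{j-1}(u).
\]
Unrolling down to $K_0$ produces the coefficients $2^k\,j!/(j-k)!$ on $H_{j-k-1}(u)H_{j-k}(u)$ and the terminal term $2^jj!\int_u^\infty e^{-x^2}\,\mathrm{d}x$, which completes the lemma.
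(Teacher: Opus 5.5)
Your proof is correct and follows the paper's argument essentially step for step. You derive the same recurrence $I_m(u)=2H_{m-1}(u)e^{-u^2/2}+2(m-1)I_{m-2}(u)$; writing $H_m e^{-x^2/2}$ as an exact derivative plus $2(m-1)H_{m-2}e^{-x^2/2}$ is the same computation as the paper's integration by parts on $\int_u^\infty xH_{m-1}(x)e^{-x^2/2}\,\mathrm{d}x$ followed by the three-term recurrence. You then use the same parity-dependent termination, and the same derivative identity for $H_{j-1}H_je^{-x^2}$ giving $K_j=e^{-u^2}H_{j-1}(u)H_j(u)+2jK_{j-1}$.
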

\end{subequations}

\begin{subequations}
\begin{proof}

\medskip

\noindent
$\bullet$
We evaluate $J_m = \int_u^\infty H_m(x) e^{-x^2/2} \,\mathrm{d}x$.
We use two standard identities for the physicist Hermite polynomials:
\begin{align}
    H_m'(x) &= 2m H_{m-1}(x) \label{eq:hermite_deriv} \\
    H_m(x) &= 2x H_{m-1}(x) - 2(m-1)H_{m-2}(x) \label{eq:hermite_recurrence}
\end{align}
We begin by evaluating the integral of $2x H_{m-1}(x) e^{-x^2/2}$ using integration by parts. Let $f(x) = H_{m-1}(x)$ and $g'(x) = x e^{-x^2/2}$, which implies $g(x) = -e^{-x^2/2}$.
\begin{align*}
    \int_u^\infty x H_{m-1}(x) e^{-x^2/2} \,\mathrm{d}x &= \left[ -H_{m-1}(x) e^{-x^2/2} \right]_u^\infty + \int_u^\infty H_{m-1}'(x) e^{-x^2/2} \,\mathrm{d}x \\
    &= H_{m-1}(u) e^{-u^2/2} + 2(m-1) \int_u^\infty H_{m-2}(x) e^{-x^2/2} \,\mathrm{d}x
\end{align*}
where we substituted \eqref{eq:hermite_deriv} in the second term. We then substitute $2x H_{m-1}(x)$ using a rearrangement of \eqref{eq:hermite_recurrence}, $2x H_{m-1}(x) = H_m(x) + 2(m-1)H_{m-2}(x)$, into the left-hand side:
\begin{equation*}
    \frac{1}{2} \int_u^\infty \Big( H_m(x) + 2(m-1)H_{m-2}(x) \Big) e^{-x^2/2} \,\mathrm{d}x = H_{m-1}(u) e^{-u^2/2} + 2(m-1) \int_u^\infty H_{m-2}(x) e^{-x^2/2} \,\mathrm{d}x
\end{equation*}
Distributing the integral and isolating $J_m = \int_u^\infty H_m(x) e^{-x^2/2} \,\mathrm{d}x$ yields the exact recurrence relation:
\begin{equation}\label{eq:J_recurrence}
    J_m = 2H_{m-1}(u) e^{-u^2/2} + 2(m-1) J_{m-2}
\end{equation}
By iteratively applying \eqref{eq:J_recurrence}, we can expand $J_m$:
\begin{align*}
    J_m &= 2H_{m-1}(u) e^{-u^2/2} + 2(m-1) \Big[ 2H_{m-3}(u) e^{-u^2/2} + 2(m-3) J_{m-4} \Big] \\
    &= e^{-u^2/2} \Big[ 2H_{m-1}(u) + 2^2(m-1)H_{m-3}(u) + 2^3(m-1)(m-3)H_{m-5}(u) + \dots \Big] + \text{Remainder}
\end{align*}
The $k$-th term of this expansion involves the cascading product $(m-1)(m-3)\cdots(m-2k+1)$, which evaluates precisely to the double factorial ratio $\frac{(m-1)!!}{(m-2k-1)!!}$. Incorporating the accumulated factor of $2^{k+1}$, the summation becomes:
\begin{equation*}
    e^{-u^2/2} \sum_{k=0}^{\lfloor (m-1)/2 \rfloor} 2^{k+1} \frac{(m-1)!!}{(m-2k-1)!!} H_{m-2k-1}(u)
\end{equation*}
The recursion stops differently depending on parity:
\begin{itemize}
    \item If $m$ is odd, the final evaluation step corresponds to $k = (m-1)/2$, meaning $H_0(u)$ is evaluated and no subsequent integrals are required. Thus, $R_m(u) = 0$.
    \item If $m$ is even, the final evaluated boundary term corresponds to $k = (m-2)/2$. The remaining integral falls back to $J_0 = \int_u^\infty H_0(x) e^{-x^2/2}\,\mathrm{d}x = \int_u^\infty e^{-x^2/2}\,\mathrm{d}x$. The accumulated multiplier in front of $J_0$ is $2^{m/2} (m-1)!!$, yielding exactly the prescribed $R_m(u)$.
\end{itemize}

\medskip

\noindent
$\bullet$
We evaluate $K_j = \int_u^\infty H_j(x)^2 e^{-x^2} \,\mathrm{d}x$.
Consider the derivative of the product $H_{j-1}(x) H_j(x) e^{-x^2}$:
\begin{equation*}
    \frac{\mathrm{d}}{\mathrm{d}x} \Big[ H_{j-1}(x) H_j(x) e^{-x^2} \Big] = \Big[ H_{j-1}'(x) H_j(x) + H_{j-1}(x) H_j'(x) - 2x H_{j-1}(x) H_j(x) \Big] e^{-x^2}
\end{equation*}
Substitute the derivative identity \eqref{eq:hermite_deriv} for the first two terms:
\begin{equation*}
    \Big[ 2(j-1)H_{j-2}(x)H_j(x) + 2j H_{j-1}(x)^2 - 2x H_{j-1}(x)H_j(x) \Big] e^{-x^2}
\end{equation*}
Using \eqref{eq:hermite_recurrence}, we can replace $2x H_{j-1}(x)$ with $H_j(x) + 2(j-1)H_{j-2}(x)$ in the last term. This gives:
\begin{equation*}
    2(j-1)H_{j-2}(x)H_j(x) + 2j H_{j-1}(x)^2 - \Big( H_j(x) + 2(j-1)H_{j-2}(x) \Big)H_j(x)
\end{equation*}
The cross-terms $2(j-1)H_{j-2}(x)H_j(x)$ cancel, leaving:
\begin{equation*}
    \frac{\mathrm{d}}{\mathrm{d}x} \Big[ H_{j-1}(x) H_j(x) e^{-x^2} \Big] = \Big( 2j H_{j-1}(x)^2 - H_j(x)^2 \Big) e^{-x^2}
\end{equation*}
Integrating this equation from $u$ to $\infty$ yields:
\begin{equation*}
    \left[ H_{j-1}(x) H_j(x) e^{-x^2} \right]_u^\infty = 2j \int_u^\infty H_{j-1}(x)^2 e^{-x^2} \,\mathrm{d}x - \int_u^\infty H_j(x)^2 e^{-x^2} \,\mathrm{d}x
\end{equation*}
Because the exponential $e^{-x^2}$ vanishes at $\infty$, evaluating the bounds leaves:
\begin{equation*}
    - H_{j-1}(u) H_j(u) e^{-u^2} = 2j K_{j-1} - K_j
\end{equation*}
Rearranging this isolates our desired recurrence for $K_j$:
\begin{equation}\label{eq:K_recurrence}
    K_j = H_{j-1}(u) H_j(u) e^{-u^2} + 2j K_{j-1}
\end{equation}
By unrolling \eqref{eq:K_recurrence} successively $j$ times, we extract boundary terms $H_{j-k-1}(u)H_{j-k}(u) e^{-u^2}$ at each step $k$ (where $k$ ranges from $0$ to $j-1$). 
At step $k$, the initial multiplier accumulates as $2j \cdot 2(j-1) \cdots 2(j-k+1)$, which simplifies directly to $2^k \frac{j!}{(j-k)!}$. This gives the exact finite sum provided in the lemma statement. 
After $j$ steps, the recursion completely bottoms out at $K_0 = \int_u^\infty H_0(x)^2 e^{-x^2} \,\mathrm{d}x = \int_u^\infty e^{-x^2} \,\mathrm{d}x$. The accumulated multiplier in front of $K_0$ is $2^j j!$, yielding the exact stated remainder.
\end{proof}
\end{subequations}

\medskip

The next lemma extends Lemma~\ref{lem:hermite_tail_explicit} from the canonical Gaussian weight $e^{-y^2/2}$ to an arbitrary weight $e^{-\beta y^2/2}$ with $\beta>0$. The general $\beta$-version gives the exact squared-Hermite tail $T_d^{\mathrm{exact}}(u)$ used in Theorem~\ref{thm:imf_tail_sharp}: when integrating $H_j(\rho x)^2$ against $e^{-(1+\rho^2)x^2/2}$ on $[u,\infty)$ and substituting $y=\rho x$, the relevant exponent is $\beta=(1+\rho^2)/\rho^2=(3k-2)/k>2$ for $k\ge 3$, a regime to which the existing Lemma~\ref{lem:hermite_tail_explicit} ($\beta=1$) does not apply.

\begin{subequations}
\begin{lemma}[Generalized Hermite--Gaussian tail recurrence]
\label{lem:Jm_beta}
For every $\beta>0$, every integer $m\ge 0$, and every $u\in\R$, set
\begin{equation}\label{eq:Jm_beta_def}
    J_m^\beta(u)\;:=\;\int_u^\infty H_m(y)\,e^{-\beta y^2/2}\,\mathrm{d}y\,.
\end{equation}
Write $\gamma:=2/\beta$ and $\theta:=(2-\beta)/\beta$. The base cases are
\begin{equation}\label{eq:Jm_beta_init}
    J_0^\beta(u)\;=\;\sqrt{\tfrac{2\pi}\beta}\,\bar\Phi(u\sqrt\beta)\,,\qquad
    J_1^\beta(u)\;=\;\gamma\,e^{-\beta u^2/2}\,,
\end{equation}
and for every $m\ge 2$, the family $\{J_m^\beta\}$ satisfies the three-term recurrence
\begin{equation}\label{eq:Jm_beta_rec}
    J_m^\beta(u)\;=\;\gamma\,H_{m-1}(u)\,e^{-\beta u^2/2}\;+\;2(m-1)\,\theta\;J_{m-2}^\beta(u)\,.
\end{equation}
Iterating~\eqref{eq:Jm_beta_rec} yields the explicit closed form
\begin{equation}\label{eq:Jm_beta_iter}
    J_m^\beta(u)\;=\;\gamma\,e^{-\beta u^2/2}\!\!\sum_{k=0}^{\lfloor (m-1)/2\rfloor}(2\theta)^k\,\frac{(m-1)!!}{(m-2k-1)!!}\,H_{m-2k-1}(u)\;+\;R_m^\beta(u)\,,
\end{equation}
where $R_m^\beta(u)=0$ if $m$ is odd, and $R_m^\beta(u)=(2\theta)^{m/2}(m-1)!!\,J_0^\beta(u)$ if $m$ is even.
\end{lemma}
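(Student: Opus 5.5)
The plan is to copy the proof of Lemma~\ref{lem:hermite_tail_explicit}, now with a general weight $e^{-\beta y^2/2}$. There are two base cases, then the recurrence, then its iteration.

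\emph{Base cases.} For $J_0^\beta$, substitute $z=y\sqrt\beta$. This gives
\[
\int_u^\infty e^{-\beta y^2/2}\,\mathrm{d}y=\beta^{-1/2}\int_{u\sqrt\beta}^\infty e^{-z^2/2}\,\mathrm{d}z=\sqrt{2\pi/\beta}\,\bar\Phi(u\sqrt\beta).
\]
For $J_1^\beta$, use $H_1(y)=2y$. The integrand $2y\,e^{-\beta y^2/2}$ has the primitive $-(2/\beta)e^{-\beta y^2/2}$, so $J_1^\beta(u)=\gamma e^{-\beta u^2/2}$.

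\emph{Recurrence.} Fix $m\ge 2$ and integrate $y\,H_{m-1}(y)\,e^{-\beta y^2/2}$ by parts, with $g(y)=-\beta^{-1}e^{-\beta y^2/2}$. The boundary term at $+\infty$ vanishes by Gaussian decay. Using $H_{m-1}'=2(m-1)H_{m-2}$, this gives
\[
\int_u^\infty yH_{m-1}e^{-\beta y^2/2}\,\mathrm{d}y=\beta^{-1}H_{m-1}(u)e^{-\beta u^2/2}+\tfrac{2(m-1)}{\beta}J_{m-2}^\beta(u).
\]
Next, rewrite the left-hand side with the three-term recurrence $2yH_{m-1}=H_m+2(m-1)H_{m-2}$, which turns it into $\tfrac12\bigl(J_m^\beta+2(m-1)J_{m-2}^\beta\bigr)$. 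Solving for $J_m^\beta$ yields
\[
J_m^\beta=\tfrac2\beta H_{m-1}(u)e^{-\beta u^2/2}+2(m-1)\bigl(\tfrac2\beta-1\bigr)J_{m-2}^\beta .
\]
This is \eqref{eq:Jm_beta_rec}, since $\gamma=2/\beta$ and $\theta=(2-\beta)/\beta$. As a check, $\beta=1$ gives $\gamma=2$ and $\theta=1$, which recovers \eqref{eq:J_recurrence}.

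\emph{Iteration.} Argue by induction on $m$ within each parity class. After $k$ applications of the recurrence, the accumulated multiplier is $\prod_{i=1}^{k}2(m-2i+1)\theta=(2\theta)^k(m-1)!!/(m-2k-1)!!$, and the $(k+1)$-th boundary term is $\gamma H_{m-2k-1}(u)e^{-\beta u^2/2}$ times that multiplier. If $m$ is odd, the last boundary term has $k=(m-1)/2$. The recursion then lands on $J_1^\beta$, which is itself the boundary term $\gamma H_0 e^{-\beta u^2/2}$ (so the sum is just extended by one index), and $R_m^\beta=0$. If $m$ is even, the last boundary term has $k=(m-2)/2$, and the recursion lands on $J_0^\beta$ with multiplier $(2\theta)^{m/2}(m-1)!!$, which is exactly $R_m^\beta$.

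The only delicate point is this parity bookkeeping at the bottom of the recursion: checking that $J_1^\beta$ is absorbed into the sum with the correct coefficient $\gamma$, and not counted separately. Everything else is routine. No sign assumption on $\theta$ or $u$ is needed, because every step is an exact identity.
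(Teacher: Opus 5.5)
Your proposal is correct and follows the paper's proof essentially step for step. Integration by parts against $e^{-\beta y^2/2}$, combined with $H_{m-1}'=2(m-1)H_{m-2}$ and the three-term Hermite recurrence, gives the coefficient $2(m-1)(\gamma-1)=2(m-1)\theta$; unrolling within each parity class then absorbs $J_1^\beta=\gamma e^{-\beta u^2/2}$ as the $k=(m-1)/2$ summand for odd $m$ and leaves $(2\theta)^{m/2}(m-1)!!\,J_0^\beta$ for even $m$, exactly as in the paper (which only differs cosmetically by integrating $2y\,H_{m-1}(y)\,e^{-\beta y^2/2}$ by parts directly with primitive $-\gamma e^{-\beta y^2/2}$ instead of halving).
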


\noindent
For $\beta=1$, $\gamma=2$ and $\theta=1$, so $\gamma(2\theta)^k=2^{k+1}$, and Lemma~\ref{lem:Jm_beta} reduces exactly to~\eqref{eq:hermite_tail_series_1}; the remainder $R_m^\beta$ specialises to $2^{m/2}(m-1)!!\int_u^\infty e^{-y^2/2}\,\mathrm{d}y$, matching the existing $R_m$. For $\beta>2$, $\theta<0$, and the iterated sum~\eqref{eq:Jm_beta_iter} carries alternating signs in $k$ but remains exact pointwise; in subsequent estimates one must not pass to absolute values of individual summands.

\begin{proof}[Proof of Lemma~\ref{lem:Jm_beta}]
\noindent
$\bullet$
Apply integration by parts with $f(y)=H_{m-1}(y)$ and $g'(y)=2y\,e^{-\beta y^2/2}$, hence $g(y)=-(2/\beta)\,e^{-\beta y^2/2}=-\gamma\,e^{-\beta y^2/2}$. The boundary term at $+\infty$ vanishes by Gaussian decay, and using $H_{m-1}'(y)=2(m-1)\,H_{m-2}(y)$,
\[
    \int_u^\infty 2y\,H_{m-1}(y)\,e^{-\beta y^2/2}\,\mathrm{d}y
    \;=\;\gamma\,H_{m-1}(u)\,e^{-\beta u^2/2}\;+\;2(m-1)\,\gamma\,J_{m-2}^\beta(u)\,.
\]
Combining with the Hermite recurrence $H_m(y)=2y\,H_{m-1}(y)-2(m-1)\,H_{m-2}(y)$,
\[
    J_m^\beta(u) \;=\; \int_u^\infty 2y\,H_{m-1}(y)\,e^{-\beta y^2/2}\,\mathrm{d}y\;-\;2(m-1)\,J_{m-2}^\beta(u)\,,
\]
and $2(m-1)\gamma-2(m-1)=2(m-1)(\gamma-1)=2(m-1)\theta$, yielding~\eqref{eq:Jm_beta_rec}.

\medskip

\noindent
$\bullet$ For the boundary cases, $J_0^\beta(u)=\int_u^\infty e^{-\beta y^2/2}\,\mathrm{d}y=\sqrt{2\pi/\beta}\,\bar\Phi(u\sqrt\beta)$ by direct change of variable, and $J_1^\beta(u)=\int_u^\infty 2y\,e^{-\beta y^2/2}\,\mathrm{d}y=(2/\beta)\,e^{-\beta u^2/2}=\gamma\,e^{-\beta u^2/2}$.

\medskip

\noindent
$\bullet$ Unrolling~\eqref{eq:Jm_beta_rec} $k$ times and using the telescoping product $(m-1)(m-3)\cdots(m-2k+1)=(m-1)!!/(m-2k-1)!!$,
\[
    J_m^\beta(u)\;=\;\gamma\,e^{-\beta u^2/2}\!\!\sum_{j=0}^{k-1}(2\theta)^j\,\frac{(m-1)!!}{(m-2j-1)!!}\,H_{m-2j-1}(u)\;+\;(2\theta)^k\,\frac{(m-1)!!}{(m-2k-1)!!}\,J_{m-2k}^\beta(u)\,.
\]
For $m$ odd, take $k=(m-1)/2$: $m-2k-1=0$, hence $(m-2k-1)!!=0!!=1$, the residual factor $J_{m-2k}^\beta(u)=J_1^\beta(u)=\gamma\,e^{-\beta u^2/2}$, and the residual term coincides with the $j=(m-1)/2$ summand of the main sum (since $H_{m-2k-1}(u)=H_0(u)=1$ and $\gamma e^{-\beta u^2/2}=J_1^\beta(u)$); absorbing it into the main sum gives $R_m^\beta=0$. For $m$ even, take $k=m/2$: $m-2k-1=-1$, $(m-2k-1)!!=(-1)!!=1$, the residual factor is $J_0^\beta(u)$, and $R_m^\beta(u)=(2\theta)^{m/2}\,(m-1)!!\,J_0^\beta(u)$.

\medskip

\noindent
$\bullet$ \emph{Boundary verification ($m=1,2$).} For $m=1$, the sum~\eqref{eq:Jm_beta_iter} reduces to $\gamma\,e^{-\beta u^2/2}\,H_0(u)=\gamma\,e^{-\beta u^2/2}$, matching $J_1^\beta(u)$. For $m=2$, the main sum is $\gamma\,e^{-\beta u^2/2}\,H_1(u)=2\gamma u\,e^{-\beta u^2/2}$ and $R_2^\beta(u)=2\theta\,J_0^\beta(u)$; this matches the direct integration $J_2^\beta(u)=\int_u^\infty(4y^2-2)e^{-\beta y^2/2}\,\mathrm dy=2\gamma u\,e^{-\beta u^2/2}+2\theta\,J_0^\beta(u)$ obtained via integration by parts.
\end{proof}
\end{subequations}

\begin{corollary}[Squared-Hermite Gaussian tail]
\label{cor:Kj_beta}
\begin{subequations}
For every $\beta>0$, every integer $j\ge 0$, and every $u\in\R$, the squared-Hermite tail
\begin{equation}\label{eq:Kj_beta_def}
    K_j^\beta(u)\;:=\;\int_u^\infty H_j(y)^2\,e^{-\beta y^2/2}\,\mathrm{d}y
\end{equation}
admits the exact closed form
\begin{equation}\label{eq:Kj_beta_form}
    K_j^\beta(u)\;=\;\sum_{p=0}^j 2^p\,p!\,\binom{j}{p}^{\!2}\,J_{2j-2p}^\beta(u)\,,
\end{equation}
where each $J_{2j-2p}^\beta$ is given in closed form by Lemma~\ref{lem:Jm_beta}.
\end{subequations}
\end{corollary}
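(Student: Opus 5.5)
The plan is to linearize the product $H_j(y)^2$ and integrate term by term. The classical product formula for physicist Hermite polynomials reads
\[
H_a(y)\,H_b(y)\;=\;\sum_{p=0}^{\min(a,b)}2^p\,p!\,\binom{a}{p}\binom{b}{p}\,H_{a+b-2p}(y).
\]
This is the same linearization already invoked in the proof of Theorem~\ref{thm:imf_tail_exact}.

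\emph{Specialization.} Taking $a=b=j$ gives the pointwise polynomial identity
\[
H_j(y)^2=\sum_{p=0}^j 2^p\,p!\,\binom{j}{p}^2H_{2j-2p}(y)
\]
for every $y\in\R$. We multiply by $e^{-\beta y^2/2}$ and integrate over $[u,\infty)$. The sum is finite and each integrand is a polynomial times a Gaussian with $\beta>0$, so all integrals converge absolutely. Exchanging sum and integral is therefore trivial. By the definition~\eqref{eq:Jm_beta_def}, each term is exactly $J_{2j-2p}^\beta(u)$, which yields~\eqref{eq:Kj_beta_form}. Lemma~\ref{lem:Jm_beta} then supplies the closed form of each $J_{2j-2p}^\beta$. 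All indices $2j-2p$ are even, so each of these closed forms carries its $J_0^\beta$ remainder, i.e.\ a $\bar\Phi(u\sqrt\beta)$ term.

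\emph{Main obstacle: the linearization formula.} The only step needing justification is the linearization formula, if one does not want to cite it (e.g.\ Szeg\H{o} or Mehta). I would prove it via generating functions. Start from $\sum_n H_n(y)s^n/n!=e^{2ys-s^2}$. Then
\[
e^{2ys-s^2}\,e^{2yt-t^2}=e^{2y(s+t)-(s+t)^2}\,e^{2st}.
\]
Expand the left side in $s^at^b/(a!b!)$ and the right side as
\[
\sum_{n,p}H_n(y)\,\frac{(s+t)^n}{n!}\,\frac{(2st)^p}{p!}.
\]
Matching the coefficient of $s^at^b$ on both sides gives the formula.

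\emph{Sanity check.} For $j=0$ the identity reduces to $K_0^\beta=J_0^\beta$. For $j=1$ it gives $H_1^2=4y^2=H_2+2$, which matches directly.
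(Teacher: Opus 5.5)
Your proposal is correct and matches the paper's proof: specialize the classical Hermite product linearization to $a=b=j$, multiply by $e^{-\beta y^2/2}$, and integrate the finite sum termwise to identify each term with $J^\beta_{2j-2p}(u)$. Your generating-function derivation of the linearization formula, from $e^{2ys-s^2}e^{2yt-t^2}=e^{2y(s+t)-(s+t)^2}e^{2st}$, is a valid self-contained supplement; the paper omits it and simply cites the formula as classical.
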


\begin{proof}[Proof of Corollary~\ref{cor:Kj_beta}]
The classical linearization of the product of two physicist Hermite polynomials reads
\[
    H_m(y)\,H_n(y)\;=\;\sum_{p=0}^{\min(m,n)} 2^p\,p!\,\binom{m}{p}\binom{n}{p}\,H_{m+n-2p}(y)\,.
\]
Specialising to $m=n=j$ yields $H_j(y)^2=\sum_{p=0}^j 2^p\,p!\,\binom{j}{p}^{\!2}\,H_{2j-2p}(y)$. Multiplying by $e^{-\beta y^2/2}$, integrating from $u$ to $\infty$, and exchanging the finite sum with the integral (justified termwise since each $H_m\,e^{-\beta y^2/2}$ is absolutely integrable on $[u,\infty)$), one obtains~\eqref{eq:Kj_beta_form}.
\end{proof}

\section{List of notations}
\label{app:notations}

\paragraph{Tensor regression model}
\begin{center}
\renewcommand{\arraystretch}{1.2}
\begin{tabular}{p{2.5cm} p{12cm}}
    $k \ge 2$ & Order of the symmetric tensors (all theorems require $k,d\ge 3$). \\
    $d \ge 2$ & Ambient dimension of the underlying vectors (tensors have $d^k$ entries). \\
    $n = d-1$ & Effective dimension (associated to the sphere and the GOE matrices). \\
    $R \ge 1$ & Rank of the signal tensor $\meanTensor$ (and rank bound for the candidate tensors $\putativemeanTensor$). \\
    $\kappa \in (0, 1]$ & Coherence parameter. \\
    $\lambda > 0$ & Signal-to-noise ratio. \\
    $\obs, \noise, \meanTensor$ & Observed tensor, standard Gaussian noise tensor, and planted (true) normalized signal tensor $\sigma^\star$, respectively. \\
    $\putativemeanTensor,\ \hat{\putativemeanTensor}$ & Generic candidate tensor and the profile MLE~\eqref{eq:MLE_definition}. \\
    $\mathcal{C}_{R,\kappa},\ \bm G$ & Feasible set $\{\putativemeanTensor\in\mathfrak{S}_R:\kappa(\putativemeanTensor)\ge\kappa\}$; coherence $\kappa(\cdot)$ from~\eqref{def:coherence} via the Gram matrix $\bm G$, $G_{ij}=\langle t_i,t_j\rangle^k$. \\
    $\langle \cdot, \cdot \rangle_\tensors, \|\cdot\|_F$ & Tensor canonical inner product and Frobenius norm. \\
    $\Tensors$ & Space of symmetric tensors of order $k$ and dimension $d$. \\
    $\sphereTensors$ & Unit sphere in $\Tensors$. \\
    $\mathfrak{S}_R$ & Set of symmetric normalized tensors of rank at most $R$.
\end{tabular}
\end{center}

\paragraph{Geometry and Random Fields}
\begin{center}
\renewcommand{\arraystretch}{1.2}
\begin{tabular}{p{2.5cm} p{12cm}}
    $\mathbb{S}^{d-1}$ & Unit sphere in $\R^d$, which has dimension $n$. \\
    $|\mathbb{S}^{d-1}|$ & Surface area of the unit sphere, $|\mathbb{S}^{d-1}| = 2\pi^{d/2}/\Gamma(d/2)$. \\
    $T_{\bm{\theta}}\mathbb{S}^{d-1}$ & Tangent space to the sphere at point $\bm{\theta}$, also of dimension $n$. \\
    $X(\bm{\theta})$ & Centered Gaussian random field defined on the sphere by $X(\bm{\theta}) = \langle \noise, \bm{\theta}^{\otimes k} \rangle_\tensors$ (the Kostlan--Shub--Smale random field). \\
    $\nabla X, \nabla^2 X$ & Riemannian gradient and Riemannian Hessian of the random field $X$. \\
    $\Gamma_{R,\kappa}$ & Supremum of the noise over the manifold constraints defined by $\mathfrak{S}_R$ and~$\kappa$. \\
    $\Gamma_{1,1}$ & Two-sided absolute supremum of the field, $\Gamma_{1,1} = \sup_{\bm{\theta}} |X(\bm{\theta})|$. \\
    $\widetilde\Gamma_{1,1}(u)$ & One-sided excursion probability $\widetilde\Gamma_{1,1}(u) = \P\{\sup_{\bm{\theta}} X(\bm{\theta}) > u\}$, related to $\Gamma_{1,1}$ by $\P\{\Gamma_{1,1}>u\}\le 2\,\widetilde\Gamma_{1,1}(u)$.
\end{tabular}
\end{center}

\paragraph{Random Matrices and Spectra}
\begin{center}
\renewcommand{\arraystretch}{1.2}
\begin{tabular}{p{2.5cm} p{12cm}}
    $G_{d-1}$ & Random $n \times n$ matrix drawn from the Gaussian Orthogonal Ensemble (GOE). \\
    $\mu_1,\dots,\mu_{d-1}$ & Real eigenvalues of the matrix $G_{d-1}$. \\
    $M_{d-1}$ & Spectral radius of $G_{d-1}$, $M_{d-1} = \max_i |\mu_i|$. \\
    $\rho$ & Scaling constant appearing for the GOE shift, defined as $\rho = \sqrt{k/(2(k-1))}$. \\
    $I_{d-1}$ & Identity matrix of dimension $n$.
\end{tabular}
\end{center}

\paragraph{Special Functions and Analysis}
\begin{center}
\renewcommand{\arraystretch}{1.2}
\begin{tabular}{p{2.5cm} p{12cm}}
    $\bar\Phi(u)$ & Standard Gaussian tail, $\bar\Phi(u) = \P(Z>u)$. \\
    $\varphi(x)$ & Density function of the standard normal distribution, $\varphi(x) = (2\pi)^{-1/2} e^{-x^2/2}$. \\
    $\Gamma(\cdot)$ & Gamma function. \\
    $H_j(x)$ & Hermite polynomial of degree $j$ (physicists' convention, $H_n(x)\sim(2x)^n$). \\
    $q_d, Q_d$ & Weighted and unweighted characteristic polynomials appearing in the Fyodorov--Mehta orthogonal expansion. \\
    $u$ & Threshold level for bounding the exceedance probability. \\
    $I_m(u;a)$ & Incomplete variance integral, $I_m(u;a) = \int_u^\infty x^m e^{-ax^2/2} \,\mathrm{d}x$.
\end{tabular}
\end{center}

\paragraph{Kac--Rice integral, tail bounds, and thresholds}
\begin{center}
\renewcommand{\arraystretch}{1.2}
\begin{tabular}{p{2.7cm} p{11.8cm}}
    $C_{k,d}$ & Kac--Rice prefactor, $C_{k,d}=2\sqrt\pi\,(k-1)^{(d-1)/2}/\Gamma(d/2)$. \\
    $\delta_0(u)$ & Kac--Rice integral~\eqref{eq:KR_combined_intro} bounding $\widetilde\Gamma_{1,1}(u)$. \\
    $\delta_{\mathrm{bl}}(u)$ & Asymptotic baseline~\eqref{eq:asymptotic_baseline_approx}; $\delta_0\sim\delta_{\mathrm{bl}}$ as $u\to\infty$ (not a bound). \\
    $\delta_{\mathrm{exact}}(u)$ & Exact closed-form evaluation of $\delta_0$ (Theorem~\ref{thm:imf_tail_exact}). \\
    $\delta_{\mathrm{IMF}},\ \delta_{\mathrm{IMF}}^\star$ & Improved Mehta--Fyodorov bound and its sharpened variant (Theorems~\ref{thm:imf_tail},~\ref{thm:imf_tail_sharp}). \\
    $\delta_{\mathrm{SMF}},\ \delta_{\mathrm{SM}}$ & Simplified Mehta--Fyodorov and spectral-method bounds (Theorems~\ref{thm:smf_tail},~\ref{thm:sm_tail}). \\
    $\delta_{\min}(u)$ & Master failure probability $\delta_{\min}=2\,\delta_{\mathrm{IMF}}$ (Theorem~\ref{thm:main}). \\
    $u_{\mathrm{IMF}},u_{\mathrm{SMF}},u_{\mathrm{SM}}$ & Validity thresholds $\sqrt{2d-1}/\rho$, $2\sqrt d$, $32\sqrt{d-1}/\rho$. \\
    $u_\alpha,\ \alpha_0(k,d)$ & Confidence-level inversion~\eqref{eq:u_alpha} and its validity threshold (Remark~\ref{rem:choice_u}).
\end{tabular}
\end{center}

\paragraph{Mehta--Fyodorov constants and Hermite-tail integrals}
\begin{center}
\renewcommand{\arraystretch}{1.2}
\begin{tabular}{p{2.7cm} p{11.8cm}}
    $\Lambda,\ \beta$ & $\Lambda=2\rho^2-1=1/(k-1)$; $\beta=(1+\rho^2)/\rho^2=(3k-2)/k$. \\
    $c_j,\ \mu_m$ & $c_j=(2^j j!\sqrt\pi)^{-1/2}$; $\mu_m=\int_\R H_m(y)e^{-y^2/2}\,\mathrm{d}y$. \\
    $\alpha_d,\ \beta_d$ & Dominant Hermite coefficient and remainder envelope in $Q_d=\alpha_d H_{d-1}+\mathcal R_d$ (Lemma~\ref{lem:dominant_term}). \\
    $\Phi_d,\ \Psi_d$ & Polynomial--rational functions of the IMF decomposition (Proposition~\ref{prop:IMF_decomp}). \\
    $\eta_d(\rho,u)$ & Layer-cake correction of the SM bound (Proposition~\ref{prop:det_bound}). \\
    $\mathcal I_d^c(\nu)$ & Hermite tail integral $\int_\nu^\infty H_d(y)e^{-y^2/2}\,\mathrm{d}y$. \\
    $\mathcal L_d,\ \mathcal C_d$ & Linear Hermite tail and cross integral in the $\delta_{\mathrm{exact}}$ decomposition. \\
    $D_1,\dots,D_4$ & The four pieces of $\delta_{\mathrm{exact}}$ (Theorem~\ref{thm:imf_tail_exact}). \\
    $J_m^\beta,\ K_j^\beta,\ T_d^{\mathrm{exact}}$ & Generalised Hermite and squared-Hermite tail integrals (Lemma~\ref{lem:Jm_beta}, Corollary~\ref{cor:Kj_beta}).
\end{tabular}
\end{center}

\paragraph{Spherical $k$-spin complexity (Section~\ref{sec:refinements})}
\begin{center}
\renewcommand{\arraystretch}{1.2}
\begin{tabular}{p{2.7cm} p{11.8cm}}
    $N^{\mathrm{lm}}_{[E,\infty)},\ N^{\mathrm{cp}}_{[E,\infty)}$ & Number of local maxima, resp.\ critical points, of $X$ with value $\ge E$. \\
    $E,\ e$ & Energy level and reduced energy $e=\rho E/\sqrt{2(d-1)}$. \\
    $E_\infty,\ E_0$ & ABC spectral-edge energy $2\sqrt{(k-1)/k}$ and ground-state energy $E_0>E_\infty$. \\
    $E_{\mathrm{BDG}},\ u_{\mathrm{ABC}}$ & Two-sided-bracket threshold $8\sqrt{2(d-1)}/\rho$; ABC energy scale $u_{\mathrm{ABC}}=E/\sqrt d$. \\
    $\Theta_k,\ \Theta_p,\ \Theta_{0,p}$ & ABC complexity functions: local-maximum rate of the $k$-spin field, and total resp.\ index-$0$ rates of the $p$-spin model (Corollary~\ref{cor:abc_match}). \\
    $C_{\mathrm{amp}},\ \delta_{\mathrm{BDG}}$ & Amplifier constant $8\sqrt2$ and GOE spectral-radius bound $\delta_{\mathrm{BDG}}(\rho E)=e^{-2(\rho E)^2/9}$ (Lemma~\ref{lem:bdg}).
\end{tabular}
\end{center}

\end{document}